\newtheorem{theorem}[equation]{Theorem}
\newtheorem{lemma}[equation]{Lemma}
\newtheorem{proposition}[equation]{Proposition}
\newtheorem{corollary}[equation]{Corollary}
\theoremstyle{definition}
\newtheorem{definition}[equation]{Definition}
\newtheorem{example}[equation]{Example}
\theoremstyle{remark}
\newtheorem{remark}[equation]{Remark}
\newtheorem{Subsubsec}[equation]{}
\numberwithin{equation}{subsection}
\DeclareMathAlphabet{\mathpzc}{OT1}{pzc}{m}{n}
\DeclareMathAlphabet{\matheur}{U}{eur}{m}{n}
\definecolor{lava}{RGB}{207,16,32}
\definecolor{purple}{RGB}{148,0,211}
\newcommand{\FF}{\mathbb{F}}
\newcommand{\ZZ}{\mathbb{Z}}
\newcommand{\QQ}{\mathbb{Q}}
\newcommand{\RR}{\mathbb{R}}
\newcommand{\TT}{\mathbb{T}}
\newcommand{\GG}{\mathbb{G}}
\newcommand{\CC}{\mathbb{C}}
\newcommand{\OO}{\mathbb{O}}
\newcommand{\PP}{\mathbb{P}}
\newcommand{\KK}{\mathbb{K}}
\newcommand{\NN}{\mathbb{N}}
\newcommand{\MM}{\mathbb{M}}
\newcommand{\ba}{\mathbf{a}}
\newcommand{\bK}{\mathbf{K}}
\newcommand{\bX}{\mathbf{X}}
\newcommand{\bU}{\mathbf{U}}
\newcommand{\bochi}{\boldsymbol{\chi}}
\newcommand{\boy}{\boldsymbol{y}}
\newcommand{\boxx}{\boldsymbol{x}}
\newcommand{\afk}{\mathfrak{a}}
\newcommand{\cfk}{\mathfrak{c}}
\newcommand{\dfk}{\mathfrak{d}}
\newcommand{\mfk}{\mathfrak{m}}
\newcommand{\nfk}{\mathfrak{n}}
\newcommand{\pfk}{\mathfrak{p}}
\newcommand{\qfk}{\mathfrak{q}}
\newcommand{\Afk}{\mathfrak{A}}
\newcommand{\Ifk}{\mathfrak{I}}
\newcommand{\Pfk}{\mathfrak{P}}
\newcommand{\Fcal}{\CMcal{F}}
\newcommand{\Hcal}{\CMcal{H}}
\newcommand{\Ocal}{\CMcal{O}}
\newcommand{\Pcal}{\CMcal{P}}
\newcommand{\Rcal}{\CMcal{R}}
\newcommand{\Ucal}{\CMcal{U}}
\newcommand{\Ccal}{\CMcal{C}}
\newcommand{\Acal}{\CMcal{A}}
\newcommand{\Mscr}{\mathscr{M}}
\newcommand{\Pscr}{\mathscr{P}}
\newcommand{\Sscr}{\mathscr{S}}
\newcommand{\eA}{\matheur{A}}
\newcommand{\eG}{\matheur{G}}
\newcommand{\eH}{\matheur{H}}
\newcommand{\eK}{\matheur{K}}
\newcommand{\eM}{\matheur{M}}
\newcommand{\eF}{\matheur{F}}
\newcommand{\ek}{\matheur{k}}
\DeclareMathOperator{\Aut}{Aut}
\DeclareMathOperator{\Lie}{Lie}
\DeclareMathOperator{\GL}{GL}
\DeclareMathOperator{\Mat}{Mat}
\DeclareMathOperator{\End}{End}
\DeclareMathOperator{\Gal}{Gal}
\DeclareMathOperator{\Hom}{Hom}
\DeclareMathOperator{\Res}{Res}
\DeclareMathOperator{\trdeg}{tr.deg}
\DeclareMathOperator{\wt}{wt}
\DeclareMathOperator{\Emb}{Emb}
\DeclareMathOperator{\Inf}{Inf}
\DeclareMathOperator{\Spec}{Spec}
\DeclareMathOperator{\Div}{Div}
\DeclareMathOperator{\rank}{rank}
\DeclareMathOperator{\ord}{ord}
\DeclareMathOperator{\Gr}{Gr}
\newcommand{\ok}{\bar{k}}
\newcommand{\sep}{\mathrm{sep}}
\newcommand{\laurent}[2]{{#1 (\!( #2 )\!)}}
\def\rank{\operatorname{rank}}
\newcommand{\ari}{\operatorname{ari}}
\newcommand{\geo}{\operatorname{geo}}
\newcommand{\cyc}{\operatorname{cyc}}
\newcommand{\St}{\operatorname{St}}
\newcommand{\gal}{\operatorname{Gal}}
\newcommand{\divv}{\operatorname{div}}
\newcommand{\Pic}{\operatorname{Pic}}
\newcommand{\Frob}{\operatorname{Frob}}
\newcommand{\re}{\operatorname{Re}}
\newcommand\subfrac[2]{\genfrac{}{}{0pt}{}{#1}{#2}}
\newcommand{\assign}{\mathrel{\vcenter{\baselineskip0.5ex \lineskiplimit0pt
                     \hbox{\scriptsize.}\hbox{\scriptsize.}}}%
                     =}
\newcommand{\rassign}{=%
                     \mathrel{\vcenter{\baselineskip0.5ex \lineskiplimit0pt
                     \hbox{\scriptsize.}\hbox{\scriptsize.}}}%
                     }
\DeclareRobustCommand{\longleftmapsto}{\text{\reflectbox{$\longmapsto$}}}
\begin{document}

\title[Special gamma values and Chowla--Selberg phenomenon over function fields]{Algebraic Relations among Special Gamma Values and the Chowla--Selberg Phenomenon over Function Fields}

\author[F.-T.\ Wei]{Fu-Tsun Wei}
\address{Department of Mathematics, National Tsing Hua University, No.~101, Section 2, Kuang-Fu Road, Hsinchu City 300044, Taiwan R.O.C.}
\email{ftwei@math.nthu.edu.tw}

%    \thanks will become a 1st page footnote.
\thanks{
The author is supported by NSTC grant 109-2115-M-007-017-MY5
and the National Center for Theoretical Sciences.
}

%    General info
\subjclass[2000]{Primary 11J93; Secondary 11G09}

\date{\today}

\begin{abstract}
The aim of this paper is to determine all algebraic relations among various special gamma values over function fields, and prove a Chowla--Selberg-type formula for quasi-periods of CM abelian $t$-modules. Our results are based on the intrinsic relations between gamma values in question and periods of CM dual $t$-motives, which are interpreted in terms of their ``distributions''. This also enables us to derive an analogue of the Deligne--Gross period conjecture for CM Hodge--Pink structures.
\end{abstract}

\keywords{Function field, CM dual $t$-motive, Period distribution, Stickelberger distribution, Gamma value, Chowla--Selberg formula, Deligne--Gross period conjecture}

\maketitle
\tableofcontents

\section{Introduction}
The main goal of this paper is to determine the algebraic relations among special gamma values over function fields, and derive an analogue of the Chowla--Selberg formula for quasi-periods of CM abelian $t$-modules.
In the classical case, the gamma function was first introduced by Euler in order to solve the interpolation problem of factorials.
This transcendental function shows up in many areas and plays an essential role since then.
The celebrated Chowla--Selberg formula (see \cite{CS67}) expresses the periods of CM elliptic curves, up to algebraic multiples, in terms of particular twisted products of gamma values at fractions.
This led to extensive studies on the connection between special gamma values in question and periods of CM abelian varieties
(see \cite{Gr78}, \cite{Anderson82}, \cite{Co93}, \cite{MR04}, \cite{Yang10}, \cite{BM16}, and \cite{Fr17}, etc.).
We refer the reader to \cite{Gr21} for a comprehensive historical review.
On the other hand, gamma functions on the function field side similarly arose from \emph{Carlitz factorials} \cite{Car35}, and then grew into three ``interpolations'' in the work of Goss \cite{Goss88} and Thakur \cite{Thakur91}.
After establishing natural functional equations of these gamma functions, Thakur proposed a recipe/conjecture on the \emph{Chowla--Selberg phenomenon} (see \cite[7.12]{Thakur91}) for the periods of CM Drinfeld modules and examined several cases.
In this paper, we shall prove Thakur’s conjecture, and clarify the intrinsic relations between special values of various gamma functions and ``abelian CM periods'' in positive characteristic.

\subsection{Special gamma values and algebraic independence}

Let $p$ be a prime number and $q$ be a power of $p$.
Let $A\assign \FF_q[\theta]$, the polynomial ring with one variable $\theta$ over a finite field $\FF_q$ with $q$ elements, and $k \assign \FF_q(\theta)$, the fraction field of $A$.
Denote by $k_\infty$ the completion of $k$ with respect to the absolute value $|\cdot|_\infty$ associated to the infinite place of $k$ normalized so that $|\theta|_\infty = q$, and $\CC_\infty$ the completion of a chosen algebraic closure of $k_\infty$.
Let $\ok$ be the algebraic closure of $k$ in $\CC_\infty$.
Put $A_+ \assign \{a \in A\mid a \text{ is monic}\}$.

There are three analogues of the gamma function in the function field case:
\begin{itemize}
    \item
    \emph{Arithmetic gamma function} (see Goss \cite[App.]{Go80}): Let $\ZZ_p$ be the ring of $p$-adic integers. For $y \in \ZZ_p$,
    \[
    \Gamma_{\ari}(y) \assign \Pi_{\ari}(y-1),
    \quad \text{ where } \quad 
    \Pi_{\ari}(y)\assign \prod_{i=1}^\infty \left(\prod_{j=0}^{i-1}(1-\frac{\theta^{q^j}}{\theta^{q^i}})\right)^{y_i}.
    \]
    Here $y_i$ for $i \in \ZZ_{\geq 0}$ are the unique integers with $0\leq y_i<q$ such that
    $y = \sum_{i=0}^\infty y_i q^i$.
    \item
    \emph{Geometric gamma function} (see Thakur \cite{Thakur91}): For $x \in \CC_\infty \setminus (-A_+ \cup \{0\})$,
    \[
    \Gamma_{\geo}(x) \assign \frac{1}{x} \cdot \Pi_{\geo}(x), \quad \text{ where } \quad 
    \Pi_{\geo}(x) \assign \prod_{a \in A_{\scaleto{+}{4pt}}}\left(1+\frac{x}{a}\right)^{-1}.
    \]
    \item \emph{Two-variable gamma function} (see Goss \cite{Goss88}): For $x \in \CC_\infty \setminus (-A_+ \cup \{0\})$ and $y \in \ZZ_p$,
    \[
    \Gamma(x,y) \assign \frac{1}{x} \cdot \frac{\Pi_{\geo}(x,y-1)}{\Pi_{\ari}(y-1)} \quad \text{ where } \quad 
    \Pi_{\geo}(x,y) \assign \prod_{i=0}^\infty\prod_{\substack{a \in A_{\scaleto{+}{4pt}} \\ \deg a = i}}\Big(1+\frac{x}{a}\Big)^{-y_i}.
    \]
    Here $y_i$ for $i \in \ZZ_{\geq 0}$ are chosen as in the arithmetic case.
\end{itemize}

The two-variable gamma function defined above actually follows Thakur's modification in \cite[the $\Gamma_2$ in Definition~8.1.1]{Thakur91}.
We refer the reader to
Remark~\ref{rem: Goss-gamma} for its comparison with Goss' original definition.
The two-variable gamma function can be seen as an extension of both the arithmetic and geometric ones.
In particular, one has that
\[
\Gamma(x,1-\frac{1}{q-1})\cdot \Gamma_{\ari}(1-\frac{1}{q-1}) = \Gamma_{\geo}(x), \quad \forall x \in \CC_\infty \setminus (-A_+\cup \{0\}).
\]

The natural functional equations of these gamma functions, e.g.\ the corresponding reflection and multiplication formulas, are well-established in \cite{Goss88} and \cite{Thakur91}.
We recall in Proposition~\ref{prop: AR-Gamma} the needed monomial relations, up to $\ok^\times$-multiples, among the gamma values at $x \in k\setminus(-A_+\cup \{0\})$ and $y \in \ZZ_{(p)} \assign \QQ \cap \ZZ_p$ induced by the functional equations.
The first theorem of this paper is the following analogue of the Lang--Rohrlich conjecture:

\begin{theorem}[Theorem~\ref{thm: Lang-Rohrlich-conj} and Corollary~\ref{cor: AR-Gamma}]\label{thm: L-R-conj}
 Given $\nfk \in A_+$ and $\ell \in \NN$, we have that
\begin{align*}
&\ \trdeg_{\ok}\ok\Big(\Gamma_{\geo}(x), \Gamma_{\ari}(y), \Gamma(x,y)\ \Big|\ x \in \frac{1}{\nfk}A\setminus(-A_+\cup\{0\}),\ y \in \frac{1}{q^\ell-1}\ZZ\Big) \\
=&\  1+(\ell - \frac{1}{(q-1)^{\epsilon_\nfk}})\cdot \#(A/\nfk)^\times,
\end{align*}
where $\epsilon_\nfk = 1$ if $\deg \nfk >0$ and $0$ otherwise.
Consequently, all algebraic relations among gamma values $\Gamma_{\geo}(x)$, $\Gamma_{\ari}(y)$, $\Gamma(x,y)$ for $x \in k\setminus (-A_+\cup\{0\})$ and $y \in \ZZ_{(p)}$ are explained by the monomial relations listed in Proposition~\ref{prop: AR-Gamma}.
\end{theorem}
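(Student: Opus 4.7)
I would prove the transcendence degree formula by establishing matching upper and lower bounds; the ``consequently'' clause then follows immediately since the monomial relations of Proposition~\ref{prop: AR-Gamma} already realize the upper bound.

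For the upper bound, I would take the monomial relations from Proposition~\ref{prop: AR-Gamma} (reflection, multiplication, translation, and reduction among the three gamma functions), encode them as a sublattice of the free $\ZZ$-module generated by the listed gamma values (modulo $\ok^\times$), and compute the rank of the quotient. The ``$+1$'' tracks the Carlitz period $\tpi$ arising from the arithmetic gamma at a reciprocal of $q-1$; the factor $\ell$ counts the independent arithmetic-gamma directions coming from the $\ell$ Frobenius conjugates of each class in $\frac{1}{q^\ell-1}\ZZ$; and the geometric and two-variable gamma values contribute $\#(A/\nfk)^\times$ independent values per such class, reduced by exactly one Stickelberger-type distribution relation precisely when $\deg\nfk>0$ (hence the exponent $\epsilon_\nfk$).

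For the lower bound, I would invoke the period interpretation developed in the main body of the paper: each of the listed gamma values is, up to an algebraic factor and a power of $\tpi$, a period of some CM dual $t$-motive, with CM type governed by a character of $(A/\nfk)^\times \times \ZZ/(q^\ell-1)$. Assembling all of these into a single CM dual $t$-motive $\Mscr$, Papanikolas' theorem identifies the transcendence degree of the field generated by the periods of $\Mscr$ over $\ok$ with the dimension of the motivic Galois group $\Gamma_{\Mscr}$. Since $\Mscr$ is CM, $\Gamma_{\Mscr}$ is a torus whose cocharacter lattice is, by the ``period distribution'' framework of the paper, precisely the quotient of the character module of $(A/\nfk)^\times\times \ZZ/(q^\ell-1)$ by the span of the Stickelberger-type relations. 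Computing this rank yields exactly the same number as the upper bound.

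The principal obstacle is the lower bound: one must show that the period distribution supplies \emph{all} relations, i.e.\ that $\Gamma_{\Mscr}$ is as large as the CM structure permits. This is the content that the earlier sections of the paper (the intrinsic identification of gamma values with periods of CM dual $t$-motives and the Deligne--Gross type interpretation of the period distribution) are designed to supply. The otherwise mysterious correction $\ell - \frac{1}{(q-1)^{\epsilon_\nfk}}$ then emerges naturally as the rank of this CM cocharacter lattice, and matching it cleanly with the combinatorial count from Proposition~\ref{prop: AR-Gamma} is the bookkeeping to be carried through.
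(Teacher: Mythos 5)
Your plan matches the paper's proof in essentially every respect: the upper bound is the rank count of the universal distribution cut out by the monomial (diamond-bracket) relations (Lemma~\ref{lem: Ucyc-dim}, Corollary~\ref{cor: Ucyc-iso}), and the lower bound comes from identifying the normalized gamma values with period symbols of CM dual $t$-motives via the soliton construction (Theorem~\ref{thm: Gamma dist}) and quoting the transcendence degree of the CM period field, i.e.\ the Shimura-conjecture analogue of \cite{BCPW22} (Theorem~\ref{thm: trdeg}), rather than recomputing a motivic Galois group from scratch. Only two small corrections to your bookkeeping: the relevant Galois group is $(\eA/\nfk)^\times\times\ZZ/\ell\ZZ$ (the set $\frac{1}{q^\ell-1}\ZZ/\ZZ$ is the index set of arguments, not the group), and passing from ``the gamma field has the right transcendence degree'' to ``all relations are monomial'' also requires the universality of the Stickelberger distribution, which the paper proves via the evaluator interpretation and non-vanishing of Dirichlet $L$-values at $s=0$ (Lemma~\ref{lem: evaluator}, Corollary~\ref{cor: vanishing-cond}, Proposition~\ref{prop: St-span}).
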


\begin{remark}
(1) For $c_1,c_2 \in \CC_\infty^\times$, we denote by $c_1\sim c_2$ if $c_1/c_2 \in \ok^\times$. 
From the relation in \cite[Theorem~1.4]{Thakur91}:
\[
\Gamma_{\ari}(1-\frac{a}{q-1}) \sim \tilde{\pi}^{\frac{a}{q-1}}, \quad \forall a \in \ZZ \text{ with } 0\leq a<q-1,
\]
where $\tilde{\pi}$ is the \emph{Carlitz fundamental period} (see Remark~\ref{rem: Carlitz period}),
the monomial relations in Proposition~\ref{prop: AR-Gamma} imply that Theorem~\ref{thm: L-R-conj} coincides with the geometric case in \cite[Corollary~1.2.2]{ABP} (resp.\ the arithmetic case in \cite[Corollary~3.3.3]{CPTY10}) when taking $\ell =1$ (resp.\ $\nfk = 1$), see Remark~\ref{rem: comparison}~(1) and (2).

(2) The transcendence theory of arithmetic and geometric gamma values has been fully developed separately
(see \cite{Thakur91}, \cite{Thakur96}, \cite{All96}, \cite{MY97}, and \cite{CPTY10} for arithmetic gamma values, and see \cite{Sinha97}, \cite{BP02}, \cite{ABP} for the geometric ones).
In contrast, there seems to have been no further progress on two-variable gamma values after Goss and Thakur.
Theorem~\ref{thm: L-R-conj} not only fills in this missing part, but also clarifies the algebraic independence of these three different types of gamma values.
In particular, the algebraic relations among arithmetic and geometric gamma values come solely from their relations with $\tilde{\pi}$, see Remark~\ref{rem: comparison}~(3).
\end{remark}

The proof of Theorem~\ref{thm: L-R-conj} is based upon the ``period interpretation'' of the gamma values in question, which is illustrated via the ``period distribution'' introduced in the next subsection.

\subsection{The period distribution and gamma distributions}\label{subsec: PD-GD}
Let $\eA \assign \FF_q[t]$ be the polynomial ring over $\FF_q$ with another variable $t$ (transcendental over $\CC_\infty$), and $\ek = \FF_q(t)$ the fraction field of $\eA$.
Let $v_\theta: \ek \cong k$ be the $\FF_q$-algebra isomorphism sending $t$ to $\theta$.
We regard $\ek$ as the function field of the projective $t$-line $\PP^1$ over $\FF_q$, and $v_\theta$ corresponds to the evaluation map at the point $\theta \in \PP^1(\CC_\infty)$.
For $f \in \ek$, we denote $f(\theta)\assign v_\theta(f) \in k$.

Let $\ek^{\sep}$ be a separable closure of $\ek$, and put $\eG\assign \gal(\ek^{\sep}/\ek)$.
Fix an embedding $\nu: \ek^{\sep} \hookrightarrow \ok \subset \CC_\infty$ extending from $v_\theta$.
With respect to $\nu$, the Galois group $\gal(k^{\sep}_\infty/k_\infty)$, where $k^{\sep}_\infty$ is the separable closure of $k_\infty$ in $\CC_\infty$, corresponds to a subgroup of $\eG$ denoted by $\eG_\infty$.
The space $\Sscr(\eG)$ of \emph{Stickelberger functions} consists of all locally constant $\QQ$-valued functions $\varphi$ on $\eG$ satisfying (see Definition~\ref{defn: S(G)}):
\begin{itemize}
    \item[(1)] For $\varrho_1,\varrho_2 \in \eG$ and $\varrho_\infty \in \eG_\infty$, we have
    \[
    \varphi\big(\varrho_1 \cdot (\varrho_2 \varrho_\infty \varrho_2^{-1}\varrho_\infty^{-1})\big) = \varphi(\varrho_1);
    \]
    \item[(2)]
    Let $d\varrho_\infty$ be the Haar measure on $\eG_\infty$ normalized so that $\text{vol}(\eG_\infty,d\varrho_\infty) = 1$.
    Then
    \[
    \int_{\eG_\infty}\varphi(\varrho_1\varrho_\infty) d\varrho_\infty = \int_{\eG_\infty}\varphi(\varrho_2\varrho_\infty) d\varrho_\infty \quad \forall \varrho_1,\varrho_2 \in \eG.
    \]
\end{itemize}
The connections between particular Stickelberger functions and Dirichlet $L$-values are discussed in Lemma~\ref{lem: evaluator}.
In particular, 
let $\eK\subset \ek^{\sep}$ be a \emph{CM field over $\ek$} (recalled in Section~\ref{sec: CM}) which is fixed by the commutator subgroup $[\eG,\eG_\infty]$ and $\Xi$ a \emph{generalized CM type of $\eK$} (see Definition~\ref{defn: A.CM type}).
We define the Stickelberger function $\varphi_{\eK,\Xi}$ associated to $(\eK,\Xi)$ in \eqref{eqn: phi-K-Phi}, and show that $\Sscr(\eG)$ is spanned by $\varphi_{\eK,\Xi}$ for all CM fields $\eK$ over $\ek$ fixed by $[\eG,\eG_\infty]$ and generalized CM types $\Xi$ of $\eK$ (see Lemma~\ref{lem: S(G)}).

In \cite[Sec.~5]{BCPW22}, an analogue of Shimura's period symbols was introduced arising from the ``CM periods'' through the theory of Anderson's dual $t$-motives.
The fundamental properties of these symbols, which were established in \cite[Thm.~5.3.2]{BCPW22},
enable us to compare CM periods associated to different CM fields.
The needed results are recalled in Section~\ref{sec: defn PS}.
We then introduce an analogue of Anderson's \emph{period distribution} in the following.

\begin{theorem}[Theorem~\ref{thm: PD}]\label{thm: pd}
 Fix an embedding $\nu: \ek^{\sep} \hookrightarrow \ok \subset \CC_\infty$ extending from the evaluation map $v_\theta$.
There exists a unique $\QQ$-linear map $\Pscr_\nu: \Sscr(\eG)\rightarrow \CC_\infty^\times/\ok^\times$ satisfying that for every CM field $\eK\subset \ek^{\sep}$ fixed by $[\eG,\eG_\infty]$ and every generalized CM type $\Xi$ of $\eK$,
\[
\Pscr_\nu(\varphi_{\eK,\Xi}) = \Pcal_\eK(\xi_\nu,\Xi),
\]
where $\Pcal_\eK(\xi_\nu,\Xi)$ is the period symbol given in Definition~\ref{defn: period quantity}, and $\xi_\nu$ corresponds to the embedding $\nu\big|_{\eK}:\eK\hookrightarrow \CC_\infty$ under the bijection in Remark~\ref{rem: JK-Emb}. 
\end{theorem}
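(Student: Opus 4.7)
The plan is to handle uniqueness cleanly, then build existence by reducing every linear relation among the $\varphi_{\eK,\Xi}$ to one internal to a single CM field, where the functoriality of period symbols carries the weight. For uniqueness, Lemma~\ref{lem: S(G)} states that the $\varphi_{\eK,\Xi}$ span $\Sscr(\eG)$ over $\QQ$, so any $\QQ$-linear $\Pscr_\nu$ with the prescribed values on the generators is determined on all of $\Sscr(\eG)$.

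For existence, the substantive issue is well-definedness: whenever a $\QQ$-linear combination
\[
\sum_i c_i\, \varphi_{\eK_i,\Xi_i} = 0 \quad \text{in } \Sscr(\eG),
\]
I must verify that $\prod_i \Pcal_{\eK_i}(\xi_\nu,\Xi_i)^{c_i} \in \ok^\times$. The first step is to pass to a common ambient field: take the compositum $\widetilde{\eK} \subset \ek^{\sep}$ of the finitely many $\eK_i$ involved, which remains fixed by $[\eG,\eG_\infty]$ and so is again a CM field over $\ek$. Each generalized CM type $\Xi_i$ admits a canonical pullback $\Xi_i^\sharp$ to $\widetilde{\eK}$ for which $\varphi_{\eK_i,\Xi_i} = \varphi_{\widetilde{\eK},\Xi_i^\sharp}$, and simultaneously the period symbols satisfy $\Pcal_{\eK_i}(\xi_\nu,\Xi_i) \sim \Pcal_{\widetilde{\eK}}(\xi_\nu,\Xi_i^\sharp)$ by the comparison properties from \cite[Thm.~5.3.2]{BCPW22} recalled in Section~\ref{sec: defn PS}. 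This collapses the problem to relations within a single CM field.

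Inside a fixed CM field $\widetilde{\eK}$, both $\Xi \mapsto \varphi_{\widetilde{\eK},\Xi}$ and $\Xi \mapsto \Pcal_{\widetilde{\eK}}(\xi_\nu,\Xi) \bmod \ok^\times$ extend $\QQ$-linearly from the $\QQ$-vector space of generalized CM types in the sense of Definition~\ref{defn: A.CM type}. What remains is to show that the kernel of the first map is contained in that of the second. I would proceed by unpacking the construction of $\varphi_{\widetilde{\eK},\Xi}$ in \eqref{eqn: phi-K-Phi} against the Galois-equivariance condition (1) and the $\eG_\infty$-integration condition (2) of Definition~\ref{defn: S(G)}, and matching these with the corresponding intrinsic symmetries of the period symbol (namely, that $\Pcal_{\widetilde{\eK}}(\xi_\nu,\cdot)$ is trivial modulo $\ok^\times$ on generalized CM types that are constant along $\eG_\infty$-cosets and have vanishing average).

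The main obstacle will be making this matching precise with rational rather than integer coefficients: generalized CM types and Stickelberger functions take values in $\QQ$, so one must verify that the period symbol descends to rational multiplicities on the nose modulo $\ok^\times$, without introducing torsion ambiguities. I expect this to require a careful bookkeeping with the $t$-motivic constructions underlying $\Pcal_{\widetilde{\eK}}$, combining the multiplicativity of periods under tensor products of CM dual $t$-motives, the triviality of periods attached to generalized CM types coming from representations factoring through $\eG_\infty$-invariants, and the $\ok^\times$-scaling behavior of comparison isomorphisms. Once this $\QQ$-linear compatibility is established, uniqueness on the spanning set together with pullback compatibility yields the desired well-defined $\QQ$-linear map $\Pscr_\nu$.
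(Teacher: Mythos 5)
Your overall architecture (uniqueness from the spanning statement in Lemma~\ref{lem: S(G)}, then well-definedness by pushing any relation $\sum_i c_i\varphi_{\eK_i,\Xi_i}=0$ into the compositum $\widetilde{\eK}$ via Lemma~\ref{lem: Inf} and the inflation compatibility $\Pcal_{\eK_i}(\xi_\nu,\Xi_i)=\Pcal_{\widetilde{\eK}}(\xi_\nu,\Inf_{\widetilde{\eK}/\eK_i}\Xi_i)$ from Theorem~\ref{thm: PS}~(1)) is exactly the reduction the paper performs, phrased there as a direct limit over inflation maps (Remark~\ref{rem: S(G)-iso}). Where your proposal has a genuine gap is the final step inside the single field $\widetilde{\eK}$. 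You pose the problem as ``show the kernel of $\Xi\mapsto\varphi_{\widetilde{\eK},\Xi}$ is contained in the kernel of $\Xi\mapsto\Pcal_{\widetilde{\eK}}(\xi_\nu,\Xi)$'' and then propose to prove it by matching conditions (1)--(2) of Definition~\ref{defn: S(G)} against alleged ``intrinsic symmetries'' of the period symbol (triviality on types constant along $\eG_\infty$-cosets with vanishing average), to be justified by tensor-product multiplicativity of CM dual $t$-motives. No such vanishing property of the period symbol is available in the paper, and nothing of the sort is needed: by Lemma~\ref{lem: S(G)} the map $\Phi^0\mapsto\varphi_{\widetilde{\eK},\Phi^0}$ is \emph{injective} on $I^0_{\widetilde{\eK}}$, indeed a $\QQ$-linear isomorphism $\QQ\otimes_{\ZZ}I^0_{\widetilde{\eK}}\cong\Sscr(\eG/\eH_{\widetilde{\eK}})$ (the coefficients $m_\xi$ are read off as values of $\varphi$ on disjoint cosets), so the kernel of the first map is zero. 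Hence your relation already forces $\sum_i c_i\,\Inf_{\widetilde{\eK}/\eK_i}(\Xi_i)=0$ in $\QQ\otimes_{\ZZ}I^0_{\widetilde{\eK}}$, and then $\prod_i\Pcal_{\widetilde{\eK}}(\xi_\nu,\Inf_{\widetilde{\eK}/\eK_i}\Xi_i)^{c_i}$ is trivial simply because $\Pcal_{\widetilde{\eK}}(\xi_\nu,\cdot)$ is a group homomorphism on $I^0_{\widetilde{\eK}}$ (Theorem~\ref{thm: PS}) extended $\QQ$-linearly.

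Your worry about ``torsion ambiguities'' with rational multiplicities is likewise resolved without any $t$-motivic bookkeeping: the target $\CC_\infty^\times/\ok^\times$ is a $\QQ$-vector space (divisible because $\CC_\infty$ is algebraically closed, torsion-free because $\ok$ is algebraically closed in $\CC_\infty$, so $x^n\in\ok^\times$ forces $x\in\ok^\times$), hence the group homomorphism of Theorem~\ref{thm: PS} extends uniquely and $\QQ$-linearly to $\QQ\otimes_{\ZZ}I^0_{\widetilde{\eK}}$. With these two observations your reduction closes, and the resulting argument coincides with the paper's proof, which packages the same content as the composition of $\mathrm{id}_\QQ\otimes\varinjlim_{\eK}\Pcal_{\eK}(\xi_\nu,\cdot)$ with the inverse of the isomorphism in Remark~\ref{rem: S(G)-iso}. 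As written, however, the proposal replaces the decisive injectivity input of Lemma~\ref{lem: S(G)} with an unproved (and unnecessary) vanishing claim about period symbols, so it does not yet constitute a proof.
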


On the other hand, the gamma distributions are defined by taking values at ``fractions'': for $x \in k$ and $y\in \ZZ_{(p)}$, put $\{x\} \in k$ (resp.\ $\langle y\rangle_{\ari} \in \ZZ_{(p)}$) with $|\{x\}|_\infty <1$ (resp. $0\leq \langle y \rangle_{\ari} <1$) and $x-\{x\}\in A$ (resp.\ $y-\langle y\rangle_{\ari} \in \ZZ$). Then $\{\cdot\}$ and $\langle \cdot \rangle_{\ari}$ can be viewed as functions on $k/A$ and $\ZZ_{(p)}/\ZZ$, respectively.
For $x \in k/A$ and $y \in \ZZ_{(p)}/\ZZ$, we set
\[
\tilde{\Gamma}(x,y)\assign
\begin{cases}
\Gamma(\{x\},1-\langle -y\rangle_{\ari}), & \text{ if $0\neq x \in k/A$,}\\
\Gamma_{\ari}(1-\langle -y\rangle_{\ari})^{-1}, & \text{ otherwise.}
\end{cases}
\]
Then $\tilde{\Gamma}(x,y) \in \CC_\infty^\times$, and the \emph{two-variable gamma distribution} $\hat{\Gamma}: k/A\times \ZZ_{(p)}/\ZZ \rightarrow \CC_\infty^\times/\ok^\times$ is defined by
\[
\hat{\Gamma}(x,y) \assign \tilde{\Gamma}(x,y)\cdot \ok^\times \quad \in \CC_\infty^\times/\ok^\times, 
\quad \forall (x,y) \in k/A\times \ZZ_{(p)}/\ZZ.
\]
Moreover, the \emph{geometric (resp.\ arithmetic) gamma distribution} $\hat{\Gamma}_{\geo}: k/A\rightarrow \CC_\infty^\times/\ok^\times$ (resp.\ $\hat{\Gamma}_{\ari}: \ZZ_{(p)}/\ZZ\rightarrow \CC_\infty^\times/\ok^\times$) is
\[
\hat{\Gamma}_{\geo}(x) \assign \hat{\Gamma}(x,\frac{1}{1-q}), \quad \forall x \in k/A \quad (\text{resp.} \quad \hat{\Gamma}_{\ari}(y) \assign \hat{\Gamma}(0,y)^{-1}, \quad \forall y \in \ZZ_{(p)}/\ZZ).
\]

The main bridge between the above gamma distributions and the period distribution in Theorem~\ref{thm: pd} is built from the so-called \emph{Stickelberger distributions} (in the sense of \cite[Chap.~1, \S 3]{K-L}) associated to the corresponding ``diamond brackets'' introduced by Thakur~\cite{Thakur91}. 
Intuitively,
we identify $k/A$ (resp.\ $\ZZ_{(p)}/\ZZ$) with the ``Carlitz torsions'' (resp.\ roots of unity) in $\ok$, respectively, and equip with a $\eG$-action $\star$ via the Artin map (see \eqref{eqn: 2var-artin} and \eqref{eqn: G-action}).
For each $x \in k/A$ and $y \in \ZZ_{(p)}/\ZZ$, we define $\St^{\geo}(x), \St^{\ari}(y), \St(x,y): \eG\rightarrow \QQ$ respectively by (see Definition~\ref{defn: ST}):
\[
\St^{\geo}(x)(\varrho)\assign \langle \varrho\star x\rangle_{\geo}-\frac{1}{q-1}, 
\quad 
\St^{\ari}(y)(\varrho)\assign \langle -\varrho\star y\rangle_{\ari}
\]
\[
\St(x,y)(\varrho)\assign \langle \varrho \star x, -\varrho \star y\rangle - \langle -\varrho \star y\rangle_{\ari}, \quad 
\forall \varrho \in \eG,
\]
where $\langle \cdot \rangle_{\geo}$, $\langle \cdot \rangle_{\ari}$, and $\langle \cdot,\cdot\rangle$ are the \emph{geometric, arithmetic, and two-variable diamond brackets}, respectively (recalled in Section~\ref{sec: diamond bracket}).
It is seen that $\St^{\geo}(x)$, $\St^{\ari}(y)$, and $\St(x,y)$ all belong to $\Sscr(\eG^{\cyc})$,
where $\eG^{\cyc}$ is the Galois group of the compositum $\eK^{\cyc}$ of all \emph{Carlitz cyclotomic function fields} and constant field extensions over $\ek$ (see Section~\ref{sec: cyclotomic extn}), and $\Sscr(\eG^{\cyc})$ consists of all the functions in $\Sscr(\eG)$ factoring through $\eG^{\cyc}$.
Moreover, the diamond bracket relations (recalled in Section~\ref{sec: diamond bracket}) assure the ``distribution'' properties of $\St^{\geo}: k/A \rightarrow \Sscr(\eG^{\cyc})$, $\St^{\ari}: \ZZ_{(p)}/\ZZ\rightarrow \Sscr(\eG^{\cyc})$, and $\St: (k/A)\times(\ZZ_{(p)}/\ZZ)\rightarrow \Sscr(\eG^{\cyc})$ (see Remark~\ref{rem: St-relations}).
Generalizing the construction of the ``soliton dual $t$-motives'' in \cite[6.4.2]{ABP}, we derive the following analogue of Deligne's theorem (stated in \cite[Theorem~4.7]{Anderson82}):

\begin{theorem}[Theorem~\ref{thm: Gamma dist}]\label{thm: D-thm}
Let $\nu_1: \eK^{\cyc}\hookrightarrow \CC_\infty$ be the specific embedding defined in \eqref{eqn: nu_1}, and $\Pscr_{\nu_1}^{\cyc}$ be the restriction of the period distribution to $\Sscr(\eG^{\cyc})$ with respect to $\nu_1$.
Then
\[
\hat{\Gamma}_{\geo} = \Pscr_{\nu_1}^{\cyc}\circ \St^{\geo}, \quad 
\hat{\Gamma}_{\ari} = \Pscr_{\nu_1}^{\cyc}\circ \St^{\ari},
\quad \text{ and } \quad 
\hat{\Gamma} = \Pscr_{\nu_1}^{\cyc}\circ \St.
\]
\end{theorem}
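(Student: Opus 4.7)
The plan is to establish the three identities by constructing, for each input, a CM dual $t$-motive over $\eK^{\cyc}$ whose period (up to $\ok^\times$) equals the prescribed gamma value and whose CM data, translated through Theorem~\ref{thm: pd}, yields the prescribed Stickelberger function. The two-variable identity $\hat{\Gamma} = \Pscr^{\cyc}_{\nu_1}\circ \St$ is the main case; the geometric identity then follows by setting $y = 1/(1-q)$, together with Thakur's relation $\Gamma_{\ari}(1-1/(q-1))\sim \tilde{\pi}^{1/(q-1)}$ (the corrective term $-1/(q-1)$ in the definition of $\St^{\geo}$ is designed precisely to absorb the resulting $\tilde{\pi}$-factor), and the arithmetic identity follows from the definition $\hat{\Gamma}_{\ari}(y)=\hat{\Gamma}(0,y)^{-1}$ and the vanishing of the geometric correction at $x=0$.

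For the two-variable case I would first reduce to finitely many inputs using distribution properties: the left-hand side is a Kubert--Lang-type distribution by Thakur's reflection and multiplication formulas for $\Gamma(x,y)$, while the right-hand side is a distribution by the diamond bracket relations recalled in Section~\ref{sec: diamond bracket} together with the $\QQ$-linearity of $\Pscr^{\cyc}_{\nu_1}$. It thus suffices to fix $\nfk \in A_+$ and $\ell \in \NN$, and verify the identity for $x=a/\nfk$ and $y=b/(q^\ell-1)$. For such $(x,y)$, I would generalize the soliton dual $t$-motive of \cite[Sec.~6.4]{ABP} by incorporating an extra Frobenius twist indexed by $y$: produce a rigid analytic trivialization built from an infinite product modeled on Thakur's formula for $\Pi_{\geo}(x,y)$, together with an arithmetic factor modeled on $\Pi_{\ari}(y-1)$. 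Specialization at $t=\theta$ then recovers $\tilde{\Gamma}(x,y)$ up to $\ok^\times$, in parallel with \cite{ABP} in the geometric case and \cite{CPTY10} in the arithmetic case; the resulting dual $t$-motive has CM by a subfield of $\eK^{\cyc}$, namely the compositum of the $\nfk$-th Carlitz cyclotomic field (controlling the soliton part in $x$) and the degree-$\ell$ constant field extension (controlling the twist in $y$).

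The main obstacle is the identification of the generalized CM type (Definition~\ref{defn: A.CM type}) of this dual $t$-motive with $\St(x,y)$. This requires tracking the $\eG^{\cyc}$-action on the constructed trivialization: the geometric factor must yield the two-variable diamond bracket $\langle \varrho\star x, -\varrho\star y\rangle$ equivariantly, while the arithmetic factor must subtract $\langle -\varrho\star y\rangle_{\ari}$, thus reproducing the defining formula for $\St(x,y)$. This calculation blends the ABP period-matrix formalism with the cyclotomic class field theory encoded in the Artin action $\star$ from \eqref{eqn: G-action}, and it is the technical heart of the proof. Once it is established, Theorem~\ref{thm: pd} identifies the period of the dual $t$-motive with $\Pscr^{\cyc}_{\nu_1}(\St(x,y))$, and the geometric and arithmetic identities follow by the specializations noted above.
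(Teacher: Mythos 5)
Your proposal follows essentially the same route as the paper: the paper also reduces to a fixed cyclotomic field $\eK_{\nfk,\ell}$ (the compositum of the $\nfk$-th Carlitz cyclotomic field and the degree-$\ell$ constant field extension), builds generalized soliton dual $t$-motives from ABP's functions $g_x$ together with an arithmetic factor, identifies their generalized CM types $\Phi_{(x,y)}$, $\Phi_x$, $\Xi_{\ari,-y}$ with the Stickelberger functions (Lemma~\ref{lem: ST}), and evaluates the explicit rigid analytic trivialization $\psi_*$ at the distinguished point $\xi_{1,0}$ to recover $\Pi(\{x\},-\langle y\rangle_{\ari})$ and its companions, concluding via the period distribution of Theorem~\ref{thm: PD}. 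The only cosmetic differences are that the paper treats the arithmetic, geometric, and two-variable cases in parallel rather than by specialization, and needs no preliminary distribution-relation reduction since any $(x,y)$ already lies in some $\frac{1}{\nfk(\theta)}A/A\times\frac{1}{q^\ell-1}\ZZ/\ZZ$.
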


The strategy of the proof of Theorem~\ref{thm: L-R-conj} is briefly outlined as follows:
Given $\nfk \in \eA_+$ and $\ell \in \NN$, let $\eK_{\nfk,\ell}$ be the \emph{$(\nfk,\ell)$-th cyclotomic function field} (given above \eqref{eqn: cyc-Artin}) and $\eG_{\nfk,\ell}$ the Galois group of $\eK_{\nfk,\ell}$ over $\ek$.
Denote by $\Sscr(\eG_{\nfk,\ell})$ the space consisting of the functions in $\Sscr(\eG^{\cyc})$ factoring through $\eG_{\nfk,\ell}$.
Let $\ok\big(\Pscr_{\nu_1}^{\cyc}(\Sscr(\eG_{\nfk,\ell}))\big)$ be the field generated by arbitrary representatives of $\Pscr_{\nu}^{\cyc}(\varphi) \in \CC_\infty^\times/\ok^\times$ for all $\varphi \in \Sscr(\eG_{\nfk,\ell})$.
The main theorem of \cite{BCPW22} on Shimura's conjecture shows that (see Theorem~\ref{thm: trdeg})
\[
\trdeg_{\ok} \ok \big(\Pscr_{\nu_1}^{\cyc}(\Sscr(\eG_{\nfk,\ell}))\big) = 1 + (\ell - \frac{1}{(q-1)^{\epsilon_\nfk}}) \cdot \#(\eA/\nfk)^\times.
\]
On the other hand, we prove below that the Stickelberger distributions $\St^{\geo}$, $\St^{\ari}$, and $\St$ are actually \emph{universal} (see Corollary~\ref{cor: Uari-iso}, \ref{cor: Ugeo-iso}, and \ref{cor: Ucyc-iso}).
Combining with Theorem~\ref{thm: D-thm}, we obtain that the field $\ok \big(\Pscr_{\nu_1}^{\cyc}(\Sscr(\eG_{\nfk,\ell}))\big)$ is algebraic over 
\begin{align*}
&\ \ok \Big(\Pscr_{\nu_1}^{\cyc}\big(\St(x,y)\big)\ \Big|\ x \in \frac{1}{\nfk(\theta)}A/A,\ y \in \frac{1}{q^{\ell}-1}\ZZ/\ZZ\Big) \\
= &\ \ok\Big(\tilde{\Gamma}(x,y)\ \Big|\ x \in \frac{1}{\nfk(\theta)}A/A,\ y \in \frac{1}{q^\ell-1}\ZZ/\ZZ \Big) \\
= &\ \ok\Big(\Gamma_{\geo}(x), \Gamma_{\ari}(y), \Gamma(x,y)\ \Big|\ x \in \frac{1}{\nfk(\theta)}A\setminus(-A_+\cup\{0\}),\ y \in \frac{1}{q^\ell-1}\ZZ\Big).
\end{align*}
The final assertion of Theorem~\ref{thm: L-R-conj} thereby will follow from the compatibility between the monomial relations in Proposition~\ref{prop: AR-Gamma}  and the diamond bracket relations in Lemma~\ref{lem: ari-relation},~\ref{lem: AL-geoG}, Proposition~\ref{prop: db-relation}, and Remark~\ref{rem: db-relation}.\\

Theorem~\ref{thm: D-thm} shows on one hand that all types of gamma values at fractions come from periods of CM dual $t$-motives, and, on the other hand, enables us to write ``abelian CM periods'' in terms of products of the gamma values in question.
Indeed, the universality of the Stickelberger distributions indicates that every Stickelberger function $\varphi \in \Sscr(\eG^{\cyc})$ can be written as a $\QQ$-linear combination of $\St(x,y)$ for $x \in k/A$ and $y \in \ZZ_{(p)}/\ZZ$.
In particular, let $\eK \subset \eK^{\cyc}$ be an \emph{imaginary} field over $\ek$ (i.e.\ the infinite place of $\ek$ does not split in $\eK$) and $\varphi_\eK \in \Sscr(\eG^{\cyc})$ be the characteristic function of the Galois group $\gal(\eK^{\cyc}/\eK)$.
Writing
\begin{equation}\label{eqn: T-r/c}
\varphi_\eK = \sum_{x \in k/A} \sum_{y \in \ZZ_{(p)}/\ZZ} m_{x,y} \St(x,y), \quad \text{ where }  m_{x,y} \in \QQ \text{ with } m_{x,y}=0 \text{ for almost all }x,y,
\end{equation}
we prove the following recipe/conjecture of Thakur from Theorem~\ref{thm: D-thm}:

\begin{theorem}[Theorem~\ref{thm: CSP}]
Keep the notation from above.
For every nonzero period $\lambda$ 
of a Drinfield $\eA$-module of rank $[\eK:\ek]$ over $\ok$ with CM by $O_{\eK}$, where $O_{\eK}$ is the integral closure of $\eA$ in $\eK$, we have
\[
\lambda \sim \prod_{x \in k/A}\prod_{y \in \ZZ_{(p)}/\ZZ}\tilde{\Gamma}(x,y)^{m_{x,y}}.
\]
\end{theorem}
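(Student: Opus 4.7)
\emph{Strategy.} The plan is to apply the period distribution $\Pscr_{\nu_1}^{\cyc}$ to both sides of the $\QQ$-linear expansion \eqref{eqn: T-r/c}. The right-hand side yields the desired product of gamma values by $\QQ$-linearity and Theorem~\ref{thm: D-thm}; the left-hand side produces a period symbol via Theorem~\ref{thm: pd}, which I then identify with $\lambda$ up to an $\ok^\times$-multiple.

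\emph{The two evaluations.} Working in the multiplicative $\QQ$-vector space $\CC_\infty^\times/\ok^\times$ (which is divisible and torsion-free because $\ok$ is algebraically closed in $\CC_\infty$), Theorem~\ref{thm: D-thm} together with the $\QQ$-linearity of $\Pscr_{\nu_1}^{\cyc}$ immediately gives
\[
\Pscr_{\nu_1}^{\cyc}(\varphi_\eK)\;=\;\prod_{x\in k/A}\prod_{y\in \ZZ_{(p)}/\ZZ}\tilde{\Gamma}(x,y)^{m_{x,y}}\cdot \ok^\times.
\]
For the other side, I first check that $\varphi_\eK$, the characteristic function of $\gal(\eK^{\cyc}/\eK)$, lies in $\Sscr(\eG^{\cyc})$: since $\eK$ is imaginary, the infinite place of $\ek$ does not split in $\eK$, so $\gal(\eK^{\cyc}/\eK)$ is stable under conjugation by $\eG_\infty$ (yielding condition~(1)), and the mean-value condition~(2) then follows. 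Next, I verify that $\varphi_\eK$ coincides (up to a known normalization) with $\varphi_{\eK,\Xi_0}$ for the canonical generalized CM type $\Xi_0$ of $\eK$ coming from the unique place of $\eK$ above infinity, as in Definition~\ref{defn: A.CM type}. Applying Theorem~\ref{thm: pd} then yields
\[
\Pscr_{\nu_1}^{\cyc}(\varphi_\eK)\;=\;\Pcal_\eK(\xi_{\nu_1},\Xi_0).
\]

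\emph{From period symbol to Drinfeld period.} The final step is to show $\Pcal_\eK(\xi_{\nu_1},\Xi_0)\sim \lambda$. Any rank-$[\eK:\ek]$ Drinfeld $\eA$-module over $\ok$ with CM by $O_\eK$ corresponds, via Anderson's equivalence of categories, to the CM dual $t$-motive attached to $(\eK,\Xi_0)$. By the construction of $\Pcal_\eK$ recalled in Section~\ref{sec: defn PS} following \cite{BCPW22}, the period symbol $\Pcal_\eK(\xi_{\nu_1},\Xi_0)$ represents, modulo $\ok^\times$, the essentially unique nonzero period of this CM dual $t$-motive; changing the Drinfeld module within its isogeny class over $\ok$ only modifies this period by an $\ok^\times$-factor. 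Combining with the two previous steps concludes the proof.

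\emph{Main obstacle.} The delicate point is precisely this last identification: one must trace carefully through the definitions to confirm that the embedding $\nu_1$, the chosen generalized CM type $\Xi_0$, and the comparison isomorphism between the period lattice of the Drinfeld module and that of its dual $t$-motive all fit together so that no spurious algebraic factor intrudes. In the rank-one case this reduces to the Carlitz-period computation used in \cite{ABP}; in general it rests on the rigid-analytic period-symbol formalism of \cite{BCPW22}, and on the fact that, for $\eK$ imaginary, the generalized CM type $\Xi_0$ is essentially unique, so the period symbol depends only on the choice of embedding $\xi_{\nu_1}$.
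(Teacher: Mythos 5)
Your overall route is the same as the paper's: evaluate the period distribution on both sides of \eqref{eqn: T-r/c}, use Theorem~\ref{thm: D-thm} for the gamma side, and identify $\Pscr_{\nu_1}^{\cyc}(\varphi_\eK)$ with a CM period symbol on the other side. Two points in your set-up are off, one cosmetic and one substantive. Cosmetically: $\varphi_\eK=\mathbf{1}_{\eH_\eK}$ equals $\varphi_{\eK,\xi_0}$ exactly, where $\xi_0\in J_\eK$ is the point with $\nu_{\xi_0}=\nu_1\big|_{\eK}$, i.e.\ the point corresponding to the identity coset under \eqref{eqn: JK-coset}; generalized CM types are divisors supported on $J_\eK=\pi_{\bX/\PP^1}^{-1}(\theta)$ and have nothing to do with the place of $\eK$ above $\infty$, and since $\eK$ is imaginary every one of the $[\eK:\ek]$ points of $J_\eK$ is a CM type, so there is no canonical type ``coming from infinity''. (Also, condition~(1) of Definition~\ref{defn: S(G)} for $\mathbf{1}_{\eH_\eK}$ follows from $\eK/\ek$ being abelian, i.e.\ $[\eG,\eG_\infty]\subset\eH_\eK$; it is condition~(2) that uses imaginarity, via $\eG_\infty\eH_\eK=\eG$.)

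The substantive gap is your last step. The Hartl--Juschka dual $t$-motive $\eM(\rho)$ of a given $E_\rho$ is a CM dual $t$-motive of type $(\eK,\xi_\rho)$, where $\xi_\rho$ corresponds to the embedding $\nu_\rho$ that $\partial\rho$ induces on $\Lie(E_\rho)$; this point varies with $\rho$, and the CM type is an isogeny invariant (it is read off from $\sigma\eM=\Ifk_\Xi\eM$), so the CM Drinfeld modules with CM by $O_\eK$ split into distinct isogeny classes indexed by $\xi\in J_\eK$. Hence it is false that every such $E_\rho$ ``corresponds to the CM dual $t$-motive attached to $(\eK,\Xi_0)$'' for your fixed $\Xi_0=\xi_0$, and ``changing within the isogeny class'' cannot bridge different CM types; as written your final identification fails for every $E_\rho$ with $\xi_\rho\neq\xi_0$. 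What rescues the argument --- and what the paper uses inside \eqref{eqn: HB-period} and Theorem~\ref{thm: HB-period} --- is Theorem~\ref{thm: PS}~(2): $\Pcal_\eK(\xi^\varrho,\Phi^\varrho)=\Pcal_\eK(\xi,\Phi)$, so that, $\eK/\ek$ being abelian, all diagonal symbols agree and $p_\eK(\xi_\rho,\xi_\rho)\sim p_\eK(\xi_0,\xi_0)=\wp_{\nu_1}^{\cyc}(\mathbf{1}_{\eH_\eK})$. Your remark that the CM type is ``essentially unique'' is not a substitute for this invariance. Finally, even with the correct type, $\lambda\sim\int_{\gamma}\omega_{\eM(\rho),\xi_\rho}$ is not a formal consequence of the anti-equivalence of categories: it rests on the comparison of de Rham pairings \eqref{eqn: dR-P comp} and the $\ok$-rationality of the eigen-differential (packaged in the paper as Theorem~\ref{thm: HB-period}, following \cite[\S 8.5]{BCPW22}); deferring to that source is acceptable, but the CM-type bookkeeping above must be fixed for the deduction to go through.
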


\begin{remark}
${}$
\begin{itemize}
\item[(1)] Let $\eK\subset \eK^{\cyc}$ be an imaginary field over $\ek$.
Since any Drinfeld $\eA$-module $E_{\rho}$ of rank $[\eK:\ek]$ over $\bar{k}$ with CM by an arbitrary $\eA$-order in $\eK$ is isogenous (over $\bar{k}$) to one with CM by $O_{\eK}$, the above statement also applies to every nonzero period $\lambda$ of such $E_{\rho}$.
\item[(2)] The above recipe is actually generalized to quasi-periods of CM abelian $t$-modules with arbitrary CM type $(\eK,\Xi)$ over $\ok$, where $\eK \subset \eK^{\cyc}$, see Theorem~\ref{thm: qp-CM} and \ref{thm: HB-period}.
\end{itemize}
\end{remark}

Our next task is to carry out the decomposition \eqref{eqn: T-r/c} in an explicit form. 
This is achieved in Proposition~\ref{prop: phi-ST} from the arithmetic property of the Stickelberger functions $\St(x,y)$ for $x \in k/A$ and $y \in \ZZ_{(p)}/\ZZ$ regarded as ``evaluators'' of Dirichlet $L$-values in Lemma~\ref{lem: evaluator}.
Therefore Theorem~\ref{thm: D-thm} provides a definite expression for every ``abelian CM period'' in terms of a monomial product of special gamma values, 
which results in an analogue of the Chowla--Selberg formula presented in the next subsection.

\subsection{\texorpdfstring{The Chowla--Selberg formula for abelian $t$-modules}{The Chowla--Selberg formula for abelian t-modules}}

In the theory of function field arithmetic,
\emph{abelian $t$-modules}, introduced by Anderson \cite{Anderson86}, play the analogous rule of commutative algebraic groups in the classical settings. 
The \emph{quasi-periods} of abelian $t$-modules (introduced by Gekeler~\cite{Ge89} and Yu~\cite{Yu90} for Drinfeld modules, and generalized to arbitrary abelian $t$-modules by Brownawell-Papanikolas \cite{BP02}) %, see also \cite{NP21}) 
often show up as ``special values'' in various circumstances (see \cite{Car35}, \cite{Yu91}, \cite{CY07}, \cite{NP21}, \cite{GN21}, \cite{Ge88}, \cite{C12}, \cite{C12-2}, \cite{CG22}, etc.).
From the comparisons between the ``de Rham pairings''  established in \cite[Prop.~8.3.4]{BCPW22} (recalled in \eqref{eqn: dR-P comp}), we may naturally transform the study of quasi-periods of a given abelian $t$-module $E_\rho$ into the analysis of periods of the associated \emph{Hartl--Juschka dual $t$-motive $\eM(\rho)$} (see \eqref{eqn: M(rho)}).
In particular, when $E_\rho$ is a
\emph{CM abelian $t$-module} over $\ok$ (see after \eqref{eqn: dR-P comp}), its quasi-periods can be expressed in terms of our period symbols (cf.~\cite[Thm.~8.4.1 and 8.5.2]{BCPW22}).

Now, let $E_\rho$ be a CM abelian $t$-module with generalized CM type $(\eK,\Xi)$ over $\ok$.
Suppose further that $\eK \subset \eK_{\nfk,\ell}$ for $\nfk \in \eA_+$ and $\ell \in \NN$, where $\eK_{\nfk,\ell}$ is the $(\nfk,\ell)$-th cyclotomic function field (given above \eqref{eqn: cyc-Artin}) .
Let $\varphi_{\eK,\Xi} \in \Sscr(\eG^{\cyc})$ be the Stickelberger function associated to $(\eK,\Xi)$ given in \eqref{eqn: phi-K-Phi}.
We may view $\varphi_{\eK,\Xi}$ as a $\ZZ$-valued function on $\eG_\eK\assign \gal(\eK/\ek)$.
For each $\varrho_0 \in \eG_\eK$, put $(\varrho_0\cdot \varphi_{\eK,\Xi})(\varrho) \assign \varphi_{\eK,\Xi}(\varrho\varrho_0)$ for every $\varrho \in \eG_\eK$.
Then the space of quasi-periods of $E_\rho$ is written as (see \eqref{eqn: CM-qp})
\[
\sum_{\varrho_0 \in \eG_\eK} \ok\cdot \wp_{\nu_1}^{\cyc}(\varrho_0 \cdot \varphi_{\eK,\Xi}),
\]
where $\wp_{\nu_1}^{\cyc}(\varrho_0 \cdot \varphi_{\eK,\Xi}) \in \CC_\infty^\times$ is an arbitrary representative of $\Pscr_{\nu_1}^{\cyc}(\varrho_0 \cdot \varphi_{\eK,\Xi}) \in \CC_\infty^\times/\ok^\times$.
For each $\varrho \in \eG_\eK$, put $m_\varrho \assign \varphi_{\eK,\Xi}(\varrho)$.
From the explicit expression of $\varrho_0\cdot\varphi_{\eK,\Xi}$ in Proposition~\ref{prop: phi-ST} for every $\varrho_0 \in \eG_\eK$, we derive the following Chowla--Selberg formula for quasi-periods of $E_\rho$:

\begin{theorem}[Theorem~\ref{thm: ex-qp-CM}~(1)]\label{thm: CSF-1}
Keep the notation as above.
The space of quasi-periods of $E_\rho$ is spanned over $\bar{k}$ by
\[
\tilde{\pi}^{\frac{\wt(\Xi)}{[\eK:\eK^+]}}
\cdot  \prod_{\varrho \in \eG_\eK} \left(\prod_{\cfk\mid \nfk}\prod_{\substack{a \in (\eA/\cfk)^\times \\ i \in \ZZ/\ell \ZZ}} \tilde{\Gamma}(\frac{a(\theta)}{\cfk(\theta)},\frac{q^i}{1-q^\ell})^{n_{\cfk}(\varrho\varrho_0,a,i)}\right)^{\frac{m_{\varrho}}{[\eK:\ek]}}, \quad \text{ for }  \varrho_0 \in \eG_\eK,
\]
where $\wt(\Xi)$ is the weight of $\Xi$ given in \eqref{eqn: weight}, $\eK^+$ is the maximal totally real subfield of $\eK$, and $n_\cfk(\varrho,a,i)$ is a particular rational number defined in \eqref{eqn: defn-nc}.
\end{theorem}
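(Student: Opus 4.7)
The plan is to combine the period-theoretic description of the quasi-period space in \eqref{eqn: CM-qp} with the bridge between Stickelberger and gamma distributions supplied by Theorem~\ref{thm: D-thm}. Since the quasi-period space equals $\sum_{\varrho_0\in\eG_\eK}\ok\cdot\wp_{\nu_1}^{\cyc}(\varrho_0\cdot\varphi_{\eK,\Xi})$, it is enough to compute each generator $\wp_{\nu_1}^{\cyc}(\varrho_0\cdot\varphi_{\eK,\Xi})$ modulo $\ok^\times$ and check that it coincides with the product given in the theorem.

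First I would invoke the explicit decomposition of $\varrho_0\cdot\varphi_{\eK,\Xi}$ in $\Sscr(\eG_{\nfk,\ell})$ supplied by Proposition~\ref{prop: phi-ST}. Writing $\varphi_{\eK,\Xi}=\sum_{\varrho\in\eG_\eK} m_\varrho\cdot \chi_\varrho$ where $\chi_\varrho$ is the characteristic function of $\varrho\in\eG_\eK$, applying the proposition to each $\chi_\varrho$ and translating by $\varrho_0$ expresses $\varrho_0\cdot\varphi_{\eK,\Xi}$ as an explicit $\QQ$-linear combination of the Stickelberger distributions $\St(a(\theta)/\cfk(\theta),q^i/(1-q^\ell))$ with coefficients $\sum_\varrho m_\varrho\cdot n_\cfk(\varrho\varrho_0,a,i)/[\eK:\ek]$, plus a single constant Stickelberger function carrying the archimedean contribution. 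After regrouping the double product over $\varrho$, these coefficients are exactly the exponents appearing in the theorem. Next, applying the period distribution $\Pscr_{\nu_1}^{\cyc}$ and using its $\QQ$-linearity together with Theorem~\ref{thm: D-thm}, each image $\Pscr_{\nu_1}^{\cyc}(\St(a(\theta)/\cfk(\theta),q^i/(1-q^\ell)))$ becomes $\hat\Gamma(a(\theta)/\cfk(\theta),q^i/(1-q^\ell))=\tilde\Gamma(a(\theta)/\cfk(\theta),q^i/(1-q^\ell))\cdot\ok^\times$, reproducing every gamma factor in the claimed product.

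It remains to identify the image of the constant piece with $\tilde\pi^{\wt(\Xi)/[\eK:\eK^+]}$. By Theorem~\ref{thm: D-thm} and the relation $\Gamma_{\ari}(1-a/(q-1))\sim\tilde\pi^{a/(q-1)}$ recalled after Theorem~\ref{thm: L-R-conj}, this image is a rational power of $\tilde\pi$ whose exponent is determined by the average $\int_{\eG_\infty}(\varrho_0\cdot\varphi_{\eK,\Xi})(\varrho_\infty)\,d\varrho_\infty$. By property~(2) in the definition of $\Sscr(\eG)$ this average is independent of $\varrho_0$, and a direct unwinding of the definition of the weight of a generalized CM type identifies it with $\wt(\Xi)/\bigl([\eK:\eK^+](q-1)\bigr)$, yielding the factor $\tilde\pi^{\wt(\Xi)/[\eK:\eK^+]}$. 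The main technical obstacle will be precisely this last identification: one must match the normalization of the Haar measure on $\eG_\infty$ (which governs the averaging property defining $\Sscr(\eG)$) with the convention counting infinite-place embeddings in $\Xi$ relative to the CM involution, so that the index $[\eK:\eK^+]$ appears with the correct power. Once this bookkeeping is settled, assembling the gamma factors from Step~2 with the $\tilde\pi$--factor from Step~3 produces the asserted formula for each generator, and varying $\varrho_0$ over $\eG_\eK$ yields the claimed spanning set for the quasi-period space.
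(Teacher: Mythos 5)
Your overall route is the paper's own: you reduce via \eqref{eqn: CM-qp} (Theorem~\ref{thm: qp-CM}) to computing $\wp_{\nu_1}^{\cyc}(\varrho_0\cdot\varphi_{\eK,\Xi})$ modulo $\ok^\times$, you decompose $\varrho_0\cdot\varphi_{\eK,\Xi}$ by Proposition~\ref{prop: phi-ST} into a constant piece plus a $\QQ$-linear combination of the $\St\bigl(\tfrac{a(\theta)}{\cfk(\theta)},\tfrac{q^i}{1-q^\ell}\bigr)$ with coefficients $\sum_{\varrho}m_\varrho\, n_\cfk(\varrho\varrho_0,a,i)/[\eK:\ek]$, and you convert the $\St$-part into the gamma product by Theorem~\ref{thm: Gamma dist}. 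One small caveat on the decomposition step: Proposition~\ref{prop: phi-ST} cannot literally be applied ``to each characteristic function $\chi_\varrho$'' separately, since $\mathbf{1}_{\varrho\eH_\eK}$ is in general not a Stickelberger function (a single point of $J_\eK$ lies in $I_\eK^0$ only when $\eK^+=\ek$); this is harmless, because the proposition applies directly to $\Xi^{\varrho_0}$ and its formula is linear in the $m_\varrho$, so your stated coefficients are the correct ones.

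The genuine problem is your third step. With the Haar measure normalized as in Definition~\ref{defn: S(G)} (total mass $1$ on $\eG_\infty$), the average $\int_{\eG_\infty}(\varrho_0\cdot\varphi_{\eK,\Xi})(\varrho_\infty)\,d\varrho_\infty$ equals $\wt(\Xi)/[\eK:\eK^+]$, not $\wt(\Xi)/\bigl([\eK:\eK^+](q-1)\bigr)$: the image of $\eG_\infty$ in $\eG_\eK$ is the decomposition group at $\infty$, of order $[\eK:\eK^+]$, and the sum of the $m_\xi$ over each fiber of $\pi_{\bX/\bX^+}$ is $\wt(\Xi)$ by definition of $I_\eK^0$; your value is off by a factor $q-1$, which you then silently compensate by an unstated convention, and you acknowledge the normalization matching as an unresolved obstacle. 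In fact no averaging argument (and no Haar-measure bookkeeping) is needed at all: Proposition~\ref{prop: phi-ST} already exhibits the constant piece as $\tfrac{\wt(\Xi)}{[\eK:\eK^+]}\cdot\mathbf{1}_{\eG_\eK}$, and since $\mathbf{1}_{\eG}=\varphi_{\ek,\xi_\theta}$ one has $\Pscr_{\nu_1}^{\cyc}(\mathbf{1}_{\eG})=\tilde{\pi}\cdot\ok^\times$ (remark following Theorem~\ref{thm: PD}, via Remark~\ref{rem: Carlitz period}); $\QQ$-linearity of $\Pscr_{\nu_1}^{\cyc}$ then yields the factor $\tilde{\pi}^{\wt(\Xi)/[\eK:\eK^+]}$ at once. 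If you prefer to stay on the gamma side, use instead $\mathbf{1}_{\eG}=(q-1)\St^{\ari}(\tfrac{1}{1-q})$ together with $\hat{\Gamma}_{\ari}(\tfrac{1}{1-q})\sim\tilde{\pi}^{1/(q-1)}$. With this replacement your argument coincides with the paper's proof.
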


In particular, let $\eK^+$ be the maximal totally real subfield of $\eK$. When $\wt(\Xi) = 1$, i.e.\ $\Xi$ is a CM type of $\eK$,
there exist exactly $\varrho_1,...,\varrho_d \in \eG_\eK$, where $d = [\eK^+:\ek]$, such that 
$\varphi_{\eK,\Xi}(\varrho) = 1$ if $\varrho = \varrho_i$ for some $i$ with $1\leq i \leq d$ and $0$ otherwise.
We then obtain the Chowla--Selberg formula for $\lambda \in \Lambda_\rho$, where $\Lambda_\rho$ is the \emph{period lattice of $E_\rho$} (see Section~\ref{sec: pv-CM}):

\begin{theorem}[Theorem~\ref{thm: ex-qp-CM}~(2)] \label{thm: CSF-2}
Let $E_\rho$ be a CM abelian $t$-module with CM type $(\eK,\Xi)$ over $\ok$, where $\eK \subset \eK_{\nfk,\ell}$ for $\nfk \in \eA_+$ and $\ell \in \NN$.
Let $\eK^+$ be the maximal totally real subfield of $\eK$, and put $d = [\eK^+:\ek]$.
Take $\varrho_1,...,\varrho_d \in \eG_\eK$ as above, and let $\nu_{\varrho_j}\assign \nu_1 \circ \varrho_j$ for $1\leq j \leq d$.
There exist an ideal $\Ifk_\rho$ of $O_\eK$ (the integral closure of $\eA$ in $\eK$) and a suitable $\bar{k}$-linear isomorphism  $\Lie(E_\rho) \cong \bar{k}^d$ so that the
image of every period vector $\lambda \in \Lambda_\rho \subset \Lie(E_\rho)(\CC_\infty)$ in $\CC_\infty^d$ has the form $(\nu_{\varrho_1}(\alpha)\lambda_1,...,\nu_{\varrho_d}(\alpha)\lambda_d) \in \CC_\infty^d$,
where $\alpha \in \Ifk_\rho$ and for $1\leq j \leq d$,
\[
\lambda_j \assign \tilde{\pi}^{\frac{1}{[\eK:\eK^+]}}
\cdot  \prod_{j'=1}^d \prod_{\cfk\mid \nfk}\prod_{\substack{a \in (\eA/\cfk)^\times \\ i \in \ZZ/\ell\ZZ}} \tilde{\Gamma}(\frac{a(\theta)}{\cfk(\theta)},\frac{q^i}{1-q^\ell})^{\frac{n_{\cfk}(\varrho_{j'}\varrho_{j}^{-1},a,i)}{[\eK:\ek]}}.
\]
\end{theorem}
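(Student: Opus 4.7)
The strategy is to combine the CM-module structure of $\Lambda_\rho$ with Theorem~\ref{thm: CSF-1}. First, since $E_\rho$ has CM by $O_\eK$, the period lattice $\Lambda_\rho \subset \Lie(E_\rho)(\CC_\infty)$ is a torsion-free $O_\eK$-module of $\eA$-rank $[\eK:\ek]$. As $O_\eK$ is a Dedekind domain of $\eA$-rank $[\eK:\ek]$, this forces $\Lambda_\rho$ to be projective of $O_\eK$-rank one, hence isomorphic as an $O_\eK$-module to some fractional ideal $\Ifk_\rho \subset \eK$. Fix such an isomorphism $\Ifk_\rho \xrightarrow{\sim} \Lambda_\rho$, $\alpha \mapsto \lambda_\alpha$. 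Because $\Xi = \{\varrho_1,\dots,\varrho_d\}$ has weight one, the CM type gives a canonical $\bar{k}$-linear identification $\Lie(E_\rho) \cong \bar{k}^d$ in which $O_\eK$ acts on the $j$-th factor by $\nu_{\varrho_j}$. Under this identification, $\lambda_\alpha$ must take the form $(\nu_{\varrho_1}(\alpha)\lambda_1^*,\dots,\nu_{\varrho_d}(\alpha)\lambda_d^*)$ with constants $\lambda_j^* \in \CC_\infty^\times$ depending only on the chosen $O_\eK$-isomorphism. It therefore suffices to prove $\lambda_j^* \sim \lambda_j$ in $\CC_\infty^\times/\bar{k}^\times$.

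For this identification I would invoke Theorem~\ref{thm: CSF-1}. With $\wt(\Xi)=1$, one has $m_\varrho = \varphi_{\eK,\Xi}(\varrho) \in \{0,1\}$, equal to $1$ precisely on $\Xi$, and the formula of Theorem~\ref{thm: CSF-1} simplifies to
\[
\mu_{\varrho_0} \;=\; \tilde{\pi}^{\frac{1}{[\eK:\eK^+]}} \prod_{j'=1}^d \prod_{\cfk \mid \nfk}\prod_{\substack{a \in (\eA/\cfk)^\times \\ i \in \ZZ/\ell\ZZ}} \tilde{\Gamma}\Bigl(\tfrac{a(\theta)}{\cfk(\theta)},\tfrac{q^i}{1-q^\ell}\Bigr)^{\frac{n_\cfk(\varrho_{j'}\varrho_0,a,i)}{[\eK:\ek]}}, \qquad \varrho_0 \in \eG_\eK.
\]
Since $\Xi$ has weight one, the $\bar{k}$-span of the quasi-periods of $E_\rho$ equals $\bar{k}\lambda_1^* + \cdots + \bar{k}\lambda_d^*$, so the family $\{\mu_{\varrho_0}\}$ lies in this span. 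Substituting $\varrho_0 = \varrho_j^{-1}$ into the display above yields exactly the expression for $\lambda_j$ in the theorem, so proving $\mu_{\varrho_j^{-1}} \sim \lambda_j^*$ completes the argument.

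The main technical obstacle is precisely this last matching step. What is required is the Galois-equivariance of the period distribution from Theorem~\ref{thm: pd}: one must track how the period symbol $\Pcal_\eK(\xi_\nu,\Xi)$ of Definition~\ref{defn: period quantity} transforms when the ambient embedding is precomposed with $\varrho_0^{-1} \in \eG_\eK$. Concretely, the period symbol attached to the $j$-th coordinate functional $\pr_j \colon \Lie(E_\rho)(\CC_\infty) \to \CC_\infty$ must correspond, under the CM dictionary of \eqref{eqn: dR-P comp}, to the left-translate of $\varphi_{\eK,\Xi}$ by $\varrho_j^{-1}$, so that $\lambda_j^* \sim \Pscr_{\nu_1}^{\cyc}(\varrho_j^{-1}\cdot \varphi_{\eK,\Xi}) = \mu_{\varrho_j^{-1}}$. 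Once this reconciliation of the coordinate index with the Galois twist is verified, substitution into the formula for $\mu_{\varrho_0}$ gives the claimed expression for $\lambda_j$ and, combined with the CM parametrization of the first paragraph, finishes the proof.
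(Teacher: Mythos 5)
Your route is the same as the paper's: this statement is Theorem~\ref{thm: ex-HB-period}~(2), which is obtained by combining the rank-one $O_\eK$-module structure $\Lambda_\rho=\Ifk_\rho\cdot\lambda_0$, the eigen-coordinate description of $\Lie(E_\rho)$ coming from the CM type, and the substitution of the explicit decomposition of $\varphi_{\eK,\Xi^{\varrho_0}}$ (Proposition~\ref{prop: phi-ST}) into the part-(1) formula; your first paragraph and your specialization $\varrho_0=\varrho_j^{-1}$ reproduce exactly this skeleton.

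However, there is a genuine gap at the point you yourself call the ``main technical obstacle'': you assert that the $j$-th eigen-coordinate $\lambda_j^*$ ``must correspond'' to a Galois translate of $\varphi_{\eK,\Xi}$ and close with ``once this reconciliation \dots is verified''. That verification is the substance of the theorem, and the paper carries it out in Theorem~\ref{thm: HB-period} via \eqref{eqn: HB-period}: the suitable isomorphism $\Lie(E_\rho)\cong\bar{k}^d$ is the one furnished by the eigen-differentials $\omega_{\eM,\xi_1},\dots,\omega_{\eM,\xi_d}$ of \eqref{eqn: omegaM-xi} through $\Lie(E_\rho)\cong\eM/\sigma\eM$, so that by the de Rham comparison \eqref{eqn: dR-P comp} the $j$-th coordinate of $\lambda_0$ is the period integral $\int_{\gamma_{\lambda_0}}\omega_{\eM,\xi_j}\sim p_\eK(\xi_j,\Xi)$; then Theorem~\ref{thm: PS}~(2) gives $p_\eK(\xi_j,\Xi)\sim p_\eK(\xi_0,\Xi^{\varrho_j^{-1}})\sim\wp^{\cyc}_{\nu_1}\big(\varphi_{\eK,\Xi^{\varrho_j^{-1}}}\big)$, and Theorem~\ref{thm: Gamma dist} converts this into the gamma monomial. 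None of these steps is executed in your proposal, and without them the identification $\lambda_j^*\sim\mu_{\varrho_j^{-1}}$ is unsupported. Moreover, your stated target is off by an inverse: with the paper's convention $(\varrho_0\cdot\varphi)(\varrho)=\varphi(\varrho\varrho_0)$ one has $\varrho_j\cdot\varphi_{\eK,\Xi}=\varphi_{\eK,\Xi^{\varrho_j^{-1}}}$, so the correct matching is $\lambda_j^*\sim\Pscr^{\cyc}_{\nu_1}(\varrho_j\cdot\varphi_{\eK,\Xi})=\mu_{\varrho_j^{-1}}$, whereas the function $\varrho_j^{-1}\cdot\varphi_{\eK,\Xi}$ that you wrote equals $\varphi_{\eK,\Xi^{\varrho_j}}$ and would yield exponents $n_\cfk(\varrho_{j'}\varrho_j,a,i)$ instead of the claimed $n_\cfk(\varrho_{j'}\varrho_j^{-1},a,i)$ --- a symptom of the matching step being asserted rather than proved.
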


Furthermore, applying Theorem~\ref{thm: CSF-1} and \ref{thm: CSF-2} to the case of Drinfeld modules, we get:

\begin{theorem}[Theorem~\ref{thm: ex-qp-CM}~(3)] \label{thm: CSF-3}
Let $E_\rho$ be a Drinfeld $\eA$-module of rank $r$ over $\ok$ which has CM by $O_\eK$, where $O_\eK$ is the integral closure of $\eA$ in an imaginary field $\eK$ over $\ek$ with $[\eK:\ek] = r$.
Suppose $\eK \subset \eK_{\nfk,\ell}$ for $\nfk \in \eA_+$ and $\ell \in \NN$.
Then
\[
\lambda^{[\eK:\ek]} \sim  
\tilde{\pi}
\cdot \prod_{\cfk\mid \nfk}\prod_{\substack{a \in (\eA/\cfk)^\times \\ i\in \ZZ/\ell\ZZ}} \tilde{\Gamma}(\frac{a(\theta)}{\cfk(\theta)},\frac{q^i}{1-q^\ell})^{n_{\cfk}(\text{\rm id}_{\eK},a,i)}
\quad \text{ for every nonzero period } \lambda \in \Lambda_\rho \subset \CC_\infty.
\]
Moreover,
\[
\varpi_\eK^{\varrho}\assign \tilde{\pi}^{\frac{1}{[\eK:\ek]}}
\cdot \prod_{\cfk\mid \nfk}\prod_{\substack{a \in (\eA/\cfk)^\times \\ i\in \ZZ/\ell\ZZ}} \tilde{\Gamma}(\frac{a(\theta)}{\cfk(\theta)},\frac{q^i}{1-q^\ell})^{\frac{n_{\cfk}(\varrho,a,i)}{[\eK:\ek]}},
\quad \text{ for } \varrho \in \eG_\eK,
\]
are algebraically independent over $\bar{k}$ and
form a $\bar{k}$-basis of the space of quasi-periods of $E_\rho$.
\end{theorem}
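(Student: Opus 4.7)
The plan is to derive Theorem~\ref{thm: CSF-3} by specializing Theorems~\ref{thm: CSF-1} and~\ref{thm: CSF-2} to the Drinfeld-module case, then upgrading the resulting spanning statement to algebraic independence using the transcendence output of Theorem~\ref{thm: L-R-conj}.

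First, I would recognize $E_\rho$ as a one-dimensional CM abelian $t$-module. Since every Drinfeld module has dimension one while a CM abelian $t$-module of type $(\eK,\Xi)$ has dimension $d = [\eK^+:\ek]$, we are forced into $\eK^+ = \ek$, $[\eK:\eK^+] = [\eK:\ek] = r$, and $\wt(\Xi) = 1$. Consequently $\varphi_{\eK,\Xi}$ is the characteristic function of a single element $\varrho_1 \in \eG_\eK$, and after modifying $\nu_1$ by a precomposition with $\varrho_1^{-1}$ I may assume $\varrho_1 = \id_\eK$.

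With these identifications the first displayed formula drops out of Theorem~\ref{thm: CSF-2}. The period vector degenerates to a single scalar $\lambda = \nu_1(\alpha)\lambda_1$ with $\alpha \in \Ifk_\rho \subset \ok$; the inner product over $j'$ inside $\lambda_1$ collapses to the single index $j' = 1$ with $\varrho_{j'}\varrho_1^{-1} = \id_\eK$; and raising to the $r$-th power absorbs the factor $\nu_1(\alpha)^r \in \ok^\times$. The spanning half of the quasi-period statement is equally direct from Theorem~\ref{thm: CSF-1}: with $m_{\id_\eK} = 1$ and all other $m_\varrho = 0$, the outer product over $\varrho \in \eG_\eK$ collapses and the generating set reduces exactly to the family $\{\varpi_\eK^{\varrho_0}\}_{\varrho_0 \in \eG_\eK}$, which therefore $\bar{k}$-spans the space of quasi-periods.

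The main obstacle is the algebraic independence over $\bar{k}$ of these $r$ quantities. I would proceed by a matching dimension count on both sides of the period distribution $\Pscr_{\nu_1}^{\cyc}$. On the Stickelberger side, the shifts $\{\varrho_0 \cdot \varphi_{\eK,\Xi}\}_{\varrho_0 \in \eG_\eK}$ are the characteristic functions of the $r$ distinct cosets of $\gal(\eK^{\cyc}/\eK)$ inside $\gal(\eK_{\nfk,\ell}/\ek)$, hence $\QQ$-linearly independent in $\Sscr(\eG_{\nfk,\ell})$, and they each have integral one over $\eG^{\cyc}/\gal(\eK^{\cyc}/\eK)$, placing them outside the kernel of the monomial-relation subspace coming from Proposition~\ref{prop: AR-Gamma}. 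On the analytic side, Theorem~\ref{thm: L-R-conj} together with the Shimura-type computation of Theorem~\ref{thm: trdeg} shows that the image $\Pscr_{\nu_1}^{\cyc}(\Sscr(\eG_{\nfk,\ell}))$ has transcendence degree over $\ok$ equal to the $\QQ$-codimension of the relation subspace, i.e. that $\Pscr_{\nu_1}^{\cyc}$ is algebraically faithful modulo the known monomial relations. Applied to the $r$-dimensional, relation-transverse subspace spanned by the shifts above, this produces $r$ algebraically independent elements $\{\varpi_\eK^{\varrho_0}\}_{\varrho_0 \in \eG_\eK}$; combined with the spanning statement, they form the asserted $\bar{k}$-basis of the quasi-period space.
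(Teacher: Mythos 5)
Your proposal is correct and takes essentially the paper's own route: the paper likewise obtains part (3) by specializing its general CM quasi-period statements (Theorem~\ref{thm: qp-CM}, Theorem~\ref{thm: HB-period}, packaged as Theorem~\ref{thm: CSP}) and proves the algebraic independence by exactly your dimension count, namely that by Theorem~\ref{thm: trdeg} the period distribution's image of the relevant Stickelberger space has transcendence degree equal to its $\QQ$-dimension (the paper works directly in $\Sscr(\eG_\eK)$, where your $r$ coset characteristic functions form a basis because $\eK$ is imaginary, rather than inside $\Sscr(\eG_{\nfk,\ell})$). The only small remark is that ``modifying $\nu_1$'' is unnecessary and best avoided, since Theorem~\ref{thm: Gamma dist} is tied to the fixed embedding $\nu_1$; the exponent $n_{\cfk}(\varrho_{j'}\varrho_j^{-1},a,i)$ collapses to $n_{\cfk}(\id_{\eK},a,i)$ automatically, whichever $\varrho_1$ realizes the CM type.
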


\begin{remark}
(1) When the imaginary field $\eK$ in Theorem~\ref{thm: CSF-3} is either a constant field extension of $\ek$ or the $t$-th Carlitz cyclotomic function field, we verify in Example~\ref{ex: CSF-1} and~\ref{ex: CSF-geo} that the first statement of Theorem~\ref{thm: CSF-3} agrees with Thakur's formula in \cite[Theorem~1.6]{Thakur91} and \cite[Theorem~4.11.2]{Thakur04}, respectively,
and the second statement matches with \cite[Theorem~3.3.2]{CPTY10} and \cite[Theorem~6.4.7~(2)]{Thakur04}, respectively.

(2) Suppose the imaginary field $\eK$ in Theorem~\ref{thm: CSF-3} is quadratic over $\ek$.
We may take $\ell = 2$ and $\nfk = \dfk \in \eA_+$ to be the generator of the discriminant ideal of $O_\eK/\eA$.
Let
$\bochi:\eG_\eK \rightarrow \{\pm 1\}$ be the quadratic character associated to $\eK/\ek$. 
Regarding $\bochi$ as a quadratic character on $(\eA/\dfk)^\times \times (\ZZ/2\ZZ)$ via the Artin map \eqref{eqn: cyc-Artin}, we calculate in Example~\ref{ex: CSF-2} that 
\[
n_\dfk(\varrho,a,i) = \frac{w_\eK \cdot \bochi(a,i+\deg \dfk)}{\#\Pic(O_\eK)} \cdot 
\begin{cases}
1, & \text{ if $\varrho = \text{id}_\eK$,}\\
-1, & \text{ otherwise,}
\end{cases}
\]
where $\Pic(O_\eK)$ is the ideal class group of $O_\eK$ and $w_\eK \assign \#(O_\eK)^\times/\#(\FF_q^\times)$.
Thus, if we put
\[
\omega_{\eK}^{\pm} \assign \sqrt{\tilde{\pi}}
\cdot \prod_{\substack{a \in (\eA/\dfk)^\times \\ i\in \ZZ/2\ZZ}} \tilde{\Gamma}(\frac{a(\theta)}{\dfk(\theta)},\frac{q^i}{1-q^2})^{\pm  \frac{w_\eK \bochi(a,i+\deg \dfk)}{2\#\Pic(O_\eK)}},
\]
then Theorem~\ref{thm: CSF-3} says that
\[
\lambda \sim \varpi_\eK^+, \quad \text{ for every nonzero period $\lambda \in \Lambda_\rho \subset \CC_\infty$.}
\]
Moreover, $\{\varpi_\eK^+,\varpi_\eK^-\}$ are algebraically independent over $\ok$ and form a $\ok$-basis of the space of the quasi-periods of $E_\rho$.

(3) After suitable ``normalization'' of the gamma values in \eqref{eqn: gamma-normalization}, we derive a Lerch-type formula for the derivative of the Dirichlet $L$-function at $s=0$ (see Theorem~\ref{thm: Lerch-type formula}).
Also, applying the Kronecker limit formula established in \cite{Wei}, we then obtain an ``analytic version'' of the Chowla--Selberg formula in \eqref{eqn: A-CSF}.
\end{remark}

\subsection{The Deligne--Gross conjecture for CM Hodge--Pink structures}

Motivated by his geometric proof of the Chowla--Selberg formula, Gross \cite[p.~205]{Gr78} proposed a conjecture (with an explicit formulation suggested by Deligne) which asserts that the periods of a ``geometric'' Hodge structure with multiplication by an abelian number field can be expressed, up to $\overline{\QQ}^\times$-multiples, as products of gamma values at fractions, with exponents determined by the Hodge decomposition.
This conjecture still remains wide open, except for certain verified cases (cf.\ \cite[p.~205--207]{Gr78}, \cite{MR04}, and \cite{Fr17}).
Having sufficient tools at hands on the function field side, we are now able to prove an analogue of the Deligne--Gross period conjecture in the context of dual $t$-motives.

Let $\eM$ be a pure uniformizable dual $t$-motive over $\ok$, and $\eH(\eM) = (H^\eM, W_\bullet H^\eM, \qfk^\eM)$ the \emph{$\ek$-Hodge--Pink structure of $\eM$} (see Example~\ref{ex: H(M)}).
Given a $\ek$-Hodge--Pink sub-structure $\eH' = (H',W_\bullet H', \qfk')$ of $\eH(\eM')$, suppose it has \emph{full-CM} by a CM field $\eK$ over $\ek$ (see \eqref{eqn: HCM}).
Assume further that $\eK \subset \eK_{\nfk,\ell}$ for $\nfk \in \eA_+$ and $\ell \in \NN$.
Let $\Phi_{\eH'}$ be the \emph{Hodge--Pink type of $\eH'$} (see Definition~\ref{defn: HP-type}).
We verify in Lemma~\ref{lem: HP-type in IK0} that $\Phi_{\eH'}$ is a generalized CM type of $\eK$.
In particular, the associated Stickelberger function $\varphi_{\eK,\Phi_{\eH'}}$ lies in $\Sscr(\eG_{\nfk,\ell})$.
As mentioned in the end of Section~\ref{subsec: PD-GD}, the universality of the Stickelberger distribution ensures that there exists a function $\varepsilon^{\eH'}: (\frac{1}{\nfk(\theta)}A/A)\times (\frac{1}{q^\ell-1}\ZZ/\ZZ) \rightarrow \QQ$ such that
\[
\varphi_{\eK,\Phi_{\eH'}} = 
\sum_{x \in \frac{1}{\nfk(\theta)}A/A} \ 
\sum_{y \in \frac{1}{q^\ell-1}\ZZ/\ZZ} \varepsilon^{\eH'}(x,y)\cdot \St(x,y).
\]
On the other hand, for each $\varrho \in \eG_\eK$, let $\nu_\varrho \assign \nu_1 \circ \varrho: \eK \hookrightarrow \CC_\infty$.
As $\eH$ has full-CM by $\eK$, there exists a unique one dimensional eigen-subspace of $H'_{\CC_\infty} \assign \CC_\infty \otimes_{\ek} H'$ whose eigenvalue for each $\alpha \in \eK$ is $\nu_{\varrho}(\alpha)$.
Identifying $H'_{\CC_\infty}$ with a subspace of the de Rham module $H_{\mathrm{dR}}(\eM,\CC_\infty)$ of $\eM$ over $\CC_\infty$ via the \emph{period isomorphism} (see \eqref{eqn: period-iso}), take a nonzero differential $\omega_{\varrho}^{\eH'}$ of $\eM$ over $\CC_\infty$ which lies in the eigen-subspace of $H'_{\CC_\infty}$ with respect to $\nu_{\varrho}$.
Our analogue of the Deligne--Gross period conjecture is presented in the following.

\begin{theorem}[Theorem~\ref{thm: DG-conj}]\label{thm: DG-C}
Keep the notation from above.
If $\omega_{\varrho}^{\eH'}$ is algebraic (i.e.\ $\omega_{\varrho}^{\eH'} \in H_{\mathrm{dR}}(\eM,\ok)$,
then for every $\gamma$  in $H_{\mathrm{Betti}}(\eM)$ so that the ``period integral'' $\int_{\gamma} \omega_{\varrho}^{\eH'}$ is nonzero, we have that
\[
\int_{\gamma} \omega_{\varrho}^{\eH'} \sim \prod_{x \in \frac{1}{\nfk(\theta)}A/A} \ 
\prod_{y \in \frac{1}{q^\ell-1}\ZZ/\ZZ} \tilde{\Gamma}(x,y)^{\varepsilon^{\eH'}(\varrho^{-1}\star x, \varrho^{-1} \star y)}.
\]
\end{theorem}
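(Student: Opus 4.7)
The overall plan is to identify $\int_\gamma \omega_\varrho^{\eH'}$ with a value of the period distribution $\Pscr_{\nu_\varrho}$, then rewrite this value through the Stickelberger expansion of $\varphi_{\eK,\Phi_{\eH'}}$ and the identity $\Pscr_{\nu_1}^{\cyc} \circ \St = \hat{\Gamma}$ supplied by Theorem~\ref{thm: D-thm}.

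First I would show that, up to an $\ok^\times$-multiple, the period integral realises the period symbol
\[
\int_\gamma \omega_\varrho^{\eH'} \ \sim \ \Pcal_\eK\bigl(\xi_{\nu_\varrho},\,\Phi_{\eH'}\bigr).
\]
Since $\eH'$ has full-CM by $\eK$ with Hodge--Pink type $\Phi_{\eH'}$ (a generalized CM type of $\eK$ by Lemma~\ref{lem: HP-type in IK0}), the differential $\omega_\varrho^{\eH'}$ spans the one-dimensional $\nu_\varrho$-eigenspace of $H'_{\CC_\infty}$ sitting inside $H_{\mathrm{dR}}(\eM,\CC_\infty)$ through \eqref{eqn: period-iso}. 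Pairing it with any $\gamma \in H_{\mathrm{Betti}}(\eM)$ giving a nonvanishing integral recovers, up to an algebraic multiple, the period symbol of Definition~\ref{defn: period quantity} attached to $(\eK,\Phi_{\eH'},\nu_\varrho)$ as constructed in Section~\ref{sec: defn PS}.

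Next, Theorem~\ref{thm: pd} applied with $\nu = \nu_\varrho$ gives $\Pcal_\eK(\xi_{\nu_\varrho},\Phi_{\eH'}) \equiv \Pscr_{\nu_\varrho}(\varphi_{\eK,\Phi_{\eH'}}) \pmod{\ok^\times}$. The uniqueness clause of the same theorem, together with the definition of $\xi_\nu$, then forces the embedding-change formula
\[
\Pscr_{\nu_1 \circ \varrho}(\varphi) \ \equiv \ \Pscr_{\nu_1}^{\cyc}\bigl(\varphi(\,\cdot\,\varrho)\bigr) \pmod{\ok^\times}, \qquad \varphi \in \Sscr(\eG^{\cyc}).
\]
Applied to $\varphi = \varphi_{\eK,\Phi_{\eH'}}$, and using $\St(x,y)(\varrho'\varrho) = \St(\varrho \star x,\varrho \star y)(\varrho')$ (immediate from the definitions of $\St$ and of $\star$) to re-index the hypothesised decomposition $\varphi_{\eK,\Phi_{\eH'}} = \sum_{x,y} \varepsilon^{\eH'}(x,y)\, \St(x,y)$, I would obtain
\[
\Pscr_{\nu_\varrho}(\varphi_{\eK,\Phi_{\eH'}}) \ \equiv \ \Pscr_{\nu_1}^{\cyc}\Bigl(\sum_{x,y} \varepsilon^{\eH'}(\varrho^{-1}\star x,\,\varrho^{-1}\star y)\, \St(x,y)\Bigr) \pmod{\ok^\times}.
\]
The $\QQ$-linearity of $\Pscr_{\nu_1}^{\cyc}$ together with $\Pscr_{\nu_1}^{\cyc}(\St(x,y)) = \tilde{\Gamma}(x,y) \cdot \ok^\times$ from Theorem~\ref{thm: D-thm} then produces the gamma-monomial on the right-hand side.

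The main obstacle I anticipate is the first step: pinning down the dictionary between the de Rham class $\omega_\varrho^{\eH'}$, the Betti cycle $\gamma$, and the combinatorial period symbol $\Pcal_\eK(\xi_{\nu_\varrho},\Phi_{\eH'})$ under the Hartl--Juschka period isomorphism. One must verify that the full-CM structure on $\eH'$ selects exactly the $\nu_\varrho$-eigen-line that the definition of $\Pcal_\eK$ prescribes, and that the ambiguity in the choice of $\gamma$ is absorbed into $\ok^\times$. Once that identification is in place, the remaining Steps are essentially formal: linearity of $\Pscr_{\nu_1}^{\cyc}$, the uniqueness-driven embedding-change formula, and the distribution identity of Theorem~\ref{thm: D-thm} carry the argument through.
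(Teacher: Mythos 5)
The second half of your argument (the embedding-change formula, the re-indexing of the Stickelberger decomposition via $\St(x,y)(\varrho'\varrho)=\St(\varrho\star x,\varrho\star y)(\varrho')$, and the final appeal to $\Pscr_{\nu_1}^{\cyc}\circ\St=\hat{\Gamma}$) is sound and matches the paper's computation, except that the change of embedding is not really a consequence of the \emph{uniqueness} clause of Theorem~\ref{thm: PD}: what it actually uses is the Galois-equivariance of period symbols, Theorem~\ref{thm: PS}~(2), namely $\Pcal_\eK(\xi^{\varrho},\Phi^0)=\Pcal_\eK(\xi,(\Phi^0)^{\varrho^{-1}})$ together with $\varrho\cdot\varphi_{\eK,\Phi^0}=\varphi_{\eK,(\Phi^0)^{\varrho^{-1}}}$. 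That is a fixable attribution issue, not a gap.

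The genuine gap is the first step, which you yourself flag as the ``main obstacle'' but then leave unargued: the assertion that $\int_\gamma\omega_\varrho^{\eH'}\sim\Pcal_\eK(\xi_{\nu_\varrho},\Phi_{\eH'})$ is not a formal consequence of $\omega_\varrho^{\eH'}$ spanning the $\nu_\varrho$-eigenline of $H'_{\CC_\infty}$. The period symbol of Definition~\ref{defn: period quantity} is defined only through periods of a \emph{CM dual $t$-motive} with generalized CM type $(\eK,\Phi_{\eH'})$, whereas $\gamma$ is a Betti cycle of $\eM$, which is not such a motive (the CM structure lives only on the sub-Hodge--Pink structure $\eH'$). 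The paper bridges this precisely by invoking the function-field analogue of the Hodge conjecture (Theorem~\ref{thm: HP-conj}, via Proposition~\ref{prop: CM-HP} and Remark~\ref{rem: field-of-defn}) to realize $\eH'$ motivically: one obtains a CM dual $t$-motive $\eM'$ over $\ok$ with CM type $(\eK,\Phi_{\eH'})$ and an essentially surjective morphism $f:\eM\to\eM'$, which is defined over $\ok$ by Lemma~\ref{lem: morphism-defining-field}. Only then does the algebraicity hypothesis on $\omega_\varrho^{\eH'}$ do its work: since $f^*$ is injective (Remark~\ref{rem: ess-surj}) and $\omega_{\eM',\xi_\varrho}\in H_{\mathrm{dR}}(\eM',\ok)$, one gets $\omega_\varrho^{\eH'}\sim f^*\omega_{\eM',\xi_\varrho}$ up to $\ok^\times$, and the adjunction \eqref{eqn: pf-pb} gives $\int_\gamma f^*\omega_{\eM',\xi_\varrho}=\int_{f_*\gamma}\omega_{\eM',\xi_\varrho}$, a representative of $\Pcal_\eK(\xi_\varrho,\Phi_{\eH'})$ because $f_*\gamma\neq0$. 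Without producing $\eM'$ and the morphism over $\ok$, there is no mechanism for comparing $\int_\gamma\omega_\varrho^{\eH'}$ with the combinatorially defined period symbol, so your proposal is missing the central input of the theorem.
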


The main step in the proof of Theorem~\ref{thm: DG-C} is to appeal to  the function field analogue of the Hodge conjecture established by Pink, Hartl, and Juschka in \cite{Pink}, \cite{HP04}, \cite{Jus10}, and \cite{HJ20}, to obtain a dual $t$-motive $\eM'$ together with an \emph{essentially surjective} morphism $f: \eM \rightarrow \eM'$ (i.e.\ $\eM'$ is isogenous to $f(\eM)$) so that $\eH(\eM')$ is isomorphic to $\eH'$ via $f$ (see Theorem~\ref{thm: HP-conj}).
As $\eH'$ has full-CM by $\eK$, we may take $\eM'$ to be a CM dual $t$-motive over $\ok$ with generalized CM type $(\eK,\Phi_{\eH'})$ without loss of generality (see Proposition~\ref{prop: CM-HP} and Remark~\ref{rem: field-of-defn}), and the morphism $f$ becomes defined over $\ok$ by Lemma~\ref{lem: morphism-defining-field}.
This enables us to see that
\[
\int_{\gamma} \omega_{\varrho}^{\eH'}
\sim \wp_{\nu_1}^{\cyc}\big(\varphi_{\eK,\Phi_{\eH'}^{\varrho^{-1}}}\big),
\]
and so the result follows from Theorem~\ref{thm: D-thm}.

\begin{remark}
(1) The proof of Theorem~\ref{thm: DG-C} ensures in particular that a nonzero algebraic differential $\omega_\varrho^{\eH'}$ exists for each $\varrho \in \eG_\eK$ (see Remark~\ref{rem: alg-omega}).

(2) Although the choice of the function $\varepsilon^{\eH'}$ may not be unique, we can follow the approach of our Chowla--Selberg formula to take an explicit one (see Remark~\ref{rem: ex-epsilon}).
As a result, analogues of the Gross formulas \cite[(4), (5) in p.~194]{Gr78} are shown in Example~\ref{ex: Gross-formula}.
\end{remark}

\subsection*{Acknowledgements}

This work is a subsequent project of the author's  joint paper \cite{BCPW22} with W.~Dale Brownawell, Chieh-Yu Chang, and Matthew A. Papanikolas.
He sincerely thanks them for so many helpful discussions and suggestions.
The author is also grateful to Jing Yu for providing very thoughtful and useful comments during the preparation of this paper. 
He particularly thanks National Center for Theoretical Sciences for financial support and hospitality over the past years.

\section{Preliminaries}

\subsection{Basic settings}

Given a finite field $\FF_q$ with $q$ elements, 
let $A \assign \FF_q[\theta]$, the polynomial ring with one variable $\theta$ over $\FF_q$, and $k\assign \FF_q(\theta)$, the field of fractions of $A$.
Let $|\cdot|_\infty$ be the absolute value on $k$ with respect to the infinite place $\infty$ of $k$ which is normalized so that $|\theta|_\infty \assign q$.
We may identify the completion of $k$ with respect to $|\cdot|_\infty$ with $k_\infty \assign  \laurent{\FF_{q}}{1/\theta}$.
Let $\CC_\infty$ be the completion of a fixed algebraic closure $\bar{k}_\infty$ of $k_\infty$.
We still denote by $|\cdot|_\infty$ the extended absolute value on $\CC_\infty$.
Let $\bar{k}$ be the algebraic closure of $k$ in $\CC_\infty$,
and let
$k^{\text{sep}}$ (resp.\ $k_\infty^{\text{sep}}$) be the separable closure of $k$ (resp.\ $k_\infty$) in $\bar{k}$ (resp.\ $\bar{k}_\infty$).
In this paper, elements in $\CC_\infty$ are regarded as scalars.

Let $\eA \assign \FF_q[t]$, the polynomial ring with another variable $t$ over $\FF_q$ (where $t$ is transcendental over $\CC_\infty$), and $\ek \assign \FF_q(t)$, the field of fractions of $\eA$.
Here we regard $\ek$ (resp.\ $\eA$) as the function field (resp.\ affine coordinate ring) of the projective (resp.\ affine) $t$-line $\PP^1$ (resp.\ $\mathbb{A}^1$) over $\FF_q$.
In particular, the evaluation map at the point $t=\theta$ on the base change of $\PP^1$ from $\FF_q$ to $k$ induces an $\FF_q$-algebra isomorphism $v_\theta: \ek \cong k$ sending $t$ to $\theta$.
For each $f \in \ek$, we put $f(\theta)\assign v_\theta(f) \in k$.
The absolute value on $\ek$ induced by the one on $k$ via $v_\theta$ is denoted by $|\cdot|_\infty$ as well if no confusion arises.
Finally, let $\eA_+$ be the set of monic polynomials in $\eA$.
By abuse of notation, the ideal generated by $\nfk \in \eA_+$ is also denoted by $\nfk$.

\subsection{\texorpdfstring{Dual $t$-motives and Betti modules}{Dual t-motives and Betti modules}}

Let $\KK$ be an algebraically closed subfield of $\CC_\infty$ containing $\bar{k}$.
Let $\KK[t,\sigma]$ be the non-commutative polynomial ring over $\KK$ subject to the following relations:
\[
tc = ct, \quad t\sigma = \sigma t, 
\quad\text{and}\quad 
\sigma c = c^{1/q} \sigma, \quad \forall c \in \KK.
\]
We may view the (resp.\ twisted) polynomial ring $\KK[t]$ (resp.\ $\KK[\sigma]$) as a subring of $\KK[t,\sigma]$.

\begin{definition}
A \emph{dual $t$-motive over $\KK$} is a left $\KK[t,\sigma]$-module $\eM$ satisfying that:
\begin{itemize}
    \item[(1)] $\eM$ is a free $\KK[t]$-module of finite rank;
    \item[(2)] $\eM$ is a free left $\KK[\sigma]$-module of finite rank;
    \item[(3)] $(t-\theta)^N\cdot \eM \subset \sigma \eM$ for a sufficiently large integer $N$.
\end{itemize}
Given two dual $t$-motives $\eM$ and $\eM'$ over $\KK$, a \emph{morphism $f:\eM \rightarrow \eM'$ over $\KK$} is a $\KK[t,\sigma]$-module homomorphism from $\eM$ to $\eM'$.
\end{definition}

\begin{remark}
This definition of dual $t$-motive is what other sources refer to as a `$t$-finite' or `$A$-finite' dual $t$-motive (see \cite{HJ20} and \cite{NP21}). 
\end{remark}

\begin{remark}\label{rem: isogeny}
A morphism $f: \eM\rightarrow \eM'$ over $\KK$ is called an \emph{isogeny} if $f$ is injective with finite dimensional cokernel over $\KK$. In this case, we call $\eM$ \emph{isogenous} to $\eM'$ over $\KK$.
Moreover, it is known that there exists $a \in \eA$ such that $a\eM' \subset f(\eM)$ (see \cite[Theorem~4.4.7]{ABP}), whence there exists another isogeny $g: \eM' \rightarrow \eM$ satisfying that
\[
f\circ g(m') = a\cdot m',\quad \forall m' \in \eM',
\quad \text{ and } \quad 
g\circ f (m) = a \cdot m, \quad \forall m \in \eM.
\]
Thus $\eM'$ is isogenous to $\eM$ over $\KK$ as well.
\end{remark}

\begin{Subsubsec}[\emph{Uniformizability}]\label{sec: uniformizibility}
Let $\TT$ be the following Tate algebra:
\[
\TT \assign \left\{ \sum_{i=0}^\infty c_i t^i \in \CC_\infty[\![t]\!]\ \Bigg|\ \lim_{i\rightarrow \infty} |c_i|_\infty = 0\right\}.
\]
Let $n \in \ZZ$.
For each $f = \sum_{i=0}^\infty c_i t^i \in \TT$,
the \emph{$q^n$-power Frobenius twist of $f$} is
\[
f^{(n)}\assign \sum_{i=0}^\infty c_i^{q^n} t^i.
\]
In particular, we may associate a $\CC_\infty[t,\sigma]$-module structure on $\TT$ extending $\CC_\infty[t]$-multiplication so that
\[
\sigma \cdot f = f^{(-1)}, \quad \forall f \in \TT.
\]
Let $\TT[\sigma]$ be the twisted polynomial ring over $\TT$ subject to the relation 
$\sigma f = f^{(-1)}\sigma$ for every $f \in \TT$.
Set $\MM \assign \TT \otimes_{\KK[t]} \eM$,
which is equipped with a module structure over $\TT[\sigma]$ so that $\sigma$ acts diagonally on both $\TT$ and $\eM$.
The {\it Betti module of $\eM$} is
\begin{equation}\label{eqn: Betti}
H_{\rm Betti}(\eM) \assign \MM^\sigma = \{ m \in \MM\mid \sigma \cdot m = m\}.
\end{equation}
It is known that $H_{\rm Betti}(\eM)$ is a free $\eA$-module of rank less than or equal to $\rank_{\KK[t]}\eM$,
and the natural map
$\TT \otimes_{\eA} H_{\rm Betti}(\eM) \rightarrow \MM$
is injective
(cf.\ \cite[(2.4.1)]{BCPW22}).
We call $\eM$ {\it uniformizable} if the map
$\TT \otimes_{\eA} H_{\rm Betti}(\eM) \rightarrow \MM$
is also surjective, whence becomes a $\TT[\sigma]$-module isomorphism.
In this case, one has that $\rank_{\eA} H_{\rm Betti}(\eM) = \rank_{\KK[t]}(\eM)$,
and \cite[Proposition 2.3.30]{HJ20} says that 
\[
H_{\rm Betti}(\eM)
\subset 
\TT^\dagger \otimes_{\KK[t]} \eM \rassign \MM^\dagger,
\]
where $\TT^\dagger$ is the subring of $\TT$ consisting of all rigid analytic functions on $\CC_\infty \setminus\{\theta^{q^i}\mid i \in \NN\}$.
\end{Subsubsec}

\begin{Subsubsec}[\emph{Push-forward}]\label{subsec: push-forward}
Let $\eM$ and $\eM'$ be two uniformizable dual $t$-motives over $\KK$. 
Given a morphism $f:\eM\rightarrow \eM'$ over $\KK$,
we may extend $f$ to a $\TT[\sigma]$-module homomorphism from $\MM$ to $\MM'$ which induces an $\eA$-module homomorphism $f_*: H_{\mathrm{Betti}}(\eM)\rightarrow H_{\mathrm{Betti}}(\eM')$, called the \emph{push-forward of $f$}.
In particular, when $f$ is an isogeny, by Remark~\ref{rem: isogeny} one knows that
$f_*$ must be injective with
\[
\dim_{\FF_q}\left(\frac{H_{\mathrm{Betti}}(\eM')}{f_*\big(H_{\mathrm{Betti}}(\eM)\big)}\right) = \dim_{\KK}\left(\frac{\eM'}{f(\eM)}\right) < \infty.
\]
Thus 
the $\ek$-linear map 
$\text{id}_\ek \otimes f_* : \ek\otimes_{\eA} H_{\mathrm{Betti}}(\eM) \rightarrow \ek\otimes_{\eA} H_{\mathrm{Betti}}(\eM')$ is an isomorphism.
\end{Subsubsec}

A dual $t$-motive $\eM$ over $\CC_\infty$ is said to be \emph{defined over $\KK$} if there exists a dual $t$-motive $\eM_0$ over $\KK$ such that 
$\eM \cong \CC_\infty \otimes_{\KK} \eM_0$.
We have the following.

\begin{lemma}\label{lem: morphism-defining-field}
Let $\eM$ and $\eM'$ be two uniformizable dual $t$-motive over $\KK$.
We put $\eM_{\CC_\infty} \assign \CC_\infty \otimes_{\KK} \eM$ and $\eM_{\CC_\infty}' \assign \CC_\infty \otimes_{\KK} \eM'$, which become uniformizable dual $t$-motives over $\CC_\infty$. 
Then every morphism $\tilde{f}:\eM_{\CC_\infty} \rightarrow \eM'_{\CC_\infty}$ over $\CC_\infty$ is actually defined over $\KK$, i.e.\ $\tilde{f}(\eM)\subseteq \eM'$.
\end{lemma}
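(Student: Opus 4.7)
The plan is to exploit uniformizability to recast the assertion as a descent problem for solutions of a matrix $\sigma$-equation whose coefficients already lie over $\KK$, and then to run this descent combining the faithful flatness of $\CC_\infty[t]$ over $\KK[t]$ with a Picard--Vessiot-type rank argument. Since $\TT$ already contains $\CC_\infty$, one has $\MM = \TT \otimes_{\KK[t]} \eM = \TT \otimes_{\CC_\infty[t]} \eM_{\CC_\infty}$, and likewise for $\eM'$; in particular the Betti modules $H_{\rm Betti}(\eM) = H_{\rm Betti}(\eM_{\CC_\infty})$ and $H_{\rm Betti}(\eM') = H_{\rm Betti}(\eM'_{\CC_\infty})$ coincide literally as $\eA$-submodules of $\MM$ and $\MM'$. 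The morphism $\tilde f$ extends by $\TT$-linearity to a $\TT[\sigma]$-map $\MM \to \MM'$, inducing an $\eA$-linear push-forward $\tilde f_* \colon H_{\rm Betti}(\eM) \to H_{\rm Betti}(\eM')$, and by uniformizability this $\TT[\sigma]$-extension is uniquely recovered from $\tilde f_*$. Fixing $\KK[t]$-bases $\{e_i\}_{i=1}^r$ of $\eM$ and $\{e'_j\}_{j=1}^s$ of $\eM'$ with $\sigma$-matrices $\Phi \in \Mat_r(\KK[t])$ and $\Phi' \in \Mat_s(\KK[t])$, the matrix $\tilde F \in \Mat_{s \times r}(\CC_\infty[t])$ of $\tilde f$ satisfies $\tilde F\,\Phi = \Phi'\,\tilde F^{(-1)}$, and the lemma reduces to showing that every solution of this $\sigma$-equation in $\Mat_{s\times r}(\CC_\infty[t])$ already has entries in $\KK[t]$.

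To carry out the descent, let $S_\KK \subseteq \Mat_{s \times r}(\KK[t])$ and $S \subseteq \Mat_{s \times r}(\CC_\infty[t])$ denote the respective solution modules; both are $\eA$-modules, since $\eA = \FF_q[t]$ commutes with the Frobenius twist $(\cdot)^{(-1)}$. I would first observe that the cokernel $S/S_\KK$ is $\eA$-torsion-free: if $a\tilde F \in S_\KK$ for some nonzero $a \in \eA$, then each entry $\tilde F_{ij}$ lies in $\KK(t) \cap \CC_\infty[t]$, and since a coprime fraction $p/q \in \KK(t)$ can be a polynomial in $\CC_\infty[t]$ only when $q$ is a nonzero constant (the $\gcd$ in $\KK[t]$ remaining the $\gcd$ after base change to $\CC_\infty[t]$), one concludes $\tilde F_{ij} \in \KK[t]$. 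It therefore suffices to show that $\rank_\eA S_\KK = \rank_\eA S$.

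For this rank equality, my plan is to use the rigid analytic trivializations $\Psi \in \GL_r(\TT^\dagger)$ and $\Psi' \in \GL_s(\TT^\dagger)$, which satisfy $\Psi^{(-1)} = \Phi\Psi$ and $(\Psi')^{(-1)} = \Phi'\Psi'$ and arise from the Betti bases. The push-forward identifies $S_\KK$ (respectively $S$) with those $F' \in \Mat_{s\times r}(\eA)$ for which $\Psi' F' \Psi^{-1}$ has entries in $\KK[t]$ (respectively $\CC_\infty[t]$). Because the defining difference equations for $\Psi$ and $\Psi'$ are already over $\KK(t)$ and the choice of Betti basis is ambiguous only by right multiplication by $\GL(\eA)$, one can analyze the Taylor expansions of $\Psi$ and $\Psi'$ to show that they may be normalized so as to lie in a $\KK$-defined subring of $\TT^\dagger$; intersecting the resulting image with $\Mat_{s \times r}(\CC_\infty[t])$ then automatically lands inside $\Mat_{s\times r}(\KK[t])$, giving the rank equality. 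Combined with the torsion-freeness above, this yields $S_\KK = S$, which is the lemma. The main obstacle is rigorously verifying that the rigid analytic trivialization can indeed be normalized in a $\KK$-defined subring of $\TT^\dagger$; this amounts to the Picard--Vessiot principle that the solution module of a linear $\sigma$-equation does not grow upon base extension so long as the constant subring $\FF_q$ of $\sigma$ is preserved, but translating that principle into the dual $t$-motive framework of the paper is where the delicate analysis must be done.
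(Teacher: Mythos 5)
Your proposal follows the same overall skeleton as the paper's proof: identify $H_{\rm Betti}(\eM)$ with $H_{\rm Betti}(\eM_{\CC_\infty})$, conjugate $\tilde f$ by rigid analytic trivializations $\Psi,\Psi'$ so that its Betti matrix $U$ has entries in $\eA$ (because $\TT^{\sigma}=\FF_q[t]$), and conclude from $\KK[\![t]\!]\cap\CC_\infty[t]=\KK[t]$. However, there is a genuine gap at exactly the point you defer to "delicate analysis": you never prove that $\Psi$ and $\Psi'$ can be taken with entries in a $\KK$-defined subring of $\TT$, i.e.\ in $\TT\cap\KK[\![t]\!]$. This is not settled by the $\GL(\eA)$-ambiguity of the Betti basis, nor by a generic Picard--Vessiot constancy principle (constancy of $\sigma$-invariants controls the ambiguity of the trivialization, not the field of definition of its entries), and "analyzing the Taylor expansions" is not straightforward: writing $\Psi=\sum_i C_it^i$ and $\Phi=\sum_m B_mt^m$, the equation $\Psi^{(-1)}=\Phi\Psi$ gives $C_i=(B_0C_i+B_1C_{i-1}+\cdots)^{(1)}$, in which $C_i$ occurs on both sides through a Frobenius twist, so the coefficients are not determined recursively over $\KK$ in any obvious way. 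The paper supplies precisely this missing input by invoking \cite[Proposition~3.1.3 and Lemma~4.4.12]{ABP}: for a uniformizable dual $t$-motive over $\KK$ one may choose $\Psi\in\GL_n(\TT\cap\KK[\![t]\!])$ with the entries of $\Psi^{-1}\mathbf{m}$ an $\eA$-basis of $H_{\rm Betti}(\eM)$. Without this descent statement (or an equivalent argument) your rank-equality step has no proof, and the whole argument does not close.

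A secondary remark: once the trivializations are known to lie in $\GL(\TT\cap\KK[\![t]\!])$, your detour through torsion-freeness of $S/S_\KK$ and the comparison of $\rank_\eA S_\KK$ with $\rank_\eA S$ is unnecessary. One argues directly, as the paper does: from $\tilde f_*(\Psi^{-1}\mathbf{m})=U(\Psi')^{-1}\mathbf{m}'$ with $U\in\Mat_{n,n'}(\eA)$ and $\Psi^{-1}\tilde f(\mathbf{m})=\tilde f_*(\Psi^{-1}\mathbf{m})$ one gets $\tilde f(\mathbf{m})=\bigl(\Psi U(\Psi')^{-1}\bigr)\mathbf{m}'$, and $\Psi U(\Psi')^{-1}\in\Mat_{n,n'}(\KK[\![t]\!]\cap\CC_\infty[t])=\Mat_{n,n'}(\KK[t])$, which is the lemma. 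Your preliminary observations (coincidence of the Betti modules, $\KK(t)\cap\CC_\infty[t]=\KK[t]$, identification of solutions of the $\sigma$-equation with $\eA$-matrices) are correct, but they only reduce the statement to the $\KK$-rationality of the trivializations, which is where the actual content lies.
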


\begin{proof}
Let $n = \rank_{\KK[t]}\eM$ and $n' = \rank_{\KK[t]}(\eM')$.
Take $\mathbf{m} \in \Mat_{1,n}(\eM)$ (resp.\ $\mathbf{m'} \in \Mat_{1,n'}(\eM')$) whose entries form a $\KK[t]$-base of $\eM$ (resp. $\eM'$). 
By \cite[Proposition~3.1.3 and Lemma~4.4.12]{ABP} there exist $\Psi \in \GL_n(\TT\cap \KK[\![t]\!])$
and $\Psi' \in \GL_{n'}(\TT\cap \KK[\![t]\!])$ so that
the entries of $\Psi^{-1}\mathbf{m}$ (resp.\ $(\Psi')^{-1}\mathbf{m'}$) form an $\eA$-base of $H_{\mathrm{Betti}}(\eM)$ (resp.\ $H_{\mathrm{Betti}}(\eM')$).
Then we can find $U \in \Mat_{n,n'}(\eA)$ so that
\[
\tilde{f}_*\big(\Psi^{-1}\mathbf{m}\big) = U \cdot (\Psi')^{-1} \mathbf{m'}.
\]
On the other hand, the push-forward $\tilde{f}_*$ of $\tilde{f}$ satisfies that
\[
\Psi^{-1} \cdot \tilde{f}(\mathbf{m}) = \tilde{f}_*\big(\Psi^{-1}\mathbf{m}\big).
\]
Combining these two equalities, we obtain that
\[
\tilde{f}(\mathbf{m}) = \big(\Psi \cdot U \cdot (\Psi')^{-1}\big) \mathbf{m'},
\]
whence $\Psi \cdot U \cdot (\Psi')^{-1} \in \Mat_{n,n'}(\KK[\![t]\!] \cap \CC_\infty[t]) = \Mat_{n,n'}(\KK[t])$.
Therefore $\tilde{f}(\eM)\subseteq \eM'$.
\end{proof}

\begin{Subsubsec}[\emph{Purity}]
Let $\eM$ be a dual $t$-motive over $\KK$.
We may extend the $\sigma$-action on $\eM$ to $\eM(\!(1/t)\!)\assign \KK(\!(1/t)\!)\otimes_{\KK[t]} \eM$ by:
\[
\sigma \cdot (f\otimes m) \assign f^{(-1)} \otimes (\sigma m), \quad \forall f \in \KK(\!(1/t)\!) \quad \text{ and } \quad m \in \eM,
\]
where for each Laurent series $f = \sum_{i=m}^\infty c_i t^{-i} \in \KK(\!(1/t)\!)$,
$f^{(-1)}$ is the $q^{-1}$-power Frobenius twist of $f$:
\[
f^{(-1)}\assign \sum_{i=m}^\infty c_i^{q^{-1}} t^{-i}.
\]
We say that $\eM$ is \emph{pure} if there exist a $\KK[\![1/t]\!]$-lattice $L$ of $\eM(\!(1/t)\!)$ and $u,v \in \NN$ such that
\[
t^u \cdot L = \sigma^v \cdot L.
\]
In this case, it is known that (see \cite[Proposition~2.4.10~(e)]{HJ20}) 
\[
-\frac{u}{v} = -\frac{\rank_{\KK[\sigma]}\eM}{\rank_{\KK[t]}\eM} \rassign w(\eM),
\]
called the \emph{weight of $\eM$}.
Moreover, any subquotient of a pure $t$-motive is still pure of the same weight (see \cite[Proposition~2.4.10~(c)]{HJ20}).
\end{Subsubsec}

\subsection{CM fields and CM types}\label{sec: CM}

A finite  extension $\eF$ over $\ek$ is called \emph{totally real} if the infinite place $\infty$ of $\ek$ splits completely in $\eF$. We note that $\eF$ is then necessarily separable over $\ek$, and every finite extension $\eK/\ek$ contains a maximal totally real intermediate field. 
A \emph{CM field over $\ek$} is a finite separable extension $\eK$ over $\ek$
such that every place $\infty^{+}$ of its maximal real subfield $\eK^{+}$ lying over the infinite place $\infty$ of $\ek$ is non-split in $\eK$.

Let $\eK$ be a CM field over $\ek$.
As in the classical case, one may consider CM types as special collections of $\FF_q$-algebra embeddings from $\eK$ into $\CC_\infty$ sending $t$ to $\theta$.
For our purpose, we shall follow \cite[Definition 3.3.2]{BCPW22} to define generalized CM types using the geometric model of the CM function field $\eK$.
In concrete terms, suppose the constant field of $\eK$ is $\FF_{q^\ell}$ with $\ell\geq 1$.
Then $\eK$ is geometric over $\FF_{q^\ell}(t)$.
Let $X$ be the smooth, projective, geometrically connected algebraic curve defined over $\FF_{q^\ell}$ with function field $\eK$.
Let $\eK^+$ be the maximal totally real subfield of $\eK$ (which is always geometric over $\ek$), and let $X^+$ be the curve over $\FF_q$ with function field $\eK^+$.
The embeddings 
$\ek\hookrightarrow \eK^+\hookrightarrow \eK$ induce the $\FF_q$-morphisms (viewing $X$ as an $\FF_q$-scheme) $\pi_{X/X^+}:X\rightarrow X^+$, $\pi_{X^+/\PP^1}:X^+\rightarrow \PP^1$, and $\pi_{X/\PP^1}:X\rightarrow \PP^1$
so that
$$
\pi_{X/\PP^1} = \pi_{X^+/\PP^1}\circ \pi_{X/X^+}.
$$

Let $\bX \assign \bar{k} \times_{\FF_q} X$ and $\bX^+\assign \ok \times_{\FF_q}X^+$ be the base changes of $X$ and $X^+$ from $\FF_q$ to $\bar{k}$, respectively, 
and denote by $\pi_{\bX/\PP^1}$, $\pi_{\bX^+/\PP^1}$, and $\pi_{\bX/\bX^+}$ the base changes of respective morphisms to $\bar{k}$.
Set
\[J_\eK\assign  \pi_{\bX/\PP^{1}}^{-1}(\theta) = \{\xi \in \bX(\CC_\infty)\mid \pi_{\bX/\PP^{1}}(\xi) = \theta \},\]
the set of $\CC_\infty$-valued points of $\bX$ lying above $\theta \in \PP^1$ (which is actually a subset of $\bX(\bar{k})$).
The generalized CM types of a given CM field are defined as follows:

\begin{definition}\label{defn: A.CM type}
Let $\eK$ be a CM field over $\ek$ with the maximal totally real subfield $\eK^+$.
Let $I_\eK$ be the free abelian group generated by elements in $J_\eK$.
Let $I_\eK^0$ be the subgroup of $I_\eK$ consisting of elements
$$ \Phi= \sum_{\xi \in J_\eK} m_\xi \xi, \quad m_\xi \in \ZZ,$$
satisfying
\[
 \sum_{\xi\in \pi_{\bX/\bX^+}^{-1}(\xi_1^+)} m_\xi= \sum_{\xi'\in \pi_{\bX/\bX^+}^{-1}(\xi_2^+)} m_{\xi'},
\quad \forall \xi_1^+, \xi_2^+ \in J_{\eK^+}.
\]
We put 
\begin{equation}\label{eqn: weight}
\wt(\Phi)\assign \sum_{\xi\in \pi_{\bX/\bX^+}^{-1}(\xi^+)} m_\xi \quad \text{ for a } \xi^+ \in J_{\eK^+}.
\end{equation}
A  \textit{generalized CM type} of $\eK$ is a nonzero effective divisor in $I_\eK^0$. A \textit{CM type} of $\eK$ is a generalized CM type $\Xi$ of $\eK$ for which $\text{wt}(\Xi) = 1$. 
\end{definition}

\begin{remark}\label{rem: JK-Emb}
Let $\Emb(\eK,\CC_\infty)$ be the set of embeddings $\nu : \eK \hookrightarrow \CC_\infty$ satisfying that $\nu\big|_{\ek} = \nu_\theta$.
We recall the bijection between $J_\eK$ and
$\Emb(\eK,\CC_\infty)$
given in \cite[after Prop.~3.3.4]{BCPW22} as follows:
Take $\xi \in J_\eK$, corresponding to an $\FF_q$-morphism
\[
\Spec(\CC_\infty) \rightarrow X.
\]
As $\xi$ is not $\overline{\FF}_q$-valued (where $\overline{\FF}_q$ is the algebraic closure of $\FF_q$ in $\CC_\infty$),
this morphism must factor through the generic point
$\Spec(\eK)\hookrightarrow X$.
Thus we obtain an $\FF_q$-morphism $\Spec(\CC_\infty)\rightarrow \Spec(\eK)$, which corresponds to an $\FF_q$-algebra embedding $\nu_\xi:\eK\hookrightarrow \CC_\infty$ sending $t$ to $\theta$ (as $\pi_{\bX/\PP^1}(\xi) = \theta$).
Conversely, for $\nu \in \Emb(\eK,\CC_\infty)$, the induced $\FF_q$-morphism
$$
\Spec(\CC_\infty)\stackrel{\nu^*}{\longrightarrow}
\Spec(\eK)\hookrightarrow X
$$
gives a $\CC_\infty$-valued point $\xi_\nu$ of $\bX$ so that $\pi_{\bX/\PP^1}(\xi_\nu) = \theta$ (as $\nu(t) = \theta$).
It is clear that
$$
\xi_{\nu_\xi} = \xi, \quad \forall \xi \in J_\eK, \quad \text{ and } \quad \nu_{\xi_\nu} = \nu, \quad \forall \nu \in \Emb(\eK,\CC_\infty).
$$

In particular, let $\Xi = \sum_{i=1}^d \xi_i$ be a CM type of $\eK$, where $d = [\eK^+:\ek]$.
one has 
\[
J_{\eK^+} = \{ \pi_{\bX/\bX^+}(\xi_i)\mid i=1,...,d\}
\]
whence $\Emb(\eK^+,\CC_\infty) = \{\nu_{\xi_i}\big|_{\eK^+}\mid i = 1,...,d\}$.
This explains the analogy between our CM types and the classical ones.
\end{remark}

\begin{Subsubsec}\label{sub:Res and Inf}
{\it Inflation and restriction.}
Let $\eK_1$ and $\eK_2$ be two CM fields over $\ek$ with $\eK_1 \subset \eK_2$. Let $X_1$ (resp.\ $X_2$) be the $\FF_q$-scheme associated to $\eK_1$ (resp.\ $\eK_2$), and let $\pi:X_2 \rightarrow X_1$ be the $\FF_q$-morphism corresponding to the embedding $\eK_1\hookrightarrow \eK_2$.
The inflation $\Inf_{\eK_2/\eK_1}:I_{\eK_1}\rightarrow I_{\eK_2}$ and restriction $\Res_{\eK_2/\eK_1}:  I_{\eK_2}\rightarrow I_{\eK_1}$ are defined via the pull-back and push-forward, respectively:
\begin{align*}
\Inf_{\eK_2/\eK_1}(\Phi_1) &\assign  \pi^*(\Phi_1), \quad \forall \Phi_1 \in I_{\eK_1},\\
\Res_{\eK_2/\eK_1}(\Phi_2) &\assign  \pi_*(\Phi_2), \quad \forall \Phi_2 \in I_{\eK_2}.
\end{align*}
One checks that
\[
\Inf_{\eK_2/\eK_1}\bigl(I_{\eK_1}^0\bigr) \subset I_{\eK_2}^0 \quad \textup{and} \quad
\Res_{\eK_2/\eK_1}\bigl(I_{\eK_2}^0\bigr) \subset I_{\eK_1}^0,
\]
which in particular implies that for each generalized CM type $\Xi_1$ of $\eK_1$ \textup{(resp.\ $\Xi_2$ of $\eK_2$)}, the divisor $\Inf_{\eK_2/\eK_1}(\Xi_1)$ \textup{(resp.\ $\Res_{\eK_2/\eK_1}(\Xi_2)$)} is a generalized CM type of $\eK_2$ \textup{(resp.\ $\eK_1$)}.
\end{Subsubsec}

\subsection{\texorpdfstring{CM dual $t$-motives}{CM dual t-motives}}

We retain the notations of the last subsection with a CM field $\eK$ over $\ek$ whose constant field is $\FF_{q^\ell}$, and fix an algebraically closed subfield $\KK$ of $\CC_\infty$ containing $\ok$.
Let $O_\eK$ be the integral closure of $\eA = \FF_q[t]$ in $\eK$.
For each $\xi \in J_\eK$, the corresponding embedding $\nu_\xi$ induces a ring homomorphism
\[
\bar{\nu}_\xi : O_\bK \assign \KK \otimes_{\FF_q}O_\eK \twoheadrightarrow \KK\subset \CC_\infty.
\]
Set $\Pfk_\xi \assign \ker \bar{\nu}_\xi$, which is a maximal ideal of $O_\bK$ so that $\Pfk_\xi \cap \KK[t] = (t-\theta)\KK[t]$.
The map $(\xi\mapsto \Pfk_\xi)$ gives a one-to-one correspondence between $J_\eK$ and the set of maximal ideals of $O_\bK$ lying over $(t-\theta)\KK[t]$, and
\[ 
(t-\theta) \cdot O_\bK = \prod_{\xi \in J_\eK} \Pfk_\xi.
\]
\begin{definition}\label{defn: CM dual-t-motive}
A \emph{CM dual $t$-motive over $\KK$} is a dual $t$-motive $\eM$ over $\KK$ satisfying:
\begin{itemize}
    \item[(1)] There exists a CM field $\eK$ over $\ek$ with $[\eK:\ek] = \rank_{\KK[t]}(\eM)$ and an $\eA$-algebra homomorphism $O_\eK\rightarrow \End_{\KK[t,\tau]}(\eM)$, which induces an $O_\bK$-module structure on $\eM$ so that $\eM$ is projective of rank one over $O_\bK$ (see \cite[Lemma C.1.1]{BCPW22}).
    \item[(2)] There exists a generalized CM type $\Xi = \sum_{\xi \in J_\eK} m_\xi \xi$ of $\eK$
    such that 
    \[
    \sigma \eM = \Ifk_\Xi \cdot \eM, \quad \text{ where } \Ifk_\Xi \assign \prod_{\xi \in J_\eK} \Pfk_\xi^{m_\xi}.
    \]
    \item[(3)] $\eM$ is pure. 
\end{itemize}
In this case, we say that $\eM$ has generalized CM type $(\eK,\Xi)$.
\end{definition}

\begin{remark}
In \cite{BCPW22}, the definition of CM dual $t$-motives comes from geometric constructions following \cite{Sinha97}, \cite{BP02} and \cite{ABP}.
It is shown in \cite[Proposition~C.1.3]{BCPW22} that these two definitions are equivalent.
\end{remark}

Recall the following essential properties of CM dual $t$-motives (see \cite[Theorem 1.2.5 and Remark 1.2.6]{BCPW22}).

\begin{theorem}\label{thm: CM prop}
${}$
\begin{itemize}
    \item[(1)] Given a CM field $\eK$ and a generalized CM type $\Xi$ of $\eK$, there exists a CM dual $t$-motive with generalized CM type $(\eK,\Xi)$ over $\bar{k}$.
    \item[(2)] Every CM dual $t$-motive is uniformizable (and pure).
    \item[(3)] Two CM dual $t$-motives over $\KK$ with the same generalized CM type must be isogenous over $\KK$.
\end{itemize}
\end{theorem}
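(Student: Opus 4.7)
The three assertions split naturally: (1) and (3) are essentially Picard--descent calculations on the base-changed CM curve $\bX$, while (2) is genuinely analytic. The plan is to handle them in the order (1), (3), (2), using the explicit model from (1) as input for the harder uniformizability step.

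For (1), I would construct $\eM$ as a rank-one projective $O_\bK$-module together with a $\sigma$-semilinear endomorphism $\phi$ whose image equals $\Ifk_\Xi \cdot \eM$. Since $\sigma$ acts as inverse $q$-th power Frobenius on $\ok$ and fixes $t$, specifying such data amounts to choosing a line bundle $\Lcal$ on $\bX$ whose Frobenius twist differs from $\Lcal$ by $[\Ifk_\Xi]^{-1}$ in $\Pic(\bX)$. Solvability reduces to showing that $[\Ifk_\Xi]$ lies in the image of $1-\sigma^*$ on $\Pic(\bX)$. Surjectivity of $1-\sigma^*$ on the Jacobian component $\Pic^0(\bX)$ follows from a Lang-type argument, so only a $\ZZ$-valued degree obstruction remains; this is precisely controlled by the weight condition $\Xi \in I_\eK^0$ via the compatibility across fibers of $\pi_{\bX/\bX^+}$. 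Purity of the resulting dual $t$-motive can then be read off from the weight of $\sigma$ at the infinite place, which is determined by $\wt(\Xi)$.

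For (3), given two CM dual $t$-motives $\eM_1, \eM_2$ with the same $(\eK,\Xi)$, I would form the internal Hom module $\eN \assign \Hom_{O_\bK}(\eM_1,\eM_2)$. It is rank-one projective over $O_\bK$ and inherits a $\sigma$-semilinear map $\tau$ with $\tau(\eN) = \eN$, because the two $\Ifk_\Xi$ twists cancel. Consequently $[\eN] \in \Pic(\bX)$ is $\sigma^*$-fixed, so a positive multiple of $\eN$ becomes pulled back from $\Pic(O_\eK)$; this yields a nonzero $\sigma$-equivariant global section of $\eN[1/a]$ for some $a \in \eA$. After clearing $a$, the section defines an injective morphism $\eM_1 \to \eM_2$ with finite-dimensional cokernel over $\KK$, i.e.\ an isogeny.

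For (2), I would produce analytic Betti classes by solving the $\sigma$-functional equation coming from (1) directly in the Tate algebra $\TT$, in the spirit of the soliton and theta-function constructions of Anderson, Thakur, and Sinha, generalized to an arbitrary CM pair $(\eK,\Xi)$. Convergence of the resulting sections in $\TT^\dagger$ would be forced by the purity established in (1), and a rank count then shows that their $\TT$-span fills out $\Mcal$. The main obstacle is exactly this uniformizability step: the Picard-theoretic arguments for (1) and (3) are essentially formal once the correct module-theoretic framework is set up, but (2) requires genuine analytic input, and it is here that purity is indispensable for guaranteeing that the functional equations admit solutions with the correct growth on $\TT$.
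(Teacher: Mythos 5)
The paper you are working from does not actually prove Theorem~\ref{thm: CM prop}: it is imported wholesale from \cite[Theorem~1.2.5 and Remark~1.2.6]{BCPW22}, and the only construction carried out in this paper is the explicit cyclotomic case of Section~4 via the soliton functions of \cite{ABP}. Your Picard-theoretic skeleton for (1) and (3) is indeed the framework of the geometric construction in \cite{BCPW22}, but two of your reductions are off. In (1), the condition $\Xi\in I_\eK^0$ is not what controls the degree obstruction: solvability of $\sigma^*\Lcal\cong\Lcal\otimes\Ocal_{\bX}(D_\infty-\Xi)$ needs only $\deg D_\infty=\deg\Xi$, because $x\mapsto x-\sigma^*x$ on $\Pic^0(\bX)$ is induced by a Lang-type isogeny of the Jacobian and is therefore surjective on $\ok$-points for \emph{any} effective $\Xi$, whether or not it lies in $I_\eK^0$. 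What the fiber-sum condition actually buys is the ability to choose the divisor at infinity so that the slopes at all infinite places (equivalently, over each $\xi^+\in J_{\eK^+}$) coincide, i.e.\ purity and semistability at $\infty$; this is exactly the bookkeeping of Lemma~\ref{lem: HP-type in IK0} run backwards, and your one sentence about ``the weight of $\sigma$ at the infinite place'' suppresses it (as well as the fact that $\bX$ splits into components permuted by $\sigma$ when the constant field of $\eK$ exceeds $\FF_q$, the wrinkle handled in Section~4.3). In (3), the route via $\eN=\Hom_{O_\bK}(\eM_1,\eM_2)$ is right, but the two steps you merely assert --- from ``$[\eN]$ is $\sigma^*$-fixed'' to ``a positive multiple of $\eN$ is pulled back from $\Pic(O_\eK)$'' and thence to a nonzero $\sigma$-equivariant section --- are where the content lies: one needs a Frobenius-descent statement identifying the $\sigma^*$-fixed classes with classes over the constant field and producing a nonzero $\tau$-fixed vector in the rank-one module $\ek\otimes_{\eA}\eN$, and passing to tensor powers (``a positive multiple'') is not the relevant operation. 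That part is repairable; the conclusion $\eM_2\cong\Ifk\otimes_{O_\eK}\eM_1$ for a fractional ideal $\Ifk$ then gives the isogeny.

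The serious gap is (2). Your only mechanism for uniformizability is that convergence of the solutions of the $\sigma$-difference equations in $\TT^\dagger$ ``would be forced by the purity established in (1)''. There is no such principle: purity is already condition (3) of Definition~\ref{defn: CM dual-t-motive}, so if purity forced the existence of enough $\sigma$-invariants in $\TT\otimes_{\ok[t]}\eM$, then statement (2) would be vacuous and would need no CM input at all --- whereas non-uniformizable abelian $t$-modules exist already in Anderson's original paper, and uniformizability is precisely the nontrivial analytic content of (2). In all known proofs the convergence comes from arithmetic input specific to the CM situation: in the cyclotomic case treated in this paper, the Betti cycles $\tilde{\psi}_*$ of Lemma~\ref{lem: psi*} converge only because the soliton $g_x=1+\sum_{i,j}c_{ij}t^iz^j$ of Theorem~\ref{thm: GCFn} satisfies $|c_{ij}|_\infty<1$; in general the corresponding analytic argument is exactly what \cite{BCPW22} supplies. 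As it stands, your step (2) is an assertion rather than a proof, and together with the misplaced role of $I_\eK^0$ in (1) this means the proposal does not yet establish the theorem.
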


\section{Period symbols and the period distribution}

\subsection{Period symbols}\label{sec: defn PS}

Let $\eM$ be a dual $t$-motive over $\bar{k}$.
The \emph{de Rham module of $\eM$ over $\ok$} is (see \cite[Section 4.5]{HJ20}):
\[
H_{\mathrm{dR}}(\eM,\ok)\assign 
\Hom_{\bar{k}}(\eM/(t-\theta)\eM, \bar{k}).
\]
In general, for an algebraically closed subfield $\KK$ of $\CC_\infty$ containing $\ok$,
we set 
\[
H_{\mathrm{dR}}(\eM,\KK)\assign \KK \otimes_{\ok} H_{\mathrm{dR}}(\eM,\ok) \cong \Hom_{\KK}(\eM_\KK/(t-\theta)\eM_\KK, \KK), \quad 
\text{where} \quad \eM_\KK \assign \KK\otimes_{\ok} \eM.
\]
We also  identify $H_{\mathrm{dR}}(\eM,\KK)$ with the space of $\KK$-linear functionals on $\eM_\KK$ which factor through $\eM_\KK/(t-\theta) \eM_\KK$.
Elements of $H_{\mathrm{dR}}(\eM,\KK)$ are called \emph{differentials of $\eM$ over $\KK$}.

Suppose $\eM$ is uniformizable.
Given $\omega \in H_{\mathrm{dR}}(\eM,\bar{k})$,
we naturally extend $\omega$ to a $\CC_{\infty}$-linear functional on $\MM^\dagger = \TT^\dagger \otimes_{\bar{k}[t]}\eM$ by setting
\[
\omega(f\otimes m)\assign  f(\theta)\cdot \omega(m), \quad \forall f \in \TT^\dagger \text{ and }  m \in \eM,
\]
and this induces
a $\CC_\infty$-linear functional $\omega : \MM^\dagger/(t-\theta)\MM^\dagger \rightarrow \CC_\infty$.
Suppose $\eM$ is uniformizable.
The \textit{de Rham pairing} of $\eM$ is defined as follows:
\begin{equation}\label{E:deRham pairing}
H_{\mathrm{dR}}(\eM,\bar{k}) \times H_{\mathrm{Betti}}(\eM)
\longrightarrow \CC_\infty, \quad (\omega,\gamma) \longmapsto \int_{\gamma} \omega \assign  \omega(\gamma),
\end{equation}
This pairing is non-degenerate
(see \cite[Lemma~5.1.2]{BCPW22}).
Every period integral $\int_{\gamma} \omega$ is called a \textit{period of $\eM$}.
In particular, let
\[
H^\eM \assign \Hom_{\eA}(H_{\mathrm{Betti}}(\eM),\ek)
\quad \text{ and } \quad 
H^\eM_{\CC_\infty} \assign \CC_\infty \underset{v_\theta,\ek}{\otimes} H^\eM \cong \Hom_{\eA}(H_{\mathrm{Betti}}(\eM),\CC_\infty),
\]
where the $\eA$-module structure on $\CC_\infty$ comes from the evaluation map $v_\theta: \ek \cong k\subset \CC_\infty$.
The de~Rham pairing induces the so-called \emph{period isomorphism}
\begin{equation}\label{eqn: period-iso}
H_{\mathrm{dR}}(\eM,\CC_\infty) \cong H^\eM_{\CC_\infty}.
\end{equation}

Given two uniformizable dual $t$-motives $\eM$ and $\eM'$ over $\KK$,
let $f:\eM \rightarrow \eM'$ be a morphism over $\KK$. The induced $\KK$-linear homomorphism $f^*: H_{\mathrm{dR}}(\eM',\KK) \rightarrow H_{\mathrm{dR}}(\eM,\KK)$ defined by
\[
f^*\omega' (m)\assign \omega'(f(m)), \quad \forall m \in \eM \text{ and } \omega' \in H_{\mathrm{dR}}(\eM',\KK),
\]
is called the \emph{pull-back of $f$}.
Recall that $f_*: H_{\mathrm{Betti}}(\eM)\rightarrow H_{\mathrm{Betti}}(\eM')$ is the push-forward of $f$ (see Section~\ref{subsec: push-forward}).
The following equality holds:
\begin{equation}\label{eqn: pf-pb}
\int_{f_*\gamma}\omega' = \int_{\gamma}f^*\omega',\quad \forall \gamma \in H_{\mathrm{Betti}}(\eM),\ \omega' \in H_{\mathrm{dR}}(\eM',\KK).
\end{equation}
Consequently, let $f_*^\vee: H^{\eM'}_{\CC_\infty} \rightarrow H^{\eM}_{\CC_\infty}$ be the $\CC_\infty$-linear map induced by $f_*$.
The following diagram commutes:
\begin{equation}\label{eqn: comp-period-iso}
\SelectTips{cm}{}
\xymatrix{
H_{\mathrm{dR}}(\eM',\CC_\infty) \ar[r]^>>>>>{\sim} \ar[d]_{f^*} & H^{\eM'}_{\CC_\infty}\ar[d]^{f_*^{\vee}} \\
H_{\mathrm{dR}}(\eM,\CC_\infty) \ar[r]^>>>>>{\sim} & H^{\eM}_{\CC_\infty},
}
\end{equation}

\begin{remark}\label{rem: ess-surj}
Let $\eM$ and $\eM'$ be two dual $t$-motives over $\KK$.
A morphism $f:\eM\rightarrow \eM'$ (over $\KK$) is \emph{essentially surjective} if $f(\eM)$ is isogenous to $\eM'$ via the inclusion.
In this case, we get that $f_*^\vee : H^{\eM'}_{\CC_\infty} \rightarrow H^{\eM}_{\CC_\infty}$ is injective, whence so is $f^*: H_{\mathrm{dR}}(\eM',\KK)\rightarrow H_{\mathrm{dR}}(\eM,\KK)$ by the above commutative diagram.
\end{remark}

Now, let $\eM$ be a CM dual $t$-motive over $\bar{k}$ with generalized CM type $(\eK,\Xi)$.
For each $\xi \in J_\eK$, there exists a 
nonzero differential $\omega_{\eM,\xi} \in H_{\mathrm{dR}}(\eM,\bar{k})$, unique up to a $\bar{k}^{\times}$-multiple, satisfying (see \cite[Proposition~5.2.1]{BCPW22})
\begin{equation}\label{eqn: omegaM-xi}
\omega_{\eM,\xi} (\alpha \cdot m) = \nu_\xi(\alpha) \cdot \omega_{\eM,\xi}(m), \quad \text{$\forall\, m \in \eM$ and $\alpha \in O_\eK$.}
\end{equation}
We call $\omega_{\eM,\xi}$ the \emph{differential of $\eM$ associated with $\xi$}.
Since $H_{\rm Betti}(\eM)$ is projective of rank one over $O_\eK$ and any other CM dual $t$-motive with the same CM type $(\eK,\Xi)$ must be isogenous to $\eM$, the period $\int_{\gamma} \omega_{\eM,\xi}$ for a nonzero cycle $\gamma \in H_{\rm Betti}(\eM)$ is, up to a $\bar{k}^\times$-multiple, uniquely determined by $\eK$, $\Xi$, and $\xi$ alone.

\begin{definition}[See \emph{\cite[Def.~5.2.2]{BCPW22}}] \label{defn: period quantity}
Let $\eK$ be a CM field over $\ek$ and $\Xi$ be a generalized CM type of $\eK$. 
Given $\xi \in J_\eK$, the {\it period symbol} $\Pcal_\eK(\xi,\Xi)$ is the coset in $\CC_\infty^\times/\bar{k}^\times$ represented by the period
$\int_{\gamma} \omega_{\eM,\xi}$
for any CM dual $t$-motive $\eM$ with generalized CM type $(\eK,\Xi)$ over $\bar{k}$ and a nonzero cycle $\gamma \in H_{\rm Betti}(\eM)$.

We also use the notation $p_\eK(\xi,\Xi) \in \CC_\infty^\times$ for an arbitrary representative of $\Pcal_\eK(\xi,\Xi)$, and call it a period symbol as well if there is no risk of confusion.
\end{definition}

\begin{remark}\label{rem: Carlitz period}
We denote by $x\sim y$ for $x,y \in \CC_\infty^\times$ that $x/y \in \bar{k}^\times$.
When $\eK = \ek$ and $\Xi = n\xi_\theta$, where $\xi_\theta$ is the point $t=\theta \in \PP^1_{/\bar{k}}$,
we have that (see \cite[Proposition~5.2.3]{BCPW22})
\[
p_{\ek}(\xi_\theta, n\xi_\theta) \sim \tilde{\pi}^n, 
\]
where $\tilde{\pi}$ is a fundamental period of the Carlitz $\FF_{q}[t]$-module: 
\[
\tilde{\pi} \assign (-\theta)^{\frac{q}{q-1}} \cdot \prod_{i=1}^\infty \left(1-\frac{\theta}{\theta^{q^i}}\right)^{-1}.
\]
\end{remark}

The needed properties of period symbols are stated in the following (see \cite[Theorem~1.3.1]{BCPW22}):

\begin{theorem}\label{thm: PS}
Let $\eK$ be a CM field over $\ek$.
For each $\xi \in J_\eK$,
we may extend $\Pcal_\eK(\xi,\cdot)$ to a unique group homomorphism 
from $I_\eK^0$ to $\CC_\infty^\times/\bar{k}^\times$ satisfying the following:
\begin{itemize}
\item[(1)] Let $\eK'$ be a CM field containing $\eK$. For $\xi' \in J_{\eK'}$ and $\Phi^0 \in I_\eK^0$, 
\[
    \Pcal_{\eK}\big({\Res}_{\eK'/\eK}(\xi'),\Phi^0\big) = \Pcal_{\eK'}\big(\xi',{\Inf}_{\eK'/\eK}(\Phi^0)\big),
\]
where ${\Res}_{\eK'/\eK}$ and ${\Inf}_{\eK'/\eK}$ are restriction and inflation, respectively (see Section~\ref{sub:Res and Inf}).
\item[(2)] For each $\varrho \in \Aut(\eK/\ek)$, 
\[
    \Pcal_{\eK}\big(\xi^{\varrho},\Phi^0\big) = \Pcal_\eK\big(\xi,(\Phi^0)^{\varrho^{-1}}\big), \quad \forall \xi \in J_\eK,\ \Phi^0 \in I_\eK^0,
\]
where $\xi^\varrho \in J_\eK$ corresponds to the embedding $\nu_\xi \circ \varrho$ and $(\Phi^0)^{\varrho} \assign \sum_{\xi} m_\xi \xi^\varrho$.
\item[(3)] Let $\eK^+$ be the maximal totally real subfield of $\eK$. Given $\xi^+ \in J_{\eK^+}$ and $\Phi^0 \in I_\eK^0$, we have the Legendre relation
\[
\prod_{\substack{\xi \in J_\eK \\ \pi_{\bX/\bX^+}(\xi) = \xi^+}}p_\eK(\xi,\Phi^0)
\sim \tilde{\pi}^{\text{\rm wt}(\Phi^0)}.
\]
\end{itemize}
\end{theorem}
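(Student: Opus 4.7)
The plan is to first establish the well-definedness and multiplicativity of $\Pcal_\eK(\xi,\cdot)$ on the monoid of generalized CM types, and then extend it to the group $I_\eK^0$ by linearity. The key construction is the tensor product: if $\eM_1$ and $\eM_2$ are CM dual $t$-motives over $\bar{k}$ with generalized CM types $(\eK,\Xi_1)$ and $(\eK,\Xi_2)$, then $\eM_1 \otimes_{O_{\bar{\eK}}} \eM_2$ is naturally a CM dual $t$-motive of type $(\eK,\Xi_1+\Xi_2)$, since the ideals controlling the $\sigma$-action multiply as $\Ifk_{\Xi_1}\cdot \Ifk_{\Xi_2} = \Ifk_{\Xi_1+\Xi_2}$. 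The de~Rham differential associated with $\xi$ on the tensor product is the tensor of the corresponding differentials on the factors, and a nonzero Betti element in $H_{\mathrm{Betti}}(\eM_1)\otimes H_{\mathrm{Betti}}(\eM_2)$ pairs to the product of periods. Combined with uniqueness-up-to-isogeny from Theorem~\ref{thm: CM prop}(3), this yields multiplicativity in $\CC_\infty^\times/\bar{k}^\times$. Every element of $I_\eK^0$ is a difference of two generalized CM types (add a large multiple of a fixed auxiliary generalized CM type), so extension and uniqueness are automatic.

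For property (1), I would show that if $\eM$ is a CM dual $t$-motive over $\bar{k}$ with type $(\eK,\Phi^0)$, then $O_{\bar{\eK'}}\otimes_{O_{\bar{\eK}}}\eM$ carries a natural structure of a CM dual $t$-motive with type $(\eK',\Inf_{\eK'/\eK}(\Phi^0))$, since pulling back the ideal $\Ifk_{\Phi^0}$ along $\bX'\to \bX$ produces exactly $\Ifk_{\Inf(\Phi^0)}$. The differential associated with $\xi' \in J_{\eK'}$ on the base change restricts on $\eM$ to the differential associated with $\Res_{\eK'/\eK}(\xi')$, up to a $\bar{k}^\times$-scalar. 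Property (2) is more immediate: the Galois automorphism $\varrho \in \Aut(\eK/\ek)$ permutes the eigen-lines in $H_{\mathrm{dR}}(\eM,\bar{k})$ according to its action on $J_\eK$, so replacing $\xi$ by $\xi^\varrho$ in \eqref{eqn: omegaM-xi} is the same as twisting the CM type by $\varrho^{-1}$, and the period symbol transforms accordingly.

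The main obstacle is the Legendre relation in (3), which has no tensor-product shortcut. Here I would reduce to a determinant computation: the top exterior power $\bigwedge^{[\eK:\eK^+]}_{O_{\bar{\eK^+}}}\eM$ descends to a CM dual $t$-motive over $\eK^+$ whose associated ideal is the norm from $O_\eK$ down to $O_{\eK^+}$ of $\Ifk_{\Phi^0}$. The definition of $I_\eK^0$ forces this norm to be the ideal of $O_{\bar{\eK^+}}$ cut out by $\wt(\Phi^0)\cdot \pi_{\bX/\bX^+,*}(\xi^+)$, and since $\eK^+$ is totally real, the resulting determinant motive is isogenous to the $\wt(\Phi^0)$-th tensor power of the pullback of the Carlitz motive. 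Its period is then $\tilde{\pi}^{\wt(\Phi^0)}$ up to $\bar{k}^\times$ by Remark~\ref{rem: Carlitz period}, while on the other side the determinant of the de~Rham pairing on $\eM$ over the fiber $\pi_{\bX/\bX^+}^{-1}(\xi^+)$ is precisely $\prod_\xi p_\eK(\xi,\Phi^0)$. I anticipate the delicate point to be identifying this determinant with a Carlitz period rather than merely a $\bar{k}$-multiple of one, which will require tracking the purity/weight data from Definition~\ref{defn: CM dual-t-motive}(3) carefully through the exterior-power construction.
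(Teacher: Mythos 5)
You should first note that this paper does not prove Theorem~\ref{thm: PS} at all: it is imported verbatim from \cite[Theorem~1.3.1]{BCPW22}, so there is no internal proof to compare your argument against. Judged on its own terms, your outline follows the same circle of ideas as the cited work: multiplicativity via the tensor product $\eM_1\otimes_{O_{\bK}}\eM_2$ of CM dual $t$-motives together with isogeny-invariance (Theorem~\ref{thm: CM prop}~(3)); extension to all of $I_\eK^0$ by adding a large multiple of $\Inf_{\eK/\ek}(\xi_\theta)=\sum_{\xi\in J_\eK}\xi$; property (1) by base change of the coefficient ring from $O_\eK$ to $O_{\eK'}$; and the Legendre relation by a norm/determinant over the totally real subfield, where the defining condition of $I_\eK^0$ does give $\Nr(\Ifk_{\Phi^0})=\bigl((t-\theta)\cdot(\ok\otimes_{\FF_q}O_{\eK^+})\bigr)^{\wt(\Phi^0)}$.

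Three caveats on your sketch. For (2), for a \emph{fixed} $\eM$ the two sides involve different CM types, so ``permuting eigen-lines'' of one motive is not yet an argument; the honest proof compares $\eM$ with the motive obtained by precomposing the $O_\eK$-action with $\varrho$, for which $\omega_{\eM,\xi}$ becomes the $\xi^\varrho$-differential. For (3), the Betti class of the determinant motive is a top wedge $\gamma_1\wedge\cdots\wedge\gamma_r$ ($r=[\eK:\eK^+]$), so what you compute is $\det\bigl(\int_{\gamma_i}\omega_{\eM,\xi_j}\bigr)$ over the fiber $\{\xi_1,\dots,\xi_r\}$ above $\xi^+$, not directly $\prod_j\int_\gamma\omega_{\eM,\xi_j}$; to identify the two up to $\ok^\times$, pass to $\ek\otimes_\eA H_{\mathrm{Betti}}(\eM)$, which is free of rank one over $\eK$, write $\gamma_i=\alpha_i\gamma$ with $\alpha_1,\dots,\alpha_r$ a $\eK^+$-basis of $\eK$, and use that $\det\bigl(\nu_{\xi_j}(\alpha_i)\bigr)$ is a nonzero element of $\ok$ by separability. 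Finally, your closing worry is moot: the Legendre relation is asserted only up to $\ok^\times$, so Remark~\ref{rem: Carlitz period} together with your already-established (1) suffices; the real work your sketch leaves implicit is the verification that tensor products, base changes and exterior powers are again pure, uniformizable dual $t$-motives (freeness over $\ok[\sigma]$, purity, behaviour of Betti modules), which is exactly what \cite{BCPW22} and \cite{HJ20} supply.
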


Next, we shall use these properties to introduce an analogue of Anderson's period distribution on the space of ``Stickelberger functions''.

\subsection{Stickelberger functions}
Let $\eG \assign \gal(\ek^{\text{sep}}/\ek)$ where  $\ek^{\text{sep}}$ is a separable closure  of $\ek$.
Fix an embedding $\nu: \ek^{\text{sep}} \hookrightarrow \bar{k} \subset \CC_\infty$ which extends the evaluation map $v_\theta:\ek\cong k$ sending $t$ to $\theta$.
The Galois group $\gal(k^{\rm sep}_\infty/k_\infty)$ is identified with a subgroup $\eG_\infty$ of $\eG$ via $\nu$, which is the decomposition group of a place of $\ek^{\text{sep}}$ lying over $\infty$.

\begin{definition}\label{defn: S(G)}
Let $\Sscr(\eG)$ be the space consisting of all locally constant $\QQ$-valued functions $\varphi$ on $\eG$ satisfying the following two conditions:
\begin{itemize}
    \item[(1)] For $\varrho_1,\varrho_2 \in \eG$ and $\varrho_\infty \in \eG_\infty$, we have
    $$\varphi\big(\varrho_1 \cdot (\varrho_2 \varrho_\infty \varrho_2^{-1}\varrho_\infty^{-1})\big) = \varphi(\varrho_1);$$
    \item[(2)]
    Let $d\varrho_\infty$ be the Haar measure on $\eG_\infty$ normalized so that $\text{vol}(\eG_\infty,d\varrho_\infty) = 1$.
    Then
    $$\int_{\eG_\infty}\varphi(\varrho_1\varrho_\infty) d\varrho_\infty = \int_{\eG_\infty}\varphi(\varrho_2\varrho_\infty) d\varrho_\infty \quad \forall \varrho_1,\varrho_2 \in \eG.$$ 
\end{itemize}
\end{definition}

\begin{remark}\label{rem: S(G)}
${}$
\begin{itemize}
    \item[(1)]
    The space $\Sscr(\eG)$ is endowed with a left action of $\eG$ under right translations:
    $$
    (\varrho_0 \cdot \varphi) (\varrho)\assign \varphi(\varrho\varrho_0), \quad \forall \varrho_0, \varrho \in \eG \text{ and } \varphi \in \Sscr(\eG).
    $$
Given $\varphi \in \Sscr(\eG)$, let $\eH_\varphi$ be the stabilizer of $\varphi$ in $\eG$, which is of finite index in $\eG$.
The condition (1) of Definition~\ref{defn: S(G)} says that the commutator subgroup $[\eG,\eG_\infty]$ is contained in $\eH_\varphi$,
which implies in particular that $\eG_\infty \eH_\varphi$ becomes a subgroup of $\eG$ with finite index.
\item[(2)]
Given a subgroup $\eH$ of $\eG$ with $\eH \supset [\eG,\eG_\infty]\ (\supset [\eG_\infty,\eG_\infty])$,
one observes that 

\begin{tabular}{cl}
\\[-1em]
(a) & $\varrho \eG_\infty \varrho^{-1} \subset \eG_\infty \eH$ for every $\varrho \in \eG$; \\[0.15cm]
(b) & 
$\eH$ is normal in $\eG_\infty \eH$ and 
$\displaystyle\frac{\eG_\infty \eH}{\eH} \cong \frac{\eG_\infty}{\eG_\infty \cap \eH}$ is an abelian group.\\[-1em]
& 
\end{tabular}

\noindent When $\eH = \eH_\varphi$ for a  function $\varphi \in \Sscr(\eG)$,
let 
$\eK_\varphi$ and $\eK^+_\varphi$ be the fixed field of $\eH_\varphi$ and $\eG_\infty\eH_\varphi$ in $\ek^{\text{sep}}$, respectively. 
Then (a) implies that $\eK_\varphi^+$ is a totally real field over $\ek$, and (b) shows that
$\eK_\varphi$ is a CM field over $\ek$ with maximal totally real subfield $\eK_\varphi^+$ satisfying that the extension $\eK_\varphi/\eK_\varphi^+$ is abelian with $\gal(\eK_\varphi/\eK_\varphi^+) \cong \eG_\infty \eH_\varphi / \eH_\varphi$.
\item[(3)] Given two CM fields $\eK_1$ and $\eK_2$, the compositum $\eK_1\cdot \eK_2$ may not be a CM field in general (cf.\ \cite[Remark~3.1.2~(4)]{BCPW22}).
However, for $\varphi_1, \varphi_2 \in \Sscr(\eG)$, 
the condition (1) of Definition~\ref{defn: S(G)} ensures that
$$
[\eG,\eG_\infty] \subset \eH_{\varphi_1}\cap \eH_{\varphi_2}.
$$
This implies that the compositum $\eK= \eK_{\varphi_1}\cdot \eK_{\varphi_2}$ is still a CM field with 
\[
\gal(\ek^{\text{sep}}/\eK) = \eH_{\varphi_1}\cap \eH_{\varphi_2},
\]
and its maximal totally real subfield $\eK^+$ is the fixed field of $\eG_\infty(\eH_{\varphi_1}\cap \eH_{\varphi_2})$. 
    \item[(4)] The condition (1) of Definition~\ref{defn: S(G)} says in particular that the space $\Sscr(\eG)$ is independent of the chosen embedding $\nu$.
    Indeed, let $\nu':\ek^{\rm sep}\hookrightarrow \bar{k}$ be another embedding and $\eG_\infty'$ be the corresponding decomposition group.
    There exists $\varrho_0 \in \eG$ so that $\nu' = \nu \circ \varrho_0$, whence
    $\eG_\infty' = \varrho_0^{-1} \eG_\infty \varrho_0$.
    Notice that 
    $$[\eG,\eG_\infty] = [\eG,\varrho_0^{-1}\eG_\infty\varrho_0] = [\eG,\eG_\infty'].$$
    Thus for a locally constant function $\varphi$ on $\eG$, the condition that $[\eG,\eG_\infty]$ is contained in $\eH_\varphi$ is equivalent to saying that $[\eG,\eG_\infty']$ is contained in $\eH_\varphi$, and for every $\varrho \in \eG$ and $\varrho_\infty \in \eG_\infty$,
    $$
    \varphi(\varrho \varrho_\infty) = \varphi\big((\varrho \varrho_\infty) (\varrho_\infty^{-1}\varrho_0^{-1} \varrho_\infty \varrho_0)\big) = \varphi(\varrho (\varrho_0^{-1} \varrho_\infty \varrho_0)). 
    $$
    Therefore the assertion holds.
    \item[(5)]
    Given an open subgroup $\eH$ of $\eG$ with $[\eG,\eG_\infty] \subset \eH$,
    set
\[
    \Sscr(\eG/\eH) \assign \{\varphi \in \Sscr(\eG)\mid \varrho \cdot \varphi = \varphi, \quad \forall \varrho \in \eH\}.
\]
Every $\varphi \in \Sscr(\eG/\eH)$ can be written in the following unique form:
\[
\varphi = \sum_{\varrho \eH \in \eG/\eH}m_{\varrho \eH}(\varphi) \cdot \mathbf{1}_{\varrho \eH},
\]
where $m_{\varrho \eH}(\varphi) = \varphi(\varrho)$ for each coset $\varrho \eH \in \eG/\eH$ and $\mathbf{1}_{\varrho \eH}$ is the characteristic function of $\varrho \eH$.
Moreover, put $\eH^+\assign \eG_\infty \eH$ and $\eH_\infty \assign \eG_\infty \cap \eH$.
For each $\varphi \in \Sscr(\eG/\eH)$, the isomorphism $\eG_\infty/\eH_\infty \cong \eH^+/\eH$ enables us to translate the condition~(2) in Definition~\ref{defn: S(G)} into
\[
\sum_{\varrho_\infty \eH_\infty \in \eG_\infty /\eH_\infty}m_{\varrho_1\varrho_\infty \eH}(\varphi) = \sum_{\varrho_\infty \eH_\infty \in \eG_\infty /\eH_\infty}m_{\varrho_2\varrho_\infty \eH}(\varphi), \quad \forall \varrho_1, \varrho_2 \in \eG.
\]
\end{itemize} 
\end{remark}

Let $\eK\subset \ek^{\text{sep}}$ be a CM field satisfying that
$\eH_\eK \assign  \Gal(\ek^{\text{sep}}/\eK) \supset [\eG,\eG_\infty]$.
For each $\xi \in J_\eK$, 
take $\varrho_\xi \in \eG$ so that the embedding $\nu_\xi \in \Emb(\eK,\CC_\infty)$ satisfies $$\nu_\xi = (\nu \circ \varrho_\xi)\big|_\eK.$$
The coset $\varrho_\xi \eH_\eK$ in $\eG$ is uniquely determined by $\xi$.
Conversely, given a coset $\varrho \eH_\eK$ in $\eG$, there exists a unique point $\xi_\varrho \in J_\eK$ so that 
\[
\nu_{\xi_\varrho} = (\nu \circ \varrho)\big|_\eK
\quad \in \Emb(\eK,\CC_\infty).
\]
This gives us a bijection (with respect to the fixed $\nu: \ek^{\sep} \cong k^{\sep} \subset \CC_\infty$):
\begin{eqnarray}\label{eqn: JK-coset}
    J_\eK & \stackrel{\sim}{\longleftrightarrow}&  \eG/\eH_\eK  \\
    \xi & \longmapsto & \varrho_\xi \eH_\eK
    \nonumber \\
    \xi_{\varrho} & \longleftmapsto & \varrho \eH_\eK. \nonumber 
\end{eqnarray}

\begin{remark}\label{rem: JK+-coset}
Given another embedding $\nu':\ek^{\sep} \hookrightarrow \CC_\infty$,
there exists $\varrho' \in \eG$ so that $\nu' = \nu \circ \varrho'$.
Hence the coset corresponding to a given $\xi \in J_\eK$ under the above bijection with respect to $\nu'$ becomes $(\varrho')^{-1}\varrho_\xi \eH_\eK$.
\end{remark}
Let $\eK^+$ be the maximal totally real subfield of $\eK$, and $\eH_{\eK^+} = \gal(\ek^{\sep}/\eK^+) = \eG_\infty \eH_\eK$.
Put $\eH_{\eK,\infty}\assign \eG_\infty \cap \eH_\eK$.
Then \eqref{eqn: JK-coset} induces the following one-to-one correspondence for each $\xi^+ \in J_{\eK^+}$:
\begin{equation}\label{eqn: xi+coset}
\{\xi \in J_\eK\mid \pi_{\bX/\bX^+}(\xi) = \xi^+\} \stackrel{\sim}{\longleftrightarrow}
\varrho_{\xi^+}\eH_{\eK^+}/\eH_\eK \assign 
\big\{\varrho_{\xi^+}\varrho_\infty \eH_\eK\mid \varrho_\infty \eH_{\eK,\infty} \in \eG_\infty/\eH_{\eK,\infty}\big\}.
\end{equation}
From Remark~\ref{rem: S(G)}~(5),
we may write each $\varphi \in \Sscr(\eG/\eH_\eK)$ in the following unique form:
\begin{equation}\label{eqn: phi-1}
\varphi = \sum_{\xi \in J_\eK} m_\xi(\varphi) \cdot \mathbf{1}_{\varrho_\xi \eH_\eK}, \quad \text{where $m_\xi(\varphi) = \varphi(\varrho_\xi)$ for every $\xi \in J_\eK$,}
\end{equation}
and the condition (2) in Definition~\ref{defn: S(G)} becomes:
\begin{equation}\label{eqn: cond (2)-equiv}
\sum_{\substack{\xi_1 \in J_\eK \\ \pi_{\bX/\bX^+}(\xi_1) = \xi_1^+}}
m_{\xi_1}(\varphi) = \sum_{\substack{\xi_2 \in J_\eK \\ \pi_{\bX/\bX^+}(\xi_2) = \xi_2^+}}
m_{\xi_2}(\varphi), \quad \forall \xi_1^+,\xi_2^+ \in J_{\eK^+}.
\end{equation}

Given $\Phi^0 \in I_\eK^0$,
write 
\[
\Phi^0 = \sum_{\xi \in J_\eK} m_\xi \xi, \quad \text{ with\quad $m_\xi \in \ZZ$.}
\]
We set
\begin{equation}\label{eqn: phi-K-Phi}
\varphi_{\eK,\Phi^0}\assign \sum_{\xi \in J_\eK} m_\xi \cdot \mathbf{1}_{\varrho_\xi \eH_\eK},
\end{equation}

\begin{lemma}\label{lem: S(G)}
Let $\eK \subset \ek^{\sep}$ be a CM field satisfying that $\eH_\eK \assign \gal(\ek^{\sep}/\eK)$ contains the commutator subgroup $[\eG,\eG_\infty]$.
\begin{itemize}
\item[(1)] 
For every $\Phi^0 \in I_\eK^0$, we have that $\varphi_{\eK,\Phi^0} \in \Sscr(\eG/\eH_\eK)$.
Moreover, the homomorphism
$\varphi_\eK\assign (\Phi^0\mapsto \varphi_{\eK,\Phi^0}): I_\eK^0\rightarrow \Sscr(\eG/\eH_\eK)$ is injective.
\item[(2)] The space $\Sscr(\eG/\eH_\eK)$ consists of all the rational multiples of $\varphi_{\eK,\Phi^0}$ for $\Phi^0 \in I_\eK^0$, for which $\varphi_\eK$ induces a $\QQ$-linear isomorphism $\QQ\otimes_\ZZ I_\eK^0 \cong \Sscr(\eG/\eH_\eK)$.
\end{itemize}
\end{lemma}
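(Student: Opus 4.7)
The plan is to prove both parts by directly matching the two defining conditions of $\Sscr(\eG)$ with the condition cutting out $I_\eK^0$, transporting the discussion from cosets of $\eH_\eK$ to points of $J_\eK$ via the bijections \eqref{eqn: JK-coset} and \eqref{eqn: xi+coset}. The content is bookkeeping rather than a genuine obstruction.

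For part~(1), I would first verify condition~(1) of Definition~\ref{defn: S(G)} for $\varphi_{\eK,\Phi^0}$: each indicator $\mathbf{1}_{\varrho_\xi \eH_\eK}$ is right-$\eH_\eK$-invariant, and by hypothesis every element $\varrho_2\varrho_\infty\varrho_2^{-1}\varrho_\infty^{-1} \in [\eG,\eG_\infty]$ lies in $\eH_\eK$, so right multiplication by such a commutator preserves each coset $\varrho_1\varrho_\xi \eH_\eK$ and hence fixes every $\mathbf{1}_{\varrho_\xi \eH_\eK}$. Condition~(2) is the substantive input: by Remark~\ref{rem: S(G)}(5) it is equivalent to the identity \eqref{eqn: cond (2)-equiv} in the multiplicities $m_\xi(\varphi_{\eK,\Phi^0}) = m_\xi$, which under the fiber identification \eqref{eqn: xi+coset} is precisely the equality defining $I_\eK^0$ in Definition~\ref{defn: A.CM type}. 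Injectivity of $\varphi_\eK$ is immediate, since the $\mathbf{1}_{\varrho_\xi\eH_\eK}$ for $\xi \in J_\eK$ are pairwise disjointly supported and thus linearly independent over $\QQ$.

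For part~(2), I would take an arbitrary $\varphi \in \Sscr(\eG/\eH_\eK)$ and expand it via Remark~\ref{rem: S(G)}(5) as $\varphi = \sum_{\xi \in J_\eK} m_\xi(\varphi)\mathbf{1}_{\varrho_\xi\eH_\eK}$ with $m_\xi(\varphi) \in \QQ$. Since $\eK/\ek$ is a finite separable extension, $J_\eK$ is finite, so one can choose a positive integer $N$ with $N m_\xi(\varphi) \in \ZZ$ for every $\xi$. Condition~(2) of Definition~\ref{defn: S(G)} applied to $\varphi$, rewritten as \eqref{eqn: cond (2)-equiv}, guarantees that the divisor $\Phi^0 \assign \sum_{\xi \in J_\eK} N m_\xi(\varphi)\,\xi$ lies in $I_\eK^0$, and by construction $N\varphi = \varphi_{\eK,\Phi^0}$, so $\varphi = N^{-1}\varphi_{\eK,\Phi^0}$ is a rational multiple of an element in the image of $\varphi_\eK$. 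Combined with the injectivity in part~(1), extending $\varphi_\eK$ $\QQ$-linearly yields the claimed isomorphism $\QQ \otimes_\ZZ I_\eK^0 \xrightarrow{\sim} \Sscr(\eG/\eH_\eK)$.

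The only subtlety worth stressing is that both directions hinge on the same dictionary: a locally constant $\QQ$-valued function on $\eG$ factoring through $\eG/\eH_\eK$ corresponds, via \eqref{eqn: JK-coset}, to a $\QQ$-divisor on $J_\eK$, and the Stickelberger-type condition~(2) of Definition~\ref{defn: S(G)} corresponds, via \eqref{eqn: xi+coset} and Remark~\ref{rem: S(G)}(5), to the condition that the fiber sums over $J_{\eK^+}$ be constant, which is exactly membership in $\QQ \otimes_\ZZ I_\eK^0$. Once this dictionary is set up, both halves of the lemma fall out, and no further obstacle arises.
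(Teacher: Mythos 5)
Your proposal is correct and follows essentially the same route as the paper: match the multiplicity identity \eqref{eqn: cond (2)-equiv} from Remark~\ref{rem: S(G)}~(5) with the defining condition of $I_\eK^0$ via the bijections \eqref{eqn: JK-coset} and \eqref{eqn: xi+coset}, get injectivity from disjointness of the cosets, and for surjectivity clear denominators (the paper's ``assume $\varphi$ is $\ZZ$-valued without loss of generality'') and read off $\Phi^0_\varphi = \sum_\xi m_\xi(\varphi)\xi \in I_\eK^0$ with $\varphi_{\eK,\Phi^0_\varphi} = \varphi$. The only difference is that you spell out the verification of condition~(1) of Definition~\ref{defn: S(G)} explicitly, which the paper leaves implicit.
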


\begin{proof}
Comparing the condition of $\Phi^0$ in Definition~\ref{defn: A.CM type} with \eqref{eqn: cond (2)-equiv}, we get that $\varphi_{\eK,\Phi^0}$ lies in $\Sscr(\eG/\eH_\eK)$.
Suppose $\varphi_{\eK,\Phi^0} = 0$.
As distinct cosets of $\eH_\eK$ in $\eG$ are disjoint, the expression of $\varphi_{\eK,\Phi^0}$ in \eqref{eqn: phi-K-Phi} tells us that $m_\xi = 0$ for every $\xi \in J_\eK$, whence $\Phi^0 = 0$.
This implies that $\varphi_\eK$ is injective and completes the proof of (1).

To prove $(2)$, take $\varphi \in \Sscr(\eG/\eH_\eK)$ and suppose $\varphi$ is $\ZZ$-valued without loss of generality.
Expressing $\varphi$ as in \eqref{eqn: phi-1},
we get $m_\xi(\varphi) \in \ZZ$, and \eqref{eqn: cond (2)-equiv} ensures that
\[
\Phi^0_\varphi \assign \sum_{\xi \in J_\eK} m_\xi(\varphi) \xi\quad  \in I_\eK^0.
\]
It is clear that $\varphi_{\eK,\Phi^0_\varphi} = \varphi$ and the result holds.
\end{proof}

\begin{remark}\label{rem: S(G/H)-dim}
For each CM field $\eK\subset \ek^{\sep}$ satisfying that $\eH_\eK = \gal(\ek^{\sep}/\eK)$ contains the commutator subgroup $[\eG,\eG_\infty]$, the isomorphism in Lemma~\ref{lem: S(G)}~(2) says that
\[
\dim_{\QQ}\Sscr(\eG/\eH_\eK) = \rank_\ZZ I_\eK^0 = 1+ (1-\frac{1}{[\eK:\eK^+]})[\eK:\ek].
\]
\end{remark}

The following lemma shows that the embeddings $\varphi_\eK: I_\eK^0\rightarrow \Sscr(\eG/\eH_\eK)\subset \Sscr(\eG)$ factor through the inflation maps:

\begin{lemma}\label{lem: Inf}
Let $\eK, \eK' \subset \ek^{\text{\rm sep}}$ be two CM fields over $\ek$ fixed by the commutator subgroup $[\eG,\eG_\infty]$ and $\eK\subset \eK'$.
For each $\Phi^0 \in I_\eK^0$, we have
$$
\varphi_{\eK,\Phi^0} = \varphi_{\eK',\text{\rm Inf}_{\eK'/\eK}(\Phi^0)} \quad \in \Sscr(\eG).
$$
\end{lemma}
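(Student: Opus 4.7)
The plan is to unpack the definition of both sides using the bijection \eqref{eqn: JK-coset} and then verify that the inflation of a point in $J_\eK$ corresponds exactly to the decomposition of a single coset of $\eH_\eK$ into cosets of $\eH_{\eK'}$. Write $\Phi^0 = \sum_{\xi \in J_\eK} m_\xi \xi$, so that by definition
\[
\varphi_{\eK,\Phi^0} = \sum_{\xi \in J_\eK} m_\xi \cdot \mathbf{1}_{\varrho_\xi \eH_\eK}.
\]
On the other side, it suffices (by $\ZZ$-linearity of both $\Inf_{\eK'/\eK}$ and $\varphi_{\eK'}$) to prove the equality when $\Phi^0 = \xi$ is a single point of $J_\eK$. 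Thus the goal reduces to showing
\[
\mathbf{1}_{\varrho_\xi \eH_\eK} = \sum_{\substack{\xi' \in J_{\eK'} \\ \pi_{\bX'/\bX}(\xi') = \xi}} \mathbf{1}_{\varrho_{\xi'} \eH_{\eK'}}
\]
for every $\xi \in J_\eK$, where $\pi_{\bX'/\bX}: \bX' \to \bX$ is the morphism corresponding to the inclusion $\eK \hookrightarrow \eK'$.

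\textbf{Key steps.} First, I would identify the set $\{\xi' \in J_{\eK'} : \pi_{\bX'/\bX}(\xi') = \xi\}$ with cosets inside $\varrho_\xi \eH_\eK$: under the bijection in \eqref{eqn: JK-coset}, a point $\xi' \in J_{\eK'}$ with $\pi_{\bX'/\bX}(\xi') = \xi$ corresponds to a coset $\varrho' \eH_{\eK'}$ satisfying $(\nu \circ \varrho')\big|_{\eK} = \nu_\xi = (\nu \circ \varrho_\xi)\big|_\eK$, which is equivalent to $\varrho_\xi^{-1} \varrho' \in \eH_\eK$. Hence these are precisely the cosets $\varrho' \eH_{\eK'}$ with $\varrho' \eH_{\eK'} \subseteq \varrho_\xi \eH_\eK$, and the map $\xi' \mapsto \varrho_{\xi'} \eH_{\eK'}$ gives a bijection between the fiber over $\xi$ and the set of $\eH_{\eK'}$-cosets contained in $\varrho_\xi \eH_\eK$.

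Second, I would verify that the pullback divisor $\pi_{\bX'/\bX}^{*}(\xi)$ carries multiplicity one at each point of the fiber. Since $\eK$ and $\eK'$ are finite separable extensions of $\ek$, $\eK'/\eK$ is separable, so a $\CC_\infty$-valued embedding of $\eK$ extends to exactly $[\eK':\eK]$ distinct $\CC_\infty$-valued embeddings of $\eK'$. This forces the set-theoretic fiber $\pi_{\bX'/\bX}^{-1}(\xi)$ to have $[\eK':\eK]$ geometric points, each of ramification index one, so
\[
\Inf_{\eK'/\eK}(\xi) = \pi_{\bX'/\bX}^{*}(\xi) = \sum_{\pi_{\bX'/\bX}(\xi') = \xi} \xi'.
\]
Finally, combining this with the disjoint decomposition $\varrho_\xi \eH_\eK = \bigsqcup \varrho' \eH_{\eK'}$ (over $\eH_{\eK'}$-cosets inside $\varrho_\xi \eH_\eK$) gives the required equality of characteristic functions, and summing over $\xi$ weighted by $m_\xi$ completes the proof.

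\textbf{Main obstacle.} The only nontrivial point is the multiplicity-one assertion for $\pi_{\bX'/\bX}^{*}(\xi)$. Although separability of $\eK'/\eK$ rules out inseparability contributions at the generic point, one must be a little careful that the geometric fiber over a $\CC_\infty$-point lying above the generic point of $\PP^1$ is genuinely étale; this is immediate here because the points of $J_\eK$ all sit above the generic point $\theta$ of $\PP^1$, and base change to $\bar{k}$ preserves separability. Everything else is essentially bookkeeping with cosets.
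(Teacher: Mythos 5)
Your argument is correct and takes essentially the same route as the paper: both identify the fiber of $\pi_{\bX'/\bX}$ over $\xi$ with the $\eH_{\eK'}$-cosets contained in $\varrho_\xi\eH_\eK$ (via $\nu_{\xi'}\big|_{\eK}=\nu_\xi \Leftrightarrow \varrho_{\xi'}\eH_{\eK'}\subset\varrho_\xi\eH_\eK$) and then decompose $\mathbf{1}_{\varrho_\xi\eH_\eK}$ as the corresponding sum of $\mathbf{1}_{\varrho_{\xi'}\eH_{\eK'}}$. Your explicit check that $\Inf_{\eK'/\eK}(\xi)=\pi^*_{\bX'/\bX}(\xi)$ is multiplicity-free is a minor extra precaution that the paper leaves implicit (it follows since $t-\theta$ splits completely in $O_{\bK'}$), not a different approach.
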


\begin{proof}
Let $X$ and $X'$ be the curves corresponding to $\eK$ and $\eK'$, respectively.
Let
$\pi_{X'/X}:X'\rightarrow X$ be the $\FF_q$-morphism corresponding to the inclusion $\eK\subset \eK'$, and $\pi_{\bX'/\bX}$ its base change from $\FF_q$ to $\ok$.
Note that for $\xi' \in J_{\eK'}$ and $\xi \in J_{\eK}$, one has that 
$\pi_{\bX'/\bX}(\xi') = \xi$ if and only if $\nu_{\xi'}\big|_\eK = \nu_\xi$.
Let $\varrho_{\xi'} \eH_{\eK'}$ and $\varrho_\xi \eH_\eK$ be the cosets in $\eG$ corresponding to $\xi'$ and $\xi$, respectively.
Then
$$
\nu_{\xi'}\big|_\eK = \nu_\xi \quad \text{ if and only if } \quad 
\varrho_{\xi'}\eH_{\eK'} \subset \varrho_\xi \eH_\eK.
$$
Given $\Phi^0 \in I_\eK^0$, write
$$
\Phi^0 = \sum_{\xi \in J_\eK}m_\xi \cdot \xi \quad \text{ and so } \quad
\text{Inf}_{\eK'/\eK}(\Phi^0) = 
\sum_{\xi \in J_\eK}m_\xi \cdot \sum_{\subfrac{\xi' \in J_{\eK'}}{\pi_{\bX'/\bX}(\xi') = \xi}} \xi'.
$$
Therefore we have
\begin{eqnarray}
\varphi_{\eK,\Phi^0}
&=& \sum_{\xi \in J_\eK} m_\xi \cdot \mathbf{1}_{\varrho_\xi \eH_\eK} \nonumber \\
&=& \sum_{\xi \in J_\eK} m_\xi \cdot \sum_{\varrho' \in \eH_\eK/\eH_{\eK'}} \mathbf{1}_{\varrho_\xi \varrho' \eH_{\eK'}} \nonumber \\
&=& \sum_{\xi \in J_\eK} m_\xi \cdot \sum_{\subfrac{\xi' \in J_{\eK'}}{\pi_{X'/X}(\xi') = \xi}} \mathbf{1}_{\varrho_{\xi'} \eH_{\eK'}} \nonumber \\
&=& \varphi_{\eK',\text{Inf}_{\eK'/\eK}(\Phi^0)}. \nonumber
\end{eqnarray}
\end{proof}

\begin{remark}\label{rem: S(G)-iso}
For each $\varphi \in \Sscr(\eG)$, recall that $\eH_\varphi$ is the stabilizer of $\varphi$ in $\eG$ and $\eK_\varphi$ is the fixed field of $\eH_{\varphi}$.
By Remark~\ref{rem: S(G)}~(2), we have that $\eK_\varphi\subset \ek^{\sep}$ is a CM field, and
$\eH_{\eK_\varphi} = \eH_\varphi$.
Hence $\varphi \in \Sscr(\eG/\eH_{\eK_\varphi})$.
In conclusion,
Lemma~\ref{lem: S(G)} and \ref{lem: Inf} induces a $\QQ$-linear isomorphism
\begin{align*}
    \QQ \otimes_\ZZ \Big(\varinjlim_{\eK} I_\eK^0\Big) &\stackrel{\sim}{\longrightarrow} \Sscr(\eG), \\
    I_{\eK}^0\  \ni \ \Phi^0\quad  &\longmapsto \ \varphi_{\eK,\Phi^0},
\end{align*}
where $\eK$ runs through all CM fields in $\ek^{\sep}$ fixed by $[\eG,\eG_\infty]$, and the direct limit is with respect to the inflation maps.
\end{remark}

\subsection{The period distribution}

We introduce an analogue of Anderson's period distribution (see \cite[Proposition~1.5]{Anderson82}) by proving the following theorem:

\begin{theorem}\label{thm: PD}
Fix an embedding $\nu: \ek^{\text{\rm sep}}\hookrightarrow \bar{k}$.
There exists a unique $\QQ$-linear distribution $\Pscr_\nu : \Sscr(\eG)\rightarrow \CC_\infty^\times/\bar{k}^{\times}$ (with respect to $\nu$) satisfying
\begin{eqnarray}\label{eqn: Distribution}
\Pscr_\nu(\varphi_{\eK,\Phi^0}) = \Pcal_\eK(\xi_\nu,\Phi^0), \quad \forall \Phi^0 \in I_\eK^0,
\end{eqnarray}
where $\eK\subset \ek^{\text{\rm sep}}$ is a CM field fixed by the commutator subgroup $[\eG,\eG_\infty]$,
and $\xi_\nu \in J_\eK$ is the unique point satisfying that $\nu_{\xi_\nu} = \nu\big|_\eK$.
\end{theorem}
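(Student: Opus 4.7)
The statement splits naturally into uniqueness and existence, and both parts reduce to facts already in hand.

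For uniqueness, Remark~\ref{rem: S(G)-iso} exhibits $\Sscr(\eG)$ as the $\QQ$-span of the Stickelberger functions $\varphi_{\eK,\Phi^0}$, as $\eK$ ranges over the CM fields in $\ek^{\sep}$ fixed by $[\eG,\eG_\infty]$ and $\Phi^0$ ranges over $I_\eK^0$. Any $\QQ$-linear map prescribed on these generators is therefore completely determined, which handles uniqueness.

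For existence, the first observation is that the target $\CC_\infty^\times/\ok^\times$ admits a canonical $\QQ$-vector space structure, so that ``$\QQ$-linear'' makes sense. Divisibility is immediate from divisibility of $\CC_\infty^\times$, and torsion-freeness follows because $\ok$ is algebraically closed: any $a \in \CC_\infty^\times$ with $a^n \in \ok^\times$ is algebraic over $\ok$, hence already in $\ok^\times$. With this in hand, I would define $\Pscr_\nu$ by invoking the universal property of the direct limit in Remark~\ref{rem: S(G)-iso}. Concretely, for each CM field $\eK \subset \ek^{\sep}$ fixed by $[\eG,\eG_\infty]$, Theorem~\ref{thm: PS} furnishes a group homomorphism $\Pcal_\eK(\xi_\nu^{(\eK)},\cdot): I_\eK^0 \to \CC_\infty^\times/\ok^\times$, where $\xi_\nu^{(\eK)} \in J_\eK$ corresponds to $\nu|_\eK$.

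The one thing to verify is compatibility with inflation: for $\eK \subset \eK'$ both fixed by $[\eG,\eG_\infty]$ and $\Phi^0 \in I_\eK^0$, one needs
\[
\Pcal_{\eK}\bigl(\xi_\nu^{(\eK)},\Phi^0\bigr) = \Pcal_{\eK'}\bigl(\xi_\nu^{(\eK')},\Inf_{\eK'/\eK}(\Phi^0)\bigr),
\]
which matches the equality $\varphi_{\eK,\Phi^0} = \varphi_{\eK',\Inf_{\eK'/\eK}(\Phi^0)}$ from Lemma~\ref{lem: Inf}. This is exactly the content of Theorem~\ref{thm: PS}(1), provided $\Res_{\eK'/\eK}(\xi_\nu^{(\eK')}) = \xi_\nu^{(\eK)}$. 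The latter holds because, using the characterization $\pi_{\bX'/\bX}(\xi') = \xi \iff \nu_{\xi'}|_\eK = \nu_\xi$ recalled in the proof of Lemma~\ref{lem: Inf}, the restriction of $\nu_{\xi_\nu^{(\eK')}} = \nu|_{\eK'}$ to $\eK$ is $\nu|_\eK = \nu_{\xi_\nu^{(\eK)}}$. Granted this compatibility, the universal property yields a well-defined group homomorphism $\varinjlim_\eK I_\eK^0 \to \CC_\infty^\times/\ok^\times$; tensoring with $\QQ$ and composing with the inverse of the isomorphism in Remark~\ref{rem: S(G)-iso} produces $\Pscr_\nu$, which satisfies \eqref{eqn: Distribution} by construction.

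\textbf{Expected main obstacle.} There is no real obstacle: the heavy lifting has already been done in Theorem~\ref{thm: PS} (the Shimura-type inflation/restriction compatibility of period symbols) and in Remark~\ref{rem: S(G)-iso} (the description of $\Sscr(\eG)$ as a direct limit). The only point that requires a moment's thought is that $\CC_\infty^\times/\ok^\times$ is a $\QQ$-vector space, so that the $\ZZ$-linear data from Theorem~\ref{thm: PS} can be upgraded to a $\QQ$-linear distribution; this rests on the algebraic closedness of $\ok$.
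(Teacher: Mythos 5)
Your proposal is correct and follows essentially the same route as the paper: define $\Pcal_\eK(\xi_\nu,\cdot)$ on each $I_\eK^0$ via Theorem~\ref{thm: PS}, pass to the direct limit over inflation, tensor with $\QQ$, and compose with the inverse of the isomorphism in Remark~\ref{rem: S(G)-iso}. You merely make explicit two points the paper leaves implicit — the inflation compatibility $\Pcal_{\eK}(\xi_\nu^{(\eK)},\Phi^0) = \Pcal_{\eK'}(\xi_\nu^{(\eK')},\Inf_{\eK'/\eK}(\Phi^0))$ via Theorem~\ref{thm: PS}(1) together with $\Res_{\eK'/\eK}(\xi_\nu^{(\eK')}) = \xi_\nu^{(\eK)}$, and the unique divisibility of $\CC_\infty^\times/\ok^\times$ needed to upgrade the $\ZZ$-linear data to a $\QQ$-linear map — both of which are correct.
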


\begin{proof}
For each CM field $\eK\subset \ek^{\text{\rm sep}}$ fixed by the commutator subgroup $[\eG,\eG_\infty]$,
define $\tilde{\Pscr}_{\nu,\eK}: I_\eK^0 \rightarrow \CC_\infty^\times/\bar{k}^\times$ by
\[
\tilde{\Pscr}_{\nu,\eK}(\Phi^0)\assign \Pcal_\eK(\xi_\nu,\Phi^0), \quad \forall \Phi^0 \in I_\eK^0.
\]
By Lemma~\ref{lem: Inf}, we obtain a $\QQ$-linear homomorphism
\[
\tilde{\Pcal}_\nu \assign \text{id}_\QQ \otimes \left(\varinjlim_{\eK} \tilde{\Pcal}_{\nu,\eK}\right): \QQ \otimes_\ZZ \Big(\varinjlim_{\eK} I_\eK^0\Big) \longrightarrow \CC_\infty^\times/\ok^\times,
\]
and $\Pcal_\nu: \Sscr(\eG)\rightarrow \CC_\infty^\times/\ok^\times$ is nothing but the composition of $\tilde{\Pcal}_\nu$ with the inverse of the isomorphism in Remark~\ref{rem: S(G)-iso}.
\end{proof}

\begin{remark}
Notice that $\mathbf{1}_{\eG} = \varphi_{\ek,\xi_\theta}$, where $\xi_\theta \in J_{\ek}$ is the unique point $t=\theta \in \PP^1_{/\bar{k}}$.
By Remark~\ref{rem: Carlitz period} we have that
$$
\Pscr_\nu(\mathbf{1}_{\eG}) = \Pcal_{\ek}(\xi_\theta,\xi_\theta) = \tilde{\pi} \cdot \bar{k}^\times \quad \in \CC_\infty^\times/\bar{k}^\times.
$$
\end{remark}

Let $\eK$ be a CM field in $\ek^{\sep}$ fixed by $[\eG,\eG_\infty]$.
Suppose further that $\eK$ is Galois over $\ek$.
Let $\eK^+$ be the maximal totally real subfield of $\eK$.
By \cite[Corollary~1.3.6~(2)]{BCPW22}, we know that
\begin{equation}\label{eqn: ps-trdeg}
\trdeg_{\bar{k}} \bar{k}\big(p_\eK(\xi_v,\Phi^0)\mid \Phi^0 \in I_\eK^0\big) = 1+ (1-\frac{1}{[\eK:\eK^+]}) \cdot [\eK:\ek].
\end{equation}
Put $\eG_\eK \assign \gal(\eK/\ek) \cong \eG/\eH_\eK$.
Then $\Sscr(\eG/\eH_\eK)$ can be identified with a subspace $\Sscr(\eG_\eK)$ of $\QQ$-valued functions on $\eG_\eK$.
Let $\bar{k}\big(\Pscr_\nu\big(\Sscr(\eG_\eK)\big)\big)$ be the field generated by arbitrary representatives of $\Pscr_\nu(\varphi) \in \CC_\infty^\times/\bar{k}^\times$ for all $\varphi \in \Sscr(\eG_\eK)$.
By Theorem~\ref{thm: PD} and \eqref{eqn: ps-trdeg}, we obtain:

\begin{theorem}\label{thm: trdeg}
Let $\eK$ be a CM field in $\ek^{\sep}$ which is Galois over $\ek$ and fixed by $[\eG,\eG_\infty]$.
Let $\eK^+$ be the maximal totally real subfield of $\eK$.
Then 
\[
\trdeg_{\bar{k}}\bar{k}\big(\Pscr_\nu\big(\Sscr(\eG_\eK)\big)\big) = 1+ (1-\frac{1}{[\eK:\eK^+]}) \cdot [\eK:\ek].
\]
Consequently,
the period distribution $\Pscr_\nu: \Sscr(\eG)\rightarrow \CC_\infty^\times/\bar{k}^\times$ is injective.
\end{theorem}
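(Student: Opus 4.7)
My plan is to separate the two assertions of the theorem and handle them through a comparison of transcendence degrees with $\QQ$-vector-space dimensions. The bridge between the analytic and combinatorial sides is the $\QQ$-linear identification $\varphi_\eK:\QQ\otimes_\ZZ I_\eK^0 \stackrel{\sim}{\to} \Sscr(\eG/\eH_\eK)$ from Lemma~\ref{lem: S(G)}(2), together with the defining property of $\Pscr_\nu$ in Theorem~\ref{thm: PD}, which tells us that $\Pscr_\nu \circ \varphi_\eK$ is the $\QQ$-linear extension of $\Pcal_\eK(\xi_\nu,\cdot): I_\eK^0 \to \CC_\infty^\times/\ok^\times$.

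For the transcendence-degree equality, I will first identify $\Sscr(\eG_\eK)$ with $\Sscr(\eG/\eH_\eK)$ via the quotient $\eG\twoheadrightarrow \eG_\eK$. The previous paragraph then shows that every representative in $\CC_\infty^\times$ of an element of $\Pscr_\nu(\Sscr(\eG_\eK))$ is, up to $\ok^\times$, a rational-power monomial in the period symbols $p_\eK(\xi_\nu,\Phi^0)$ for $\Phi^0 \in I_\eK^0$. Since rational roots do not affect transcendence degree, this will give
\[
\trdeg_\ok \ok\bigl(\Pscr_\nu(\Sscr(\eG_\eK))\bigr) \;=\; \trdeg_\ok \ok\bigl(p_\eK(\xi_\nu,\Phi^0) \mid \Phi^0 \in I_\eK^0\bigr),
\]
and the right-hand side is $1 + (1-\tfrac{1}{[\eK:\eK^+]})[\eK:\ek]$ by \eqref{eqn: ps-trdeg}.

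For the injectivity of $\Pscr_\nu$, I would first treat the restriction to $\Sscr(\eG_\eK)$ for our Galois $\eK$. Set $n \assign 1+(1-\frac{1}{[\eK:\eK^+]})[\eK:\ek]$, which equals $\dim_\QQ \Sscr(\eG_\eK)$ by Remark~\ref{rem: S(G/H)-dim}, and let $V\assign \Pscr_\nu(\Sscr(\eG_\eK))$, a $\QQ$-subspace of the divisible, torsion-free group $\CC_\infty^\times/\ok^\times$ of $\QQ$-dimension at most $n$. The key general inequality is $\trdeg_\ok\ok(V) \leq \dim_\QQ V$: choosing lifts $v_1,\dots,v_m\in\CC_\infty^\times$ of a $\QQ$-basis of $V$, the field $\ok(V)$ is the union over $N\geq 1$ of the fields $\ok(v_1^{1/N},\dots,v_m^{1/N})$, each algebraic over $\ok(v_1,\dots,v_m)$, so it has transcendence degree at most $m$. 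Combined with the first step, this forces $\dim_\QQ V = n$, yielding injectivity on $\Sscr(\eG_\eK)$. To globalize, I will reduce any $\varphi\in\Sscr(\eG)$ to the Galois case: by Remark~\ref{rem: S(G)-iso}, $\varphi\in \Sscr(\eG/\eH_{\eK_0})$ for some CM field $\eK_0$ fixed by $[\eG,\eG_\infty]$; by the conjugation-invariance of $[\eG,\eG_\infty]$ noted in Remark~\ref{rem: S(G)}(4), every $\eG$-conjugate of $\eK_0$ is again a CM field fixed by $[\eG,\eG_\infty]$, so by Remark~\ref{rem: S(G)}(3) the Galois closure $\tilde{\eK}_0$ of $\eK_0$ is a Galois CM field fixed by $[\eG,\eG_\infty]$. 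Lemma~\ref{lem: Inf} then places $\varphi$ inside $\Sscr(\eG/\eH_{\tilde\eK_0})$, where injectivity has been established.

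I expect the main substantive step to be the second one: turning the transcendence-degree formula \eqref{eqn: ps-trdeg} of \cite{BCPW22} into the $\QQ$-linear injectivity of $\Pscr_\nu$ hinges entirely on the exact matching of $\dim_\QQ \Sscr(\eG_\eK)$ with the transcendence degree of its image, together with the routine-but-critical inequality $\trdeg_\ok \ok(V)\leq \dim_\QQ V$ for $\QQ$-subspaces $V$ of $\CC_\infty^\times/\ok^\times$. The Galois-closure reduction for global injectivity is a secondary but necessary technical check.
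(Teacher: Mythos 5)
Your proposal is correct and fills in exactly the derivation the paper leaves implicit: the transcendence-degree equality comes from transporting \eqref{eqn: ps-trdeg} through the identification $\Sscr(\eG_\eK)\cong\QQ\otimes_\ZZ I_\eK^0$ and Theorem~\ref{thm: PD} (rational powers not affecting $\trdeg$), and injectivity follows from matching $\dim_\QQ\Sscr(\eG_\eK)$ (Remark~\ref{rem: S(G/H)-dim}) with the transcendence degree of the image, plus the Galois-closure/inflation reduction for a general $\varphi\in\Sscr(\eG)$. This is essentially the same approach as the paper's, with only a harmless imprecision in phrasing $\ok(V)$ as a union of fields of $N$-th roots (the point that each representative is algebraic over $\ok(v_1,\dots,v_m)$ is what is needed, and is correct).
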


\subsection{The restriction to cyclotomic function fields}
\label{sec: cyclotomic extn}

Given $a \in \eA$, recall the {\it $a$-th division polynomial} $C_a(t,z) \in \FF_q[t,z]$ defined recursively as follows: write $a = bt + \epsilon$ where $b \in A$ and $\epsilon \in \FF_q$, then
$$
C_a(t,z) \assign \begin{cases}
0 & \text{ if $a = 0$;}\\
C_b(t,tz+z^q) + \epsilon z, & \text{ if $a \neq 0$.}
\end{cases}
$$
For each $\nfk \in \eA_+$,
the $\nfk$-th Carlitz cyclotomic polynomial $C_\nfk^*(t,z)$ is the unique (irreducible) factor of $C_\nfk(t,z)$ which is monic in $z$ and satisfies (cf.\ \cite[Section~6.3.2]{ABP})
$$
C_\nfk(t,z) = \prod_{\subfrac{\mfk \in \eA_{\scaleto{+}{4pt}}}{\mfk \mid \nfk}} C^*_\mfk(t,z), \quad \forall \nfk \in \eA_+.
$$
Let $O_\nfk \assign \FF_q[t,z]/(C_\nfk^*(t,z))$, which is a Dedekind domain, and let $\eK_\nfk$ be the field of fractions of $O_\nfk$, called the \textit{$\nfk$-th Carlitz cyclotomic function field}.
It is known that $\eK_\nfk /\ek$ is a finite Galois extension with 
$(\eA/\nfk)^\times \cong \gal(\eK_\nfk/\ek)\rassign \eG_\nfk$,
where the isomorphism comes from the Artin map:
for $\alpha = a \bmod \nfk \in (\eA/\nfk)^\times$,
the corresponding $\varrho_\alpha \in \eG_\nfk$ is uniquely determined by the condition that (see~\cite[Theorem~2.3]{Hayes74} or~\cite[Theorem~12.8]{Rosen})
\begin{equation}\label{eqn: geo-Artin}
\varrho_\alpha(z) \bmod C_\nfk^*(t,z) \assign C_a(t,z) \bmod C_\nfk^*(t,z) \in O_\nfk.
\end{equation}
Moreover, $\eK_\nfk$ is a geometric CM field over $\ek$, where every place $\infty^+$ of its maximal totally real subfield $\eK_\nfk^+$ lying over $\infty$ is totally ramified in $\eK_\nfk$ with ramification index $q-1$
(see~\cite[Theorem~3.2]{Hayes74} or~\cite[Theorem~12.14]{Rosen}).
In fact, under the Artin map we have that
\begin{equation}\label{eqn: inertia gp}
\Gal(\eK_\nfk/\eK_\nfk^+) \cong \FF_q^\times \hookrightarrow (\eA/\nfk)^\times.
\end{equation}

For $\mfk,\nfk \in \eA_+$ with $\mfk|\nfk$, define the inclusion map $\eK_\mfk \hookrightarrow \eK_\nfk$ by
$$
O_\mfk =\frac{\FF_q[t,z]}{(C_\mfk^*(t,z))} \hookrightarrow O_\nfk = \frac{\FF_q[t,z]}{(C_\nfk^*(t,z))}, \quad z \bmod C_\mfk^*(t,z)  \longmapsto C_{\nfk/\mfk}(t,z) \bmod C_\nfk^*(t,z).
$$
Let $\eK^{\text{geo}}$ be the compositum of all the Carlitz cyclotomic function fields.
The Galois group $\eG^{\text{geo}} \assign \gal(\eK^{\geo}/\ek)$ is isomorphic (via \eqref{eqn: geo-Artin}) to
\[
\underset{\nfk \in \eA_{\scaleto{+}{4pt}}}{\varprojlim} (\eA/\nfk)^\times
\overset{\sim}{\longleftrightarrow}
\eG^{\text{geo}}.
\]
We may suppose that our chosen separable closure $\ek^{\text{sep}}$ contains $\eK^{\geo}$.\\

Let $\overline{\FF}_q$ be the algebraic closure of $\FF_q$ in $\ek^{\sep}$, and put $\eK^{\ari} \assign \overline{\FF}_q \cdot \ek$, the compositum of $\overline{\FF}_q$ and $\ek$ in $\ek^{\sep}$. 
Identify 
\[\eG^{\ari} \assign
\gal(\eK^{\ari}/\ek) \cong \gal(\overline{\FF}_q/\FF_q) = \varprojlim_{\ell\in \NN} \gal(\FF_{q^\ell}/\FF_q) 
\quad \text{ with }\quad 
\varprojlim_{\ell \in \NN} (\ZZ/\ell \ZZ)
\]
by Frobenius twistings: for $\ell \in \NN$ and $c \in \ZZ/\ell \ZZ$, the corresponding $\varrho_c \in \eG_\ell \assign \gal(\FF_{q^\ell}/\FF_q)$ is the $q^c$-power Frobenius map, i.e.\ 
\begin{equation}\label{eqn: ari-Artin}
\varrho_c(\varepsilon) = \varepsilon^{q^c}, \quad \forall \varepsilon \in \FF_{q^\ell}.
\end{equation}

Let
$\eK^{\cyc}$ be the compositum of $\eK^{\geo}$ and $\eK^{\ari}$.
The Galois group $\eG^{\text{cyc}}\assign \gal(\eK^{\text{cyc}}/\ek)$ is isomorphic to
\[
\underset{\nfk \in \eA_{\scaleto{+}{4pt}}}{\varprojlim} (\eA/\nfk)^\times \times \varprojlim_{\ell \in \NN} (\ZZ/\ell \ZZ) \overset{\sim}{\longleftrightarrow}
\eG^{\text{geo}}\times \eG^{\ari}
\overset{\sim}{\longleftrightarrow}
\eG^{\text{cyc}}.
\]
Put $\eH^{\text{cyc}} \assign \gal(\ek^{\text{sep}}/\eK^{\text{cyc}})$ and let $\eG^{\cyc}_\infty$ be the image of $\eG_\infty$ in $\eG^{\cyc}$.
We have the following diagram:

\begin{equation}\label{eqn: Cyc ram}
\begin{tabular}{ccccc}
$\displaystyle \frac{\eG_\infty \cdot \eH^{\text{cyc}}}{\eH^{\text{cyc}}} $& $\cong $& $\eG_\infty^{\text{cyc}}$
  & $\cong$ & \hspace{1cm}
  $\FF_q^\times \hspace{0.25cm} \times \ \underset{\ell \in \NN}{\varprojlim}\ (\ZZ/\ell \ZZ)$\\
  $\cap$ && $\cap$ && $\cap$ \\
  $\displaystyle\frac{\eG}{\eH^{\text{cyc}}}$
  &$\cong$& $\eG^{\text{cyc}}$
  &$\cong$&
  $\underset{\nfk \in \eA_{\scaleto{+}{4pt}}}{\varprojlim} (\eA/\nfk)^\times \times\  \underset{\ell \in \NN}{\varprojlim}\ (\ZZ/\ell \ZZ)$.
\end{tabular}
\end{equation}

Given $\nfk \in \eA_+$ and $\ell \in \NN$, let $\eK_{\nfk,\ell}\assign \FF_{q^\ell} \cdot \eK_{\nfk}$ be the compositum of $\FF_{q^\ell}$ and $\eK_\nfk$ in $\eK^{\text{cyc}}$, called the {\it $(\nfk,\ell)$-th cyclotomic function field}.
Via the identification
\begin{equation}\label{eqn: cyc-Artin}
\begin{tabular}{ccccl}
$(\eA/\nfk)^\times \times (\ZZ/\ell \ZZ)$ & $\cong$ & $\eG_{\nfk} \times \eG_{\ell}$ & $\cong$ & 
$\eG_{\nfk,\ell}\assign \gal(\eK_{\nfk,\ell}/\ek)$ \\
$(\alpha \ \ ,\ \ c)$ & $\longmapsto$ & $(\varrho_\alpha,\varrho_c)$ & $\longmapsto$ & $\varrho_{\alpha,c}$,
\end{tabular}
\end{equation}
the maximal totally real subfield $\eK_{\nfk,\ell}^+$ of $\eK_{\nfk,\ell}$ is the fixed field of the subgroup corresponding to
$\FF_q^\times \times (\ZZ/\ell \ZZ)$.
In particular, one has that $\eG_{\nfk,1} = \eG_{\nfk}$ and $\eG_{\ell} = \eG_{1,\ell}$.
Moreover, for each irreducible $\pfk \in \eA_+$ with $\pfk \nmid \nfk$, let $\Frob_\pfk \in \eG_{\nfk,\ell}$ be the Frobenius automorphism corresponding to $\pfk$.
We have that (see \cite[Theorem~12.10]{Rosen})
\begin{equation}\label{eqn: 2var-artin}
\Frob_\pfk = \varrho_{\alpha_\pfk, c_\pfk}, \quad \text{ where  $\alpha_\pfk = \pfk \bmod \nfk \in (\eA/\nfk)^\times$ and $c_\pfk = \deg \pfk \bmod \ell$.}
\end{equation}

For each quotient group $\overline{\eG}$ of $\eG$,
we let $\Sscr(\overline{\eG})$ be the space consisting of the functions in $\Sscr(\eG)$ which factor through $\overline{\eG}$.
We may identify $\Sscr(\overline{\eG})$ as a subspace of the $\QQ$-valued functions on $\overline{\eG}$.
Then $\Sscr(\eG^{\text{cyc}})$ is the union of the spaces 
$\Sscr(\eG_{\nfk,\ell})$ for all $(\nfk,\ell) \in \eA_+\times \NN$, and by Remark~\ref{rem: S(G)}~(5) we have that
\begin{equation}\label{eqn: S(G^c)}
\Sscr(\eG_{\nfk,\ell}) =
\left\{\varphi:\eG_{\nfk,\ell}\rightarrow \QQ \ \Bigg|\ \sum_{c=0}^{\ell-1}\sum_{\epsilon \in \FF_q^\times} \varphi(\varrho\varrho_{\epsilon,c}) \text{ is independent of $\varrho \in \eG_{\nfk,\ell}$}\right\}.
\end{equation}

Let $\Pscr_\nu^{\rm cyc}$ be the restriction of $\Pscr_\nu$ to $\Sscr(\eG^{\text{cyc}})$.
We shall connect the image of $\Pscr_\nu^{\rm cyc}$ with the special gamma values
in Section~\ref{sec: 2var-Gamma}.
To do so, we first discuss three types of ``diamond brackets'' and the corresponding ``Stickelberger distributions'' in the next section.

\section{Diamond brackets and Stickelberger distributions}

\subsection{Diamond brackets}\label{sec: diamond bracket}

\subsubsection*{I. The arithmetic case}

Given $y \in \RR$, let $\langle y \rangle_{\text{ari}}$ be the
fractional part of $y$, i.e.\ the unique real number with $0\leq \langle y \rangle_{\text{ari}}<1$ so that
\[
y-\langle y \rangle_{\text{ari}} \in \ZZ.
\]
We may view $\langle \cdot \rangle_{\text{ari}}$
as a function on the quotient group $\RR/\ZZ$, called the {\it arithmetic diamond bracket}.
Recall the classical reflection and multiplication formulas in the following: for $y \in \RR$ and $N \in \NN$,
\[
\langle y\rangle_{\text{ari}} + \langle -y\rangle_{\text{ari}} = 
\begin{cases} 1 & \text{ if $y \not\in \ZZ$;}\\
0 & \text{ otherwise,}
\end{cases}
\quad\text{and}\quad
\sum_{i=0}^{N-1}\langle y+\frac{i}{N}\rangle_{\text{ari}}
= \langle Ny\rangle_{\text{ari}} + \frac{N-1}{2}.
\]

Let $\ZZ_{(p)}$ be the localization of $\ZZ$ at the prime number $p$, i.e.\
\[
\ZZ_{(p)}\assign \{a/b \in \QQ \mid a,b \in \ZZ \text{ with } b \not\equiv 0 \bmod p\}.
\]
For each $y \in \ZZ_{(p)}$, there exist $\ell \in \NN$ and integers $y_0,...,y_{\ell-1}$ with $0\leq y_0,...,y_{\ell-1}< q$ so that
$$
\langle y\rangle_{\ari} = \sum_{i=0}^{\ell-1} y_i \cdot \frac{ q^i}{q^\ell-1}.
$$
Set 
\begin{equation}\label{eqn: wt0-ari}
\text{wt}_0^{\ari}(y)\assign \frac{1}{\ell}\sum_{i=0}^{\ell-1} y_i,
\end{equation}
which depends only on $y \in \ZZ_{(p)}/\ZZ$.
Restricting $\langle \cdot \rangle_{\text{ari}}$ to $\ZZ_{(p)}$, we have the following relations:

\begin{lemma}\label{lem: ari-relation}
Given $y \in \ZZ_{(p)}$, take $\ell \in \NN$ so that $(q^\ell -1) y \in \ZZ$.
\begin{itemize}
\item[(1)] There exist unique integers $y_0,...,y_{\ell-1}$ with $0\leq y_0,...,y_{\ell-1}<q$ satisfying 
\[
\langle y \rangle_{\text{\rm ari}} = \sum_{i=0}^{\ell-1} y_i \langle \frac{q^i}{q^\ell-1}\rangle_{\text{\rm ari}}.
\]
\item[(2)] The following equality holds:
\[
\sum_{i=0}^{\ell-1}\langle q^i y\rangle_{\text{\rm ari}} = \frac{\ell \cdot \text{\rm wt}_0^{\ari}(y)}{q-1}.
\]
\end{itemize}
\end{lemma}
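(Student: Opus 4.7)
The plan is to reduce both assertions to the uniqueness and cyclic-shift behaviour of the base-$q$ expansion.

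For part (1), I will set $m \assign (q^\ell-1)\langle y\rangle_{\ari}$. Since $(q^\ell-1)y \in \ZZ$ and $\langle y\rangle_{\ari}$ differs from $y$ by an integer, $m$ is a non-negative integer, and the bound $\langle y\rangle_{\ari} < 1$ gives $0 \le m < q^\ell$. Hence $m$ has a unique base-$q$ expansion $m = \sum_{i=0}^{\ell-1} y_i q^i$ with $0 \le y_i < q$. Because $0 \le q^i < q^\ell -1$ for $0 \le i \le \ell-1$, one has $\langle q^i/(q^\ell-1)\rangle_{\ari} = q^i/(q^\ell-1)$, so dividing the base-$q$ expansion by $q^\ell-1$ yields the desired decomposition, and uniqueness is inherited from the base-$q$ expansion.

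For part (2), the key observation is that multiplication by $q$ on $\ZZ_{(p)}/\ZZ$ corresponds (under the correspondence $\langle y\rangle_{\ari} \leftrightarrow m$) to multiplication by $q$ on the numerator modulo $q^\ell-1$. Since $q \cdot q^j \equiv q^{(j+1) \bmod \ell} \pmod{q^\ell-1}$, this operation cyclically permutes the digit string $(y_0, \ldots, y_{\ell-1})$, and so for $0 \le i \le \ell-1$ one obtains
\[
\langle q^i y\rangle_{\ari} = \frac{1}{q^\ell-1}\sum_{j=0}^{\ell-1} y_{(j-i) \bmod \ell}\, q^j.
\]
Summing over $i$ and swapping the order of summation, each power $q^j$ is multiplied by $\sum_{k=0}^{\ell-1} y_k = \ell \cdot \wt_0^{\ari}(y)$. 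A final use of the geometric sum $\sum_{j=0}^{\ell-1} q^j = (q^\ell-1)/(q-1)$ then cancels the denominator, yielding the claimed value $\ell \cdot \wt_0^{\ari}(y)/(q-1)$.

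I do not expect a substantive obstacle here; the only point to monitor is that $\wt_0^{\ari}(y)$, although defined in \eqref{eqn: wt0-ari} using a particular $\ell$, is actually a function on $\ZZ_{(p)}/\ZZ$. Replacing $\ell$ by a multiple $k\ell$ uses the factorisation $q^{k\ell}-1 = (q^\ell-1)(1+q^\ell+\cdots+q^{(k-1)\ell})$ to replicate the digit string $k$ times, leaving the average digit unchanged. Hence the right-hand side of (2) is well-posed, and the identity is compatible with passing to a larger $\ell$.
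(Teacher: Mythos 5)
Your proof is correct, and since the paper states Lemma~\ref{lem: ari-relation} without proof (treating it as an elementary property of base-$q$ expansions), your argument — identifying $\langle y\rangle_{\ari}$ with the integer $m=(q^\ell-1)\langle y\rangle_{\ari}\in[0,q^\ell-1)$, using uniqueness of its $\ell$ base-$q$ digits for (1), and the fact that multiplication by $q$ modulo $q^\ell-1$ cyclically shifts the digit string for (2), together with the digit-replication argument showing $\wt_0^{\ari}$ is well defined on $\ZZ_{(p)}/\ZZ$ — is exactly the intended verification. The only caveat is the degenerate case $(q,\ell)=(2,1)$, where your inequality $q^i<q^\ell-1$ fails because $\langle q^0/(q^\ell-1)\rangle_{\ari}=\langle 1\rangle_{\ari}=0$; but there the uniqueness assertion of the lemma itself is vacuous (any $y_0\in\{0,1\}$ satisfies the displayed identity for $y\in\ZZ$), so this is an artifact of the statement rather than a gap in your argument.
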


\subsubsection*{II. The geometric case}

For $N \in \ZZ_{\geq 0}$, define $\langle \cdot \rangle_N : k_\infty \rightarrow \{0,1\}$ by (see \cite[5.5.1~{\it Definitions}]{ABP}, \cite{Sinb}, and \cite[Definition~7.6.1]{Thakur91}):
$$
\langle \sum_{i} \epsilon_i \theta^{-i} \rangle_N \assign
\begin{cases}
1, & \text{ if $\epsilon_i = 0$ for $0<i<N+1$ and $\epsilon_{N+1} = 1$;}\\
0, & \text{ otherwise.}
\end{cases}
$$
Since $\langle x + a \rangle_N = \langle x\rangle_N$ for every $x \in k_\infty$ and $a \in A$, 
we may view $\langle \cdot\rangle_N$ as a function on the quotient group $k_\infty/A$.
The {\it{geometric diamond bracket}} is:
\[
\langle \cdot \rangle_{\text{geo}} \assign \sum_{N = 0}^\infty \langle \cdot \rangle_N : k_\infty/A \rightarrow \{0,1\}.
\]
Observe that for $a \in A$ and $\nfk \in A_+$ with $\deg a < \deg \nfk$, one has that
$$
\langle \frac{a}{\nfk}\rangle_N = 
\begin{cases}
1, & \text{ if $a \in A_+$ with $\deg \nfk - \deg a = N + 1$;} \\
0, & \text{ otherwise,}
\end{cases}
$$
whence
$$
\langle \frac{a}{\nfk}\rangle_{\text{geo}} = 
\begin{cases}
1, & \text{ if $a \in A_+$;} \\
0, & \text{ otherwise.}
\end{cases}
$$

For each $x \in k/A$, set
\begin{equation}\label{eqn: wt0-geo}
\text{wt}_0^{\geo}(x) \assign 
\begin{cases}
1, & \text{ if $0 \neq x \in k/A$;} \\
0, & \text{ otherwise.}
\end{cases}
\end{equation}

\begin{lemma}\label{lem: AL-geoG}
{\rm (Cf.\ \cite[5.5.5]{ABP})} Given $x \in k/A$ and $\nfk \in A_+$, the following relations hold:
\begin{itemize}
    \item[(1)] (Reflection formula)
    \[
    \sum_{\epsilon \in \FF_q^\times} \langle \epsilon x \rangle_{\text{\rm geo}} = \text{\rm wt}_0^{\geo}(x).
    \]
    \item[(2)] (Multiplication formula)
    \[
    \sum_{\subfrac{a \in A}{\deg a < \deg \nfk}}\langle x+\frac{a}{\nfk}\rangle_{\text{\rm geo}} = \langle \nfk \cdot x \rangle_{\text{\rm geo}} + \frac{|\nfk|_\infty-1}{q-1}.
    \]
\end{itemize}
\end{lemma}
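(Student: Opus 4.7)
The plan is to reduce both identities to a direct inspection of the leading coefficient of the canonical power-series representative of a class in $k_\infty/A$. Every such class has a unique representative of the form $\sum_{i\geq 1}\epsilon_i t^{-i}$, and the definition of $\langle\cdot\rangle_N$ makes clear that $\langle z\rangle_{\geo}$ equals $1$ if and only if this representative is nonzero and its leading (smallest-index) coefficient is $1$; otherwise $\langle z\rangle_{\geo}=0$. Both parts will be proved by exploiting this description, with no transcendence input.

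For part~(1), I would handle $x=0$ trivially (both sides vanish) and otherwise write the canonical representative of $x$ with leading coefficient $\epsilon_0 \in \FF_q^\times$. Then $\epsilon x$ has leading coefficient $\epsilon\epsilon_0$, which equals $1$ for exactly one $\epsilon \in \FF_q^\times$, namely $\epsilon=\epsilon_0^{-1}$. Summing yields $1 = \text{wt}_0^{\geo}(x)$.

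For part~(2), the plan is to parametrize the fiber of the multiplication-by-$\nfk$ map on $k_\infty/A$ above $y\assign \nfk x$ as $\{x + a/\nfk : a \in A,\ \deg a < \deg\nfk\}$, and to inspect the canonical representative of $y/\nfk + a/\nfk$. Since $1/\nfk = t^{-d}\bigl(1 + O(t^{-1})\bigr)$ with $d=\deg\nfk$, for $a\neq 0$ the leading term of $a/\nfk$ sits at position $d - \deg a \in [1,d]$, with leading coefficient equal to that of $a$, whereas the leading term of $y/\nfk$ (when $y\neq 0$) sits at position $\geq d+1$. Consequently, for $a\neq 0$ the leading coefficient of $y/\nfk + a/\nfk$ is just that of $a$, so the corresponding bracket equals $1$ precisely when $a$ is monic; this accounts for $\sum_{i=0}^{d-1}q^i = (|\nfk|_\infty - 1)/(q-1)$ contributions. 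The residual term $a=0$ contributes $\langle y/\nfk\rangle_{\geo} = \langle \nfk x\rangle_{\geo}$, since division by $\nfk$ only shifts the position of the leading term without altering its coefficient. Adding these two contributions gives the claimed identity.

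The only mildly delicate point is bookkeeping the edge case $\nfk x \equiv 0 \bmod A$: here $y/\nfk$ has representative $0$, but the count of monic $a$ of degree less than $d$ still yields $(|\nfk|_\infty - 1)/(q-1)$, while $\langle \nfk x\rangle_{\geo}=0$, so both sides agree. I expect this consistency check across the $a=0$ versus $a\neq 0$ cases to be the main (and only) obstacle, and it is essentially immediate from the definitions.
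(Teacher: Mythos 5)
Your argument is correct, and it is essentially the standard one: the paper itself gives no proof of this lemma but cites \cite[5.5.5]{ABP}, where the relations are likewise verified directly from the definition of $\langle\cdot\rangle_N$ via the leading coefficient of the fractional Laurent expansion. Two small points you use implicitly and could make explicit: the set $\{x+a/\nfk \bmod A\}$ coincides with $\{y/\nfk + a/\nfk \bmod A\}$ because both are the full fiber of multiplication by $\nfk$ over the class of $y=\nfk x$, which justifies replacing $x$ by $y/\nfk$; and the fact that division by $\nfk$ leaves the leading coefficient unchanged relies on $\nfk$ being monic.
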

${}$

\subsubsection*{III. The two-variable case}
We start with the following:

\begin{proposition}
There exists a unique map $\langle\cdot,\cdot\rangle : k \times \ZZ_{(p)} \rightarrow \QQ$ satisfying that
\begin{itemize}
    \item[(1)] for $x \in k$, $y \in \ZZ_{(p)}$, $a \in A$, and $N \in \ZZ$ we have 
    $\langle x+a, y+N\rangle = \langle x,y\rangle$;
    \item[(2)] given $x \in k$ and $y \in \ZZ_{(p)}$, 
    we have
    \[
    \langle x, y\rangle = \sum_{i=0}^{\ell-1} y_i \langle x, \frac{q^i}{q^\ell-1}\rangle
    \]
    when writing $\langle y\rangle_{\ari}= \sum_{i=0}^{\ell-1} y_i q^i/(q^\ell-1)$ where $\ell \in \NN$ and $y_0,...,y_{\ell-1} \in \ZZ$ with $0\leq y_0,...,y_{\ell-1}< q$;
    \item[(3)] given $x \in k$, $\ell \in \NN$, and $i \in \ZZ$ with $0\leq i <\ell$, we have
    \[
    \langle x,\frac{q^i}{q^\ell-1}\rangle = \sum_{\subfrac{N \in \, \scaleto{\ZZ}{4.6pt}_{\scaleto{\geq 0}{4pt}}}{N \equiv -1-i\bmod \ell}} \langle x\rangle_N.
    \]
\end{itemize}
\end{proposition}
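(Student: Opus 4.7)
The plan is to construct $\langle\cdot,\cdot\rangle$ directly from condition (3), extend it by condition (2), and then verify that condition (1) holds and that the construction is forced by the axioms.

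First I would define, for every pair $(\ell, i)$ with $\ell \in \NN$ and $0 \leq i < \ell$, the value
\[
\langle x, q^i/(q^\ell - 1)\rangle \assign \sum_{\substack{N \in \ZZ_{\geq 0} \\ N \equiv -1 - i \bmod \ell}} \langle x\rangle_N,
\]
which is a finite sum since $\langle x\rangle_N = 0$ for all sufficiently large $N$. Then, for general $y \in \ZZ_{(p)}$, write $\langle y\rangle_\ari = \sum_{i=0}^{\ell-1} y_i\, q^i/(q^\ell-1)$ with $\ell \in \NN$ and $0 \leq y_i < q$, and take (2) as the definition
\[
\langle x, y\rangle \assign \sum_{i=0}^{\ell-1} y_i \langle x, q^i/(q^\ell-1)\rangle.
\]

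The main obstacle is to check that this definition is independent of the choice of $\ell$, since the same $\langle y \rangle_\ari \in [0,1)$ admits expansions with period any common multiple of the minimal period. Concretely, if $\ell' = m\ell$ for $m \in \NN$, then
\[
\frac{q^i}{q^\ell-1} = \sum_{j=0}^{m-1} \frac{q^{i+j\ell}}{q^{m\ell}-1},
\]
and the two candidate values agree provided
\[
\langle x, q^i/(q^\ell-1)\rangle = \sum_{j=0}^{m-1} \langle x, q^{i+j\ell}/(q^{m\ell}-1)\rangle.
\]
This reduces to a bookkeeping statement about arithmetic progressions: the residue class $N \equiv -1-i \pmod{\ell}$ is the disjoint union over $j = 0, 1, \ldots, m-1$ of the classes $N \equiv -1-i-j\ell \pmod{m\ell}$. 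Consequently, summing $\langle x\rangle_N$ over either side yields the same total. For two arbitrary $\ell_1, \ell_2$ giving valid expansions of $\langle y\rangle_\ari$, both refine through the common multiple $\ell_1\ell_2$, so independence follows.

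Having well-definedness, I would then verify condition (1). Periodicity in $x$ is immediate because $\langle x+a\rangle_N = \langle x\rangle_N$ for all $a \in A$ and $N \geq 0$, which propagates through both the definition of $\langle x, q^i/(q^\ell-1)\rangle$ and the expansion in (2). Periodicity in $y$ follows because $\langle y+N\rangle_\ari = \langle y\rangle_\ari$ for $N \in \ZZ$, so the digits $y_0,\dots,y_{\ell-1}$ are identical. Properties (2) and (3) hold by construction. Finally, uniqueness is immediate: condition (3) pins down the values on the generating set $\{(x, q^i/(q^\ell-1))\}$, condition (2) extends uniquely to all $y \in \ZZ_{(p)}$ by the $q$-adic expansion of $\langle y\rangle_\ari$, and condition (1) reduces the general case $(x,y) \in k \times \ZZ_{(p)}$ to representatives modulo $A$ and $\ZZ$.
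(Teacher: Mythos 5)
Your construction and well-definedness argument are essentially the paper's own proof: the paper likewise takes (2)--(3) as the definition and checks independence of the expansion of $\langle y\rangle_{\ari}$ by refining two expansions at levels $\ell,\ell'$ to the common level $\ell\ell'$ and regrouping the sums $\sum_{N}\langle x\rangle_N$ over residue classes, exactly your bookkeeping step. The only (cosmetic) difference is that you package this as a distribution relation for the basic values $\langle x,q^i/(q^\ell-1)\rangle$ before summing against the digits, whereas the paper manipulates the double sums directly; both hinge on the same uniqueness of the base-$q$ digit expansion at a fixed level.
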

We call $\langle \cdot,\cdot \rangle$ the {\it two-variable diamond bracket}.

\begin{proof}
We may use the properties (1)--(3) to define a unique map $\langle \cdot,\cdot \rangle$ on $k \times \ZZ_{(p)}$, as long as we show that the equality in (2) is independent of the expression of $y$ for a given $y \in \ZZ_{(p)}$ with $0\leq y <1$.
To verify this, take $\ell, \ell' \in \NN$ and $y_0,...,y_{\ell-1},y_0',...,y_{\ell'-1}' \in \ZZ$ with
$0\leq y_i,y'_{i'} < q$ for $0\leq i<\ell$ and $0\leq i'<\ell'$ so that
\[
\sum_{i=0}^{\ell-1} y_i \cdot \frac{q^i}{q^\ell-1} = y = \sum_{i'=0}^{\ell'-1}y_{i'}'\cdot \frac{q^{i'}}{q^{\ell'}-1}.
\]
For $0\leq i'' < \ell\ell'$, put
\[
y_{i''}''\assign y_i  \quad \text{ for a unique $i$ with $0\leq i <\ell$ and $i''\equiv i \bmod \ell$}.
\]
Then
\[
\sum_{i'=0}^{\ell'-1}y_{i'}'\cdot \frac{q^{i'}}{q^{\ell'}-1} = y = \sum_{i=0}^{\ell-1} y_i \cdot \frac{q^i}{q^\ell-1} = \sum_{i=0}^{\ell-1} y_i \cdot \sum_{j=0}^{\ell'-1} \frac{q^{i+\ell j}}{q^{\ell\ell'}-1} = \sum_{i''=0}^{\ell\ell'-1} y_{i''}'' \cdot \frac{q^{i''}}{q^{\ell\ell'}-1},
\]
whence
\[
y_{i''}'' = y_{i'}'  \quad \text{ for a unique $i'$ with $0\leq i' <\ell'
$ and $i''\equiv i' \bmod \ell'$}.
\]
On the other hand, for $x \in k$,
\begin{eqnarray}
\sum_{i=0}^{\ell-1}y_i \sum_{\subfrac{N \in \, \scaleto{\ZZ}{4.6pt}_{\scaleto{\geq 0}{4pt}}}{N\equiv -1-i \bmod \ell}}\langle x \rangle_N
&=& \sum_{i=0}^{\ell-1}y_i \sum_{j=0}^{\ell'-1}\sum_{\subfrac{N \in \, \scaleto{\ZZ}{4.6pt}_{\scaleto{\geq 0}{4pt}}}{N\equiv -1-i-\ell j \bmod \ell\ell'}}\langle x \rangle_N
= \sum_{i''=0}^{\ell\ell'-1} y_{i''}''\sum_{\subfrac{N \in \, \scaleto{\ZZ}{4.6pt}_{\scaleto{\geq 0}{4pt}}}{N\equiv -1-i'' \bmod \ell\ell'}}\langle x \rangle_N \nonumber \\
&=& \sum_{i'=0}^{\ell'-1}y'_{i'} \sum_{j'=0}^{\ell-1}\sum_{\subfrac{N \in \, \scaleto{\ZZ}{4.6pt}_{\scaleto{\geq 0}{4pt}}}{N\equiv -1-i'-\ell' j' \bmod \ell\ell'}}\langle x \rangle_N
= \sum_{i'=0}^{\ell'-1}y'_{i'} \sum_{\subfrac{N \in \, \scaleto{\ZZ}{4.6pt}_{\scaleto{\geq 0}{4pt}}}{N\equiv -1-i' \bmod \ell'}}\langle x \rangle_N. \nonumber 
\end{eqnarray}
Therefore the assertion holds.
\end{proof}

\begin{remark}\label{rem: 2var-diamond-1}
${}$
\begin{itemize}
    \item[(1)]A two-variable diamond bracket was first introduced by Thakur in \cite[Definition~8.3.1]{Thakur91}.
    The version provided here is normalized slightly differently from Thakur's in order to simplify the statements of our results.
    \item[(2)] For $x \in k$, $a \in A$, $y \in \ZZ_{(p)}$, and $N \in \ZZ$, we have $\langle a,y\rangle =0 = \langle x,N\rangle$.
    \item[(3)] For every $x \in k$, we have 
    \[
    \langle x, \frac{1}{q-1} \rangle = \sum_{N=0}^\infty \langle x \rangle_N = \langle x \rangle_{\text{geo}}.
    \]
    \item[(4)] We may view $\langle \cdot, \cdot\rangle$ as a function on $(k/A) \times (\ZZ_{(p)}/\ZZ)$.
\end{itemize}
\end{remark}

The following {\it diamond bracket relations} are straightforward:

\begin{proposition}\label{prop: db-relation}
Take $x \in k$ and $y \in \ZZ_{(p)}$.
\begin{itemize}
    \item[(1)]
    Suppose $|x|_\infty<1$. 
    For $\ell \in \NN$ and $i \in \ZZ$ with $0\leq i < \ell$,
    \[
    \sum_{\epsilon \in \FF_q^\times} \langle \epsilon x, \frac{q^i}{q^\ell-1} \rangle = \begin{cases}
    1, & \text{ if $x \neq 0$ and $\ord_\infty(x) \equiv -i \bmod \ell$,}\\
    0, & \text{ otherwise.}
    \end{cases}
    \]
    \item[(2)]
    \[
    \langle x,y\rangle + \langle x,-y \rangle =
    \begin{cases}
    (q-1)\langle x \rangle_{\text{\rm geo}}, & \text{ if $y \notin \ZZ$;}\\
    0, & \text{otherwise.}
    \end{cases}
    \]
    \item[(3)] Given $\nfk \in A_+$,
    \[
    \sum_{\subfrac{a \in A}{\deg a < \deg \nfk}}
    \left(\langle \frac{x+a}{\nfk}, y \rangle - \langle \frac{a}{\nfk},y\rangle \right)
    = 
    \langle x, |\nfk|_\infty y\rangle.
    \]
\end{itemize}
\end{proposition}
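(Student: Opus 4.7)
The plan is to reduce all three identities to statements about the single-variable brackets $\langle\cdot\rangle_N$, using the defining properties (2) and (3) of the two-variable bracket. For (2) and (3), $y$ ranges over $\ZZ_{(p)}$; property (2) expresses $\langle x,y\rangle$ as a $\ZZ$-linear combination of the $\langle x, q^i/(q^\ell-1)\rangle$ via the digit expansion of $\langle y\rangle_{\mathrm{ari}}$, and both sides of (2) and (3) are linear in $y$ through this expansion, so it suffices to verify them for $y = q^i/(q^\ell-1)$ with $0\leq i<\ell$. In this case property (3) further gives
\[
\langle x, q^i/(q^\ell-1)\rangle = \sum_{\substack{N\geq 0 \\ N\equiv -1-i\,\bmod\,\ell}} \langle x\rangle_N,
\]
reducing everything to manipulations of the $\langle x\rangle_N$.

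Parts (1) and (2) then follow quickly. For (1), unpacking the definition of $\langle\cdot\rangle_N$ on the Laurent expansion $x\bmod A = \sum_{j\geq 1}\epsilon_j(x)t^{-j} \in k_\infty/A$, one sees that $\sum_{\epsilon\in\FF_q^\times}\langle\epsilon x\rangle_N$ is the indicator of the event that $x\notin A$ and $\ord_\infty(x\bmod A) = N+1$ (the unique $\epsilon$ that works is $\epsilon_{N+1}(x)^{-1}$); summing over $N\equiv -1-i\bmod\ell$ yields the indicator in (1). For (2), the case $y\in\ZZ$ is immediate from Remark~\ref{rem: 2var-diamond-1}, while for $y\notin\ZZ$ the digits of $\langle -y\rangle_{\mathrm{ari}}$ are $q-1-y_i$ (since $\sum_{i=0}^{\ell-1}(q-1)q^i/(q^\ell-1) = 1$). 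Property (2) then gives
\[
\langle x, y\rangle + \langle x, -y\rangle = (q-1)\sum_{i=0}^{\ell-1}\langle x, q^i/(q^\ell-1)\rangle = (q-1)\sum_{N\geq 0}\langle x\rangle_N = (q-1)\langle x\rangle_{\mathrm{geo}}.
\]

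The main obstacle is (3). Put $d = \deg\nfk$. Via the reduction above, property (1), and the congruence $q^{i+d}/(q^\ell-1) \equiv q^{(i+d)\bmod\ell}/(q^\ell-1) \pmod{\ZZ}$, identity (3) is reduced to the single-bracket identity
\[
S_{N,\nfk}(x) := \sum_{\substack{a\in\eA \\ \deg a<d}}\Bigl(\bigl\langle\tfrac{x+a}{\nfk}\bigr\rangle_N - \bigl\langle\tfrac{a}{\nfk}\bigr\rangle_N\Bigr) = \begin{cases} 0 & \text{if } 0\leq N<d, \\ \langle x\rangle_{N-d} & \text{if } N\geq d, \end{cases}
\]
for every $N\geq 0$; indeed, summing this over $N\equiv -1-i\bmod\ell$ and reindexing $N'=N-d$ recovers the right-hand side $\langle x, q^{i+d}/(q^\ell-1)\rangle$. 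To prove this key identity, I would expand $1/\nfk = \sum_{m\geq d}\eta_m t^{-m}$ with $\eta_d=1$ (since $\nfk$ is monic), write $a = \sum_{i=0}^{d-1}a_i t^i$ and $x\bmod A = \sum_{j\geq 1}\epsilon_j(x)t^{-j}$, and compute the $t^{-j'}$-coefficient $\tilde\epsilon_{j'}$ of $(x+a)/\nfk$ as an explicit $\FF_q$-linear form in the $a_i$ plus a constant built from the $\epsilon_j(x)$. The conditions $\tilde\epsilon_{j'} = 0$ for $1\leq j'\leq N$ together with $\tilde\epsilon_{N+1} = 1$ then form a triangular system over $\FF_q$ with pivot $\eta_d = 1$: for $N<d$ the conditions involve only $a_{d-N-1},\ldots,a_{d-1}$ and are independent of the $\epsilon_j(x)$, so the count of valid $a$ is the same for $x$ and for $x=0$, forcing $S_{N,\nfk}(x)=0$; for $N\geq d$ the conditions force $a=0$ and a recursive solve yields $\epsilon_1(x) = \cdots = \epsilon_{N-d}(x) = 0$ with $\epsilon_{N-d+1}(x) = 1$, which is precisely the assertion $\langle x\rangle_{N-d} = 1$.
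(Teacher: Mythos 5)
The paper offers no proof of this proposition at all---it is introduced with the sentence ``The following diamond bracket relations are straightforward'' and the verification is left to the reader---so there is no argument of the paper to compare against; your write-up supplies exactly the elementary verification that is being omitted, and it is correct. The reduction via properties (2) and (3) of the two-variable bracket to statements about the $\langle\cdot\rangle_N$ is the natural route (it is also what the computations in Remark~\ref{rem: db-relation} implicitly use), parts (1) and (2) are handled correctly (your reading of $\ord_\infty(x)$ as the order of $x$ modulo $A$ is the right one, since the left-hand side of (1) only depends on $x \bmod A$), and your key identity $S_{N,\nfk}(x)=0$ for $0\le N<\deg\nfk$ and $S_{N,\nfk}(x)=\langle x\rangle_{N-\deg\nfk}$ for $N\ge\deg\nfk$ is true and does give (3) after summing over $N\equiv -1-i \bmod \ell$. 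Two small points should be made explicit in a final version. First, your coefficient computation substitutes the expansion of $x\bmod A$ into $(x+a)/\nfk$, but an individual term $\langle (x+a)/\nfk\rangle_N$ is not invariant under $x\mapsto x+c$ with $c\in A$; the sum over $a$ is, because writing $c=b\nfk+r$ with $\deg r<\deg\nfk$ gives $(x+c+a)/\nfk=(x+(a+r))/\nfk+b$ and $a\mapsto a+r$ permutes the index set---this is what legitimizes replacing $x$ by its fractional part (and likewise reduces the whole of (3) to the case $|x|_\infty<1$). Second, in the reduction of (3) to $y=q^i/(q^\ell-1)$ the right-hand side requires knowing that the digits of $\langle q^{\deg\nfk}\, y\rangle_{\ari}$ are the cyclic shift by $\deg\nfk$ of those of $\langle y\rangle_{\ari}$; this follows at once from the congruence you cite together with uniqueness of the digit expansion, but it is the statement needed before property (2) of the bracket can be applied to $\langle x,|\nfk|_\infty y\rangle$. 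With these one-line remarks added, your proof is complete.
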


\begin{remark}\label{rem: db-relation}
${}$
\begin{itemize}
    \item[(1)] (Reflection formula.) Given $x \in k/A$ and $y \in \ZZ_{(p)}/\ZZ$,
    set
    \begin{equation}\label{eqn wt_0}
    \text{wt}_0(x,y) \assign \text{wt}_0^{\geo}(x)\cdot \text{wt}_0^{\ari}(y).
    \end{equation}
    By Proposition~\ref{prop: db-relation} (1), we have that for $\ell \in \NN$  so that $(q^\ell-1)y \in \ZZ$,
    \[
    \sum_{i=0}^{\ell-1} \sum_{\epsilon \in \FF_q^\times} \langle \epsilon x, q^i y  \rangle = 
    \ell \cdot \text{wt}_0(x,y).
    \]
    \item[(2)] (Multiplication formula.)
    Given $\nfk \in A_+$, Proposition~\ref{prop: db-relation} (3) is equivalent to
    \[
    \sum_{\subfrac{a \in A}{\deg a < \deg \nfk}}
    \langle x+\frac{a}{\nfk},y\rangle 
    = \langle \nfk x, |\nfk|_\infty y\rangle - \langle  |\nfk|_\infty y\rangle_{\text{ari}} + |\nfk|_\infty \cdot \langle y\rangle_{\text{ari}}, \quad \forall x \in k \text{ and } y \in \ZZ_{(p)}.
    \]
    To see this, we may assume that $y = q^i/(q^\ell-1)$ for $\ell \in \NN$ and $i \in \ZZ$ with $0\leq i < \ell$ without loss of generality.
    Then the result follows from:
    \begin{align*}
    \sum_{\subfrac{a \in A}{\deg a < \deg \nfk}} \langle \frac{a}{\nfk}, \frac{q^i}{q^\ell-1}\rangle 
    &= \#\{a \in A_+\mid \deg a < \deg \nfk,\ \deg a \equiv \deg \nfk + i \bmod \ell\} \\
    &= -\langle \frac{q^{\deg \nfk + i}}{q^\ell-1}\rangle_{\ari} + \frac{q^{\deg \nfk + i}}{q^\ell-1}.
    \end{align*}
\end{itemize}
\end{remark}

\subsection{Stickelberger distributions}

Define actions of $\eG$ both on $k/A$ and on $\ZZ_{(p)}/\ZZ$ as follows:
Given $\varrho \in \eG^{\cyc}$, write $\varrho = \varrho_{\alpha,c}$
where $(\alpha,c) = (\alpha_\nfk,c_\ell)_{\nfk,\ell} \in \varprojlim_{\nfk,\ell}(\eA/\nfk)^\times \times (\ZZ/\ell\ZZ)$
corresponds to $\varrho$ via the Artin map (see \eqref{eqn: cyc-Artin}).
For each $x \in \frac{1}{\nfk(\theta)}A/A$ and $y \in \frac{1}{q^\ell-1}\ZZ/\ZZ$, set
\begin{equation}\label{eqn: G-action}
\varrho \star x \assign \alpha_\nfk(\theta) \cdot x \ \bmod A \quad \text{ and } \quad 
\varrho \star y \assign q^{c_\ell} \cdot y \ \bmod \ZZ.
\end{equation}
We also let $\eG$ act diagonally on $(k/A) \times (\ZZ_{(p)}/\ZZ)$, i.e.\
\[
\varrho \star (x,y) \assign (\varrho \star x, \varrho \star y), \quad \forall (x,y) \in (k/A) \times (\ZZ_{(p)}/\ZZ).
\]
Recall that $\eG^{\ari} = \gal(\eK^{\ari}/\ek)$, $\eG^{\geo} = \gal(\eK^{\geo}/\ek)$, and $\eG^{\cyc} = \gal(\eK^{\cyc}/\ek)$.
Given $x \in k/A$ and $y \in \ZZ_{(p)}/\ZZ$, define
$\St^{\ari}_0(y): \eG^{\ari}\rightarrow \QQ$, 
$\St^{\geo}_0(x): \eG^{\geo}\rightarrow \QQ$, and 
$\St_0(x,y): \eG^{\cyc} \rightarrow \QQ$ respectively in the following:
\begin{align*}
\St^{\ari}_0(y)(\varrho) &\assign \langle \varrho \star y \rangle_{\ari}, \quad \quad \forall \varrho \in \eG^{\ari}, \\
\St^{\geo}_0(x)(\varrho) &\assign \langle \varrho \star x\rangle_{\geo}, \quad \quad \forall \varrho \in \eG^{\geo}
\\  
\St_0(x,y)(\varrho) & \assign \langle \varrho \star x, \varrho \star y\rangle, \quad \quad \forall \varrho \in \eG^{\cyc}.
\end{align*}
In particular, one has
\[
\St_0(x,\frac{1}{q-1}) = \St_0^{\geo}(x),
\quad \forall x \in k/A.
\]

Let $\nfk \in \eA_+$ and $\ell \in \NN$.
Note that from \eqref{eqn: S(G^c)}, $\Sscr(\eG_{1,\ell})$ is actually the whole space of $\QQ$-valued functions on $\eG_{1,\ell}$.
Moreover, for $x \in \frac{1}{\nfk(\theta)}A/A$ and $y \in \frac{1}{q^\ell-1}\ZZ/\ZZ$,
the reflection formulas in Lemma~\ref{lem: AL-geoG} (1) and Remark~\ref{rem: db-relation} (1) imply
\begin{align*}
\sum_{\epsilon \in \FF_q^\times} \St^{\geo}_0(x)(\varrho \varrho_\epsilon) & = \text{wt}_0^{\geo}(x), \quad \forall \varrho \in \eG_{\nfk,1}, \\
\sum_{c=0}^{\ell-1}
\sum_{\epsilon \in \FF_q^\times} \St_0(x,y)(\varrho \varrho_{\epsilon,c}) & = \ell \cdot \text{wt}_0(x,y), \quad \forall \varrho \in \eG_{\nfk,\ell}.
\end{align*}
From $\eqref{eqn: S(G^c)}$ we have that 
\[
\St^{\ari}_0(y) \in \Sscr(\eG_{1,\ell}), \quad 
\St^{\geo}_0(x) \in \Sscr(\eG_{\nfk,1}), \quad 
\text{ and } \quad
\St_0(x,y) \in \Sscr(\eG_{\nfk,\ell}).
\]

\begin{definition}\label{defn: ST}
For $x \in k/A$ and $y \in \ZZ_{(p)}/\ZZ$, we set
\begin{align*}
\St^{\ari}(y) &\assign \St_0^{\ari}(-y), & \text{wt}^{\ari}(y)& \assign \text{wt}^{\ari}_0(-y), \\
\St^{\geo}(x) & \assign \St^{\geo}_0(x) - \frac{1}{q-1}\mathbf{1}_{\eG^{\geo}}, & \text{wt}^{\geo}(x) & \assign \text{wt}_0^{\geo}(x) - 1,\\
\St(x,y) & \assign \St_0(x,-y)-\St_0^{\ari}(-y), & \text{wt}(x,y) &\assign \text{wt}^{\geo}(x)\cdot \text{wt}^{\ari}(y).
\end{align*}
We call $\St^{\ari}(y)$, $\St^{\geo}(x)$, and $\St(x,y)$ the \emph{Stickelberger functions associated to $y$, $x$, and $(x,y)$}, respectively.
The maps 
$\St^{\ari}: \ZZ_{(p)}/\ZZ \rightarrow \Sscr(\eG^{\ari})$,
$\St^{\geo}: k/A\rightarrow \Sscr(\eG^{\geo})$, 
and
$\St: (k/A)\times (\ZZ_{(p)}/\ZZ) \rightarrow \Sscr(\eG^{\cyc})$
are called the \emph{Stickelberger distributions associated to $\langle\cdot \rangle_{\ari}$, $\langle\cdot \rangle_{\geo}$,
and $\langle \cdot,\cdot \rangle$}, respectively.
\end{definition}

\begin{remark}\label{rem: St-relations}
Let $\nfk \in \eA_+$ and $\ell \in \NN$.
Given $x\in \frac{1}{\nfk(\theta)}/A$ and $y \in \frac{1}{q^\ell-1}\ZZ/\ZZ$,
suppose $\langle -y \rangle_{\text{ari}} = \sum_{i=0}^{\ell-1} y_i q^i/(q^\ell-1)$ where 
$y_0,...,y_{\ell-1} \in \ZZ$ with $0\leq y_0,...,y_{\ell-1}<q$.
The diamond bracket relations imply the following:
\begin{itemize}
    \item[(1)] 
    \[
    \St^{\ari}(y) = \sum_{i=0}^{\ell-1} y_i 
    \St^{\ari}(\frac{q^i}{1-q^\ell})
    \quad \text{ and } \quad 
    \sum_{c=0}^{\ell-1} \St^{\ari}(y)(\varrho\varrho_c) = \frac{\ell \cdot  \text{wt}^{\ari}(y)}{q-1}, \quad \forall \varrho \in \eG_{1,\ell}.
    \]
    \item[(2)]
    \[
    \sum_{\subfrac{a \in A}{\deg a<\deg \nfk'}}\St^{\geo}(x+\frac{a}{\nfk'}) = \St^{\geo}(\nfk' x),\quad \forall \nfk' \in A_+,
    \]
    and
    \[
    \sum_{\epsilon \in \FF_q^\times} \St^{\geo}(x)(\varrho \varrho_\epsilon) = \text{wt}^{\geo}(x), \quad \forall \varrho \in \eG_{\nfk,1}.
    \]
    \item[(3)]
    \[
    \St(x,y) = \sum_{i=0}^{\ell-1} y_i \St(x,\frac{q^i}{1-q^\ell}), \quad 
    \sum_{c=0}^{\ell-1} \sum_{\epsilon \in \FF_q^\times}\St(x,y)(\varrho \varrho_{\epsilon,c}) = \ell \cdot \text{wt}(x,y), \quad \forall \varrho \in \eG_{\nfk,\ell},
    \]
    and
    \[
    \sum_{\subfrac{a \in A}{\deg a < \deg \nfk'}}
    \St(x+\frac{a}{\nfk'},y) = \St(\nfk' x,|\nfk'|_\infty y), \quad \forall \nfk' \in A_+.
    \]
\end{itemize}
In particular, from the definitions of $\St^{\ari}$, $\St^{\geo}$, and $\St$, we get that
\begin{itemize}
    \item[(4)] For every $x \in k/A$ and $y \in \ZZ_{(p)}/\ZZ$,
    \[
    \St(0,y) = -\St^{\ari}(y),
    \quad \St(x,\frac{1}{1-q}) = \St^{\geo}(x), 
    \]
    and
    \[
    \St(x,y)+\St(x,-y) = 
    \begin{cases}
    (q-1)\St^{\geo}(x), & \text{ if $y \neq 0 \in \ZZ_{(p)}/\ZZ$;}\\
    0, & \text{ otherwise.}
    \end{cases}
    \]
\end{itemize}
\end{remark}

\begin{proposition}\label{prop: St-span}
Let $\nfk \in \eA_+$ and $\ell \in \NN$.
\begin{itemize}
\item[(1)] The space
$\Sscr(\eG_{1,\ell})$ is spanned by $\St^{\ari}(y)$ for all $y \in \frac{1}{q^\ell-1} \ZZ/\ZZ$.
Consequently,
$\Sscr(\eG^{\ari})$ is spanned by 
$\St^{\ari}(y)$ for all $y \in  \ZZ_{(p)}/\ZZ$.
\item[(2)] The space
$\Sscr(\eG_{\nfk,1})$ is spanned by $\St^{\geo}(x)$ for all $x \in \frac{1}{\nfk(\theta)}A/A$.
Consequently, $\Sscr(\eG^{\geo})$ is spanned by $\St^{\geo}(x)$ for all $x \in k/A$.
\item[(3)] The space
$\Sscr(\eG_{\nfk,\ell})$ is spanned by 
$\St(x,y)$
for all
$x \in \frac{1}{\nfk(\theta)}A/A$ and $y \in \frac{1}{q^\ell-1}\ZZ/\ZZ$.
Consequently,
the space $\Sscr(\eG^{\rm cyc})$ is spanned by $\St(x,y)$ for all $x \in k/A$ and $y \in \ZZ_{(p)}/\ZZ$.
\end{itemize}
\end{proposition}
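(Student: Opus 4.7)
The plan is to treat the three parts uniformly via Fourier analysis on the finite abelian group $\eG_{\nfk,\ell}\cong(\eA/\nfk)^\times\times(\ZZ/\ell\ZZ)$, writing a character as a pair $(\chi,\psi)$. By Pontryagin duality, the defining condition~\eqref{eqn: S(G^c)} says $\widehat\varphi(\chi,\psi)=0$ for every nontrivial character with both $\chi|_{\FF_q^\times}=\mathbf{1}$ and $\psi=\mathbf{1}$, so $\Sscr(\eG_{\nfk,\ell})\otimes_\QQ\CC$ is spanned by $\mathbf{1}$ together with all characters $(\chi,\psi)$ satisfying $\psi\neq\mathbf{1}$ or $\chi|_{\FF_q^\times}\neq\mathbf{1}$ (the \emph{admissible} characters); the resulting dimension count matches Remark~\ref{rem: S(G/H)-dim}. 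It then suffices to produce, for each admissible character, a Stickelberger function of the prescribed form whose Fourier coefficient there is nonzero.

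For Part~(1), when $\nfk=1$ the defining condition on $\Sscr(\eG_{1,\ell})$ is vacuous, giving dimension $\ell$. By Remark~\ref{rem: St-relations}(1) every $\St^{\ari}(y)$ is a $\QQ$-combination of $\St^{\ari}(q^i/(1-q^\ell))$ for $0\leq i<\ell$, and direct evaluation yields
\[
\St^{\ari}(q^i/(1-q^\ell))(\varrho_c)=\frac{q^{(c+i)\bmod\ell}}{q^\ell-1}.
\]
The value matrix is, up to the scalar $1/(q^\ell-1)$, circulant with first row $(q^0,q^1,\ldots,q^{\ell-1})$; its eigenvalues $(q^\ell-1)/(q\zeta-1)$ over $\ell$th roots of unity $\zeta$ are all nonzero, so these $\ell$ functions form a basis of $\Sscr(\eG_{1,\ell})$.

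For Part~(2), the trivial character is realized by $\St^{\geo}(0)=-\mathbf{1}_{\eG^{\geo}}/(q-1)$, whose Fourier coefficient at $\mathbf{1}$ is $-\#(\eA/\nfk)^\times/(q-1)\neq 0$. For a nontrivial $\chi$ with $\chi|_{\FF_q^\times}\neq\mathbf{1}$, let $\cfk\mid\nfk$ be its conductor and take $x=1/\cfk(\theta)$. Since $\langle\beta/\cfk(\theta)\rangle_{\geo}$ detects the unique monic representative in each $\FF_q^\times$-coset of $(\eA/\cfk)^\times$, grouping the Fourier sum through the surjection $(\eA/\nfk)^\times\twoheadrightarrow(\eA/\cfk)^\times$ yields
\[
\widehat{\St^{\geo}(1/\cfk)}(\chi)=\frac{\#(\eA/\nfk)^\times}{\#(\eA/\cfk)^\times}\sum_{\substack{\beta\in\eA_+,\ \deg\beta<\deg\cfk\\ \gcd(\beta,\cfk)=1}}\chi^{-1}(\beta),
\]
which is, up to normalization, the Dirichlet $L$-value $L(0,\chi^{-1})$ for an ``odd'' primitive character on $\FF_q[t]$, and hence nonzero.

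Part~(3) I would settle by extending the computation of Part~(2) to the two-variable situation. The ``pure'' cases $\chi=\mathbf{1}$ and $\psi=\mathbf{1}$ are handled by Remark~\ref{rem: St-relations}(4) via $\St(0,y)=-\St^{\ari}(y)$ and $\St(x,1/(1-q))=\St^{\geo}(x)$, reducing to Parts~(1) and~(2). For a ``mixed'' admissible $(\chi,\psi)$, take $x=1/\cfk(\theta)$ (with $\cfk$ the conductor of $\chi$) and $y=q^i/(1-q^\ell)$; the same manipulation unwinds $\widehat{\St(x,y)}(\chi,\psi)$ into a nonzero constant times $L(u_\psi,\chi^{-1})$ evaluated at the unit root $u_\psi=\psi^{-1}(1)$. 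By the function-field Riemann hypothesis (Weil), all zeros of $L(u,\chi^{-1})$ lie on $|u|=q^{-1/2}$, and the potential trivial factor $(1-u)$ that appears when $\chi|_{\FF_q^\times}=\mathbf{1}$ is avoided by $u_\psi\neq 1$ (courtesy of $\psi\neq\mathbf{1}$). Thus $L(u_\psi,\chi^{-1})\neq 0$ in all mixed cases. The ``Consequently'' clauses follow by passing to the direct limit in $\nfk$ and $\ell$. I expect the main obstacle to be precisely this appeal to $L$-value non-vanishing on the unit circle; all remaining steps are formal consequences of Remark~\ref{rem: St-relations} and Fourier analysis.
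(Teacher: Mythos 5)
Your argument is correct, and its backbone is the same as the paper's: decompose $\CC\otimes_\QQ\Sscr(\eG_{\nfk,\ell})$ into characters, recognize the Fourier coefficient of a Stickelberger function against a character $\bochi$ as (a nonzero multiple of) a Dirichlet $L$-value, and conclude by a non-vanishing statement; the paper packages the Fourier computation as Lemma~\ref{lem: evaluator} and runs the convolution-algebra argument only for part (3), deducing (1) and (2) afterwards via Remark~\ref{rem: St-relations}~(4). Where you genuinely diverge is in organization and in the non-vanishing input. You reverse the reduction: (1) is proved by an explicit circulant computation (the eigenvalues $(q^\ell-1)/(q\zeta-1)$ are indeed nonzero, so you even get a basis), (2) is proved directly, and the ``pure'' characters of (3) are then absorbed through these; this is more elementary and more explicit than routing everything through (3), at the cost of a three-way case analysis. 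More substantively, the paper obtains the key criterion (Corollary~\ref{cor: vanishing-cond}: $L_\eA(0,\bochi)=0$ exactly when $\bochi$ is nontrivial but trivial on all $\varrho_{\epsilon,c}$) without any appeal to the Riemann hypothesis, by comparing orders of vanishing at $s=0$ of the Dedekind--Weil zeta functions of $O_{\nfk,\ell}$ and $O_{\nfk,\ell}^+$ (Remark~\ref{rem: L-vanish}, citing Rosen); you instead invoke Weil's RH for $L(u,\chi^{-1})$ together with the trivial zero at $u=1$ for characters trivial on $\FF_q^\times$ and the observation that $u_\psi\neq 1$ when $\psi\neq\mathbf{1}$. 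Both yield exactly the needed statement; yours uses heavier machinery but makes transparent where the only possible zero on $|u|=1$ sits, while the paper's stays inside class-field-theoretic zeta identities. Two points you should still write out, though both are routine: the unwinding of the mixed-case Fourier coefficient into $L(u_\psi,\chi^{-1})$ needs the remark that the correction term $\langle-\varrho\star y\rangle_{\ari}$ in $\St(x,y)$ sums to zero against any character with nontrivial finite part (this is precisely the computation in Lemma~\ref{lem: evaluator}), and the descent from the $\CC$-span to the $\QQ$-span (the paper's $\Sscr(\eG_{\nfk,\ell})=\Hcal_\CC(\eG_{\nfk,\ell})\cap\Fcal_\QQ(\eG_{\nfk,\ell})$) should be stated explicitly.
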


This is an analogue of Deligne's Theorem stated in \cite[Theorem~3.1]{Anderson82} for the classical case.
The key step is to regard the Stickelberger functions $\St(x,y)$ for $x \in \frac{1}{\nfk(\theta)}A/A$ and $y \in \frac{1}{q^\ell-1}\ZZ/\ZZ$, where $\nfk \in \eA_+$ and $\ell \in \NN$ are given, as the ``evaluators'' of the Artin $L$-functions associated to the characters on the Galois group $\eG_{\nfk,\ell}$.
Let $\widehat{\eG}_{\nfk,\ell}$ be the Pontryagin dual of $\eG_{\nfk,\ell}$.
Identifying $\eG_{\nfk,\ell}$ with $(\eA/\nfk)^\times \times \ZZ/\ell \ZZ$ via the Artin map in \eqref{eqn: cyc-Artin}, 
for each character $\bochi \in \widehat{\eG}_{\nfk,\ell}$
we let $\bochi_f\assign \bochi\big|_{(\eA/\nfk)^\times}: (\eA/\nfk)^\times \rightarrow \CC^\times$, and put $\cfk_{\bochi}\assign \cfk_{\bochi_f} \in \eA_+$ where $\cfk_{\bochi_f}$ is the conductor of $\bochi_f$.
The (finite part of the) \emph{Artin $L$-function associated to $\bochi$} can be written as the following:
\[
L_{\eA}(s,\bochi) \assign \prod_{\substack{\text{irr.~}\pfk \in \eA_{\scaleto{+}{4pt}} \\ \pfk \nmid \cfk_{\bochi}}}
\left(1-\frac{\bochi(\Frob_\pfk)}{|\pfk|_\infty^s}\right)^{-1}, \quad \re(s)>1.
\]
Let $L(s,\bochi_f)$ be the \emph{Dirichlet $L$-function associated to the character $\bochi_f:(\eA/\nfk)^\times \rightarrow \CC^\times$}
(see \cite[equality~(1) in p.~11 and Proposition 4.3]{Rosen}):
\begin{eqnarray}
L(s,\bochi_f) &\assign& \prod_{\substack{\text{irr.~}\pfk \in \eA_{\scaleto{+}{4pt}} \\ \pfk \nmid \cfk_{\bochi_f}}}
\left(1-\frac{\bochi_f(\pfk)}{|\pfk|_\infty^s}\right)^{-1}, \quad \re(s)>1. \nonumber \\
&=&
\begin{cases}
\displaystyle \frac{1}{1-q^{1-s}}, & \text{ if $\bochi_f$ is trivial;} \\
& \nonumber \\
\displaystyle\sum_{\subfrac{\afk \in \eA_{\scaleto{+}{4pt}},\ \deg \afk < \deg \cfk_{\bochi_f}}{(\afk, \cfk_{\bochi_f}) = 1}}\frac{\bochi_f(\afk)}{|\afk|_\infty^s}, & \text{ otherwise.}
\end{cases}
\end{eqnarray}
Write
\[
\bochi(1,1) = q^{-s_{\bochi}}
\quad \text{ where } \quad
s_{\bochi} = -\frac{2\pi \sqrt{-1}}{\ln q} \cdot \frac{d_{\bochi}}{\ell}
\quad \text{ for a unique integer $d_{\bochi}$ with $0\leq d_{\bochi}<\ell$.}
\]
By \eqref{eqn: 2var-artin} we have that 
\[
L_{\eA}(s,\bochi) = L(s+s_{\bochi},\bochi_f)
= \begin{cases}
\displaystyle \frac{1}{1-q^{1-{(s+s_{\bochi})}}}, & \text{ if $\bochi_f$ is trivial;} \\
& \nonumber \\
\displaystyle\sum_{\subfrac{\afk \in \eA_{\scaleto{+}{4pt}},\ \deg \afk < \deg \cfk_{\bochi}}{(\afk, \cfk_{\bochi}) = 1}}\frac{\bochi(\afk,\deg \afk)}{|\afk|_\infty^s}, & \text{ otherwise.}
\end{cases}
\]
For $\mfk \in \eA_+$ with $\cfk_{\bochi}\mid \mfk \mid \nfk$, put
\begin{eqnarray}\label{eqn: L-relation}
L_{\eA}^\mfk(s,\bochi)
&\assign & L_{\eA}(s,\bochi)\prod_{\subfrac{\text{irr.~} \pfk \in \eA_{\scaleto{+}{4pt}}}{\pfk\mid \mfk,\ \pfk \nmid \cfk_{\bochi}}}(1-\frac{\bochi(\pfk,\deg \pfk)}{|\pfk|_\infty^s}) \nonumber \\
&=&
\begin{cases}
\displaystyle \frac{1}{1-q^{1-(s+s_{\bochi})}} \cdot 
\prod_{\subfrac{\text{irr.~} \pfk \in \eA_{\scaleto{+}{4pt}}}{\pfk\mid \mfk}}(1-\frac{1}{|\pfk|_{\infty}^{s+s_{\bochi}}})
, & \text{ if $\bochi_f$ is trivial;} \\
& \\
\displaystyle\sum_{\subfrac{\afk \in \eA_{\scaleto{+}{4pt}},\ \deg \afk < \deg \mfk}{(\afk, \mfk) = 1}}\frac{\bochi(\afk,\deg \afk)}{|\afk|_\infty^s}, & \text{ otherwise.}
\end{cases}
\end{eqnarray}

We derive the following:

\begin{lemma}\label{lem: evaluator}
Let $\nfk \in \eA_+$ and $\ell \in \NN$.
Take $\bochi \in \widehat{\eG}_{\nfk,\ell}$ and $\mfk \in \eA_+$ with $\mfk \mid \nfk$.
For each $a \in \eA$ with $\text{\rm gcd}(a,\mfk) = 1$ and $c \in \ZZ$ with $0\leq c < \ell$, we have that when $\bochi_f$ is non-trivial,
\begin{equation}\label{eqn: evaluator}
\sum_{\varrho \in \eG_{\nfk,\ell}}\St(\frac{a(\theta)}{\mfk(\theta)},\frac{q^c}{1-q^{\ell}})(\varrho)\cdot \overline{\bochi(\varrho)}  = \frac{\#(\eA/\nfk)^\times}{\#(\eA/\mfk)^\times}\cdot \bochi(a,c+\deg \mfk) \cdot
\begin{cases}
L_{\eA}^\mfk(0,\overline{\bochi}), & \text{ if $\cfk_{\bochi}\mid \mfk$;} \\
0, &\text{ otherwise;}
\end{cases}
\end{equation}
and when $\bochi_f$ is trivial,
\begin{equation}\label{eqn: evaluator2}
\sum_{\varrho \in \eG_{\nfk,\ell}}\St(0,\frac{q^c}{1-q^{\ell}})(\varrho)\cdot \overline{\bochi(\varrho)} = \#(\eA/\nfk)^\times\cdot \bochi(1,c)\cdot L_{\eA}(0,\overline{\bochi}).
\end{equation}
\end{lemma}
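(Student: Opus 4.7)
The plan is to unfold both character sums via Definition~\ref{defn: ST}, parametrize $\varrho \in \eG_{\nfk,\ell}$ by the Artin identification $\varrho = \varrho_{\alpha,c'}$ with $(\alpha,c') \in (\eA/\nfk)^\times \times \ZZ/\ell\ZZ$ from \eqref{eqn: cyc-Artin}, and reduce to an explicit evaluation of the two-variable diamond bracket at fractions with monic denominator. Using the action \eqref{eqn: G-action}, one computes
\[
\St\!\left(\tfrac{a(\theta)}{\mfk(\theta)},\tfrac{q^c}{1-q^\ell}\right)(\varrho_{\alpha,c'})
= \Big\langle \tfrac{\alpha(\theta)a(\theta)}{\mfk(\theta)},\tfrac{q^{c+c'}}{q^\ell-1}\Big\rangle - \Big\langle \tfrac{q^{c+c'}}{q^\ell-1}\Big\rangle_{\ari}.
\]
When $\bochi_f$ is nontrivial, the arithmetic term depends only on $c'$, so character orthogonality on $(\eA/\nfk)^\times$ kills its contribution. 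The remaining two-variable term depends on $\alpha$ only modulo $\mfk$, so I would collapse the $\alpha$-sum through $(\eA/\nfk)^\times \twoheadrightarrow (\eA/\mfk)^\times$; orthogonality on the kernel produces the factor $\#(\eA/\nfk)^\times/\#(\eA/\mfk)^\times$ when $\bochi_f$ factors through $(\eA/\mfk)^\times$ and vanishes otherwise, yielding the dichotomy $\cfk_{\bochi} \mid \mfk$ versus not in \eqref{eqn: evaluator}.

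Assuming $\cfk_{\bochi} \mid \mfk$, the crucial step is the evaluation of $\langle \beta'(\theta)/\mfk(\theta), q^i/(q^\ell-1)\rangle$ for monic representatives $\beta' \in \eA_+$ of $\beta \in (\eA/\mfk)^\times$ with $\deg \beta' < \deg \mfk$. Since both $\beta'$ and $\mfk$ are monic, $\beta'(\theta)/\mfk(\theta) = \theta^{-(\deg \mfk - \deg \beta')}\bigl(1+O(\theta^{-1})\bigr)$, whence $\langle\beta'(\theta)/\mfk(\theta)\rangle_N = 1$ iff $N = \deg \mfk - \deg \beta' - 1$. Condition~(3) defining the two-variable bracket then shows that $\langle\beta'(\theta)/\mfk(\theta), q^i/(q^\ell-1)\rangle$ equals $1$ precisely when $i \equiv \deg \beta' - \deg \mfk \pmod \ell$ and is zero otherwise. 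Substituting $\beta \equiv \alpha a \bmod \mfk$, reindexing $i \equiv c+c' \pmod \ell$ (legitimate since $\bochi(1,1)^\ell = 1$ implies $q^{\ell s_{\bochi}} = 1$), and collecting phases, the residual sum becomes
\[
\sum_{\beta'} \overline{\bochi_f(\beta')}\, q^{(\deg\beta') s_{\bochi}} \;=\; \sum_{\beta'}\overline{\bochi}(\beta',\deg\beta') \;=\; L_\eA^\mfk(0,\overline{\bochi})
\]
by \eqref{eqn: L-relation}, and the prefactor assembles precisely into $\frac{\#(\eA/\nfk)^\times}{\#(\eA/\mfk)^\times}\cdot\bochi(a,c+\deg\mfk)$, proving \eqref{eqn: evaluator}.

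For the trivial $\bochi_f$ case with $x=0$, Remark~\ref{rem: St-relations}~(4) gives $\St(0,y)=-\St^{\ari}(y)$, so the $\alpha$-sum factors out as $\#(\eA/\nfk)^\times$ and one is left with
\[
-\#(\eA/\nfk)^\times\cdot \bochi(1,1)^c\cdot \frac{1}{q^\ell-1}\sum_{j=0}^{\ell-1} q^{j(1+s_{\bochi})}.
\]
Using $q^{\ell s_{\bochi}}=1$, the geometric sum equals $(q^\ell-1)/(q^{1+s_{\bochi}}-1)$, and the whole expression reduces to $\#(\eA/\nfk)^\times\cdot\bochi(1,c)\cdot \tfrac{1}{1-q^{1+s_{\bochi}}} = \#(\eA/\nfk)^\times\cdot\bochi(1,c)\cdot L_\eA(0,\overline{\bochi})$ by the closed form for the Dirichlet $L$-function at the trivial character, establishing \eqref{eqn: evaluator2}.

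The main obstacle will be the careful bookkeeping of character phases through the reindexing $c' \mapsto i = (c+c')\bmod \ell$: one must verify that the combined factor $\overline{\bochi(1,1)}^{\,c'} \cdot q^{(\deg \beta' - \deg \mfk)s_{\bochi}}$ reassembles into $\bochi(1,1)^{c+\deg\mfk}$ after summing the resulting congruence constraint, and that the restriction of $\beta'$ modulo $\mfk$ (rather than modulo $\cfk_{\bochi}$) produces exactly the imprimitive Euler factors at primes $\pfk \mid \mfk$, $\pfk \nmid \cfk_{\bochi}$ that are built into $L_\eA^\mfk$ via \eqref{eqn: L-relation}. Both facts are forced by the definitions once one keeps track of signs and exponents, but they are the points where a careless computation would go astray.
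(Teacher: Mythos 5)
Your proposal is correct and follows essentially the same route as the paper's proof: unfold the sum via the Artin parametrization, kill the arithmetic term and collapse the $\alpha$-sum by character orthogonality (the paper handles the vanishing case $\cfk_{\bochi}\nmid\mfk$ by the equivalent shift-by-$\varrho_{\alpha,0}$ trick rather than kernel orthogonality), evaluate $\langle \cdot,\cdot\rangle$ at fractions with monic numerator, and identify the resulting sums with $L_\eA^{\mfk}(0,\overline{\bochi})$ via \eqref{eqn: L-relation} and with the closed form of $L_\eA(0,\overline{\bochi})$ in the trivial-$\bochi_f$ case. The phase bookkeeping you flag does work out exactly as you state, matching the paper's computation.
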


\begin{proof}
When 
$\cfk_{\bochi} \nmid \mfk$, there exists $\alpha \in (\eA/\nfk)^\times$ with $\alpha \equiv 1 \bmod \mfk$ such that $\bochi_f(\alpha) \neq 1$.
Thus 
\begin{align*}
\sum_{\varrho \in \eG_{\nfk,\ell}}\St(\frac{a(\theta)}{\mfk(\theta)},\frac{q^c}{1-q^{\ell}})(\varrho)\cdot \overline{\bochi(\varrho)} &= \sum_{\varrho \in \eG_{\nfk,\ell}}\St(\frac{a(\theta)}{\mfk(\theta)},\frac{q^c}{1-q^{\ell}})(\varrho\varrho_{\alpha,0})\cdot \overline{\bochi(\varrho\varrho_{\alpha,0})}\\
&=\overline{\bochi_f(\alpha)}\cdot \sum_{\varrho \in \eG_{\nfk,\ell}}\St(\frac{a(\theta)}{\mfk(\theta)},\frac{q^c}{1-q^{\ell}})(\varrho)\cdot \overline{\bochi(\varrho)},
\end{align*}
which implies that 
$\displaystyle\sum_{\varrho \in \eG_{\nfk,\ell}}\St(\frac{a(\theta)}{\mfk(\theta)},\frac{q^c}{1-q^{\ell}})(\varrho)\cdot \overline{\bochi(\varrho)} = 0$.

When $\bochi_f$ is non-trivial with $\cfk_{\bochi}\mid \mfk$,
we have that
\begin{align*}
\sum_{\varrho \in \eG_{\nfk,\ell}}\St(\frac{a(\theta)}{\mfk(\theta)},\frac{q^c}{1-q^{\ell}})(\varrho)\cdot \overline{\bochi(\varrho)} &=\frac{\#(\eA/\nfk)^\times}{\#(\eA/\mfk)^\times}\cdot 
\sum_{i=0}^{\ell-1}\sum_{\alpha \in (\eA/\mfk)^\times}\langle\frac{\alpha(\theta) a(\theta)}{\mfk(\theta)},\frac{q^{i+c}}{q^{\ell}-1}\rangle \cdot \overline{\bochi(\alpha,i)} \\
&= \frac{\#(\eA/\nfk)^\times}{\#(\eA/\mfk)^\times}\cdot  \bochi(a,c) \cdot 
\sum_{\substack{\afk \in \eA_{\scaleto{+}{4pt}},\ \deg \afk<\deg \mfk, \\ \text{gcd}(\afk,\mfk) =1}} \overline{\bochi(\afk,\deg \afk-\deg\mfk)} \\
&= \frac{\#(\eA/\nfk)^\times}{\#(\eA/\mfk)^\times}\cdot  \bochi(a,c+\deg \mfk) \cdot L_{\eA}^{\mfk}(0,\overline{\bochi}),
\end{align*}
where the last equality follows from \eqref{eqn: L-relation}.

Finally, when $\bochi_f$ is trivial, we get
\begin{align*}
\sum_{\varrho \in \eG_{\nfk,\ell}}\St(0,\frac{q^c}{1-q^{\ell}})(\varrho)\cdot \overline{\bochi(\varrho)}
&= \#(\eA/\nfk)^\times \cdot \sum_{i=0}^{\ell-1}\left(-\langle \frac{q^{c+i}}{q^\ell-1}\rangle_{\ari} \cdot q^{-i s_{\overline{\bochi}}}\right) \\
&= \#(\eA/\nfk)^\times \cdot \frac{q^{-cs_{\bochi}}}{1-q^\ell} \cdot  \sum_{i=0}^{\ell-1}q^{i(1-s_{\overline{\bochi}})} \\
&= \#(\eA/\nfk)^\times \cdot \bochi(1,c) \cdot L_{\eA}(0,\overline{\bochi}).
\end{align*}
\end{proof}

\begin{remark}\label{rem: L-vanish}
Let $\nfk \in \eA_+$ and $\ell \in \NN$.
Let $\eK_{\nfk,\ell}^+$ be the maximal totally real subfield of $\eK_{\nfk,\ell}$.
Put $\eG_{\nfk,\ell}^+ \assign \gal(\eK_{\nfk,\ell}^+/\ek)$.
Then we may identify $\widehat{\eG}_{\nfk,\ell}^+$, the Pontryagin dual of $\eG_{\nfk,\ell}^+$, with the subgroup of $\widehat{\eG}_{\nfk,\ell}$ consisting of all characters $\bochi$ satisfying that $\bochi(\varrho_{\epsilon,c}) = 1$ for all $\epsilon \in \FF_q^\times$ and $c \in \ZZ/\ell\ZZ$.
By \cite[Proposition~14.12]{Rosen}, we get that for each $\bochi^+ \in \widehat{\eG}_{\nfk,\ell}^+$,
\[
\ord_{s=0}L_\eA(s,\bochi^+) = 1 \quad \text{ if and only if \quad $\bochi^+$ is non-trivial.}
\]
Denote by $O_{\nfk,\ell}$ and $O_{\nfk,\ell}^+$ the integral closure of $\eA$ in $\eK_{\nfk,\ell}$ and $\eK_{\nfk,\ell}^+$, respectively.
Let $\zeta_{O_{\nfk,\ell}}(s)$ and $\zeta_{O_{\nfk,\ell}^+}(s)$ be the Dedekind-Weil zeta functions of $O_{\nfk,\ell}$ and $O_{\nfk,\ell}^+$, respectively, i.e.
\[
\zeta_{O_{\nfk,\ell}}(s) \assign \sum_{\substack{\text{nonzero ideal} \\ \Afk \subset O_{\nfk,\ell}}}\frac{1}{\#(O_{\nfk,\ell}/\Afk)^{s}}
\quad \text{and} \quad 
\zeta_{O_{\nfk,\ell}^+}(s) \assign \sum_{\substack{\text{nonzero ideal} \\ \Afk^+ \subset O_{\nfk,\ell}^+}}\frac{1}{\#(O_{\nfk,\ell}^+/\Afk^+)^{s}}, \  \re(s)>1.
\]
Then it is known that (cf.\ \cite[Proposition~14.11]{Rosen})
\[
\zeta_{O_{\nfk,\ell}}(s) = \zeta_{O_{\nfk,\ell}^+}(s) \cdot \prod_{\bochi \in \widehat{\eG}_{\nfk,\ell}\setminus \widehat{\eG}^+_{\nfk,\ell}}L_\eA(s,\bochi)
\quad \text{ and } \quad
\zeta_{O_{\nfk,\ell}^+}(s) = 
\prod_{\bochi^+ \in  \widehat{\eG}^+_{\nfk,\ell}}
L_\eA(s,\bochi).
\]
As $\ord_{s=0}\zeta_{O_{\nfk,\ell}}(s) = [\eK_{\nfk,\ell}^+:\ek]-1 = \ord_{s=0}\zeta_{O_{\nfk,\ell}^+}(s)$ (see \cite[Theorem~14.4]{Rosen}), we obtain that:
\end{remark}

\begin{corollary}\label{cor: vanishing-cond}
For $\bochi \in \widehat{\eG}_{\nfk,\ell}$, $L_\eA(0,\bochi) = 0$ if and only if
\[
\text{$\bochi$ is non-trivial \quad  and } \quad 
\bochi(\varrho_{\epsilon,c}) =1, \quad \forall \epsilon \in \FF_q^\times \text{ and } c \in \ZZ/\ell \ZZ.
\]
\end{corollary}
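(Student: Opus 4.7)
The plan is to read off the statement directly from the zeta function identities recalled in Remark~\ref{rem: L-vanish} together with the classical order-of-vanishing data at $s=0$.

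First, observe that the condition ``$\bochi(\varrho_{\epsilon,c}) = 1$ for all $\epsilon \in \FF_q^\times$ and $c \in \ZZ/\ell\ZZ$'' is exactly the statement that $\bochi$ factors through $\eG_{\nfk,\ell}^+ = \eG_{\nfk,\ell}/\gal(\eK_{\nfk,\ell}/\eK_{\nfk,\ell}^+)$, i.e.\ $\bochi \in \widehat{\eG}_{\nfk,\ell}^+$. So the claim can be rephrased as: $L_\eA(0,\bochi) = 0$ if and only if $\bochi \in \widehat{\eG}_{\nfk,\ell}^+$ and $\bochi$ is non-trivial.

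The ``if'' direction is precisely what is recalled in Remark~\ref{rem: L-vanish}: for a non-trivial $\bochi^+ \in \widehat{\eG}_{\nfk,\ell}^+$ one has $\ord_{s=0} L_\eA(s,\bochi^+) = 1$, so in particular $L_\eA(0,\bochi^+) = 0$. For the ``only if'' direction I would argue as follows. First, when $\bochi$ is the trivial character, $\bochi_f$ is trivial and $s_{\bochi} = 0$, so $L_\eA(s,\bochi) = 1/(1-q^{1-s})$, which evaluates to $1/(1-q) \ne 0$ at $s=0$. It remains to show that $L_\eA(0,\bochi) \ne 0$ for every $\bochi \in \widehat{\eG}_{\nfk,\ell} \setminus \widehat{\eG}_{\nfk,\ell}^+$.

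For this, I would compare orders of vanishing at $s=0$ in the factorization
\[
\zeta_{O_{\nfk,\ell}}(s) = \zeta_{O_{\nfk,\ell}^+}(s) \cdot \prod_{\bochi \in \widehat{\eG}_{\nfk,\ell}\setminus \widehat{\eG}^+_{\nfk,\ell}} L_\eA(s,\bochi).
\]
Since every infinite place of $\eK_{\nfk,\ell}^+$ is non-split in the CM extension $\eK_{\nfk,\ell}$, the number of infinite places of $\eK_{\nfk,\ell}$ agrees with that of $\eK_{\nfk,\ell}^+$, namely $[\eK_{\nfk,\ell}^+:\ek]$; hence by the function-field analogue of the class number formula, $\ord_{s=0}\zeta_{O_{\nfk,\ell}}(s) = \ord_{s=0}\zeta_{O_{\nfk,\ell}^+}(s) = [\eK_{\nfk,\ell}^+:\ek] - 1$, as noted in Remark~\ref{rem: L-vanish}. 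Consequently
\[
\ord_{s=0} \prod_{\bochi \in \widehat{\eG}_{\nfk,\ell}\setminus \widehat{\eG}^+_{\nfk,\ell}} L_\eA(s,\bochi) = 0,
\]
which forces $L_\eA(0,\bochi) \ne 0$ for each such $\bochi$. Combining the three cases (trivial character; non-trivial ``real'' character; non-``real'' character) gives the corollary. There is no genuine obstacle here; the only subtlety is making sure the order computation for $\zeta_{O_{\nfk,\ell}}$ uses the fact that $\eK_{\nfk,\ell}$ is a CM extension of $\eK_{\nfk,\ell}^+$ so that the two zeta functions share the same order of vanishing at $s=0$.
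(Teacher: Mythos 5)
Your proof is correct and follows essentially the same route as the paper: the paper's argument is exactly the content of Remark~\ref{rem: L-vanish} — identifying the stated condition with $\bochi \in \widehat{\eG}_{\nfk,\ell}^+$, quoting Rosen's Proposition~14.12 for the non-trivial real characters, and comparing $\ord_{s=0}$ of $\zeta_{O_{\nfk,\ell}}$ and $\zeta_{O_{\nfk,\ell}^+}$ via the factorization into $L_\eA(s,\bochi)$'s to force non-vanishing for the remaining characters. Your added explicit treatment of the trivial character and of the infinite-place count is consistent with the paper's (implicit) reasoning.
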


\begin{proof}[Proof of Proposition~\ref{prop: St-span}]
By Remark~\ref{rem: St-relations}~(4), it suffices to prove (3).
Let $\Fcal_\CC(\eG_{\nfk,\ell})$ be the space of $\CC$-valued functions on $\eG_{\nfk,\ell}$,
which is a $\CC$-algebra under the convolution product:
$$
f_1*f_2(\varrho) \assign  \sum_{\subfrac{\varrho_1,\varrho_2 \in \eG_{\nfk,\ell}}{\varrho_1\varrho_2 = \varrho}} f_1(\varrho_1)f_2(\varrho_2).
$$
For each character $\bochi \in \widehat{\eG}_{\nfk,\ell}$, the element $\#(\eG_{\nfk,\ell})^{-1} \cdot \bochi$ is an idempotent in $\Fcal_\CC(\eG_{\nfk,\ell})$, and
$$
\Fcal_\CC(\eG_{\nfk,\ell}) = \bigoplus_{\bochi \in \widehat{\eG}_{\nfk,\ell}} \CC \cdot \bochi.
$$
In particular, for $\phi \in \Fcal_\CC(\eG_{\nfk,\ell})$ and $\bochi \in \widehat{\eG}_{\nfk,\ell}$, one has
$$
\phi * \bochi = (\phi\mid \bochi)_{\nfk,\ell} \cdot \bochi, \quad \text{ where }\quad  (\phi\mid \bochi)_{\nfk,\ell} \assign 
\sum_{\varrho\in \eG_{\nfk,\ell}}\phi(\varrho)\bar{\bochi}(\varrho),
$$
and so
\begin{equation}\label{eqn: chi-span}
\phi = \sum_{\bochi \in \widehat{\eG}_{\nfk,\ell}} \phi * \frac{\bochi}{\#(\eG_{\nfk,\ell})} =  \frac{1}{\#\eG_{\nfk,\ell}}\cdot \sum_{\bochi \in \widehat{\eG}_{\nfk,\ell}}(\phi\mid \bochi)_{\nfk,\ell}\cdot \bochi.
\end{equation}

Let $\Hcal_\CC(\eG_{\nfk,\ell})$ be the subspace spanned over $\CC$ by $\St(x,y)$ in $\Fcal_\CC(\eG_{\nfk,\ell})$ for all $x \in \frac{1}{\nfk(\theta)}A/A$ and $y \in \frac{1}{q^{\ell} -1} \ZZ/\ZZ$.
As $\Hcal_\CC(\eG_{\nfk,\ell})$ is invariant under the action of $\eG_{\nfk,\ell}$,
Lemma~\ref{lem: evaluator} implies that
$$
\Hcal_\CC(\eG_{\nfk,\ell}) = \bigoplus_{\subfrac{\bochi \in \widehat{\eG}_{\nfk,\ell}}{ L_{\eA}(0,\overline{\bochi}) \neq 0}} \CC \cdot \bochi = \Bigl\{\varphi \in \Fcal_\CC(\eG_{\nfk,\ell})\ \Big|\ (\varphi \mid \bochi)_{\nfk,\ell} = 0,\ \forall \bochi \in \widehat{\eG}_{\nfk,\ell} \text{ with } L_\eA (0,\bochi) = 0\Bigr\}.
$$
For $\bochi \in \widehat{\eG}_{\nfk,\ell}$, by Corollary~\ref{cor: vanishing-cond} we have that the condition $L_\eA(0,\bochi) = 0$ is equivalent to saying that $\bochi$ is non-trivial and $\bochi(\varrho_{\epsilon,c}) =1$ for every $(\epsilon,c) \in \FF_q^\times \times(\ZZ/\ell\ZZ)$.
Therefore 
\[
\Hcal_\CC(\eG_{\nfk,\ell}) =
\left\{\varphi \in \Fcal_\CC(\eG_{\nfk,\ell})\ \Bigg|\ \sum_{c=0}^{\ell -1} \sum_{\epsilon \in \FF_q^\times} \varphi( \varrho \varrho_{\epsilon,c} ) \text{ is independent of $\varrho \in \eG_{\nfk,\ell}$}\right\}.
\]

Let $\Fcal_\QQ(\eG_{\nfk,\ell})$ be the space of $\QQ$-valued functions on $\eG_{\nfk,\ell}$.
The above description of $\Hcal_\CC(\eG_{\nfk,\ell})$ and \eqref{eqn: S(G^c)} shows the desired identity:
$$
\Sscr(\eG_{\nfk,\ell}) = \Hcal_\CC(\eG_{\nfk,\ell}) \cap \Fcal_\QQ(\eG_{\nfk,\ell}) = \sum_{x \in \frac{1}{\nfk(\theta)}A/A}\ \sum_{y \in \frac{1}{q^{\ell}-1}\ZZ/\ZZ} \QQ \cdot \St(x,y).
$$
\end{proof}

\begin{remark}\label{rem: S(G)-chi}
Let $\nfk \in \eA_+$ and $\ell \in \NN$.
For each CM field $\eK$ contained in $\eK_{\nfk,\ell}$, 
let $\Fcal_\CC(\eG_\eK)$ be the space of $\CC$-valued functions on $\eG_\eK = \gal(\eK/\ek)$.
The above proof also shows that
\[
\CC \otimes_\QQ \Sscr(\eG_\eK) \cong \Fcal_\CC(\eG_\eK) \cap \Hcal_\CC(\eG_{\nfk,\ell})
= \bigoplus_{\substack{\bochi \in \widehat{\eG}_\eK \\ L_\eA(0,\overline{\bochi}) \neq 0}} \CC \cdot \bochi.
\]
In particular, let $\eK^+$ be the maximal totally real subfield of $\eK$ and $\eG_{\eK^+} \assign \gal(\eK^+/\ek)$. 
For each $\varphi \in \Sscr(\eG_\eK)$, by Corollary~\ref{cor: vanishing-cond} and \eqref{eqn: chi-span} we have the following expression:
\[
\varphi = \frac{1}{\#\eG_\eK}\cdot \left(\sum_{\varrho \in \eG_\eK} \varphi(\varrho)\right) +  \frac{1}{\#\eG_\eK}\cdot
\sum_{\bochi \in \widehat{\eG}_\eK\setminus \widehat{\eG}_{\eK^+}}\left(\sum_{\varrho \in \eG_\eK}\varphi(\varrho)\overline{\bochi(\varrho)}\right)\cdot \bochi.
\]
This will be used in Section~\ref{sec: CSF}.
\end{remark}

\subsection{Universal distributions associated to diamond bracket relations}
\label{sec: Uni-dis}

\subsubsection*{I. The arithmetic case}

Let $\Acal^{\text{ari}}$ be the free abelian group generated by all elements in $\ZZ_{(p)}/\ZZ$.
We identify $\Acal^{\text{ari}}$ with a subgroup of $\Acal^{\text{ari}}_\QQ \assign \QQ \otimes_\ZZ \Acal^{\text{ari}}$.
Every element in $\Acal^{\text{ari}}$
(resp.\ $\Acal^{\text{ari}}_\QQ$) can be written uniquely as a formal sum
\[
\boy = \sum_{y \in \ZZ_{(p)}/\ZZ} n_y [y], \quad
\text{where $n_y \in \ZZ$ (resp.\ $\QQ$) and $n_y = 0$ for almost all $y$.}
\]
Given $\ell \in \NN$, let $\Acal^{\ari}_\ell$ (resp.\ $\Acal^{\ari}_{\ell,\QQ}$)
be the subgroup of $\Acal^{\ari}$ (resp.\ subspace of $\Acal^{\ari}_\QQ$) generated by $y \in \frac{1}{q^\ell-1}\ZZ/\ZZ$.
Also, we let $\Rcal^{\text{ari}}_\ell$ be the subspace of $\Acal^{\text{ari}}_{\ell,\QQ}$ generated by
\[
[y] - \sum_{i=0}^{\ell-1} y_i [\frac{q^i}{1-q^\ell}], \quad \forall y \in \frac{1}{q^\ell-1} \ZZ/\ZZ,
\]
where $y_0,...,y_{\ell-1} \in \ZZ$ with
$0\leq y_0,...,y_{\ell-1}<q$ so that
$\langle -y \rangle_{\text{ari}} = \sum_{i=0}^{\ell-1}y_i q^i/(q^\ell-1)$.
Put $\Ucal^{\ari}_\ell\assign \Acal^{\ari}_{\ell,\QQ}/\Rcal^{\ari}_\ell$ and 
$\Ucal^{\ari}\assign \Acal^{\ari}_\QQ/\Rcal^{\ari}$, where
$\Rcal^{\text{ari}} \assign \cup_\ell \Rcal^{\text{ari}}_\ell$.
We call $\Ucal^{\ari}$ the \emph{universal distribution associated to the arithmetic diamond bracket relations}.

\begin{lemma}\label{lem: Uari-dim}
$\dim_{\QQ}\Ucal^{\ari}_\ell \leq \ell$ for each $\ell \in \NN$.
\end{lemma}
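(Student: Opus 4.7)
The plan is to observe that the defining relations for $\Ucal^{\ari}_\ell$ already present every basis element of $\Acal^{\ari}_{\ell,\QQ}$ in terms of the $\ell$ specific classes $[q^i/(1-q^\ell)]$ for $0 \leq i \leq \ell-1$, so that these classes span $\Ucal^{\ari}_\ell$ over $\QQ$.

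First I would recall that $\Acal^{\ari}_{\ell,\QQ}$ has the basis $\{[y] \mid y \in \tfrac{1}{q^\ell-1}\ZZ/\ZZ\}$. For each such $y$, write $\langle -y\rangle_{\ari} = \sum_{i=0}^{\ell-1} y_i \cdot q^i/(q^\ell-1)$ with $0 \leq y_i < q$, as guaranteed by Lemma~\ref{lem: ari-relation}(1). Then by definition of $\Rcal^{\ari}_\ell$, the class of $[y]$ in $\Ucal^{\ari}_\ell = \Acal^{\ari}_{\ell,\QQ}/\Rcal^{\ari}_\ell$ satisfies
\[
[y] \equiv \sum_{i=0}^{\ell-1} y_i \cdot \bigl[\tfrac{q^i}{1-q^\ell}\bigr] \pmod{\Rcal^{\ari}_\ell}.
\]
Hence the images of $[q^0/(1-q^\ell)], [q^1/(1-q^\ell)], \ldots, [q^{\ell-1}/(1-q^\ell)]$ already generate $\Ucal^{\ari}_\ell$ as a $\QQ$-vector space, yielding $\dim_\QQ \Ucal^{\ari}_\ell \leq \ell$.

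There is essentially no obstacle here: the spanning set of size $\ell$ is read off from the defining relations. The only potential subtlety is checking that each $y \in \tfrac{1}{q^\ell-1}\ZZ/\ZZ$ actually falls under the scope of the defining relations, but this is precisely the content of Lemma~\ref{lem: ari-relation}(1) (which supplies a well-defined digit expansion with $\ell$ digits for every such $y$). The matching lower bound (and thus the equality $\dim_\QQ \Ucal^{\ari}_\ell = \ell$, should the paper need it) would presumably require constructing an $\ell$-dimensional quotient of $\Ucal^{\ari}_\ell$, for example via the Stickelberger distribution $\St^{\ari}$ into $\Sscr(\eG_{1,\ell})$, but that is not needed for the stated upper bound.
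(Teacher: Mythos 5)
Your proof is correct and is exactly the paper's argument: the relations defining $\Rcal^{\ari}_\ell$ express each generator $[y]$ in terms of the $\ell$ classes $[q^i/(1-q^\ell)]$, $0\leq i<\ell$, so their images span $\Ucal^{\ari}_\ell$ and the bound follows. The paper states this in one line; your appeal to Lemma~\ref{lem: ari-relation}(1) for the digit expansion is just the same observation spelled out.
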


\begin{proof}
As $\Ucal^{\ari}_\ell$ is generated by the images of $[\displaystyle \frac{q^i}{1-q^\ell}]$ for $0\leq i < \ell$, the result follows.
\end{proof}

On the other hand,
extending $\St^{\ari}$ to a $\QQ$-linear homomorphism from $\Acal^{\ari}_{\ell,\QQ}$ (resp.\ $\Acal^{\ari}_\QQ$) to $\Sscr(\eG_{1,\ell})$ (resp.\ $\Sscr(\eG^{\ari})$),
by Proposition~\ref{prop: St-span}~(1) we have 

\begin{lemma}\label{lem: Stari-surj}
For each $\ell \in \NN$,
$\St^{\ari}: \Acal^{\text{\rm ari}}_{\ell,\QQ} \rightarrow \Sscr(\eG_{1,\ell})$ is surjective.
Consequently,
$\St^{\ari} : \Acal^{\text{\rm ari}}_\QQ \rightarrow \Sscr(\eG^{\text{\rm ari}})$
is surjective.
\end{lemma}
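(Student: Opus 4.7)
The lemma is essentially a formal consequence of Proposition~\ref{prop: St-span}~(1), and the plan is to unwind the definitions and invoke that result directly. First I would note that $\Acal^{\ari}_{\ell,\QQ}$ is by construction the $\QQ$-vector space freely generated by the symbols $[y]$ for $y \in \frac{1}{q^\ell-1}\ZZ/\ZZ$, so that extending $y \mapsto \St^{\ari}(y)$ by $\QQ$-linearity yields a well-defined map into $\Sscr(\eG_{1,\ell})$ (the target is correct because $\St^{\ari}(y)$ factors through $\eG_{1,\ell}$ whenever $(q^\ell-1)y \in \ZZ$, which is a direct consequence of the definition of $\St^{\ari}$ together with Remark~\ref{rem: St-relations}~(1)).

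Next I would apply Proposition~\ref{prop: St-span}~(1), which states that $\Sscr(\eG_{1,\ell})$ is $\QQ$-spanned by $\St^{\ari}(y)$ for $y \in \frac{1}{q^\ell-1}\ZZ/\ZZ$. Since these spanning elements are precisely the images of the generators $[y]$ of $\Acal^{\ari}_{\ell,\QQ}$, the map $\St^{\ari}: \Acal^{\ari}_{\ell,\QQ} \to \Sscr(\eG_{1,\ell})$ is surjective, giving the first claim.

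For the global consequence, I would use that $\Sscr(\eG^{\ari})$ is the union of the subspaces $\Sscr(\eG_{1,\ell})$ as $\ell$ ranges over $\NN$: any locally constant $\varphi \in \Sscr(\eG^{\ari})$ factors through some finite quotient $\gal(\FF_{q^\ell}/\FF_q)$, hence belongs to $\Sscr(\eG_{1,\ell})$. Combining this with the compatible inclusions $\Acal^{\ari}_{\ell,\QQ} \hookrightarrow \Acal^{\ari}_\QQ$ and applying the first part of the lemma for each $\ell$ gives surjectivity of $\St^{\ari}: \Acal^{\ari}_\QQ \to \Sscr(\eG^{\ari})$. There is no genuine obstacle at this step: all the substantive work (namely the character-theoretic computation of Lemma~\ref{lem: evaluator} together with the non-vanishing criterion of Corollary~\ref{cor: vanishing-cond}) has already been carried out inside Proposition~\ref{prop: St-span}, so the lemma is just a repackaging of that result in the language of the universal distribution.
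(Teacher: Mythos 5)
Your argument is correct and coincides with the paper's: the lemma is stated there as an immediate consequence of Proposition~\ref{prop: St-span}~(1), after extending $\St^{\ari}$ $\QQ$-linearly to $\Acal^{\ari}_{\ell,\QQ}$, exactly as you do. Your additional remarks (that $\St^{\ari}(y)$ lands in $\Sscr(\eG_{1,\ell})$ and that $\Sscr(\eG^{\ari})$ is the union of the $\Sscr(\eG_{1,\ell})$) are the same observations the paper makes just before stating the lemma, so there is nothing to add.
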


From Remark~\ref{rem: St-relations}~(1), one has that $\St^{\ari}(\Rcal^{\ari}) = 0$, which says that the surjection $\St^{\ari}: \Acal^{\ari}_{\ell,\QQ} \rightarrow \Sscr(\eG_{1,\ell})$ factors through $\Ucal^{\ari}_\ell$.
Since $\dim_\QQ \Sscr(\eG_{1,\ell}) = \ell$ by Remark~\ref{rem: S(G/H)-dim}, combining Lemma~\ref{lem: Uari-dim} and \ref{lem: Stari-surj} we obtain 

\begin{corollary}\label{cor: Uari-iso}
The map from $\Ucal^{\ari}_\ell$ to  $\Sscr(\eG_{1,\ell})$ induced by $\St^{\ari}$ is a $\QQ$-linear isomorphism.
Consequently,
\begin{equation}\label{eqn: Uari-dim}
\dim_\QQ \Ucal^{\ari}_\ell = \dim_\QQ \Sscr(\eG_{1,\ell}) = \ell,
\end{equation}
and we have a $\QQ$-linear isomorphism
$\Ucal^{\ari} \cong \Sscr(\eG^{\ari})$.
\end{corollary}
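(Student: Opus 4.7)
The plan is to consolidate the three inputs already assembled: the upper bound $\dim_\QQ \Ucal^{\ari}_\ell \leq \ell$ from Lemma~\ref{lem: Uari-dim}, the surjectivity of $\St^{\ari}:\Acal^{\ari}_{\ell,\QQ}\to \Sscr(\eG_{1,\ell})$ from Lemma~\ref{lem: Stari-surj}, and a matching lower bound $\dim_\QQ \Sscr(\eG_{1,\ell}) = \ell$. Everything will then be forced by a dimension count: surjection between $\QQ$-vector spaces of the same finite dimension is automatically an isomorphism.

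First I would verify that $\St^{\ari}$ descends to $\Ucal^{\ari}_\ell$. This is immediate from Remark~\ref{rem: St-relations}~(1): for every $y\in \frac{1}{q^\ell-1}\ZZ/\ZZ$ with $q$-ary expansion $\langle -y\rangle_{\ari}=\sum_{i=0}^{\ell-1}y_i q^i/(q^\ell-1)$, one has $\St^{\ari}(y)=\sum_{i=0}^{\ell-1}y_i\St^{\ari}(q^i/(1-q^\ell))$, so $\St^{\ari}$ kills each generator of $\Rcal^{\ari}_\ell$. Combined with Lemma~\ref{lem: Stari-surj}, this yields a surjective $\QQ$-linear map $\overline{\St^{\ari}}:\Ucal^{\ari}_\ell\twoheadrightarrow \Sscr(\eG_{1,\ell})$.

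Next I would compute $\dim_\QQ\Sscr(\eG_{1,\ell})$. The constant field extension $\eK_{1,\ell}=\FF_{q^\ell}\cdot\ek$ is a CM field over $\ek$: its maximal totally real subfield is $\ek$ itself, because the infinite place of $\ek$ is inert in $\eK_{1,\ell}$ with residue degree $\ell$ (so it does not split, ruling out any larger totally real intermediate field). Hence $[\eK_{1,\ell}:\ek]=[\eK_{1,\ell}:\eK_{1,\ell}^+]=\ell$, and Remark~\ref{rem: S(G/H)-dim} gives
\[
\dim_\QQ \Sscr(\eG_{1,\ell})=1+\Big(1-\frac{1}{\ell}\Big)\cdot \ell=\ell.
\]
Chaining with Lemma~\ref{lem: Uari-dim}: $\ell=\dim_\QQ \Sscr(\eG_{1,\ell})\leq \dim_\QQ \Ucal^{\ari}_\ell\leq \ell$, so both inequalities are equalities and $\overline{\St^{\ari}}$ is an isomorphism.

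For the consequence, since $\Acal^{\ari}_\QQ=\bigcup_\ell \Acal^{\ari}_{\ell,\QQ}$ and $\Rcal^{\ari}=\bigcup_\ell \Rcal^{\ari}_\ell$, we obtain $\Ucal^{\ari}=\varinjlim_\ell \Ucal^{\ari}_\ell$, while $\Sscr(\eG^{\ari})=\bigcup_\ell \Sscr(\eG_{1,\ell})$ since every locally constant $\QQ$-valued function on $\eG^{\ari}$ factors through some $\eG_{1,\ell}$. Passing to the direct limit of the isomorphisms $\overline{\St^{\ari}}$ then yields $\Ucal^{\ari}\cong \Sscr(\eG^{\ari})$. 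There is no genuine obstacle: the crux is really the dimension formula for $\Sscr(\eG_{1,\ell})$, which is handed to us by the identification of the constant field extension as a CM field over $\ek$ with trivial totally real subfield.
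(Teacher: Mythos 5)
Your proposal is correct and follows essentially the same route as the paper: factor $\St^{\ari}$ through $\Ucal^{\ari}_\ell$ via Remark~\ref{rem: St-relations}~(1), invoke the surjectivity of Lemma~\ref{lem: Stari-surj}, and squeeze dimensions between Lemma~\ref{lem: Uari-dim} and $\dim_\QQ\Sscr(\eG_{1,\ell})=\ell$ from Remark~\ref{rem: S(G/H)-dim}, then pass to the limit. Your explicit check that $\eK_{1,\ell}$ is CM with $\eK_{1,\ell}^+=\ek$ (the infinite place being inert) is just the justification the paper leaves implicit when citing Remark~\ref{rem: S(G/H)-dim}, so there is nothing substantively different.
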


\subsubsection*{II. The geometric case}

Let $\Acal^{\geo}$ be the free abelian group generated by all elements in $k/A$.
We may identify $\Acal^{\geo}$ with a subgroup of $\Acal^{\geo}_\QQ \assign \QQ \otimes_\ZZ \Acal^{\geo}$.
Every element in $\Acal^{\geo}$ (resp.\ $\Acal^{\geo}_\QQ$)
can be written uniquely as a formal sum
\[
\boxx = \sum_{x \in k/A} n_x [x],
\quad
\text{where $n_x \in \ZZ$ (resp.\ $\QQ$) and $n_x = 0$ for almost all $x$.}
\]
Given $\nfk \in \eA_+$, let $\Acal^{\geo}_{\nfk,\QQ}$ be the subspace of $\Acal^{\geo}_\QQ$ spanned by elements in 
$\frac{1}{\nfk(\theta)}A/A$.
Also, we let $\Rcal_\nfk$ be the subspace generated by
\[
[\nfk'\cdot x] -  \sum_{\subfrac{a \in A}{\deg a < \deg \nfk'}} [x+\frac{a}{\nfk'}],\quad  \forall x \in \frac{1}{\nfk(\theta)}A/A \quad \text{and} \quad 
\nfk' \in A_+ \text{ with } \nfk' \mid \nfk(\theta),
\]
and
\[
\sum_{\epsilon \in \FF_q^\times} [\epsilon x], \quad \forall x \in (\frac{1}{\nfk(\theta)}A/A) \setminus \{0\}.
\]
Put $\Ucal^{\geo}_\nfk \assign \Acal^{\geo}_{\nfk,\QQ}/\Rcal^{\geo}_\nfk$ and
$\Ucal^{\geo}\assign \Acal^{\geo}_\QQ/\Rcal^{\geo}$, where
$\Rcal^{\geo} \assign \cup_{\nfk} \Rcal_\nfk^{\geo}$.
We call $\Ucal^{\geo}$ the \emph{universal distribution associated to the geometric diamond bracket relations.}

\begin{lemma}\label{lem: Ugeo-dim}
$\dim_\QQ \Ucal^{\ari}_\nfk \leq 1+ \displaystyle (1-\frac{1}{(q-1)^{\epsilon_\nfk}}) \cdot \#(A/\nfk)^\times$ for each $\nfk \in \eA_+$, where $\epsilon_\nfk \assign 1$ if $\deg \nfk>0$ and $0$ otherwise.
\end{lemma}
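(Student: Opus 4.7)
As written, the statement uses $\Ucal^{\ari}_\nfk$, yet the arithmetic universal distribution $\Ucal^{\ari}$ is defined in the paper only with subscript $\ell \in \NN$ (Lemma~\ref{lem: Uari-dim} bounds $\dim_\QQ \Ucal^{\ari}_\ell \le \ell$), whereas the right-hand side $1+(1-(q-1)^{-\epsilon_\nfk})\#(A/\nfk)^\times$ depends on $\nfk \in \eA_+$. Since this bound matches $\dim_\QQ \Sscr(\eG_{\nfk,1})$ computed from Remark~\ref{rem: S(G/H)-dim} together with \eqref{eqn: geo-Artin} and \eqref{eqn: inertia gp} (using $[\eK_\nfk:\eK_\nfk^+]=q-1$ and $[\eK_\nfk:\ek] = \#(\eA/\nfk)^\times$), I read $\Ucal^{\ari}_\nfk$ as the geometric universal distribution $\Ucal^{\geo}_\nfk$ introduced immediately above and outline a proof of $\dim_\QQ \Ucal^{\geo}_\nfk \le 1 + (1-(q-1)^{-\epsilon_\nfk})\#(A/\nfk)^\times$. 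The case $\nfk=1$ is immediate since $\Acal^{\geo}_{1,\QQ} = \QQ\cdot[0]$, $\Rcal^{\geo}_1 = 0$, and $\epsilon_1=0$ give both sides $=1$; thus I assume $\deg \nfk > 0$ from now on.

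The plan is to exhibit a spanning set of $\Ucal^{\geo}_\nfk$ of size $1 + (1-1/(q-1))\#(A/\nfk)^\times$, consisting of $[0]$ together with, for each $\FF_q^\times$-orbit in the primitive set $P_\nfk = \{a/\nfk : \gcd(a,\nfk)=1,\ \deg a < \deg \nfk\}$, all but one orbit representative. The disjoint union $\frac{1}{\nfk(\theta)}A/A = \{0\} \sqcup \bigsqcup_{1 \ne \cfk \mid \nfk} P_\cfk$ with $|P_\cfk| = \#(A/\cfk)^\times$ and the free $\FF_q^\times$-action together with the reflection relation $\sum_{\epsilon}[\epsilon x] \in \Rcal^{\geo}_\nfk$ collapse each nonzero orbit from $q-1$ to $q-2$ independent classes, so the surviving contribution of $P_\nfk$ is $(q-2)/(q-1) \cdot \#(A/\nfk)^\times$, matching the bound after adding $[0]$. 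It remains to show every imprimitive class $[a/\cfk]$ (with $\cfk \mid \nfk$, $1 \ne \cfk \ne \nfk$, $\gcd(a,\cfk)=1$) lies in the span of $\{[0]\} \cup P_\nfk$ modulo $\Rcal^{\geo}_\nfk$. By CRT pick $a^\sharp \in A$ with $a^\sharp \equiv a \pmod{\cfk}$ and $\gcd(a^\sharp,\nfk)=1$, and apply the multiplication relation at $\nfk' = \nfk/\cfk$ to $x = a^\sharp/\nfk$:
\[
[a/\cfk] = [a^\sharp/\cfk] = \sum_{\deg d < \deg(\nfk/\cfk)} [(a^\sharp + d\cfk)/\nfk] \quad \text{in } \Ucal^{\geo}_\nfk.
\]
For any prime $p \mid \cfk$ one has $a^\sharp + d\cfk \equiv a^\sharp \not\equiv 0 \pmod{p}$, so only primes of $\nfk$ missing from $\cfk$ can produce imprimitive summands on the right.

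The main obstacle, demanding the most care, is closing this reduction when $\nfk/\cfk$ contains a prime coprime to $\cfk$: the right-hand side then includes imprimitive terms of conductor $\cfk'$ satisfying $\cfk \mid \cfk'$ (strictly, generically) and $\mathrm{rad}(\cfk) \subsetneq \mathrm{rad}(\cfk')$. I plan to close by induction on $|\mathrm{rad}(\nfk)| - |\mathrm{rad}(\cfk)|$, the number of primes of $\nfk$ missing from $\cfk$: classes with $\mathrm{rad}(\cfk) = \mathrm{rad}(\nfk)$ reduce in one step (all right-hand summands are automatically primitive), and a general $[a/\cfk]$ is handled by the inductive hypothesis applied to each strictly-larger-radical imprimitive summand on the right. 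The delicate combinatorial bookkeeping --- confirming termination after finitely many steps and verifying that no additional generators beyond $\{[0]\}$ and the chosen orbit representatives in $P_\nfk$ survive --- is where the bulk of the work lies.
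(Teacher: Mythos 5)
You correctly read $\Ucal^{\ari}_\nfk$ as a typo for $\Ucal^{\geo}_\nfk$, and your count of the proposed generating set is consistent with the bound; but note the paper gives no independent argument here: it defers to Lemma~\ref{lem: Ucyc-dim} with $\ell=1$, whose Kubert--Lang generating set consists of the classes $[\,\sum_j a_j/\pfk_j^{e_j}\,]$ with each $a_j$ either coprime to $\pfk_j$ or zero (and $a_j\neq 1$) --- in particular it retains many \emph{imprimitive} fractions. That is exactly where your proposal breaks. In the inductive step, the imprimitive summand produced by the multiplication relation need not have strictly fewer ``missing primes''. Take $\nfk=\pfk_1\pfk_2$ with distinct primes and $\cfk=\pfk_1$: in $[a/\pfk_1]=\sum_{\deg d<\deg\pfk_2}[(a^\sharp+d\pfk_1)/\nfk]$ exactly one summand is imprimitive, and its class is $[\,(a^\sharp\pfk_2^{-1}\bmod \pfk_1)/\pfk_1\,]$ --- conductor $\pfk_1$ again. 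Iterating only walks through the $\pfk_2$-orbit in $(\eA/\pfk_1)^\times$ and returns to $[a/\pfk_1]$, so the induction does not terminate; if moreover $\pfk_2\equiv 1\bmod \pfk_1$, the relation literally reads $[a/\pfk_1]=\sum(\text{primitive terms})+[a/\pfk_1]$ and gives no reduction at all.

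Worse, the spanning claim itself is false, not merely unproved, so no repair of the bookkeeping can save this route. Let $q\geq 3$, choose $\pfk_2\equiv 1\bmod\pfk_1$, set $\nfk=\pfk_1\pfk_2$, and let $\bochi$ be a character of $\eG_{\nfk,1}\cong(\eA/\nfk)^\times$ of conductor $\pfk_1$ with $\bochi|_{\FF_q^\times}\neq 1$; then $L_\eA(0,\bochi)\neq 0$ by Corollary~\ref{cor: vanishing-cond}. By Lemma~\ref{lem: evaluator} with $\ell=1$ and $\mfk=\nfk$, together with \eqref{eqn: L-relation}, every primitive class satisfies $\big(\St^{\geo}(a(\theta)/\nfk(\theta))\,\big|\,\bochi\big)_{\nfk,1}\propto L_\eA^{\nfk}(0,\overline{\bochi})=L_\eA(0,\overline{\bochi})\bigl(1-\overline{\bochi}(\pfk_2)\bigr)=0$, and $\St^{\geo}(0)$ is a multiple of $\mathbf{1}_{\eG}$, so the images of all your proposed generators have vanishing $\bochi$-component; on the other hand, taking $\mfk=\pfk_1$ in Lemma~\ref{lem: evaluator} gives $\big(\St^{\geo}(a(\theta)/\pfk_1(\theta))\,\big|\,\bochi\big)_{\nfk,1}\propto L_\eA^{\pfk_1}(0,\overline{\bochi})=L_\eA(0,\overline{\bochi})\neq 0$. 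Since $\St^{\geo}$ kills $\Rcal^{\geo}_\nfk$ (Remark~\ref{rem: St-relations}) and hence factors through $\Ucal^{\geo}_\nfk$, the class $[a/\pfk_1]$ cannot lie in the span of $\{[0]\}\cup P_\nfk$ in $\Ucal^{\geo}_\nfk$. So imprimitive generators are unavoidable; the intended argument is the one of Lemma~\ref{lem: Ucyc-dim} at $\ell=1$: the multiplication relations reduce everything to the Kubert--Lang set of size $\#(\eA/\nfk)^\times$ described there, and the reflection relations then drop $\#(\eA/\nfk)^\times/(q-1)-1$ of these, yielding the stated bound.
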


\begin{proof}
This is similar to Lemma~\ref{lem: Ucyc-dim} when taking $\ell=1$.
We postpone the proof to there.
\end{proof}

On the other hand,
extending $\St^{\geo}$ to a $\QQ$-linear homomorphism from $\Acal^{\geo}_{\ell,\QQ}$ (resp.\ $\Acal^{\geo}_\QQ$) to $\Sscr(\eG_{\nfk,1})$ (resp.\ $\Sscr(\eG^{\geo})$),
by Proposition~\ref{prop: St-span}~(2) we have 

\begin{lemma}\label{lem: Stgeo-surj}
For each $\nfk \in \eA_+$,
$\St^{\geo}: \Acal^{\text{\rm geo}}_{\nfk,\QQ} \rightarrow \Sscr(\eG_{\nfk,1})$ is surjective.
Consequently,
$\St^{\geo} : \Acal^{\text{\rm geo}}_\QQ \rightarrow \Sscr(\eG^{\text{\rm geo}})$
is surjective.
\end{lemma}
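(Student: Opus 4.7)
The plan is to derive this lemma as an almost immediate corollary of Proposition~\ref{prop: St-span}~(2), which has just been established. First, I would observe that since $\St^{\geo}$ is extended by $\QQ$-linearity from $\Acal^{\geo}_{\nfk,\QQ}$ to $\Sscr(\eG_{\nfk,1})$, its image is precisely the $\QQ$-span $\sum_{x \in \frac{1}{\nfk(\theta)}A/A}\QQ\cdot \St^{\geo}(x)$. But Proposition~\ref{prop: St-span}~(2) asserts exactly that this span equals $\Sscr(\eG_{\nfk,1})$, giving the first claim.

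For the consequent statement, I would argue that $\Sscr(\eG^{\geo}) = \bigcup_{\nfk \in \eA_+}\Sscr(\eG_{\nfk,1})$. Indeed, any $\varphi \in \Sscr(\eG^{\geo})$ is locally constant, so its stabilizer $\eH_\varphi$ is an open subgroup of $\eG^{\geo}$ containing $[\eG,\eG_\infty]$ by condition~(1) of Definition~\ref{defn: S(G)}. Since $\eK^{\geo}$ is the directed union of the cyclotomic function fields $\eK_\nfk$, the fixed field of $\eH_\varphi$ sits inside some $\eK_\nfk$, so $\varphi$ factors through $\eG_{\nfk,1}$ and lies in $\Sscr(\eG_{\nfk,1})$. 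Combining this with the first assertion and the identity $\Acal^{\geo}_{\QQ} = \bigcup_\nfk \Acal^{\geo}_{\nfk,\QQ}$ yields surjectivity on the larger space.

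There is no substantive obstacle here: the entire content of the lemma is already packaged into Proposition~\ref{prop: St-span}~(2), and this statement is essentially a formal rephrasing once $\St^{\geo}$ is extended to the $\QQ$-vector space $\Acal^{\geo}_{\nfk,\QQ}$. The only non-trivial ingredient beyond that proposition is the directed-union/finite-level argument needed to pass from the $\nfk$-level to $\eK^{\geo}$, which is routine.
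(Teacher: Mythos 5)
Your proposal is correct and matches the paper's treatment: the paper states this lemma as an immediate consequence of Proposition~\ref{prop: St-span}~(2) after extending $\St^{\geo}$ by $\QQ$-linearity, with no further argument needed. Your additional remark that $\Sscr(\eG^{\geo})$ is the union of the finite-level spaces $\Sscr(\eG_{\nfk,1})$ is the same routine observation the paper uses implicitly (and is already built into the second assertion of that proposition).
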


Notice that $\dim_\QQ \Sscr(\eG_{\nfk,1}) = \displaystyle 1 + (1-\frac{1}{(q-1)^{\epsilon_\nfk}}) \cdot \#(\eA/\nfk)^\times$ by Remark~\ref{rem: S(G/H)-dim}.
From a similar approach as in the arithmetic case, we obtain 

\begin{corollary}\label{cor: Ugeo-iso}
The map from $\Ucal^{\geo}_\nfk$ to  $\Sscr(\eG_{\nfk,1})$ induced by $\St^{\geo}$ is a $\QQ$-linear isomorphism.
Consequently,
\begin{equation}\label{eqn: Ugeo-dim}
\dim_\QQ \Ucal^{\geo}_\nfk = \dim_\QQ \Sscr(\eG_{\nfk,1}) = \displaystyle 1 + (1-\frac{1}{(q-1)^{\epsilon_\nfk}}) \cdot \#(\eA/\nfk)^\times,
\end{equation}
and we have a $\QQ$-linear isomorphism
$\Ucal^{\geo} \cong \Sscr(\eG^{\geo})$.
\end{corollary}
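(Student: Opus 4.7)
The plan is to mirror the proof of the arithmetic analogue, Corollary~\ref{cor: Uari-iso}. The first step is to check that $\St^{\geo}\colon \Acal^{\geo}_{\nfk,\QQ} \to \Sscr(\eG_{\nfk,1})$ annihilates the subspace $\Rcal^{\geo}_\nfk$. The multiplication-type generators are killed by the first identity of Remark~\ref{rem: St-relations}~(2). For the reflection generators $\sum_{\epsilon \in \FF_q^\times}[\epsilon x]$ with $0 \neq x \in \frac{1}{\nfk(\theta)}A/A$, one uses the fact that $(\varrho\varrho_\epsilon)\star x = \epsilon\cdot(\varrho\star x)$ (so that the $\varrho_\epsilon$-translates on the group side match the $\FF_q^\times$-action on the argument side) together with Lemma~\ref{lem: AL-geoG}~(1) to compute $\sum_{\epsilon} \St^{\geo}(\epsilon x)(\varrho) = \text{wt}_0^{\geo}(\varrho\star x) - 1 = 0$ for every $\varrho \in \eG_{\nfk,1}$. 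Consequently $\St^{\geo}$ descends to a $\QQ$-linear map $\bar{\St}^{\geo}_\nfk\colon \Ucal^{\geo}_\nfk \to \Sscr(\eG_{\nfk,1})$, which is surjective by Lemma~\ref{lem: Stgeo-surj}.

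Next I would perform the dimension comparison. The upper bound from Lemma~\ref{lem: Ugeo-dim} reads
\[
\dim_\QQ \Ucal^{\geo}_\nfk \leq 1 + \left(1-\frac{1}{(q-1)^{\epsilon_\nfk}}\right)\cdot \#(\eA/\nfk)^\times.
\]
On the other hand, applying Remark~\ref{rem: S(G/H)-dim} to $\eK_{\nfk,1} = \eK_\nfk$, combined with $[\eK_\nfk:\ek] = \#(\eA/\nfk)^\times$ and $[\eK_\nfk:\eK_\nfk^+] = (q-1)^{\epsilon_\nfk}$ (the latter coming from the total ramification of $\infty$ recorded in \eqref{eqn: inertia gp}), produces exactly the same numerical value for $\dim_\QQ \Sscr(\eG_{\nfk,1})$. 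A surjection between finite-dimensional $\QQ$-vector spaces of equal dimension must be an isomorphism, so $\bar{\St}^{\geo}_\nfk$ is the desired isomorphism and both sides share the stated common dimension.

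For the ``consequently'' statement, I would pass to the direct limit over $\nfk \in \eA_+$ ordered by divisibility. Since $\Acal^{\geo}_\QQ = \bigcup_\nfk \Acal^{\geo}_{\nfk,\QQ}$ and $\Rcal^{\geo} = \bigcup_\nfk \Rcal^{\geo}_\nfk$, while $\Sscr(\eG^{\geo}) = \bigcup_\nfk \Sscr(\eG_{\nfk,1})$ is the corresponding union of Stickelberger spaces, the isomorphisms $\bar{\St}^{\geo}_\nfk$ are automatically compatible with the transition maps (they are all induced by the single map $\St^{\geo}$), and passing to the colimit yields $\Ucal^{\geo} \cong \Sscr(\eG^{\geo})$.

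The only input not inherited formally from previously established results is the upper bound in Lemma~\ref{lem: Ugeo-dim}, and this is where I expect the main obstacle to lie. However, the paper defers that bound to the general two-variable statement (Lemma~\ref{lem: Ucyc-dim}) and recovers it by specializing the two-variable argument to $\ell = 1$, so within the proof of Corollary~\ref{cor: Ugeo-iso} itself no further combinatorial work is needed.
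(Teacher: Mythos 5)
Your proposal is correct and follows essentially the same route as the paper: kill $\Rcal^{\geo}_\nfk$ via the relations in Remark~\ref{rem: St-relations}~(2) (equivalently Lemma~\ref{lem: AL-geoG}), invoke surjectivity from Lemma~\ref{lem: Stgeo-surj}, compare the upper bound of Lemma~\ref{lem: Ugeo-dim} with the dimension of $\Sscr(\eG_{\nfk,1})$ from Remark~\ref{rem: S(G/H)-dim}, and pass to the union over $\nfk$. The only cosmetic point is that the dimension step should be phrased as ``a surjection from a space of dimension at most $d$ onto a space of dimension $d$ is an isomorphism,'' since a priori only the upper bound on $\dim_\QQ \Ucal^{\geo}_\nfk$ is known.
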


\subsubsection*{III. The two-variable case}

Let $\Acal^{\cyc}$ be the free abelian group generated by elements in $(k/A) \times (\ZZ_{(p)}/\ZZ)$,
and regard $\Acal^{\cyc}$ as a subgroup of $\Acal^{\cyc}_\QQ \assign \QQ \otimes_\ZZ \Acal^{\cyc}$.
Every element in $\Acal^{\cyc}$ (resp.\ $\Acal^{\cyc}_\QQ$) can be written uniquely as
$$
\ba = \sum_{x,y} m_{x,y} \cdot [x,y], \quad m_{x,y} \in \ZZ \ \text{ (resp.\ $\QQ$)}
$$
and 
$m_{x,y} = 0$ for almost all $(x,y) \in (k/A)\times (\ZZ_{(p)}/\ZZ)$.
Given $\nfk \in \eA_+$ and $\ell \in \NN$,
let $\Acal^{\cyc}_{\nfk,\ell,\QQ}$ be the subspace of $\Acal^{\cyc}_\QQ$ generated by $[x,y]$ for all $x \in \frac{1}{\nfk(\theta)}A/A$ and $y \in \frac{1}{q^{\ell}-1}\ZZ/\ZZ$. 
Also, we let $\Rcal^{\cyc}_{\nfk,\ell}$ be the subspace of $\Acal^{\cyc}_{\nfk,\ell,\QQ}$ generated by
\begin{equation}\label{eqn: mul-re}
[x,y]-\sum_{i=0}^{\ell-1} y_i [x,\frac{q^i}{1-q^\ell}]
\quad \text{ and } \quad
[\nfk'x,|\nfk'|_\infty y]-\sum_{\subfrac{a \in A}{\deg a < \deg \nfk'}} [x+\frac{a}{\nfk'},y],
\end{equation}
for all $x \in \frac{1}{\nfk(\theta)}A/A$, 
$\nfk' \in A_+$ with $\nfk' \mid \nfk(\theta)$, and
$y \in \frac{1}{q^\ell-1}\ZZ/\ZZ \text{ with }\langle -y \rangle_{\ari} = \sum_{i=0}^{\ell-1}y_i q^i/(q^\ell-1)$
where $0\leq y_1,...,y_{\ell-1}<q$,
and
\begin{equation}\label{eqn: ref-re}
\sum_{i=0}^{\ell-1}\sum_{\epsilon \in \FF_q^\times }[\epsilon x, q^i y],
\quad \forall x \in (\frac{1}{\nfk(\theta)}A/A)\setminus \{0\} \text{ and } y \in \frac{1}{q^\ell-1}\ZZ/\ZZ.
\end{equation}
Put $\Ucal^{\cyc}_{\nfk,\ell}\assign \Acal^{\cyc}_{\nfk,\ell,\QQ}/\Rcal^{\cyc}_{\nfk,\ell}$ and $\Ucal^{\cyc}\assign 
\Acal^{\cyc}_\QQ/\Rcal^{\cyc}$, where
$\Rcal^{\cyc} \assign \cup_{\nfk,\ell} \Rcal^{\cyc}_{\nfk,\ell}$.
We call $\Ucal^{\cyc}$ the \emph{universal distribution associated to the two-variable diamond bracket relations}.

\begin{lemma}\label{lem: Ucyc-dim}
$\dim_\QQ \Ucal^{\cyc}_{\nfk,\ell} \leq  1+ \displaystyle (\ell-\frac{1}{(q-1)^{\epsilon_\nfk}})\cdot \#(\eA/\nfk)^\times$
for each $\nfk \in \eA_+$ and $\ell \in \NN$, where $\epsilon_\nfk$ is defined in Lemma~\ref{lem: Ugeo-dim}.
\end{lemma}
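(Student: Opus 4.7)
The plan is to exhibit, by combining the relations in \eqref{eqn: mul-re} and \eqref{eqn: ref-re}, an explicit spanning set of $\Ucal^{\cyc}_{\nfk,\ell}$ of the required cardinality; the argument naturally specializes at $\ell = 1$ to prove Lemma~\ref{lem: Ugeo-dim}, which is why that proof was postponed to here.

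I would first use the arithmetic (digit) relation from the first part of \eqref{eqn: mul-re}: for $[x,y] \in \Ucal^{\cyc}_{\nfk,\ell}$, writing $\langle -y\rangle_{\ari} = \sum_{i=0}^{\ell-1} y_i q^i/(q^\ell-1)$, we have $[x,y] = \sum_i y_i\, \mathbf{e}_i(x)$ with $\mathbf{e}_i(x) := [x, q^i/(1-q^\ell)]$. Hence $\Ucal^{\cyc}_{\nfk,\ell}$ is spanned by the $\ell\cdot|\nfk|_\infty$ symbols $\mathbf{e}_i(x)$ for $x \in \tfrac{1}{\nfk(\theta)}A/A$ and $0 \leq i < \ell$.

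Next, I would use the multiplication relation in \eqref{eqn: mul-re} to eliminate generators at intermediate conductors. Partition $\tfrac{1}{\nfk(\theta)}A/A = \bigsqcup_{\mfk \mid \nfk(\theta)} S_\mfk$ by exact denominator $\mfk \in A_+$, so that $|S_\mfk| = \#(A/\mfk)^\times$. For any prime $\pfk \in A_+$ with $\pfk \mid \nfk(\theta)/\mfk$ and $\pfk \nmid \mfk$, and any $x \in S_\mfk$, the multiplication relation with $\nfk' = \pfk$ combined with the digit relation yields
\[
\mathbf{e}_{(i+\deg\pfk)\bmod\ell}(x) \;=\; \sum_{z \in \pfk^{-1}(x)} \mathbf{e}_i(z),
\]
and a direct preimage computation shows that exactly one of the $|\pfk|_\infty$ terms on the right has conductor $\mfk$ while the other $|\pfk|_\infty - 1$ have conductor $\mfk\pfk$. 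Iterating this identity along chains of primes pushes all intermediate-conductor symbols toward the top: every $\mathbf{e}_j(x)$ with $x$ of conductor strictly smaller than $\nfk(\theta)$ can be rewritten as a $\QQ$-linear combination of $\mathbf{e}_{j'}(0)$ and $\mathbf{e}_{j'}(x')$ for $x' \in S_{\nfk(\theta)}$, cutting the spanning set to $\ell(1 + \#(\eA/\nfk)^\times)$ symbols.

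Finally, I would count the relations among these remaining generators coming from \eqref{eqn: mul-re} at $\nfk' = \nfk(\theta)$ and from \eqref{eqn: ref-re}. The first source gives $\ell$ relations $R_i: \mathbf{e}_{(i+\deg\nfk)\bmod\ell}(0) = \mathbf{e}_i(0) + \sum_{0 \neq x \in \frac{1}{\nfk(\theta)}A/A}\mathbf{e}_i(x)$; when $\deg \nfk > 0$, the second source contributes, for each $\FF_q^\times$-orbit $\Omega \subset S_{\nfk(\theta)}$, a reflection relation $S_\Omega : \sum_{j=0}^{\ell-1}\sum_{x \in \Omega}\mathbf{e}_j(x) = 0$, giving $\#(\eA/\nfk)^\times/(q-1)$ such relations (using \eqref{eqn: inertia gp} to ensure $(q-1)\mid\#(\eA/\nfk)^\times$). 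Summing $R_0 + \cdots + R_{\ell-1}$ telescopes the $\mathbf{e}_{\bullet}(0)$ terms, leaving $-\sum_j\sum_{x \neq 0}\mathbf{e}_j(x)$, which decomposes into the $\FF_q^\times$-orbit sums $-\sum_\Omega S_\Omega$; this exhibits a single linear dependence among the $\ell + \#(\eA/\nfk)^\times/(q-1)$ listed relations, yielding
\[
\dim_\QQ \Ucal^{\cyc}_{\nfk,\ell} \;\leq\; \ell\bigl(1 + \#(\eA/\nfk)^\times\bigr) - \ell - \tfrac{\#(\eA/\nfk)^\times}{q-1} + 1 \;=\; 1 + \bigl(\ell - \tfrac{1}{q-1}\bigr)\#(\eA/\nfk)^\times,
\]
matching the claimed bound when $\deg \nfk > 0$. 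The degenerate case $\nfk = 1$ is immediate: there are no nonzero $x$, no reflection relations apply, and the $\ell$ symbols $\mathbf{e}_j(0)$ directly yield the bound $\ell = 1 + (\ell-1)\cdot 1$.

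The main obstacle lies in step three: one must verify that substituting the step-two reductions into \emph{all} multiplication and reflection relations (including those coming from nonzero orbits of intermediate conductors) does not introduce further dependences that would spoil the count. The key point is to track carefully the cyclic shift $j \mapsto (j - \deg \pfk) \bmod \ell$ imposed on the arithmetic index by each multiplication-by-$\pfk$ relation, together with the $\FF_q^\times$-orbit structure at each level of the divisor lattice of $\nfk(\theta)$; the explicit computation in the example $\nfk = \pfk$ prime (where the reduction in step~two is vacuous) isolates the $\sum_i R_i + \sum_\Omega S_\Omega = 0$ dependence in its cleanest form and serves as the template for the general argument.
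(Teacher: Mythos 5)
Your step 2 is where the argument breaks: it is not true that every class $[x,\tfrac{q^j}{1-q^\ell}]$ with $x$ of intermediate conductor lies in the $\QQ$-span of the classes $[0,\cdot]$ and the classes of full conductor $\nfk(\theta)$. For a prime $\pfk\nmid\mfk$ and $x$ of exact denominator $\mfk$, your identity rearranges to $\mathbf{e}_i(\pfk^{-1}x)=\mathbf{e}_{(i+\deg\pfk)\bmod\ell}(x)-(\text{terms of denominator }\mfk\pfk)$, where $\pfk^{-1}$ is the inverse of $\pfk(\theta)$ modulo $\mfk$; so modulo higher-conductor classes these relations only identify $\mathbf{e}_i(x)$ along the orbit of $(x,i)$ under $(x,i)\mapsto(\pfk^{-1}x,\,i-\deg\pfk)$, and when that action is trivial they eliminate nothing. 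Concretely, take $q=3$, $\ell=1$, $\nfk=t(t+1)$, $x=1/\theta$: here $t+1\equiv 1\pmod{t}$ and $\deg(t+1)\equiv 0\pmod{\ell}$, and the class of $[1/\theta,\tfrac{1}{1-q}]$ in $\Ucal^{\cyc}_{\nfk,1}$ is \emph{not} in the span of $[0,\tfrac{1}{1-q}]$ and the four full-conductor classes $[a(\theta)/(\theta(\theta+1)),\tfrac{1}{1-q}]$, $a\in\{1,2,t+2,2t+1\}$. Indeed $\St$ annihilates the defining relations (Remark~\ref{rem: St-relations}) and hence factors through $\Ucal^{\cyc}_{\nfk,1}$; on $(\eA/\nfk)^\times=\{1,2,t+2,2t+1\}$ the images of your five proposed generators span only the $2$-dimensional space generated by the constant function and the indicator of the subgroup $\{1,t+2\}$ (the monic units), whereas $\St^{\geo}(1/\theta)$ is, up to the constant $-\tfrac12$, the indicator of $\{1,2t+1\}$, which is not in that span. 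Since this counterexample has $\ell=1$, your claim that the argument specializes to a proof of Lemma~\ref{lem: Ugeo-dim} fails for the same reason. You also identify the wrong obstacle in your closing paragraph: extra dependences among relations would only weaken an upper bound if they occur among the relations you subtract, and what step 3 actually requires (even granting step 2) is a proof that the relations $R_i,S_\Omega$, after substitution, have rank at least $\ell+\#(\eA/\nfk)^\times/(q-1)-1$ on the reduced generators; exhibiting one dependence bounds that rank from above, which is the wrong direction.

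The paper's proof avoids this by never attempting a top-conductor-only reduction: adapting the Kubert--Lang argument (Theorems 9.1 and 9.2 of Lang's \emph{Cyclotomic Fields}), the multiplication relations alone cut the generating set down to the classes $[x,\tfrac{q^i}{1-q^\ell}]$ with $x$ running over the Kubert set $\bigl\{\sum_j a_j/\pfk_j^{e_j}\bigr\}$ (with $a_j\neq 1$, and either $\pfk_j\nmid a_j$ or $a_j=0$), which keeps representatives at \emph{every} conductor level and has cardinality $\#(\eA/\nfk)^\times$, giving $\ell\cdot\#(\eA/\nfk)^\times$ generators; the reflection relations then remove a further $\#(\eA/\nfk)^\times/(q-1)-1$ of them. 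That filtration-by-conductor bookkeeping is exactly the content your shortcut tries to bypass, and in the example above (where $[1/\theta,\cdot]$ survives as a necessary generator) one sees why it cannot be bypassed.
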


\begin{proof}
When $\deg \nfk = 0$, i.e.\ $\nfk = 1$, observe that $\Ucal_{1,\ell}^{\cyc} = \Ucal_\ell^{\ari}$, and the result follows from Lemma~\ref{lem: Uari-dim}.
Suppose $\deg \nfk >0$.
Write $\nfk(\theta) = \pfk_1^{e_1}\cdots \pfk_s^{e_s}$, where $\pfk_1,...,\pfk_s \in A_+$ are distinct monic irreducible polynomials.
Adapting the argument in \cite[Theorem~9.1 and 9.2]{La90} (see also \cite{Ku79}),
the multiplication relation in \eqref{eqn: mul-re} shows that 
the space $\Ucal^{\cyc}_{\nfk,\ell}$ is generated by the images of  $[\displaystyle x,\frac{q^i}{1-q^\ell}]$ in $\Ucal^{\cyc}_{\nfk,\ell}$ for all integers $i$ with $0\leq i <\ell$ and $x$ in the following set:
\[
\left\{\sum_{j=1}^s\frac{a_j}{\pfk_j^{e_j}}\ \Bigg|\
1\neq a_j \in A,\ \deg a_j < \deg \pfk_j^{e_j}, \text{ and either }\ \pfk_j \nmid a_j\ \text{ or } \ a_j = 0
\right\}.
\]
Moreover, the reflection relation in \eqref{eqn: ref-re}
allows us to drop $\displaystyle\frac{\#(\eA/\nfk)^\times}{q-1}-1$ elements in the above generating set of $\Ucal^{\cyc}_{\nfk,\ell}$.
Hence
\begin{align*}
\dim_\QQ \Ucal^{\cyc}_{\nfk,\ell} &\leq  \ell \cdot \#(\eA/\nfk)^\times - \left(\frac{\#(\eA/\nfk)^\times }{q-1}  -1\right)\\
&= 1 + (\ell-\frac{1}{q-1})\cdot \#(\eA/\nfk)^\times 
\end{align*}
as desired.
\end{proof}

On the other hand,
extending $\St$ to a $\QQ$-linear homomorphism from $\Acal^{\cyc}_{\nfk,\ell,\QQ}$ (resp.\ $\Acal^{\cyc}_\QQ$) to $\Sscr(\eG_{\nfk,\ell})$ (resp.\ $\Sscr(\eG^{\cyc})$),
by Proposition~\ref{prop: St-span}~(c) we have

\begin{lemma}\label{lem: Stcyc-surj}
For each $\nfk \in \eA_+$ and $\ell \in \NN$,
$\St: \Acal^{\cyc}_{\nfk,\ell,\QQ} \rightarrow \Sscr(\eG_{\nfk,\ell})$ is surjective.
Consequently,
$\St: \Acal^{\cyc}_\QQ \rightarrow \Sscr(\eG^{\cyc})$
is surjective.
\end{lemma}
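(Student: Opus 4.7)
The plan is to read off the lemma almost directly from Proposition~\ref{prop: St-span}~(3), which has already been established. First, I would confirm that the $\QQ$-linear extension of $\St$ is well-defined with target $\Sscr(\eG_{\nfk,\ell})$: on basis elements $[x,y]$ with $x \in \frac{1}{\nfk(\theta)}A/A$ and $y \in \frac{1}{q^\ell-1}\ZZ/\ZZ$, the function $\St(x,y)$ lies in $\Sscr(\eG_{\nfk,\ell})$ by the observation made just before Definition~\ref{defn: ST}, since the reflection relations in Remark~\ref{rem: db-relation}~(1) give the averaging condition \eqref{eqn: S(G^c)} on $\eG_{\nfk,\ell}$.

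Next, surjectivity of $\St: \Acal^{\cyc}_{\nfk,\ell,\QQ} \to \Sscr(\eG_{\nfk,\ell})$ is now tautological. The image of $\St$ on $\Acal^{\cyc}_{\nfk,\ell,\QQ}$ is by definition the $\QQ$-span of the elements $\St(x,y)$ as $(x,y)$ ranges over $(\frac{1}{\nfk(\theta)}A/A) \times (\frac{1}{q^\ell-1}\ZZ/\ZZ)$, and Proposition~\ref{prop: St-span}~(3) asserts that this span equals $\Sscr(\eG_{\nfk,\ell})$.

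Finally, for the consequence about $\St: \Acal^{\cyc}_\QQ \to \Sscr(\eG^{\cyc})$, I would pass to the limit: every $\varphi \in \Sscr(\eG^{\cyc})$ factors through some finite quotient $\eG_{\nfk,\ell}$ (this is how $\Sscr(\eG^{\cyc})$ is defined, as the union of the $\Sscr(\eG_{\nfk,\ell})$), so the surjectivity statement for a single pair $(\nfk,\ell)$ yields a preimage in $\Acal^{\cyc}_{\nfk,\ell,\QQ} \subset \Acal^{\cyc}_\QQ$.

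The main obstacle lies not in this lemma itself but in Proposition~\ref{prop: St-span}~(3), on which it is immediately built; in particular, the Dirichlet $L$-value identification in Lemma~\ref{lem: evaluator} and the vanishing criterion in Corollary~\ref{cor: vanishing-cond} are the substantive inputs, and the present lemma is essentially a bookkeeping restatement of those results in the language of distributions.
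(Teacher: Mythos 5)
Your proof is correct and matches the paper's own derivation, which likewise obtains the lemma as an immediate consequence of Proposition~\ref{prop: St-span}~(3) after extending $\St$ $\QQ$-linearly, with the checks that $\St(x,y)\in\Sscr(\eG_{\nfk,\ell})$ and the passage to the union over all $(\nfk,\ell)$ handled exactly as you describe.
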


Notice that 
$
\dim_\QQ \Sscr(\eG_{\nfk,\ell}) = 1 + \displaystyle(\ell-\frac{1}{(q-1)^{\epsilon_\nfk}})\cdot \#(\eA/\nfk)^\times$ by Remark~\ref{rem: S(G/H)-dim}.
We then obtain 

\begin{corollary}\label{cor: Ucyc-iso}
The map from $\Ucal^{\cyc}_{\nfk,\ell}$ to  $\Sscr(\eG_{\nfk,\ell})$ induced by $\St$ is a $\QQ$-linear isomorphism.
Consequently, 
\begin{equation}\label{eqn: Ucyc-dim}
\dim_\QQ \Ucal^{\cyc}_{\nfk,\ell} = \dim_\QQ \Sscr(\eG_{\nfk,\ell}) = \displaystyle 1 + (\ell-\frac{1}{(q-1)^{\epsilon_\nfk}}) \cdot \#(\eA/\nfk)^\times,
\end{equation}
and we have a $\QQ$-linear isomorphism
$\Ucal^{\cyc} \cong \Sscr(\eG^{\cyc})$.
\end{corollary}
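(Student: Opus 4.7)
The plan is to mimic exactly the pattern used in Corollaries \ref{cor: Uari-iso} and \ref{cor: Ugeo-iso}: show that $\St$ annihilates the relations defining $\Ucal^{\cyc}_{\nfk,\ell}$ so that it descends to a surjection $\overline{\St}\colon \Ucal^{\cyc}_{\nfk,\ell} \twoheadrightarrow \Sscr(\eG_{\nfk,\ell})$, and then force $\overline{\St}$ to be an isomorphism by matching the upper bound from Lemma \ref{lem: Ucyc-dim} with the exact dimension of $\Sscr(\eG_{\nfk,\ell})$.

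First I would observe that Remark \ref{rem: St-relations}(3) says precisely that $\St$ sends each generator of $\Rcal^{\cyc}_{\nfk,\ell}$---namely the two-variable multiplication relations in \eqref{eqn: mul-re} and the reflection relations in \eqref{eqn: ref-re}---to the zero function in $\Sscr(\eG_{\nfk,\ell})$. Hence the $\QQ$-linear map $\St\colon \Acal^{\cyc}_{\nfk,\ell,\QQ}\to \Sscr(\eG_{\nfk,\ell})$ factors through $\Ucal^{\cyc}_{\nfk,\ell}$, yielding $\overline{\St}$; and since $\St$ itself is surjective by Lemma \ref{lem: Stcyc-surj}, so is $\overline{\St}$.

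Next I would compute $\dim_\QQ \Sscr(\eG_{\nfk,\ell})$. When $\deg \nfk > 0$, the field $\eK_{\nfk,\ell}$ is CM over $\ek$; combining the decomposition-group data in \eqref{eqn: Cyc ram} with the ramification index $q-1$ from \eqref{eqn: inertia gp} gives $[\eK_{\nfk,\ell}:\eK_{\nfk,\ell}^+] = (q-1)\ell$ and $[\eK_{\nfk,\ell}:\ek] = \ell\cdot\#(\eA/\nfk)^\times$, so Remark \ref{rem: S(G/H)-dim} simplifies to $\dim_\QQ\Sscr(\eG_{\nfk,\ell}) = 1+(\ell-\tfrac{1}{q-1})\cdot\#(\eA/\nfk)^\times$. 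When $\nfk = 1$, the field $\eK_{1,\ell}$ is totally real and not CM, but $\eG_\infty$ surjects onto $\eG_{1,\ell}$, so condition~(2) in Definition \ref{defn: S(G)} is automatic and $\Sscr(\eG_{1,\ell})$ is the full $\ell$-dimensional space of $\QQ$-valued functions on $\eG_{1,\ell}$---again matching the formula, this time with $\epsilon_\nfk = 0$. Lemma \ref{lem: Ucyc-dim} gives the reverse inequality $\dim_\QQ \Ucal^{\cyc}_{\nfk,\ell}\le 1+(\ell-\tfrac{1}{(q-1)^{\epsilon_\nfk}})\cdot\#(\eA/\nfk)^\times$. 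Sandwiching these with the surjection $\overline{\St}$ forces equality of all three dimensions, so $\overline{\St}$ is an isomorphism.

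Finally, the global statement $\Ucal^{\cyc}\cong \Sscr(\eG^{\cyc})$ is obtained by passing to the direct limit over $(\nfk,\ell)\in \eA_+\times\NN$: both sides are the unions of their $(\nfk,\ell)$-level pieces, and the finite-level isomorphisms $\overline{\St}_{\nfk,\ell}$ are manifestly compatible with the transition maps induced by divisibility $(\nfk,\ell)\mid(\nfk',\ell')$. I expect the only real (and very minor) obstacle to be bookkeeping in the edge case $\nfk = 1$, where the CM-field dimension formula from Remark \ref{rem: S(G/H)-dim} does not directly apply and the dimension $\dim_\QQ \Sscr(\eG_{1,\ell}) = \ell$ must instead be read off from Definition \ref{defn: S(G)} itself; the remainder is a transparent parallel of the arithmetic and geometric cases.
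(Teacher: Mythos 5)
Your proposal is correct and follows essentially the same route as the paper: use Remark~\ref{rem: St-relations} (together with $\wt(x,y)=0$ for $x\neq 0$) to see $\St(\Rcal^{\cyc}_{\nfk,\ell})=0$, so the surjection of Lemma~\ref{lem: Stcyc-surj} factors through $\Ucal^{\cyc}_{\nfk,\ell}$, and then sandwich dimensions via Lemma~\ref{lem: Ucyc-dim} and Remark~\ref{rem: S(G/H)-dim}, passing to the union over $(\nfk,\ell)$ for the global isomorphism. One small correction: $\eK_{1,\ell}=\FF_{q^\ell}(t)$ is not totally real but imaginary (hence CM), since $\infty$ is inert in constant field extensions, so Remark~\ref{rem: S(G/H)-dim} applies directly and gives $\dim_\QQ\Sscr(\eG_{1,\ell})=1+(1-\tfrac{1}{\ell})\ell=\ell$; your alternative computation of this dimension from \eqref{eqn: S(G^c)} is nevertheless valid, so the slip does not affect the argument.
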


\begin{remark}
${}$
\begin{itemize}
\item[(1)] Corollary~\ref{cor: Uari-iso},~\ref{cor: Ugeo-iso}, and~\ref{cor: Ucyc-iso} say that the Stickelberger distributions $\St^{\ari}$, $\St^{\geo}$, and $\St$ are actually ``universal'' with respect to the corresponding diamond bracket relations. 
\item[(2)] Extending the action of $\eG^{\cyc}$ to $\Acal^{\ari}_\QQ$, 
$\Acal^{\geo}_{\QQ}$, and
$\Acal^{\cyc}_\QQ$, respectively,
induces isomorphisms in Corollary~\ref{cor: Uari-iso},~\ref{cor: Ugeo-iso}, and~\ref{cor: Ucyc-iso}, which are in fact $\eG^{\cyc}$-equivariant.
\end{itemize}
\end{remark}

In the next section, we shall 
compare the compositions $\Pscr^{\cyc}_{\nu_1}\circ \St^{\ari}:\Acal^{\ari}_\QQ\rightarrow \CC_\infty^\times/\bar{k}^\times$, $\Pscr^{\cyc}_{\nu_1}\circ \St^{\geo}: \Acal^{\geo}_\QQ \rightarrow \CC_\infty^\times/\bar{k}^\times$,
and $\Pscr^{\cyc}_{\nu_1}\circ \St:\Acal^{\cyc}_\QQ\rightarrow \CC_\infty^\times/\bar{k}^\times$
for a particular embedding $\nu_1:\eK^{\cyc}\hookrightarrow \CC_\infty$
with the corresponding ``gamma distributions''.

\section{\texorpdfstring{Gamma distributions}{Gamma distributions}}

\subsection{Generalized CM types corresponding to Stickelberger distributions}

Given $\nfk \in \eA_+$,
recall that $C_\nfk^*(t,z)$ is the $\nfk$-th Carlitz cyclotomic polynomial given in Section~\ref{sec: cyclotomic extn}.
Let $U_\nfk$ be the affine smooth curve in the $(t,z)$-plane over $\FF_q$ defined by $C_\nfk^*(t,z) = 0$, (i.e.\ $O_\nfk$ is the affine coordinate ring of $U_\nfk$), and $X_\nfk$ be the projective model of $U_\nfk$ over $\FF_q$.
The embedding $\ek \hookrightarrow \eK_\nfk$ induces a finite morphism $\pi_\nfk$ from $X_\nfk$ to $\PP^1$, the projective $t$-line over $\FF_q$.
Let $\infty$ be the point at infinity in $\PP^1$.
There exists $\#(\eA/\nfk)^\times/(q-1)$ points in $X_\nfk$ lying above $\infty$, and each point is ramified over $\infty$ with the inertia group equal to the image of $\FF_q^\times$ under the Artin map (see \eqref{eqn: inertia gp}). In particular, all of the points of $X_\nfk$ lying above $\infty$ are $\FF_q$-rational.
Finally, for $\mfk,\nfk \in \eA_+$ with $\mfk|\nfk$, the inclusion map $\eK_\mfk \hookrightarrow \eK_\nfk$ given by
$$
O_\mfk =\frac{\FF_q[t,z]}{(C_\mfk^*(t,z))} \hookrightarrow O_\nfk = \frac{\FF_q[t,z]}{(C_\nfk^*(t,z))}, \quad z \bmod C_\mfk^*(t,z)  \longmapsto C_{\nfk/\mfk}(t,z) \bmod C_\nfk^*(t,z),
$$
corresponds to a morphism $\pi_{X_\nfk/X_\mfk}: X_\nfk \rightarrow X_\mfk$.
\\

Let $\bU_\nfk$ and $\bX_\nfk$ be the base changes of $U_\nfk$ and $X_\nfk$ to $\bar{k}$, respectively.
The \textit{Carlitz exponential function} $\exp_\Ccal:\CC_\infty\rightarrow \CC_\infty$ is given by:
$$
\exp_\Ccal(z) \assign z \cdot \prod_{a \in A-\{0\}}\left(1-\frac{z}{\tilde{\pi} \cdot a}\right), \quad \forall z \in \CC_\infty,
$$
where $\tilde{\pi}$ is the Carlitz fundamental period  given in Remark~\ref{rem: Carlitz period}.
For each $\nfk \in \eA_+$, take
$$\lambda_\nfk \assign \exp_{\Ccal}\left( \frac{\tilde{\pi}}{\nfk(\theta)}\right) \quad \in \bar{k}.$$
One has that $C_\nfk^*(\theta,\lambda_\nfk) = 0$, and $C_a(\theta,\lambda_\nfk) = C_b(\theta,\lambda_\nfk)$ for $a,b \in \eA$ with $a \equiv b \bmod \nfk$.
All points in $\bX_\nfk$ lying above the point $ \theta$ in $\PP^1_{/\bar{k}}$ are precisely
$$ \xi_{\nfk,\alpha} \assign \big(\theta, C_a(\theta,\lambda_\nfk)\big) \ (\in \bU_\nfk), \quad \text{ where } \alpha = a \bmod \nfk \in (\eA/\nfk)^\times.$$
Note that the evaluation at the point $\xi_{\nfk,\alpha}$ gives an embedding $\nu_{\nfk,\alpha} : K_\nfk \hookrightarrow \bar{k}$, and 
$$
\nu_{\nfk,\alpha} = \nu_{\nfk,1} \circ \varrho_{\alpha}, \quad \forall \alpha \in (\eA/\nfk)^\times,
$$
where $\varrho_{\alpha} \in \eG_\nfk$ is the automorphism corresponding to $\alpha$ via the Artin map in \eqref{eqn: geo-Artin}.
In particular, for $\mfk \in \eA_+$ with $\mfk|\nfk$, one has that $\pi_{\bX_\nfk/\bX_\mfk}(\xi_{\nfk,1}) = \xi_{\mfk,1}$.
This induces a unique embedding $\nu_1: \eK^{\rm cyc} \hookrightarrow \bar{k}$ so that 
\begin{equation}\label{eqn: nu_1}
\nu_1\big|_{\overline{\FF}_q} = \text{id}_{\overline{\FF}_q} \quad \text{ and } \quad 
\nu_1\big|_{\eK_\nfk} = \nu_{\nfk,1}, \quad \forall \nfk \in \eA_+.
\end{equation}
Moreover, let $\ell \in \NN$.
For $(\alpha,c)  \in (\eA/\nfk)^\times
\times (\ZZ/\ell \ZZ)$,
put $\nu_{\alpha,c} \assign \nu_{\nfk,1} \circ \varrho_{\alpha,c}$ where $\varrho_{\alpha,c}$ is given in \eqref{eqn: cyc-Artin}.
Then
$$
\nu_{\alpha,c}\big|_{\FF_{q^\ell}} = \text{Frob}_q^{c_\ell} \quad \text{ and } \quad \nu_{\alpha,c} \big|_{\eK_\nfk} = \nu_{\nfk,\alpha}.
$$

Set
\[
U_{\nfk,\ell}\assign \FF_{q^\ell} \times_{\FF_q} U_\nfk \quad \subset \quad X_{\nfk,\ell} \assign \FF_{q^\ell} \times_{\FF_q} X_{\nfk},
\]
and
\[
\bU_{\nfk,\ell}\assign \bar{k}\times_{\FF_q} U_{\nfk,\ell} = \coprod_{i=0}^{\ell-1} \bU_{\nfk,\ell,(i)}
\quad \subset \quad
\bX_{\nfk,\ell}\assign \bar{k}\times_{\FF_q} X_{\nfk,\ell} = \coprod_{i=0}^{\ell-1}
\bX_{\nfk,\ell,(i)},
\]
where
\begin{align*}
\bU_{\nfk,\ell,(i)} & \assign
(\bar{k} \underset{\text{Frob}_q^{-i},\FF_{q^\ell}}{\times} \FF_{q^\ell}) \times_{\FF_q} U_{\nfk} \hspace{-0.7cm}
& \subset \quad 
\bX_{\nfk,\ell,(i)} &\assign
(\bar{k} \underset{\text{Frob}_q^{-i},\FF_{q^\ell}}{\times} \FF_{q^\ell}) \times_{\FF_q} X_{\nfk} \\
&\, (\cong \bar{k} \times_{\FF_q} U_\nfk = \bU_\nfk)
& 
&\, (\cong \bar{k} \times_{\FF_q} X_\nfk = \bX_\nfk).
\end{align*}
Moreover, $J_{\eK_{\nfk,\ell}} = \{\xi_{\alpha,c}\mid \alpha \in (\eA/\nfk)^\times \text{ and } c \in \ZZ/\ell \ZZ\}$, where $\xi_{\alpha,c} \in \bU_{\nfk,\ell,(\ell-c)}$ corresponds to
the embedding of $\nu_{\alpha,c} :\eK_{\nfk,\ell} \hookrightarrow \CC_\infty$.
In particular, the point $\xi_{\alpha,c}$ actually coincides with $\xi_{\alpha}$ when identifying $\bU_{\nfk,\ell,(\ell-c)}$ with $\bU_\nfk$.\\

Now, for $x \in \frac{1}{\nfk(\theta)}A/A$ and an integer $c$ with $0\leq c< \ell$,
we set
\begin{align*}
\Xi_{x,c} & \assign (q^\ell-1)\cdot \sum_{i=0}^{\ell-1}\sum_{\alpha \in (\eA/\nfk)^\times} \langle \varrho_{\alpha,i} \star x\, ,\, \varrho_{\alpha,i}  \star \frac{q^{c}}{q^\ell-1}\rangle \cdot \xi_{\alpha,i} \\
&= (q^\ell-1)\cdot \sum_{i=0}^{\ell-1}\sum_{\alpha \in (\eA/\nfk)^\times} \langle \alpha(\theta)\cdot x ,\frac{q^{i+c}}{q^\ell-1}\rangle \cdot \xi_{\alpha,i}.
\end{align*}
We also put
\begin{align*}
    \Xi_{x} & \assign (q^\ell-1)\cdot  \sum_{i=0}^{\ell-1} \sum_{\alpha \in (\eA/\nfk)^\times} \langle \varrho_{\alpha,i} \star x\rangle_{\geo} \cdot \xi_{\alpha,i} \\
    &= (q^\ell-1) \cdot \sum_{\alpha \in (\eA/\nfk)^\times} \langle \alpha(\theta) \cdot x\rangle_{\geo} \cdot \pi^*_{\bX_{\nfk,\ell}/\bX_{\nfk}}(\xi_\alpha),
\end{align*}
and
\begin{align*}
\Xi_{\text{ari},c}
&\assign 
(q^\ell-1) \cdot \sum_{i=0}^{\ell-1}\sum_{\alpha \in (\eA/\nfk)^\times}\langle \varrho_{\alpha,i} \star \frac{q^{c}}{q^\ell-1}\rangle_{\text{ari}} \cdot \xi_{\alpha,i} \\
&= (q^\ell-1) \cdot \sum_{i=0}^{\ell-1}\sum_{\alpha \in (\eA/\nfk)^\times}\langle \frac{q^{i+c}}{q^\ell-1}\rangle_{\text{ari}} \cdot \xi_{\alpha,i} \\
&= \sum_{j=0}^{c-1}q^j \cdot \pi_{\bX_{\nfk}/\PP^1}^*(\theta)_{j-c+\ell} +\sum_{j=c}^{\ell-1} q^j \cdot \pi_{\bX_{\nfk}/\PP^1}^*(\theta)_{j-c}.
\end{align*}
Here for $0\leq i \leq \ell-1$, $\pi_{\bX_\nfk/\PP^1}^*(\theta)_i \in \Div(\bX_{\nfk,\ell,(\ell-i)})$ is the divisor
corresponding to $\pi_{\bX_\nfk/\PP^1}^*(\theta)$
under the identification $\bX_{\nfk,\ell,(\ell-i)}\cong \bX_\nfk$.
In particular, by Remark~\ref{rem: 2var-diamond-1}~(3) one has that $\Xi_{x} = \sum_{c=0}^{\ell-1} \Xi_{x,c}$.
In general, for each $y \in \frac{1}{q^\ell-1}\ZZ/\ZZ$, write
$\langle y\rangle_{\ari} = \sum_{c=0}^{\ell-1}y_c q^c/(q^\ell-1)$ where $y_0,...y_{\ell-1} \in \ZZ$ with $0\leq y_0,...,y_{\ell-1}<q$.
We set
\[
\Xi_{(x,y)}\assign
\sum_{c=0}^{\ell-1}y_c \Xi_{x,c}, \quad 
\Xi_{y}
\assign
\sum_{c=0}^{\ell-1}y_c \Xi_{\ari,c},
\]
and
\[
\Phi_{x} \assign \Xi_{x}-\Xi_{1/(q-1)},\quad  
\Phi_{(x,y)} \assign 
\Xi_{(x,-y)} - \Xi_{-y}.
\]

From the above construction, the following lemma is straightforward:

\begin{lemma}\label{lem: ST}
Let $\nfk \in \eA_+$ and $\ell \in \NN$.
Given $x \in \frac{1}{\nfk(\theta)}A/A$ and $y \in \frac{1}{q^\ell-1}\ZZ/\ZZ$,
we have that
$\Xi_{x}$, $\Xi_{y}$, and $\Xi_{(x,y)}$ are generalized CM types of $\eK_{\nfk,\ell}$,
and $\Phi_{x},\ \Phi_{(x,y)} \in I_{\eK_{\nfk,\ell}}^0$.
Moreover,
\begin{align*}
\St_0^{\geo}(x) &= (q^\ell-1)^{-1}\varphi_{\eK_{\nfk,\ell},\Xi_{x}}, \\
\St^{\ari}(-y) = \St^{\ari}_0(y)  &=
(q^\ell-1)^{-1} \varphi_{\eK_{\nfk,\ell},\Xi_{y}} ,  \\
\St_0(x,y) & = 
(q^\ell -1)^{-1} \varphi_{\eK_{\nfk,\ell},\Xi_{(x,y)}} , \\
\St^{\geo}(x) &= 
(q^\ell -1)^{-1}\varphi_{\eK_{\nfk,\ell},\Phi_{x}}, \\
\St(x,y)
&=
(q^\ell-1)^{-1}\varphi_{\eK_{\nfk,\ell},\Phi_{(x,y)}}.
\end{align*}
\end{lemma}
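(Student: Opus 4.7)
The plan is to carry out the argument in three stages: (i) verify that $\Xi_x$, $\Xi_y$, $\Xi_{(x,y)}$ lie in $I^0_{\eK_{\nfk,\ell}}$ (and are nonzero in the non-trivial cases); (ii) deduce $\Phi_x, \Phi_{(x,y)} \in I^0_{\eK_{\nfk,\ell}}$; and (iii) verify the five identities by direct coefficient comparison. The basic input is a dictionary drawn from \eqref{eqn: nu_1}, \eqref{eqn: cyc-Artin}, and \eqref{eqn: JK-coset}: with respect to $\nu_1$, the bijection $J_{\eK_{\nfk,\ell}} \leftrightarrow \eG_{\nfk,\ell}$ identifies $\xi_{\alpha,i}$ with $\varrho_{\alpha,i}$, so for any divisor $\Xi = \sum_{\alpha,i}m_{\alpha,i}\,\xi_{\alpha,i}$ one has $\varphi_{\eK_{\nfk,\ell},\Xi}(\varrho) = m_{\alpha,i}$ whenever $\varrho|_{\eK_{\nfk,\ell}} = \varrho_{\alpha,i}$. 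Moreover, since $\eK^+_{\nfk,\ell}$ is cut out by $\FF_q^\times \times (\ZZ/\ell\ZZ)$, each fiber of $\pi_{\bX_{\nfk,\ell}/\bX^+_{\nfk,\ell}}$ over $J_{\eK^+_{\nfk,\ell}}$ is an $\FF_q^\times \times (\ZZ/\ell\ZZ)$-orbit of the form $\{\xi_{\epsilon\alpha_0,i}\}$, so the $I^0$-condition amounts to constancy of the orbit-sums $\sum_{\epsilon,i}m_{\epsilon\alpha_0,i}$ in $\alpha_0 \in (\eA/\nfk)^\times$.

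For stage~(i), effectivity is immediate. For $\Xi_{x,c}$ with $0 \neq x \in k/A$, Proposition~\ref{prop: db-relation}(1) gives $\sum_\epsilon \langle \epsilon\alpha_0(\theta)x,\ q^{i+c}/(q^\ell-1)\rangle = 1$ for the unique $i$ with $\ord_\infty(\alpha_0(\theta)x) \equiv -(i+c) \bmod \ell$, and $0$ otherwise, yielding the constant orbit-sum $q^\ell-1$. Summing over $c$ and weighting by $y_c$ gives constant orbit-sums $\ell(q^\ell-1)$ for $\Xi_x$ and $(q^\ell-1)\sum_c y_c$ for $\Xi_{(x,y)}$. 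A parallel computation using $\sum_{i=0}^{\ell-1}\langle q^{i+c}/(q^\ell-1)\rangle_{\ari} = 1/(q-1)$ places $\Xi_{\ari,c}$, hence $\Xi_y$, in $I^0_{\eK_{\nfk,\ell}}$ with orbit-sum $(q^\ell-1)\sum_c y_c$. For stage~(ii), a direct calculation from $\langle q^i/(q-1)\rangle_{\ari} = 1/(q-1)$ gives $\Xi_{1/(q-1)} = \tfrac{q^\ell-1}{q-1}\sum_{\alpha,i}\xi_{\alpha,i}$, whose orbit-sum $\ell(q^\ell-1)$ matches that of $\Xi_x$, so $\Phi_x = \Xi_x - \Xi_{1/(q-1)}$ has vanishing orbit-sum and lies in $I^0_{\eK_{\nfk,\ell}}$; the same cancellation handles $\Phi_{(x,y)} = \Xi_{(x,-y)}-\Xi_{-y}$.

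Stage~(iii) is pointwise: for $\varrho \in \eG^{\cyc}$ with $\varrho|_{\eK_{\nfk,\ell}} = \varrho_{\alpha,i}$, the identity $\St^{\geo}_0(x) = (q^\ell-1)^{-1}\varphi_{\eK_{\nfk,\ell},\Xi_x}$ is the tautology $\langle\alpha(\theta) x\rangle_{\geo} = \langle\alpha(\theta) x\rangle_{\geo}$. The substantive step is the base-$q$ identity
\[
\langle q^i y\rangle_{\ari} = \sum_{c=0}^{\ell-1} y_c\,\frac{q^{(i+c)\bmod\ell}}{q^\ell-1},
\]
which follows from $q^{i+c}/(q^\ell-1) \equiv q^{(i+c)\bmod\ell}/(q^\ell-1)\pmod{\ZZ}$ together with the strict inequality $\sum_c y_c\,q^{(i+c)\bmod\ell} < q^\ell-1$ (saturation would force all $y_c = q-1$, which is incompatible with $\langle y\rangle_\ari < 1$). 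Combined with property~(2) of the two-variable diamond bracket, this gives $\langle\alpha(\theta) x, q^i y\rangle = \sum_c y_c\langle\alpha(\theta) x, q^{i+c}/(q^\ell-1)\rangle$, delivering the identities for $\St^{\ari}_0(y)$ and $\St_0(x,y)$. The remaining two identities follow by subtraction, using that $(q^\ell-1)^{-1}\varphi_{\eK_{\nfk,\ell},\Xi_{1/(q-1)}} = (q-1)^{-1}\,\one_{\eG^{\cyc}}$.

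The only notable technical point is the base-$q$ identity in stage~(iii); all other steps are routine applications of the reflection and expansion formulas for the various diamond brackets assembled in Section~\ref{sec: diamond bracket}.
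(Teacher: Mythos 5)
Your proof is correct and is precisely the direct verification that the paper leaves implicit (the lemma is stated there as an immediate consequence of the construction): the coset dictionary $\xi_{\alpha,i}\leftrightarrow\varrho_{\alpha,i}$ with respect to $\nu_1$, the orbit-sum criterion for membership in $I^0_{\eK_{\nfk,\ell}}$ coming from \eqref{eqn: xi+coset}, and the cyclic digit-shift identity $\langle q^i y\rangle_{\ari}=\sum_{c}y_c\,q^{(i+c)\bmod \ell}/(q^\ell-1)$ are exactly the points that need checking, and you handle each correctly. The only caveat is the degenerate cases $x=0$ or $y=0$, where the divisors $\Xi_x$, $\Xi_y$, $\Xi_{(x,y)}$ vanish and so are not literally generalized CM types (though membership in $I^0_{\eK_{\nfk,\ell}}$ and the displayed identities still hold); this imprecision is already in the statement itself and you flag it parenthetically.
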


\subsection{Shtuka functions}

To construct a CM dual $t$-motive with generalized CM type $(\eK_{\nfk,\ell},\Xi)$
where $\Xi = \Xi_{x}$, $\Xi_{y}$, or $\Xi_{(x,y)}$,
we shall find the corresponding ``shtuka functions'' and
apply the geometric approach in \cite[Appendix~B]{BCPW22}.
First, recall the following result:

\begin{theorem}\label{thm: GCFn}
{\rm (\cite[Section 6.3.9]{ABP})} 
Let $\nfk \in \eA_+$ with $\deg \nfk > 0$.
Given nonzero $x$ in $\frac{1}{\nfk(\theta)}A/A$, there exists a regular function 
$g_x = 1 + \sum_{i,j} c_{ij}t^iz^j \in O_{\bK_\nfk} = \ok\otimes_{\FF_q}O_\nfk$ where $c_{ij} \in \bar{k}$ with $|c_{ij}|_\infty < 1$ satisfying that 
$$
\divv(g_x) = -  \infty_{X_\nfk} +
\sum_{\alpha \in (\eA/\nfk)^\times}\sum_{N=0}^\infty \langle \alpha(\theta) \cdot x\rangle_N \cdot \xi_\alpha^{(N)}
\quad \in \Div(\bX_\nfk),
$$
where
\[
\infty_{X_\nfk} \assign  \sum_{\subfrac{\tilde{\infty} \in X_\nfk}{\scaleto{\pi}{3.3pt}_{\!\scaleto{X_\nfk/\PP^1}{6.3pt}}(\tilde{\infty}) = \infty}}\tilde{\infty}.
\]
\end{theorem}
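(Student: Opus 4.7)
The plan is to follow the Anderson soliton / shtuka function construction implemented in the cyclotomic setting in ABP, Section 6.3. First, I would verify that the proposed divisor
\[
D_x \assign -\infty_{X_\nfk} + \sum_{\alpha \in (\eA/\nfk)^\times}\sum_{N=0}^\infty \langle \alpha(\theta)\cdot x\rangle_N \cdot \xi_\alpha^{(N)}
\]
has degree zero on $\bX_\nfk$. For each $\alpha$, the infinite sum $\sum_N \langle \alpha(\theta)\cdot x\rangle_N$ equals $\langle \alpha(\theta) \cdot x\rangle_{\geo}\in\{0,1\}$, and by Lemma~\ref{lem: AL-geoG}(1) exactly $\#(\eA/\nfk)^\times/(q-1)$ of the cosets $\alpha \in (\eA/\nfk)^\times$ yield the value $1$ (one monic representative per $\FF_q^\times$-orbit of $\alpha(\theta)x$ in $k/A$). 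Since $\deg \infty_{X_\nfk} = \#(\eA/\nfk)^\times/(q-1)$ as well, $\deg D_x = 0$. As a bonus, this finiteness count shows that the outer sum is really a \emph{finite} sum of divisor contributions, so no convergence issue arises at the level of divisors.

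The heart of the argument is then to exhibit $g_x$ as a concrete meromorphic function with divisor $D_x$. I would express it via the Carlitz torsion: the point $\xi_\alpha^{(N)}$ has coordinates $(\theta^{q^N}, C_{a_\alpha}(\theta,\lambda_\nfk)^{q^N})$ for a representative $a_\alpha$ of $\alpha$, so one builds $g_x$ as an \emph{infinite} product of local factors of the shape $1 - (\text{coordinate ratio})$ indexed by pairs $(\alpha,N)$ on which $\langle \alpha(\theta)x\rangle_N = 1$, multiplied by the appropriate pole-normalizing factor at each of the finitely many points of $\bX_\nfk$ above $\infty$. Convergence of the product in the Tate algebra $\TT$ (viewed in $t$, with $z$ algebraic over it) is forced by the rapid $\infty$-adic growth of the Frobenius twists $\theta^{q^N}$, so each factor is $1 + O(|\theta|_\infty^{-q^N})$. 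Matching the divisor of this product with $D_x$ is a bookkeeping check: the monic geometric diamond bracket precisely selects the Frobenius orbit of $\xi_\alpha$ at which a zero occurs, and the pole is uniformly simple at each point over $\infty$ because the inertia identification \eqref{eqn: inertia gp} shows these points are $\FF_q$-rational and each contributes a single simple pole.

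Finally, normalization: after existence, the statement $g_x = 1 + \sum_{i,j} c_{ij} t^i z^j$ with $|c_{ij}|_\infty < 1$ is obtained by rescaling so that the leading term of the Tate expansion equals $1$. The strict inequality on the other coefficients then comes from inspecting each factor in the product: each is of the form $1 + (\text{element of the maximal ideal of the valuation ring at the infinite place})$, hence so is any finite partial product, and the limit preserves the congruence modulo the maximal ideal. The principal obstacle in making this rigorous is the divisor bookkeeping at all infinite and twisted points simultaneously: one must verify that no extraneous zeros or poles slip in and that the product genuinely lies in $O_{\bK_\nfk}$ and not merely in some Laurent completion. In effect, one is reproving (or directly citing) ABP~6.3.9 and its supporting computations with the Carlitz exponential; the cleanest route is to reduce the general nonzero $x \in \tfrac{1}{\nfk(\theta)}A/A$ to the case $x = 1/\nfk(\theta)$ by absorbing the $\eA/\nfk$-scaling into a relabeling of the index $\alpha$, exploiting the evident $(\eA/\nfk)^\times$-symmetry of $D_x$, and then invoking the explicit soliton of ABP for that distinguished $x$.
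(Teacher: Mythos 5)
The paper gives no argument for this statement: it is quoted verbatim from ABP, Section~6.3.9, and the citation is the intended justification, already covering every nonzero $x\in\frac{1}{\nfk(\theta)}A/A$. So to the extent that your last sentence (``directly citing ABP~6.3.9'') is the proof, you agree with the paper. Your degree-zero verification is correct — by Lemma~\ref{lem: AL-geoG}~(1) each $\FF_q^\times$-coset of $(\eA/\nfk)^\times$ contributes exactly one $\alpha$ with $\langle\alpha(\theta)x\rangle_{\geo}=1$, matching the $\#(\eA/\nfk)^\times/(q-1)$ points of $\bX_\nfk$ over $\infty$ — but it is only a consistency check, not a step toward existence.

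The body of your proposal, read as an independent construction, has a genuine gap. On $X_\nfk$, which has positive genus in general, degree zero is far from sufficient: the entire content of the theorem is that this particular divisor is \emph{principal}, together with the normalization $g_x=1+\sum c_{ij}t^iz^j$ with $|c_{ij}|_\infty<1$ (the latter is what makes the infinite product $\psi_*=\prod_{j\geq 1}(h_*^{-1})^{(\ell j)}$ of Lemma~\ref{lem: psi*} converge later). Your product supplies neither. There is also an internal inconsistency: since for fixed $\alpha$ at most one $N$ satisfies $\langle\alpha(\theta)x\rangle_N=1$, the index set of your ``infinite product'' is finite, so the convergence discussion is vacuous; and a finite product of naive factors of the shape $1-(\text{coordinate ratio})$ is a rational function on $\bX_\nfk$ carrying additional, uncontrolled zeros and poles, with nothing in the sketch showing these cancel to leave exactly the prescribed divisor. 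What ABP actually do is Anderson's soliton construction: an explicit Coleman-type function built from Carlitz torsion and exponential data, whose divisor computation and coefficient bounds are the real work. Finally, your reduction to $x=1/\nfk(\theta)$ via the $(\eA/\nfk)^\times$-symmetry only reaches $x$ whose exact denominator is $\nfk(\theta)$; for $x$ with denominator a proper divisor $\mfk\mid\nfk$ one needs in addition to pull the soliton back along $\bX_\nfk\rightarrow\bX_\mfk$ (this is fine, since $\pi^*\infty_{X_\mfk}=\infty_{X_\nfk}$ and the fibre over $t=\theta$ is unramified, and the Galois twist $z\mapsto C_b(t,z)$ preserves the coefficient bounds, but none of this is addressed). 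The cleanest route, and the one the paper takes, is simply to invoke ABP~6.3.9 as stated for all nonzero $x$.
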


\begin{remark}\label{rem: g-twist}
Let $n$ be an integer.
For each rational function $f$ on $\bX_{\nfk}$ (i.e.\ $f$ is in $\bX_{\nfk}$'s function field $\bK_{\nfk} \assign \bar{k}(t) \otimes_{\FF_q(t)} \eK_{\nfk}$),
let $f^{(n)}$ be the {\it $q^n$-power Frobenius twist of $f$}, i.e.\ $f^{(n)}$ is obtained by raising every coefficient of $f$ to the $q^n$-power.
Moreover, for each ($\bar{k}$-)point $P \in \bX_\nfk$, 
its \emph{$q^n$-power Frobenius twist} is the point $P^{(n)}$ on $\bX_{\nfk}$ obtained by raising every coordinate of $P$ to the $q^n$-power.
Extending the $q^n$-power Frobenius twist additively to the divisors on $\bX_\nfk$,
by Theorem~\ref{thm: GCFn} we have
\[
\divv(g_x^{(n)}) =  \divv(g_x)^{(n)} =
- \infty_{X_\nfk} +
\sum_{\alpha \in (\eA/\nfk)^\times}\sum_{N=0}^\infty \langle \alpha(\theta)\cdot x\rangle_N \cdot \xi_\alpha^{(n+N)}
\quad \in \Div(\bX_\nfk).
\]
\end{remark}

Given nonzero $x \in \frac{1}{\nfk(\theta)}A/A$ and $0\leq c<\ell$,
we may write
\[
\Xi_{x,c} = (q^\ell-1)\cdot \sum_{i=0}^{\ell-1}\sum_{\alpha \in (\eA/\nfk)^\times}
\sum_{\subfrac{N\geq 0}{N \equiv -1-i-c \bmod \ell}} \langle \alpha(\theta)\cdot x \rangle_N \cdot \xi_{\alpha,i}
= \Xi_{x,c,(0)}+\sum_{i=1}^{\ell-1} \Xi_{x,c,(i)},
\]
where
\[
\Xi_{x,c,(0)} \assign 
(q^\ell-1)\cdot \sum_{\alpha \in (\eA/\nfk)^\times}
\sum_{\subfrac{N\geq 0}{N \equiv -1-c \bmod \ell}} \langle \alpha(\theta)\cdot x \rangle_N \cdot \xi_{\alpha,0} \quad \in \Div(\bX_{\nfk,\ell,(0)}),
\]
and for $1\leq i < \ell$,
\[
\Xi_{x,c,(i)} \assign 
(q^\ell-1)\cdot \sum_{\alpha \in (\eA/\nfk)^\times}
\sum_{\subfrac{N\geq 0}{N \equiv -1-(\ell-i)-c \bmod \ell}} \langle \alpha(\theta)\cdot x \rangle_N \cdot \xi_{\alpha,\ell-i} \quad \in \Div(\bX_{\nfk,\ell,(i)}).
\]
In particular, put
\begin{align*}
\Xi_{x,c}^{\#} 
&\assign  (q^\ell-1)\cdot 
\sum_{i=0}^{\ell-1} \sum_{\alpha \in (\eA/\nfk)^\times} 
\left(\sum_{\subfrac{N\geq 0}{N \equiv -1-i-c \bmod \ell}}
\langle \alpha(\theta)\cdot x \rangle_N\right) \cdot \xi_{\alpha}^{(-i)}
 \quad \in \Div(\bX_{\nfk}) \\
&(= \Xi_{x,c,(0)} + \sum_{i=1}^{\ell-1} \Xi_{x,c,(i)}^{(i-\ell)} \quad \text{when identifying $\bX_{\nfk,\ell,(i)}$ with $\bX_\nfk$ for $0\leq i<\ell$}) \\
& = (q^\ell-1)\cdot \Bigg[\sum_{i=0}^{\ell-c-1} \sum_{\alpha \in (\eA/\nfk)^\times} \langle \alpha(\theta)\cdot x\rangle_{-1-i-c+\ell}\cdot \xi_\alpha^{(-i)} \\ 
& \hspace{2cm} + 
\sum_{i=0}^{\ell-1} \ \sum_{\alpha \in (\eA/\nfk)^\times}\left(\sum_{N'\geq 1}\langle \alpha(\theta)\cdot x\rangle_{-1-i-c+\ell+\ell N'}
\right)  \cdot \xi_\alpha^{(-i)}\Bigg].
\end{align*}
Take 
\[
W_{x,c}\assign (q^\ell-1)\cdot \sum_{i=0}^{\ell-1}\sum_{\alpha \in (\eA/\nfk)^\times}
\left(
\sum_{N' \geq 1}
\langle \alpha(\theta)\cdot x\rangle_{-1-i-c+\ell+\ell N'} \sum_{j=0}^{N'-1}\xi_\alpha^{(-i+\ell j)}
\right) \quad \in \Div(\bX_\nfk)
\]
and $h_{x,c} \assign (g_x^{(1+c-\ell)})^{q^\ell-1} \in O_{\bK_\nfk}$.
Under the identification between $\bX_{\nfk}$ and $\bX_{\nfk,\ell,(0)}$,
we obtain that
\begin{align}
\divv(h_{x,c}) 
& = -(q^\ell-1) \cdot \infty_{X_\nfk}+(q^\ell-1)\cdot
\sum_{\alpha \in (\eA/\nfk)^\times}
\sum_{N=0}^\infty \langle \alpha(\theta)\cdot x\rangle_N \cdot \xi_\alpha^{(1+c-\ell + N)} \nonumber \\
& = -(q^\ell-1) \cdot \infty_{X_\nfk}+(q^\ell-1)\cdot \sum_{i=0}^{\ell-1}\sum_{\alpha \in (\eA/\nfk)^\times}\sum_{\substack{N\geq 0 \\ N \equiv -1-i-c \bmod \ell}}\langle \alpha(\theta)\cdot x\rangle_N \cdot \xi_\alpha^{(1+c-\ell+N)} \nonumber \\
& = 
-(q^\ell-1) \cdot \infty_{X_\nfk}+(q^\ell-1)\cdot
\Bigg[
\sum_{i=0}^{\ell-c-1} \sum_{\alpha \in (\eA/\nfk)^\times} \langle \alpha(\theta)\cdot x\rangle_{-1-i-c+\ell}\cdot \xi_\alpha^{(-i)}  \nonumber \\ 
& \hspace{4cm} + 
\sum_{i=0}^{\ell-1} \ \sum_{\alpha \in (\eA/\nfk)^\times}\left(\sum_{N'\geq 1}\langle \alpha(\theta)\cdot x\rangle_{-1-i-c+\ell+\ell N'}
\right)  \cdot \xi_\alpha^{(-i+\ell N')}\Bigg]
\nonumber \\
& = -(q^\ell-1) \cdot \infty_{X_\nfk}\ +\  \Xi_{x,c}^{\#}\ +\ W_{x,c}^{(\ell)}\ -\ W_{x,c} \hspace{1.2cm} \in \Div(\bX_{\nfk,\ell,(0)}).
\end{align}
For convenience, we also put $h_{0,c}=1$, $\Xi_{0,c}^\# = W_{0,c} = 0$ for $0\leq c <\ell$.\\

In the arithmetic case,
we put
\[
\Xi_{\text{ari},c}^{\#} = \sum_{i=0}^{c-1}q^i \cdot \pi_{\bX_{\nfk}/\PP^1}^*(\theta)^{(c-i-\ell)} +\sum_{i=c}^{\ell-1} q^i \cdot \pi_{\bX_{\nfk}/\PP^1}^*(\theta)^{(c-i)} \quad \in \Div(\bX_{\nfk}).
\]
Take 
\begin{align*}
h_{\text{ari},c} & \assign \prod_{i=0}^{c-1}(1-\frac{t}{\theta^{q^{c-i-\ell}}})^{q^i} \cdot \prod_{i=c}^{\ell-1}(1-\frac{t}{\theta^{q^{c-i}}})^{q^i} \\
& =
\prod_{i=0}^{c-1}(1-\frac{t^{q^i}}{\theta^{q^{c-\ell}}}) \cdot \prod_{i=c}^{\ell-1}(1-\frac{t^{q^i}}{\theta^{q^{c}}}) \quad \in \bar{k}[t] \subset O_{\bK_{\nfk}}
\end{align*}
and
$W_{\text{ari},c} \assign 0 \in \Div(\bX_{\nfk})$.
Note that $\pi^*_{\bX_{\nfk}/\PP^1}(\infty) = (q-1)\cdot \infty_{\bX_\nfk}$ and $\ord_{\infty}(h_{\ari,c}) = -\displaystyle\frac{q^\ell-1}{q-1}$.
Under the identification between $\bX_{\nfk}$ and $\bX_{\nfk,\ell,(0)}$, we have
\begin{equation}
\divv(h_{\text{ari},c}) = -(q^\ell -1)\cdot  \infty_{X_\nfk} + \Xi_{\text{ari},c}^{\#} + W_{\text{ari},c}^{(\ell)} - W_{\text{ari},c} \quad \in \Div(\bX_{\nfk,\ell,(0)}).
\end{equation}

In general, let $y \in \frac{1}{q^\ell-1}\ZZ/\ZZ$ and write
$\langle y\rangle_{\ari} = \sum_{c=0}^{\ell-1}y_c q^c/(q^\ell-1)$ where $y_0,...y_{\ell-1} \in \ZZ$ with $0\leq y_0,...,y_{\ell-1}<q$.
We set
\[
\Xi^{\#}_x \assign \sum_{c=0}^{\ell-1}\Xi^{\#}_{x,c}, \quad 
\Xi^{\#}_y \assign \sum_{c=0}^{\ell-1}y_c \Xi^{\#}_{\ari,c},\quad 
\Xi^{\#}_{(x,y)} \assign \sum_{c=0}^{\ell-1}y_c \Xi^{\#}_{x,c},
\]
\[
W_{x} \assign \sum_{c=0}^{\ell-1} W_{x,c},\quad 
W_{y} \assign 
\sum_{c=0}^{\ell-1} y_c W_{\text{ari},c},\quad W_{(x,y)}\assign \sum_{c=0}^{\ell-1}y_c W_{x,c},
\]
and
\[
h_{x} \assign \prod_{c=0}^{\ell-1} h_{x,c},\quad 
h_{y} \assign \prod_{c=0}^{\ell-1} h_{\text{ari},c}^{y_c}, \quad
h_{(x,y)}\assign
\prod_{c=0}^{\ell-1}h_{x,c}^{y_c}.
\]
Then:
\begin{proposition}
Let $\nfk \in \eA_+$ and $\ell \in \NN$.
Given $x \in \frac{1}{\nfk(\theta)}A/A$ and $y \in \frac{1}{q^\ell-1}\ZZ/\ZZ$, we have that
\begin{align*}
\divv(h_{x}) &= - (q^\ell-1)\cdot \ell\text{\rm wt}_0^{\geo}(x) \infty_{X_\nfk} + \Xi_{x}^{\#} +W_{x}^{(\ell)} - W_{x}, \\
\divv(h_{y}) &= - (q^\ell-1)\cdot \ell\text{\rm wt}^{\ari}_0(y) \infty_{X_\nfk} + \Xi_{y}^{\#} +W_{y}^{(\ell)} - W_{y},\\
\text{and }\quad \divv(h_{(x,y)}) &= - (q^\ell-1)\cdot \ell \text{\rm wt}_0(x,y) \infty_{X_\nfk} + \Xi_{(x,y)}^{\#} +W_{(x,y)}^{(\ell)} - W_{(x,y)}. 
\end{align*}
\end{proposition}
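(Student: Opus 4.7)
The proof is essentially a bookkeeping exercise: the three identities all follow by summing (with suitable multiplicities) the two divisor formulas already established just above the proposition for $\divv(h_{x,c})$ and $\divv(h_{\ari,c})$. The plan is to carry out these summations carefully, keeping track of the coefficient at the infinite divisor $\infty_{X_\nfk}$ and comparing it with the appropriate weight.

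First, for $\divv(h_x)$ with $x\neq 0$, I would use the factorization $h_x = \prod_{c=0}^{\ell-1} h_{x,c}$ to write
\[
\divv(h_x) = \sum_{c=0}^{\ell-1} \divv(h_{x,c}) = -\ell(q^\ell-1)\infty_{X_\nfk} + \sum_{c=0}^{\ell-1}\Xi^{\#}_{x,c} + \Big(\sum_{c=0}^{\ell-1}W_{x,c}\Big)^{(\ell)} - \sum_{c=0}^{\ell-1}W_{x,c}.
\]
By the definitions of $\Xi^{\#}_x$ and $W_x$ the middle and last terms collapse to the claimed form, and since $\operatorname{wt}_0^{\geo}(x)=1$ for $x\neq 0$ the coefficient $-\ell(q^\ell-1)$ matches $-\ell(q^\ell-1)\operatorname{wt}_0^{\geo}(x)$. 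The case $x=0$ is handled by the conventions $h_{0,c}=1$, $\Xi^{\#}_{0,c}=W_{0,c}=0$, together with $\operatorname{wt}_0^{\geo}(0)=0$, so both sides vanish.

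Next, for $\divv(h_y)$ with $\langle y\rangle_{\ari}=\sum_{c=0}^{\ell-1} y_c\, q^c/(q^\ell-1)$, $0\le y_c<q$, the factorization $h_y = \prod_{c=0}^{\ell-1} h_{\ari,c}^{\,y_c}$ yields
\[
\divv(h_y) = \sum_{c=0}^{\ell-1} y_c \divv(h_{\ari,c}) = -(q^\ell-1)\Big(\sum_{c=0}^{\ell-1} y_c\Big)\infty_{X_\nfk} + \Xi^{\#}_y + W_y^{(\ell)} - W_y,
\]
using the definitions of $\Xi^{\#}_y$ and $W_y$ exactly as above. The identification $\sum_{c} y_c = \ell\operatorname{wt}_0^{\ari}(y)$ is precisely the defining equation \eqref{eqn: wt0-ari}, so the leading coefficient takes the required form. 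The third identity for $\divv(h_{(x,y)})$ is proved in the same way: expand $h_{(x,y)}=\prod_c h_{x,c}^{\,y_c}$, sum $y_c \cdot \divv(h_{x,c})$, and recognize the result using the definitions of $\Xi^{\#}_{(x,y)}$, $W_{(x,y)}$, and the multiplicativity $\operatorname{wt}_0(x,y) = \operatorname{wt}_0^{\geo}(x)\operatorname{wt}_0^{\ari}(y)$.

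There is essentially no obstacle here beyond careful accounting: the genuine content—namely the divisor computations for $h_{x,c}$ and $h_{\ari,c}$, which rely on Theorem \ref{thm: GCFn}, Remark \ref{rem: g-twist}, and the geometric decomposition of $\bX_{\nfk,\ell}$ into components $\bX_{\nfk,\ell,(i)}$—is already in place in the discussion preceding the statement. The only place requiring any care is tracking that the Frobenius-twist contributions $W_{x,c}^{(\ell)}$ telescope correctly when one sums over $c$, but this follows immediately from $(\,\cdot\,)^{(\ell)}$ being additive on divisors and commuting with $\QQ$-linear combinations of divisors.
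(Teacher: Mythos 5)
Your proposal is correct and is exactly the argument the paper intends: the proposition is stated as an immediate consequence of the displayed formulas for $\divv(h_{x,c})$ and $\divv(h_{\ari,c})$, and you obtain it by summing those formulas with multiplicities $1$ (resp.\ $y_c$), invoking the definitions of $\Xi^{\#}_{*}$, $W_{*}$, $h_{*}$, the conventions $h_{0,c}=1$, $\Xi^{\#}_{0,c}=W_{0,c}=0$, and the identities $\sum_c y_c=\ell\,\mathrm{wt}_0^{\ari}(y)$ and $\mathrm{wt}_0(x,y)=\mathrm{wt}_0^{\geo}(x)\mathrm{wt}_0^{\ari}(y)$. Your bookkeeping of the $\infty_{X_\nfk}$-coefficient and of the additivity of the Frobenius twist on divisors is accurate, so nothing is missing.
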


\subsection{\texorpdfstring{Generalized soliton dual $t$-motives}{Generalized soliton dual t-motives}}\label{sec: soliton}

We now apply the geometric method from \cite[(B.2.1)]{BCPW22} to construct the desired CM dual $t$-motives.
Set 
\[
O_{\nfk,\ell} \assign \FF_{q^\ell} \otimes_{\FF_q} O_{\nfk}
\quad \text{ and } \quad 
O_{\bK_{\nfk,\ell}} \assign \bar{k}\otimes_{\FF_q} O_{\nfk,\ell} \cong \prod_{i=0}^{\ell-1} O_{\bK_{\nfk,\ell},(i)},
\]
where for $0\leq i<\ell$,
\[
O_{\bK_{\nfk,\ell},(i)} \assign \bar{k}\underset{\text{Frob}_q^{-i},\FF_{q^\ell}}{\otimes} O_{\nfk,\ell} \quad 
(\cong \bar{k} \otimes_{\FF_q} O_\nfk = O_{\bK_{\nfk}})
\]
is the affine coordinate ring of $\bU_{\nfk,\ell,(i)}$.
The field of fractions of $O_{\bK_{\nfk,\ell},(i)}$ is denoted by $\bK_{\nfk,\ell,(i)}$, which is the function field of $\bX_{\nfk,\ell,(i)}$.

Given $x \in \frac{1}{\nfk(\theta)}A/A$ and $y \in \frac{1}{q^\ell-1}\ZZ/\ZZ$,
let $*$ be either $x$, $y$, or $(x,y)$, and
set
$$
\eM(*) = \eM_{(W_{*},h_*)} = \prod_{i=0}^{\ell-1} \eM_{(W_*,h_*),(i)},
$$
where
$$
\eM_{(W_*,h_*),(0)}\assign  \Gamma\big(\bU_{\nfk,\ell,(0)}, \Ocal_{\bX_{\nfk,\ell,(0)}}(-W_*^{(\ell)})\big) \quad \subset O_{\bK_{\nfk,\ell},(0)}, $$
and
$$
\eM_{(W_*,h_*),(i)}\assign  \Gamma\big(\bU_{\nfk,\ell,(i)}, \Ocal_{\bX_{\nfk,\ell, (i)}}(-W_*^{(\ell-i)} + \sum_{j=1}^i \Xi_{*,(j)}^{(j-i)})\big) \quad \subset \bK_{\nfk,\ell,(i)}, \quad 1\leq i < \ell.
$$
The $\sigma$-action on $\eM_{(W_*,h_*)}$ is given by
$$
\sigma \cdot (m_{(0)},\ldots, m_{(\ell-1)}) \assign  \Big(h_* \cdot m_{(\ell-1)}^{(-1)}, m_{(0)}^{(-1)},\ldots, m_{(\ell-2)}^{(-1)}\Big), \quad
\forall (m_{(0)},\ldots, m_{(\ell-1)}) \in \eM_{(W_*,h_*)}.
$$
By \cite[Theorem~B.2.3]{BCPW22}, we have the following:

\begin{lemma}\label{lem: M(*)}
Let $\nfk \in \eA_+$ and $\ell \in \NN$.
Given $x \in \frac{1}{\nfk(\theta)}A/A$
and $y \in \frac{1}{q^\ell-1}\ZZ/\ZZ$,
take $*$ to be either $x$, $y$, or $(x,y)$.
$\eM(*)$ is a CM dual $t$-motive with generalized CM type $(\eK_{\nfk,\ell},\Xi_{*})$ over $\ok$.
\end{lemma}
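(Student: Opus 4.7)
The plan is to apply \cite[Theorem~B.2.3]{BCPW22} directly to the pair $(W_*, h_*)$ just constructed, and then verify that the resulting generalized CM type is $\Xi_*$. That theorem takes as input a divisor $W$ on $\bX_{\nfk,\ell,(0)}$ together with a rational function $h$ whose divisor has the precise shape
\[
\divv(h) = -N \cdot \infty_{X_\nfk} + \Xi^{\#} + W^{(\ell)} - W,
\]
where $\Xi^{\#}$ is an effective divisor supported above $t = \theta$; its output is a pure uniformizable CM dual $t$-motive over $\ok$ of rank $[\eK_{\nfk,\ell}:\ek]$ whose generalized CM type is read off from $\Xi^{\#}$ after reindexing the $\ell$ components via the Frobenius-twist identifications $\bX_{\nfk,\ell,(i)} \cong \bX_\nfk$.

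The first step is to note that the proposition immediately preceding the lemma supplies exactly this divisor identity for $h_*$, with $N = (q^\ell-1)\cdot \ell\,\text{wt}_0^{*}(*)$: the auxiliary divisor $W_*$ was engineered precisely to absorb, via the telescoping $W_*^{(\ell)} - W_*$, the ``tails'' in the divisors of the Frobenius-twisted Carlitz factors $g_x^{(1+c-\ell)}$ (and their arithmetic analogues built from $1 - t^{q^i}/\theta^{q^j}$), so that the remaining effective piece $\Xi_*^{\#}$ is supported entirely on fibers above $\theta$. The hypotheses of the cited theorem are then in force.

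The second step is to identify the output CM type with $\Xi_*$. Under the identification $\bX_{\nfk,\ell,(i)} \cong \bX_\nfk$ given by $\mathrm{Frob}_q^{-i}$, the point $\xi_\alpha^{(-i)}$ occurring in $\Xi_*^{\#}$ corresponds to $\xi_{\alpha,i} \in J_{\eK_{\nfk,\ell}}$; summing over $i$ the contributions from each component of $h_*$ restores the full coefficient $(q^\ell-1)\langle \alpha(\theta)\cdot x, q^{i+c}/(q^\ell-1)\rangle$ (appropriately weighted by the digits $y_c$ of $\langle y\rangle_{\ari}$ in the two-variable case), which is exactly the coefficient of $\xi_{\alpha,i}$ in $\Xi_*$ as defined just above Lemma~\ref{lem: ST}. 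This matching is, in turn, precisely the identity between $\sum_{N\geq 0,\ N \equiv -1-i-c \bmod \ell} \langle \alpha(\theta)\cdot x\rangle_N$ and $\langle \alpha(\theta)\cdot x, q^{i+c}/(q^\ell-1)\rangle$ built into the definition of the two-variable diamond bracket.

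The main obstacle is purely bookkeeping: keeping straight the cyclic indexing of the $\ell$ components, the assorted Frobenius twists, and the digit weighting by the $y_c$, together with the degenerate cases $x = 0$ (for which $h_{x,c} = 1$, $\Xi_{x,c}^{\#} = 0$, and the factor $h_y$ carries the whole divisor) and $y \in \ZZ$ (for which $h_y$ is trivial and only $h_x$ contributes). Purity, uniformizability, and the fact that $\eM(*)$ is projective of rank one over $O_{\bK_{\nfk,\ell}}$ with the required Galois-equivariance are all built into \cite[Theorem~B.2.3]{BCPW22}, so they follow automatically from the verified divisor data.
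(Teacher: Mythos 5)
Your proposal is correct and follows essentially the same route as the paper: the paper constructs $\eM(*)=\eM_{(W_*,h_*)}$, establishes the divisor identity $\divv(h_*) = -(q^\ell-1)\ell\,\mathrm{wt}_0\cdot\infty_{X_\nfk}+\Xi_*^{\#}+W_*^{(\ell)}-W_*$ in the preceding proposition, and then deduces the lemma directly from \cite[Theorem~B.2.3]{BCPW22}, exactly as you do. Your identification of the CM type (matching $\xi_\alpha^{(-i)}$ in $\Xi_*^{\#}$ with $\xi_{\alpha,i}\in J_{\eK_{\nfk,\ell}}$ via the component identifications and the congruence condition in the definition of the two-variable diamond bracket) is the same bookkeeping implicit in the paper's construction of $\Xi_{*,c}^{\#}$ from $\Xi_{*,c,(i)}$.
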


In particular, Theorem~\ref{thm: GCFn} shows that:

\begin{lemma}\label{lem: psi*}
Keep the notation from Lemma~\ref{lem: M(*)}.
Let $\OO_\nfk \assign  \TT \otimes_{\FF_{q}[t]} O_\nfk $.
The infinite product
\[
\psi_{*} \assign  \prod_{j=1}^\infty \left(h_*^{-1}\right)^{(\ell j)} \quad \text{ lies in }\quad  \OO_{\nfk}^\times,
\]
and
\[
\tilde{\psi}_* \assign (\psi_*,\psi_*^{(-1)},...,\psi_*^{(1-\ell)})
\in H_{\rm Betti}(\eM(*)) \subset \MM(*) \assign \TT \otimes_{\bar{k}[t]} \eM(*).
\]
\end{lemma}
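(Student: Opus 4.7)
The proof splits naturally into showing (i) the product $\psi_*$ converges to a unit in $\OO_\nfk$, and (ii) the resulting tuple $\tilde\psi_*$ lies in $\MM(*)^\sigma = H_{\rm Betti}(\eM(*))$.

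My plan for (i) is first to establish $(h_*^{-1})^{(\ell j)} \to 1$ in the Gauss norm on $\OO_\nfk$ as $j \to \infty$. When $* = x$, Theorem~\ref{thm: GCFn} gives $g_x = 1 + \sum c_{ij} t^i z^j$ with $|c_{ij}|_\infty < 1$, so $h_{x,c}^{-1} - 1$ has Gauss norm strictly less than $1$, and its $(\ell j)$-th Frobenius twist raises all coefficients to the $q^{\ell j}$-power, driving the Gauss norm to $0$. When $* = y$, each linear factor of $h_{\ari,c}$ has the form $1 - t^{q^i}/\theta^{q^c}$ or $1 - t^{q^i}/\theta^{q^{c-\ell}}$; the $(\ell j)$-th twist replaces $\theta^{q^c}$ by $\theta^{q^{c+\ell j}}$, whose reciprocal has absolute value $q^{-q^{c+\ell j}} \to 0$. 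The two-variable case combines both estimates. Completeness of $\OO_\nfk$ then yields the existence of $\psi_*$. For invertibility, each factor $(h_*^{-1})^{(\ell j)}$ is a unit in $\OO_\nfk$ because its divisor on $\bX_\nfk$ is supported at infinity and at Frobenius twists $\xi_\alpha^{(n)}$ of the CM torsion points, whose $t$-coordinates $\theta^{q^n}$ all satisfy $|\theta^{q^n}|_\infty > 1$ and hence lie outside the Tate disc $\{|t|_\infty \leq 1\}$. A convergent product of units tending to $1$ is itself a unit, so $\psi_* \in \OO_\nfk^\times$.

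For (ii), the $\sigma$-invariance of $\tilde\psi_*$ will follow from the functional equation $\psi_* = h_* \cdot \psi_*^{(-\ell)}$, which is verified by reindexing:
\[
\psi_*^{(-\ell)} = \prod_{j=1}^{\infty}(h_*^{-1})^{(\ell(j-1))} = \prod_{j'=0}^{\infty}(h_*^{-1})^{(\ell j')} = h_*^{-1} \cdot \psi_*.
\]
Substituting $m_{(i)} = \psi_*^{(-i)}$ into the defining $\sigma$-action on $\eM(*)$ gives $\sigma \tilde\psi_* = \tilde\psi_*$ immediately. To confirm that $\tilde\psi_* \in \MM(*)$, I still need to check $\psi_*^{(-i)} \in \TT \otimes_{\ok[t]} \eM_{(W_*, h_*), (i)}$ for each $i$. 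Telescoping the divisor identity $\divv(h_*^{-1}) = (q^\ell - 1)\ell \wt_0^*(\cdot) \infty_{X_\nfk} - \Xi_*^\# - W_*^{(\ell)} + W_*$ over the partial products $\psi_{*, N} = \prod_{j=1}^N(h_*^{-1})^{(\ell j)}$ produces
\[
\divv(\psi_{*, N}) = N(q^\ell - 1)\ell \wt_0^*(\cdot) \infty_{X_\nfk} + W_*^{(\ell)} - W_*^{((N+1)\ell)} - \sum_{j=1}^N \Xi_*^{\#(\ell j)}.
\]
Since $W_*^{((N+1)\ell)}$ and every $\Xi_*^{\#(\ell j)}$ with $j \geq 1$ are supported at $\xi_\alpha^{(n)}$ with $n \geq 1$, their supports lie outside the Tate disc and become invisible in $\OO_\nfk$ as $N \to \infty$; the effective zero divisor of $\psi_*$ inside the Tate open of $\bU_{\nfk, \ell, (0)}$ is therefore exactly $W_*^{(\ell)}$, which matches the defining line bundle of $\eM_{(W_*, h_*), (0)}$. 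The cases $1 \leq i \leq \ell - 1$ follow by applying the $q^{-i}$-power twist to the same calculation, with the correction divisor $\sum_{j=1}^i \Xi_{*,(j)}^{(j-i)}$ absorbing the components of $\Xi_*^{\#}$ that re-enter the Tate disc after the twist.

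The main obstacle, I expect, will be making this last divisor comparison rigorous: the formal divisor of $\psi_*$ is an infinite sum mixing zeros and poles across all Frobenius twists, and one must carefully verify that the pieces supported outside the Tate disc are genuinely invisible to elements of $\OO_\nfk$. I expect this to follow from the same Gauss-norm estimates used for convergence, combined with an explicit choice of $\ok[t]$-basis for the finitely generated projective module $\eM_{(W_*, h_*), (i)}$ that can be exhibited locally on the Tate disc.
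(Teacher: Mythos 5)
Your handling of convergence, of the unit property of each factor $(h_*^{-1})^{(\ell j)}$, and of the functional equation $\psi_*=h_*\,\psi_*^{(-\ell)}$ giving $\sigma$-invariance is fine and essentially the paper's argument. The gap is in the step where you pass from $\psi_*^{(-i)}\in\OO_\nfk$ to $\psi_*^{(-i)}\in\MM_{(W_*,h_*),(i)}=\TT\otimes_{\ok[t]}\eM_{(W_*,h_*),(i)}$. Your divisor-matching argument fails as stated: every point in the support of $W_*^{(\ell)}$ (and of $W_*^{(\ell-i)}$, of the $\Xi_*^{\#(\ell j)}$, and of all their twists) lies over $t=\theta^{q^n}$ for some $n\in\ZZ$, and $|\theta^{q^n}|_\infty=q^{q^n}>1$ for \emph{every} $n$, so none of these points ever lies in the affinoid $|t|_\infty\le 1$ on which $\psi_*$ lives. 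In particular the ``effective zero divisor of $\psi_*$ inside the Tate open'' is empty --- $\psi_*$ is a unit of $\OO_\nfk$, as you yourself prove in part (i) --- and it is certainly not $W_*^{(\ell)}$; likewise nothing ``re-enters the Tate disc'' after the $q^{-i}$-twist. More fundamentally, a vanishing condition imposed at points where the rigid-analytic function is not even defined cannot by itself certify membership in the algebraic tensor product $\TT\otimes_{\ok[t]}\Gamma\big(\bU_{\nfk,\ell,(0)},\Ocal_{\bX_{\nfk,\ell,(0)}}(-W_*^{(\ell)})\big)$.

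What is actually needed --- and what the paper supplies --- is that the inclusion $\eM_{(W_*,h_*),(0)}\subset O_{\bK_\nfk}$ becomes an \emph{equality} after tensoring with $\TT$: since $W_*^{(\ell)}$ is supported over $\{t=\theta^{q^i}\mid i\in\NN\}$, one can write $1=f_1m_1+\cdots+f_\ell m_\ell$ with $m_j\in\eM_{(W_*,h_*),(0)}$ and $f_j\in\bK_\nfk$ having poles only over those points, hence $f_j\in\OO_\nfk$; therefore $\MM_{(W_*,h_*),(0)}=\OO_\nfk$, and membership of $\psi_*$ is then automatic from part (i). For $1\le i<\ell$ one simply discards the effective correction divisor $\sum_{j=1}^{i}\Xi_{*,(j)}^{(j-i)}$ (it only enlarges the module) and runs the same argument for $W_*^{(\ell-i)}$, obtaining $\MM_{(W_*,h_*),(i)}\supset\OO_\nfk\ni\psi_*^{(-i)}$. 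Your closing suggestion about exhibiting an explicit $\ok[t]$-basis of $\eM_{(W_*,h_*),(i)}$ on the disc points in this direction, but as written the proposal replaces the one genuinely needed input (triviality of the twisting divisor after restriction to the affinoid, proved by such a partition-of-unity identity) with an incorrect divisor comparison, so the membership step remains unproved.
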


\begin{proof}
The proof is adapted from \cite[Lemma 6.4.3]{ABP}.
We include the details for completeness.

The support of $W^{(\ell)}_*$ in $\bX_\nfk$ is contained in the pre-image of $\{t=\theta^{q^i}\mid i \in \NN\}$.
Thus when identifying $O_{\bK_{\nfk,\ell},(0)}$ with $O_{\bK_\nfk}$,
there exist $f_{1},...,f_{\ell} \in \bK_\nfk$ (the function field of $\bX_\nfk$) with possible poles lying over $\{t=\theta^{q^i}\mid i \in \NN \}$ and $m_1,...,m_\ell \in \eM_{(W_*,h_*),(0)}$ such that 
$$
f_{1} m_1 + \cdots + f_{\ell} m_\ell = 1 \in O_{\bK_\nfk}.
$$
Then the restriction on the possible poles of $f_{1},...,f_{\ell}$ forces that $f_1,...,f_\ell \in \OO_{\nfk}$, which means that $1 \in \MM_{(W_*,h_*),(0)} \assign \TT\otimes_{\bar{k}[t]} \eM_{(W_*,h_*),(0)}$ and
$$
\MM_{(W_*,h_*),(0)} = \OO_\nfk.
$$
Moreover, by Remark~\ref{rem: g-twist} we are able to show that $h_*^{(\ell j)}$ lies in $\OO_\nfk^\times$ for all $j \in \NN$,
and
the condition on the coefficients of $g_x$ in Theorem~\ref{thm: GCFn} ensures that the infinite product $\psi_*$ lies in $\OO_\nfk^\times$.

To prove the second statement,
observe that when identifying $\bK_{\nfk,\ell,(i)}$ with $\bK_{\nfk}$ for $1\leq i < \ell$,
one has that
\[
\eM_{(W_*,h_*),(i)} \supset 
\Gamma\big(\bU_{\nfk}, \Ocal_{\bX_{\nfk}}(-W_*^{(\ell-i)})\big),
\]
whence
\[
\MM_{(W_*,h_*),(i)} \assign \TT\otimes_{\bar{k}[t]} \eM_{(W_*,h_*),(i)}  \ \supset \TT\otimes_{\bar{k}[t]} \Gamma\big(\bU_{\nfk}, \Ocal_{\bX_{\nfk}}(-W_*^{(\ell-i)})\big) = \OO_\nfk \ \ni \psi^{(-i)}.
\]

In particular,
$(\psi_*,\psi_{*}^{(-1)},...,\psi_*^{(1-\ell)}) \in \MM(*)$.
Finally, it is straightforward that
$$
\sigma\cdot 
(\psi_{*},\psi_{*}^{(-1)},...,\psi_*^{(1-\ell)})
= (\psi_{*}^{(-\ell)} h_*, \psi_{*}^{(-1)},...,\psi_*^{(1-\ell)})
= (\psi_{*},\psi_{*}^{(-1)},...,\psi_*^{(1-\ell)}).
$$
Therefore $(\psi_{*},\psi_{*}^{(-1)},...,\psi_*^{(1-\ell)}) \in H_{\text{Betti}}(\eM(*))$.
\end{proof}

\begin{remark}\label{rem: EV-psi}
Identifying $O_{\bK_{\nfk,\ell},(0)}$ with $O_{\bK_\nfk}$,
the above proof actually shows that 
\[
\psi_* \in 
\TT^\dagger \otimes_{\bar{k}[t]} O_{\bK_{\nfk,\ell},(0)} \rassign \OO_{\nfk,\ell,(0)}^\dagger.
\]
This enables us to evaluate $\psi_*$ at $\xi_{1,0} = (\theta, \lambda_\nfk) \in \bU_{\eK_{\nfk,\ell},(0)}$:
\end{remark}

\begin{proposition}\label{prop: key-equality}
Let $\nfk \in \eA_+$ and $\ell \in \NN$.
Given nonzero $x \in \frac{1}{\nfk(\theta)}A$
with $|x|_\infty <1$ and $y = q^c/(q^\ell-1)$
for an integer $c$ with $0\leq c <\ell$, we have that
\[
\psi_{(x,y)}(\xi_{1,0})
= \prod_{j=0}^\infty \left(\prod_{\subfrac{a \in A}{\deg a = c+\ell j}}\Big(1+\frac{x}{a}\Big)^{1-q^\ell}\right)
\]
and
\[
\psi_{y}(\xi_{1,0})
=
\prod_{j=0}^\infty\left(
\prod_{i=0}^{c-1}(1-\frac{\theta^{q^i}}{\theta^{q^{c+\ell j}}}) \cdot \prod_{i=c}^{\ell-1}(1-\frac{\theta^{q^i}}{\theta^{q^{c+\ell+\ell j}}})\right)^{-1}.
\]
\end{proposition}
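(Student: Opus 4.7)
The plan is to establish both identities by direct computation, unwinding the infinite products defining $\psi_y$ and $\psi_{(x,y)}$ using the explicit descriptions of $h_{\ari,c}$ and $h_{x,c}$ from Section~\ref{sec: soliton}, and then reducing the two-variable case to an evaluation formula for the soliton $g_x$.

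First I would handle the arithmetic case. Since $y = q^c/(q^\ell-1)$ satisfies $\langle y\rangle_{\ari} = q^c/(q^\ell-1)$, the only nonzero digit in the base-$q$ expansion is $y_c = 1$, so $h_y = h_{\ari,c}$. The $q^{\ell j}$-th Frobenius twist fixes the variable $t$ and raises each coefficient $-\theta^{-q^m}$ to the $q^{\ell j}$ power, yielding
\[
h_{\ari,c}^{(\ell j)}(t) = \prod_{i=0}^{c-1}\bigl(1-t^{q^i}/\theta^{q^{c-\ell+\ell j}}\bigr)\cdot \prod_{i=c}^{\ell-1}\bigl(1-t^{q^i}/\theta^{q^{c+\ell j}}\bigr).
\]
Evaluating at $\xi_{1,0} = (\theta,\lambda_\nfk)$ substitutes $t=\theta$; taking the product over $j \geq 1$, reindexing $j \mapsto j+1$ in the first factor (which for $j=1$ contributes $\theta^{q^c}$ in the denominator, i.e.\ the $j=0$ term of the desired first product), and inverting gives the stated formula.

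For the two-variable case, since $h_{(x,y)} = h_{x,c} = (g_x^{(1+c-\ell)})^{q^\ell-1}$ we have $h_{x,c}^{(\ell j)} = (g_x^{(1+c-\ell+\ell j)})^{q^\ell-1}$, so
\[
\psi_{(x,y)}(\xi_{1,0}) = \prod_{j=1}^\infty \bigl(g_x^{(1+c-\ell+\ell j)}(\xi_{1,0})\bigr)^{1-q^\ell} = \prod_{j=0}^\infty \bigl(g_x^{(c+1+\ell j)}(\xi_{1,0})\bigr)^{1-q^\ell}.
\]
Matching with the target reduces the proposition to the evaluation formula
\[
g_x^{(n)}(\xi_{1,0}) = \prod_{\substack{a \in A_+ \\ \deg a = n-1}}(1 + x/a), \qquad n \geq 1. \tag{$\ast$}
\]

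The main obstacle will be establishing $(\ast)$. The strategy here is to compare both sides as functions of $x \in \tfrac{1}{\nfk(\theta)}A$ with $|x|_\infty < 1$. Theorem~\ref{thm: GCFn} pins down the divisor of $g_x$, and the constant term of $g_x$ is normalized to $1$; both sides of $(\ast)$ equal $1$ when $x = 0$ (in which case $g_x = 1$). The right-hand side is a polynomial in $x$ of degree $q^{n-1}$ whose zeros are exactly $\{-a : a \in A_+, \deg a = n-1\}$. To match this with $g_x^{(n)}(\theta,\lambda_\nfk)$, I would invoke the product expansion of the soliton from ABP~\S 6.3.9 — concretely, the fact that $\lambda_\nfk = \exp_C(\tilde\pi/\nfk(\theta))$ and $C_a(\theta,\lambda_\nfk) = \exp_C(a(\theta)\tilde\pi/\nfk(\theta))$ intertwine the zero locus $\{\xi_\alpha^{(N)} : \langle\alpha(\theta) x\rangle_N = 1\}$ of $g_x$ with the indexing of monic $a \in A$ by degree. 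Reindexing the product $\prod_{n=0}^\infty$ in the statement of the proposition by $\deg a$ then recovers $\Pi_{\geo}(x,y)^{q^\ell-1}$ as expected from Section~\ref{subsec: PD-GD}, providing an internal consistency check.
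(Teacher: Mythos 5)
Your overall route coincides with the paper's: for $y=q^c/(q^\ell-1)$ you correctly observe $h_y=h_{\ari,c}$ and $h_{(x,y)}=h_{x,c}=(g_x^{(1+c-\ell)})^{q^\ell-1}$, the arithmetic identity is the same direct twist-and-shift computation the paper does, and the two-variable identity reduces, exactly as in the paper, to the evaluation formula $(\ast)$: $g_x^{(N+1)}(\theta,\lambda_\nfk)=\prod_{a\in A_+,\,\deg a=N}(1+x/a)$. At that point the paper simply invokes \cite[6.3.7]{ABP}, where this identity is established from the explicit analytic construction of the soliton (its expression through $\exp_{\Ccal}$ and the Carlitz torsion $\lambda_\nfk$).

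The gap is in your proposed derivation of $(\ast)$. You want to get it from Theorem~\ref{thm: GCFn} (which is ABP~6.3.9: existence of $g_x$, its divisor, and the coefficient bounds) by comparing the two sides ``as functions of $x$'', matching degrees and zero loci. This comparison has no traction: $g_x$ is only defined for $x$ in the finite set $\frac{1}{\nfk(\theta)}A/A$, so $x\mapsto g_x^{(n)}(\theta,\lambda_\nfk)$ is a function on finitely many points and cannot be identified with the polynomial $\prod_{\deg a=n-1}(1+x/a)$ by degree-and-zero considerations. Worse, on the allowed range $|x|_\infty<1$ there are no zeros to match at all: the zero divisor of $g_x^{(n)}$ is supported at the points $\xi_\alpha^{(n+N)}$ with $n+N\geq 1$, whose $t$-coordinates are $\theta^{q^{n+N}}\neq\theta$, so $g_x^{(n)}(\xi_{1,0})$ never vanishes, while on the right-hand side $|a|_\infty\geq 1>|x|_\infty$ excludes $x=-a$. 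The divisor statement plus the normalization in Theorem~\ref{thm: GCFn} determine $g_x$ uniquely, but they do not by themselves produce its values at the Frobenius twists of $\xi_{1,0}$; that is precisely the nontrivial content of ABP~6.3.7, not of 6.3.9. If you replace this part of your sketch by a citation of \cite[6.3.7]{ABP} (or reproduce its proof via the explicit soliton formula), the remaining bookkeeping in your proposal is correct and agrees with the paper's proof.
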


\begin{proof}
Given nonzero $x \in \frac{1}{\nfk(\theta)}A$ with $|x|_\infty <1$ and a non-negative integer $N$,
from \cite[6.3.7]{ABP} one has that
\[
g_x^{(N+1)}(\theta,\lambda_\nfk) = \prod_{\subfrac{a \in A_{\scaleto{+}{4pt}}}{\deg a = N}}\left(1+\frac{x}{a}\right).
\]
Thus for $0\leq c < \ell$ and $j \in \NN$,
we get
\[
(h_{x,c}^{-1})^{(\ell j)}(\xi_{1,0}) = g_x^{(c+1-\ell+\ell j)}(\theta,\lambda_\nfk)^{1-q^\ell} = \prod_{\subfrac{a \in A_{\scaleto{+}{4pt}}}{\deg a = c+\ell (j-1)}}\left(1+\frac{x}{a}\right)^{1-q^\ell}.
\]
Moreover, from the definition of $h_{\ari,c}$ it is straightforward that
\[
(h_{\ari,c}^{-1})^{(\ell j)}(\xi_{1,0})
= \left(\prod_{i=0}^{c-1}(1-\frac{\theta^{q^i}}{\theta^{c-\ell+\ell j}}) \prod_{i=c}^{\ell-1}(1-\frac{\theta^{q^i}}{\theta^{q^{c+\ell j}}})\right)^{-1}.
\]
Therefore the result holds.
\end{proof}

\begin{remark}\label{rem: psi_x}
Using the notation in Proposition~\ref{prop: key-equality},
we get in particular that
\[
\psi_x(\xi_{1,0}) = \prod_{c=0}^{\ell-1}\psi_{(x,\frac{q^c}{q^\ell-1})}(\xi_{1,0})
= \prod_{i=0}^\infty \prod_{\subfrac{a \in A_{\scaleto{+}{4pt}}}{\deg a = i}}\left(1+\frac{x}{a}\right)^{1-q^\ell} = \prod_{a \in A_{\scaleto{+}{4pt}}}\left(1+\frac{x}{a}\right)^{1-q^\ell} .
\]
\end{remark}

\subsection{Algebraic relations among special gamma values}\label{sec: 2var-Gamma}

Let $\ZZ_p$ be the ring of $p$-adic integers (i.e.\ the completion of $\ZZ_{(p)}$).
Put $-A_+\assign \{-a\mid a \in A_+\}$ and
\[
A_{+,i}\assign \{a \in A_+\mid \deg a = i\} \quad \text{for each non-negative integer $i$.}
\]
Recall the following {\it factorial functions} (see \cite{Goss88} and \cite{Thakur91}): for $x \in \CC_\infty \setminus -A_+$ and $y = \sum_{i=0}^\infty y_i q^i \in \ZZ_p$ with $0\leq y_i<q$ for each $i \in \ZZ_{\geq 0}$,
\begin{align*}
\Pi_{\text{ari}}(y) & \assign 
\prod_{i=1}^\infty\left(\prod_{j=0}^{i-1}(1-\frac{\theta^{q^j}}{\theta^{q^{i}}})
\right)^{y_i}, &
\Pi_{\text{geo}}(x)
& \assign
\prod_{a \in A_{\scaleto{+}{4pt}}}\left(1+\frac{x}{a}\right)^{-1}, \\
\Pi_{\geo}(x,y)
&\assign 
\prod_{i=0}^{\infty}
\left(
\prod_{a \in A_{{\scaleto{+}{4pt}},i}} \Big(1+\frac{x}{a}\Big)^{-y_i}\right),  &
\Pi(x,y)
&\assign \frac{\Pi_{\geo}(x,y)}{\Pi_{\text{ari}}(y)}.
\end{align*}
In particular, for $x \in \CC_\infty \setminus -A_+$ one has that
\[
\Pi_{\geo}(x,\frac{1}{1-q}) = \Pi_{\text{geo}}(x)
\quad \text{and}\quad 
\Pi(x,\frac{1}{1-q}) = \frac{\Pi_{\text{geo}}(x)}{\Pi_{\text{ari}}(1/(1-q))}.
\]
The corresponding {\it gamma functions} are defined as follows:
$x \in \CC_\infty \setminus (-A_+ \cup \{0\})$ and $y = \sum_{i=0}^\infty y_i q^i \in \ZZ_p$ with $0\leq y_i<q$ for $i \in \ZZ_{\geq 0}$,
\begin{align*}
\Gamma_{\text{ari}}(y) &\assign
\Pi_{\text{ari}}(y-1),
&
\Gamma_{\text{geo}}(x)
&\assign \frac{1}{x} \cdot \Pi_{\text{geo}}(x), \\
\Gamma_{\geo}(x,y)
&\assign
\frac{1}{x}\cdot \Pi_{\geo}(x,y-1), &
\Gamma(x,y)
& \assign 
\frac{1}{x} \cdot \Pi(x,y-1) = \frac{\Gamma_{\geo}(x,y)}{\Gamma_{\text{ari}}(y)}.
\end{align*}
In particular, for $x \in \CC_\infty \setminus (-A_+ \cup \{0\})$ one has that
\[
\Gamma_{\geo}(x,1-\frac{1}{q-1}) = \Gamma_{\text{geo}}(x)
\quad \text{and}\quad 
\Gamma(x,1-\frac{1}{q-1}) = \Gamma_{\text{geo}}(x)\Big/ \Gamma_{\text{ari}}(1-\frac{1}{q-1}).
\]

\begin{remark}\label{rem: Goss-gamma}
The original {\it Goss gamma function} is equal to (see \cite[Theorem~4.2.6]{Goss88}):
\[
\Gamma_{\text{Goss}}(x,y)\assign \Pi_{\geo}(-x,-y)\cdot \Gamma_{\ari}(y).
\]
From the reflection formula (cf.\ \cite[Theorem~1.4 and Lemma~2.3]{Thakur91}):
\[
\Gamma_{\ari}(y) \Gamma_{\ari}(1-y) =\prod_{i=1}^{\infty} \biggl(
1-\frac{\theta}{\theta^{q^{i}}} \biggr)^{-1}
= (-\theta)^{\frac{-q}{q-1}}\cdot \tilde{\pi},
\]
we get that
\[
\Gamma_{\text{Goss}}(x,y) = (-\theta)^{\frac{-q}{q-1}}\cdot \tilde{\pi} \cdot  \Pi(-x,-y)
= (-\theta)^{\frac{-q}{q-1}}\cdot \tilde{\pi} \cdot (-x)\cdot \Gamma(-x,1-y).
\]
\end{remark}

We list the monomial relations among gamma values in the following (see \cite{Thakur91} for arithmetic gamma values and \cite{Goss88} for two-variable gamma values):

\begin{proposition}\label{prop: AR-Gamma}
${}$
\begin{itemize}
    \item[(1)]
    For $x \in \CC_\infty\setminus (-A_+\cup\{0\})$ and $y \in \ZZ_p$,
    \[
    \Gamma_{\ari}(y)\Gamma_{\ari}(1-y) \sim \tilde{\pi}
    \quad \text{and}\quad
    \Gamma(x,y)\cdot\Gamma(x,1-y) \sim \frac{\Gamma_{\geo}(x)^{q-1}}{\tilde{\pi}}.
    \]
    \item[(2)]
    For $x \in k\setminus (-A_+\cup\{0\})$, $y \in \ZZ_p$, $a \in A \setminus (-A_+\cup \{0\})$ and $N \in \NN$,
    \[
    \Gamma_{\geo}(a,y) = \Gamma(a,y)\cdot \Gamma_{\ari}(y) \sim 
    \Gamma_{\ari}(N) \sim 
    \Gamma(x,N) \sim 1.
    \]
    \item[(3)] Given $x \in k \setminus A$, $y \in \ZZ_{(p)}\setminus \ZZ$, $a \in A \setminus (-A_+\cup \{0\})$ and $N \in \NN$,
    \[
    \Gamma(x+a,y+N)\sim \Gamma(x,y)
    \quad\text{ and }\quad
    \Gamma_{\ari}(y+N) \sim \Gamma_{\ari}(y).
    \]
    \item[(4)]
    Given $x \in k\setminus (-A_+\cup\{0\})$ and $y \in \frac{1}{q^\ell-1}\ZZ \setminus \ZZ$,
    write $\langle -y\rangle_{\ari} = \sum_{i=0}^{\ell-1} y_i q^i/(q^\ell-1)$ where $y_0,...,y_{\ell-1}$ are integers with $0\leq y_0,...,y_{\ell-1}<q$.
    Then
    \[
    \Gamma(x,y) \sim \prod_{i=0}^{\ell-1} \Gamma(x,1-\frac{q^i}{q^\ell-1})^{y_i}
    \quad \text{ and } \quad 
    \Gamma_{\ari}(y) \sim \prod_{i=0}^{\ell-1} \Gamma_{\ari}(1-\frac{q^i}{q^\ell-1})^{y_i}.
    \]
        \item[(5)]
    Given $x \in k\setminus A$ and $y \in \frac{1}{q^\ell-1}\ZZ \setminus \ZZ$,
    \[
    \prod_{c=0}^{\ell-1}\prod_{\epsilon \in \FF_q^\times}\Gamma(\epsilon x,q^c y) \sim 1
    \quad \text{ and } \quad
    \prod_{c=0}^{\ell-1}\Gamma_{\ari}(q^c y) \sim \tilde{\pi}^{\ell \text{\rm wt}^{\color{blue}\ari}(y)/(q-1)}.
    \]
    \item[(6)]
    For $x \in k\setminus A$, $y\in \ZZ_p$, and
    $\nfk \in A_+$,
    \[
    \prod_{\subfrac{a \in A}{\deg a < \deg \nfk}}\Gamma(\frac{x+a}{\nfk},y)
    \sim \Gamma(x,|\nfk|_\infty y).
    \]
\end{itemize}
\end{proposition}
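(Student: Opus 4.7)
The plan is to verify each of the six monomial relations by direct manipulation of the infinite products defining the three gamma functions, combined with the Carlitz product expansion
\[
\tilde{\pi} \sim \prod_{i=1}^\infty \Bigl(1-\frac{\theta}{\theta^{q^i}}\Bigr)^{-1}
\]
and the reflection and multiplication formulas for $\Gamma_{\ari}$ and $\Gamma_{\geo}$ established by Thakur~\cite{Thakur91} and Goss~\cite{Goss88}. Parts~(2) and (3) are essentially definitional: when $a \in A\setminus(-A_+\cup\{0\})$, the infinite product $\Pi_{\geo}(a,y)$ stabilizes to an algebraic number since its partial products are algebraic of bounded height, and when $y = N \in \NN$ the expansion $N = \sum_{i=0}^{r}N_i q^i$ is finite, so $\Pi_{\ari}(N)$ and $\Pi(x,N)$ reduce to finite algebraic products. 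Periodicity under $y \mapsto y+N$ and $x \mapsto x+a$ follows because only finitely many factors of each infinite product are affected. Part~(4) is the additivity of the gamma functions in the $q$-adic digits of $y$, which is immediate from the defining products together with the uniqueness of the expansion $\langle -y\rangle_{\ari} = \sum_{i=0}^{\ell-1} y_i q^i/(q^\ell-1)$.

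Part~(6) is the two-variable multiplication formula, whose combinatorial core is the identity in Proposition~\ref{prop: db-relation}~(3). The substitution $b = \nfk b' + a$ rewrites the inner product $\prod_{\deg a<\deg \nfk}(1+(x+a)/(\nfk b'))$ at each degree $i$ into $\prod_{b \in A_{+,i+\deg \nfk}}(1+x/b)$ modulo finitely many algebraic corrections; summing against the exponents $y_i$ and tracking the $1/x$ prefactors yields the result. Part~(5) combines the reflection $\Gamma_{\ari}(y)\Gamma_{\ari}(1-y) \sim \tilde{\pi}$ with Lemma~\ref{lem: ari-relation}~(2), which gives $\sum_{c=0}^{\ell-1}\langle q^c y\rangle_{\ari} = \ell\cdot \wt_0^{\ari}(y)/(q-1)$ and so controls the exponent of $\tilde{\pi}$ on the right; Thakur's identity $\Gamma_{\ari}(1-a/(q-1)) \sim \tilde{\pi}^{a/(q-1)}$ for $0\leq a<q-1$ then pins down the individual $\bar{k}^\times$-classes. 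The two-variable assertion $\prod_c\prod_\epsilon \Gamma(\epsilon x,q^c y) \sim 1$ follows analogously from the reflection in Remark~\ref{rem: db-relation}~(1) together with the arithmetic part, used to cancel the $\Pi_{\ari}$ denominators.

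The main obstacle, and the only substantive calculation, is the two-variable reflection in (1):
\[
\Gamma(x,y)\Gamma(x,1-y) \sim \frac{\Gamma_{\geo}(x)^{q-1}}{\tilde{\pi}}.
\]
I would start from
\[
\Gamma(x,y)\Gamma(x,1-y) = \frac{1}{x^2}\cdot \frac{\Pi_{\geo}(x,y-1)\Pi_{\geo}(x,-y)}{\Pi_{\ari}(y-1)\Pi_{\ari}(-y)},
\]
apply the arithmetic reflection to simplify the denominator to $\sim \tilde{\pi}$, and reduce to
\[
\Pi_{\geo}(x,y-1)\Pi_{\geo}(x,-y) \sim \Pi_{\geo}(x)^{q-1} = \bigl(x\,\Gamma_{\geo}(x)\bigr)^{q-1}.
\]
The key observation is that $\langle y-1\rangle_{\ari}+\langle -y\rangle_{\ari} = 1$ for $y \notin \ZZ$, which translates (after fixing a common denominator $q^\ell-1$) into digit sums $(y-1)_i+(-y)_i \equiv q-1$ at all but finitely many indices; the infinite product then collapses to $\prod_{a\in A_+}(1+x/a)^{-(q-1)}$ modulo an algebraic $\bar{k}^\times$-factor contributed by the finitely many exceptional low-index terms. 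The obstacle is entirely a matter of careful bookkeeping to ensure that these exceptional contributions land in $\bar{k}^\times$; this is precisely the kind of analysis carried out systematically by Thakur in \cite[Sections~7--8]{Thakur91}, and it adapts with minimal modification.
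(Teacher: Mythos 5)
The paper does not actually prove Proposition~\ref{prop: AR-Gamma}: the relations are quoted from Thakur \cite{Thakur91} (arithmetic case) and Goss \cite{Goss88} (two-variable case), so you are attempting a from-scratch verification. Your digit-by-digit manipulation of the defining products is indeed the standard route, and the sketches of (1)--(4) are essentially right in outline. But two steps fail as you describe them. The serious one is (6). After the substitution $b=\nfk b'+a$, the degree-$i$ block acquires the constant $c_i=\bigl(\prod_{b\in A_{+,i+\deg\nfk}}b\bigr)\big/\bigl(\nfk^{\,q^i|\nfk|_\infty}\prod_{b'\in A_{+,i}}(b')^{|\nfk|_\infty}\bigr)$, and this happens at \emph{every} degree $i$, not at finitely many: already for $\nfk=\theta$, $i=0$ one gets $c_0=1-\theta^{1-q}\neq 1$. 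So you are left with an infinite product $\prod_i c_i^{-w_i}$ ($w_i$ the digits of $y-1$), which is not algebraic term by term; it must be played off against the mismatch of the arithmetic factors, $\Gamma_{\ari}(y)^{|\nfk|_\infty}$ on the left versus $\Gamma_{\ari}(|\nfk|_\infty y)$ on the right, a mismatch your sketch never mentions and which is not $\sim 1$. That matching is precisely the content of Goss's multiplication theorem; ``finitely many algebraic corrections'' is false and the step does not close as written. A related gap occurs in (5): after the arithmetic part handles the $\Pi_{\ari}$ denominators, you still must show that $\prod_{\epsilon,c}\Pi_{\geo}(\epsilon x,q^cy-1)$ is the matching power of $\tilde{\pi}$ up to $\bar{k}^\times$. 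The bracket identity of Remark~\ref{rem: db-relation}~(1) only controls the exponent bookkeeping; it does not produce $\tilde{\pi}$. For that you need the geometric reflection formula $\prod_{\epsilon\in\FF_q^\times}\Gamma_{\geo}(\epsilon x)\sim\tilde{\pi}$, i.e.\ the fact that $\exp_{\Ccal}(\tilde{\pi}x)$ is a (nonzero, algebraic) Carlitz torsion point for $x\in k\setminus A$, as in \cite{Thakur91} or \cite[5.5.5]{ABP}; note you cannot invoke Remark~\ref{rem: AR-geoGamma} for this, since in the paper that remark is deduced \emph{from} Proposition~\ref{prop: AR-Gamma}.

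Two smaller points. In (2), ``the partial products are algebraic of bounded height'' is not an argument (bounded height alone gives nothing); the correct and immediate reason is that $b\mapsto b+a$ permutes $A_{+,i}$ for $i>\deg a$, so $\prod_{b\in A_{+,i}}(1+a/b)=1$ and the product is finite --- the same permutation argument, not ``only finitely many factors change,'' is what underlies the $x\mapsto x+a$ invariance in (3). In (1), your bookkeeping is cleaner than you expect (the digits of $y-1$ and $-y$ sum to exactly $q-1$ in every position, so there are no exceptional terms), but the $1/x$ normalizations do not cancel: the two sides differ by $x^{q-3}$ times an element of $\bar{k}^\times$. This is harmless for $x\in k$, which is all the paper ever uses, but it is not in $\bar{k}^\times$ for the transcendental $x\in\CC_\infty$ allowed in the statement, so you should either restrict to algebraic $x$ or carry that power explicitly. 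Given all this, the economical fix is to do what the paper does and import Goss's reflection and multiplication formulas (together with Thakur's $\Gamma_{\ari}(1-\tfrac{1}{q-1})\sim\tilde{\pi}^{1/(q-1)}$, which you already use), reserving the direct digit computations for (2)--(4).
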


\begin{remark}\label{rem: AR-geoGamma}
Since 
\[\Gamma_{\geo}(x) = \Gamma(x,1-\frac{1}{q-1})\cdot \Gamma_{\ari}(1-\frac{1}{q-1}), \quad \forall x \in \CC_\infty \setminus (-A_+\cup \{0\}),
\]
we have the following relations for geometric gamma values (see also \cite{ABP}):
\begin{itemize}
    \item[(1)]
    \[
    \Gamma_{\geo}(a) \sim 1, \quad \forall a \in A\setminus (-A_+\cup \{0\})
    \quad \text{ and } \quad 
    \Gamma_{\geo}(x+a) \sim \Gamma_{\geo}(x);
    \]
    \item[(2)]
    Given $x \in k\setminus A$ and $\nfk \in A_+$,
    \[
    \prod_{\epsilon \in \FF_q^\times} \Gamma_{\geo}(\epsilon x) \sim \tilde{\pi}
    \quad \text{ and } \quad 
    \prod_{\subfrac{a \in A}{\deg a < \deg \nfk}}
    \Gamma_{\geo}(\frac{x+a}{\nfk}) \sim \Gamma_{\geo}(x)\cdot \tilde{\pi}^{\frac{|\nfk|_\infty-1}{q-1}}.
    \]
\end{itemize}
\end{remark}

For each $x \in k_\infty$, put $\{x\} \in k_\infty$ to be the fractional part of $x$, i.e.\ $|\{x\}|_\infty <1$ and $x \equiv \{x\} \bmod A$.
We may view $\{\cdot \}$ as a function on $k_\infty/A$.
Let $\nfk \in \eA_+$ and $\ell \in \NN$.
The formulas for the evaluations of $\psi_{x},\psi_{\ari,c}, \psi_{x,c} \in \OO_{\nfk,\ell,(0)}^\dagger$ at $\xi_{1,0}$
in Proposition~\ref{prop: key-equality}
for every $x \in \frac{1}{\nfk(\theta)}A/A$ and $0\leq c < \ell$
give us the following key identities:

\begin{proposition}\label{prop: psi-factorial eq}
Let $\nfk \in \eA_+$ and $\ell \in \NN$.
Given $x \in \frac{1}{\nfk(\theta)}A/A$ and $y \in \frac{1}{q^\ell-1}\ZZ/\ZZ$,
we have that
\[
\psi_{x}(\xi_{1,0}) = \Pi_{\geo}(\{x\})^{q^\ell-1},
\quad 
\psi_{y}(\xi_{1,0})
=\Pi_{\text{\rm ari}}(-\langle y\rangle_{\ari})^{q^\ell-1},
\]
and
\[
\psi_{(x,y)}(\xi_{1,0}) = \Pi_{\geo}(\{x\},-\langle y\rangle_{\ari})^{q^\ell-1},
\quad 
\left(\frac{\psi_{(x,y)}(\xi_{1,0})}{ \psi_{y}(\xi_{1,0})}\right)
= \Pi(\{x\},-\langle y\rangle_{\ari})^{q^\ell-1}.
\]
\end{proposition}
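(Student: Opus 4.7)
The plan is to derive all four identities by combining Proposition~\ref{prop: key-equality} and Remark~\ref{rem: psi_x} with a digit-by-digit multiplicativity in the arithmetic variable, after establishing base cases at $y$ of the form $q^c/(q^\ell-1)$ with $0\leq c<\ell$. First, $\psi_x(\xi_{1,0}) = \Pi_{\geo}(\{x\})^{q^\ell-1}$ is immediate from Remark~\ref{rem: psi_x}, whose formula is literally $\prod_{a\in A_+}(1+\{x\}/a)^{1-q^\ell}$ (the case $x=0$ being trivial). Next, the $p$-adic expansion $-q^c/(q^\ell-1) = \sum_{j\geq 0} q^{c+\ell j}$ in $\ZZ_p$ shows that the base-$q$ digits of $-q^c/(q^\ell-1)$ are $1$ at positions $c+\ell j$ and $0$ elsewhere. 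Substituting into the digit-indexed product defining $\Pi_{\geo}$ yields
\[
\Pi_{\geo}\bigl(\{x\},-\tfrac{q^c}{q^\ell-1}\bigr)^{q^\ell-1} = \prod_{j=0}^\infty\prod_{a\in A_+,\,\deg a = c+\ell j}(1+\{x\}/a)^{1-q^\ell},
\]
which matches Proposition~\ref{prop: key-equality}'s formula for $\psi_{(x,q^c/(q^\ell-1))}(\xi_{1,0})$ and gives the $\psi_{(x,y)}$ base case.

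For the arithmetic base case $\psi_{q^c/(q^\ell-1)}(\xi_{1,0}) = \Pi_{\ari}(-q^c/(q^\ell-1))^{q^\ell-1}$ the match is not direct; one must relate the two infinite products via the characteristic-$p$ Frobenius identity $(1-u)^{q^\ell} = 1-u^{q^\ell}$. Setting $P_N \assign \prod_{i=0}^{N-1}(1-\theta^{q^i-q^N})$ (so $\Pi_{\ari}(-q^c/(q^\ell-1)) = \prod_{j\geq 0}P_{c+\ell j}$), this identity yields the telescoping relation
\[
P_{c+\ell m}^{q^\ell} = \frac{P_{c+\ell(m+1)}}{\prod_{i=0}^{\ell-1}\bigl(1-\theta^{q^i-q^{c+\ell(m+1)}}\bigr)}.
\]
Taking partial products $R_M \assign \prod_{m=0}^M P_{c+\ell m}$ and $Q_M \assign P_c\cdot \prod_{m=1}^M \prod_{i=0}^{\ell-1}(1-\theta^{q^i-q^{c+\ell m}})$, a direct manipulation gives $Q_{M+1} = R_{M+1}/R_M^{q^\ell}$. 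The non-Archimedean estimate $|P_N-1|_\infty \to 0$ ensures $R_M \to R_\infty \in \CC_\infty^\times$, hence $Q_\infty = R_\infty^{1-q^\ell}$. Comparing $Q_\infty$ with the expression for $\psi_{q^c/(q^\ell-1)}(\xi_{1,0})^{-1}$ supplied by Proposition~\ref{prop: key-equality} (a harmless boundary factor $\prod_{i=c}^{\ell-1}(1-\theta^{q^i-q^{c+\ell(M+1)}})$ tends to $1$) produces the desired identity.

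Finally, for a general $y$, write $\langle y\rangle_{\ari} = \sum_{c=0}^{\ell-1}y_c q^c/(q^\ell-1)$ with $0\leq y_c<q$. The factorizations $h_y = \prod_c h_{\ari,c}^{y_c}$ and $h_{(x,y)} = \prod_c h_{x,c}^{y_c}$ immediately give $\psi_y = \prod_c \psi_{\ari,c}^{y_c}$ and $\psi_{(x,y)} = \prod_c \psi_{(x,q^c/(q^\ell-1))}^{y_c}$. On the other hand, the expansion $-\langle y\rangle_{\ari} = \sum_{c,j\geq 0}y_c q^{c+\ell j}$ is the genuine base-$q$ expansion of $-\langle y\rangle_{\ari}$ with no carries (each digit $y_c<q$ sits at a distinct residue modulo $\ell$), so the digit-indexed products defining $\Pi_{\ari}$ and $\Pi_{\geo}(\{x\},\cdot)$ factor as $\prod_c$ of their $y_c$-th powers at the base points. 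Combining with the base-case identities yields the general formulas, and the final equality $\psi_{(x,y)}/\psi_y = \Pi(\{x\},-\langle y\rangle_{\ari})^{q^\ell-1}$ follows from $\Pi = \Pi_{\geo}/\Pi_{\ari}$. The main obstacle is the arithmetic base case: rearranging infinite products in $\CC_\infty$ requires the uniform convergence $|P_N-1|_\infty\to 0$, and the decisive algebraic input is the characteristic-$p$ Frobenius identity that produces the telescoping between consecutive $P_{c+\ell m}$'s.
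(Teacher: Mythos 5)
Your proof is correct and follows the route the paper intends: the proposition is presented there as a direct consequence of Proposition~\ref{prop: key-equality} together with the carry-free digit expansion $-\langle y\rangle_{\ari}=\sum_{c,j\ge 0}y_c q^{c+\ell j}$, which is exactly your reduction to the base cases $y=q^c/(q^\ell-1)$ and the digitwise factorization of $\Pi_{\ari}$ and $\Pi_{\geo}(\{x\},\cdot)$. The one step the paper leaves implicit, namely matching $\psi_{y}(\xi_{1,0})$ with $\Pi_{\ari}(-\langle y\rangle_{\ari})^{q^\ell-1}$, you supply correctly via the characteristic-$p$ identity $(1-u)^{q^\ell}=1-u^{q^\ell}$ and the telescoping relation $P_{c+\ell(m+1)}=P_{c+\ell m}^{q^\ell}\prod_{i=0}^{\ell-1}\bigl(1-\theta^{q^i-q^{c+\ell(m+1)}}\bigr)$, with the non-archimedean convergence $|P_N-1|_\infty\to 0$ legitimately justifying the regrouping of the infinite products.
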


Finally, define $\tilde{\Gamma}_{\ari}:\ZZ_{(p)}/\ZZ \rightarrow \CC_\infty^\times$,
$\tilde{\Gamma}_{\geo}:k/A\rightarrow \CC_\infty^\times$,
and
$\tilde{\Gamma}: (k/A)\times (\ZZ_{(p)}/\ZZ)\rightarrow \CC_\infty^\times$ as follows:
for $x \in k/A$ and $y \in \ZZ_{(p)}/\ZZ$,
\[
\tilde{\Gamma}_{\ari}(y) \assign \Gamma_{\ari}(1-\langle -y \rangle_{\ari}) ,
\quad 
\tilde{\Gamma}(x,y) \assign
\begin{cases}
\Gamma(\{x\},1-\langle -y\rangle_{\ari})  , & \text{ if $x \neq 0 \in k/A$;} \\
\tilde{\Gamma}_{\ari}(y)^{-1}, & \text{ otherwise,}
\end{cases}
\]
and
\[
\tilde{\Gamma}_{\geo}(x) \assign
\tilde{\Gamma}(\{x\},\frac{1}{1-q}) = \Gamma_{\ari}(1- \frac{1}{q-1})^{-1}\cdot  
\begin{cases}
\Gamma_{\geo}(\{x\}), &\text{ if $x \neq 0 \in k/A$;}\\
1,&\text{ otherwise.}
\end{cases}
\]
We may extend $\tilde{\Gamma}_{\ari}$, $\tilde{\Gamma}_{\geo}$, and $\tilde{\Gamma}$
to $\QQ$-linear homomorphisms from $\Acal^{\ari}_\QQ$, $\Acal^{\geo}_\QQ$, and $\Acal^{\cyc}_\QQ$ to $\CC_\infty^\times$, respectively, and their compositions with the quotient map $\CC_\infty^\times \rightarrow \CC_\infty^\times/\bar{k}^\times$ are denoted by $\hat{\Gamma}_{\ari}$, $\hat{\Gamma}_{\geo}$, and $\hat{\Gamma}$, respectively.
The above proposition
leads us to the following analogue of Deligne's theorem on gamma distributions (stated in \cite[Theorem~4.7]{Anderson82}):

\begin{theorem}\label{thm: Gamma dist}
\[
\hat{\Gamma}_{\text{\rm ari}}
= \Pscr^{\cyc}_{\nu_1} \circ \St^{\ari},
\quad 
\hat{\Gamma}_{\text{\rm geo}}
= \Pscr^{\cyc}_{\nu_1} \circ \St^{\geo},
\quad
\text{ and } \quad 
\hat{\Gamma}
= \Pscr^{\cyc}_{\nu_1} \circ \text{\rm St}.
\]
\end{theorem}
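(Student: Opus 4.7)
The plan is to assemble the three identities from (i) the decomposition of Stickelberger functions as $\varphi_{\eK_{\nfk,\ell},\Phi}$ (Lemma~\ref{lem: ST}), (ii) the period distribution characterization (Theorem~\ref{thm: PD}), and (iii) the explicit soliton construction together with the evaluation formulas of Section~\ref{sec: 2var-Gamma}.

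First, I would reduce the three identities to the two-variable statement. By Remark~\ref{rem: St-relations}(4) we have $\St(0,y) = -\St^{\ari}(y)$ and $\St(x,\frac{1}{1-q}) = \St^{\geo}(x)$; and directly from the definitions, $\hat{\Gamma}(0,y) = \hat{\Gamma}_{\ari}(y)^{-1}$ and $\hat{\Gamma}_{\geo}(x) = \hat{\Gamma}(x,\frac{1}{1-q})$. Hence it suffices to prove $\hat{\Gamma} = \Pscr^{\cyc}_{\nu_1}\circ \St$, and the arithmetic and geometric identities will follow.

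Second, fix $\nfk \in \eA_+$ and $\ell \in \NN$, and let $x \in \frac{1}{\nfk(\theta)}A/A$ and $y \in \frac{1}{q^\ell-1}\ZZ/\ZZ$. By Lemma~\ref{lem: ST}, $\St(x,y) = (q^\ell-1)^{-1}\varphi_{\eK_{\nfk,\ell},\Phi_{(x,y)}}$ with $\Phi_{(x,y)} = \Xi_{(x,-y)}-\Xi_{-y} \in I^0_{\eK_{\nfk,\ell}}$. Applying Theorem~\ref{thm: PD} together with the multiplicativity of the period symbol from Theorem~\ref{thm: PS}, I obtain in $\CC_\infty^\times/\bar{k}^\times$ the formula
\[
\Pscr^{\cyc}_{\nu_1}\bigl(\St(x,y)\bigr) = \left(\frac{\Pcal_{\eK_{\nfk,\ell}}(\xi_{1,0},\Xi_{(x,-y)})}{\Pcal_{\eK_{\nfk,\ell}}(\xi_{1,0},\Xi_{-y})}\right)^{1/(q^\ell-1)},
\]
where the $(q^\ell-1)$-th root is well-defined because $\CC_\infty^\times/\bar{k}^\times$ is torsion-free and divisible.

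Third, I would compute each numerator and denominator using the generalized soliton construction of Section~\ref{sec: soliton}. By Lemma~\ref{lem: M(*)}, $\eM(x,-y)$ and $\eM(-y)$ are CM dual $t$-motives with generalized CM types $(\eK_{\nfk,\ell},\Xi_{(x,-y)})$ and $(\eK_{\nfk,\ell},\Xi_{-y})$ over $\bar{k}$, and by Lemma~\ref{lem: psi*} the vectors $\tilde{\psi}_{(x,-y)}$, $\tilde{\psi}_{-y}$ furnish nonzero Betti cycles (in fact lying in $\MM(*)^\dagger$ by Remark~\ref{rem: EV-psi}). The central identification is
\[
\Pcal_{\eK_{\nfk,\ell}}\bigl(\xi_{1,0},\Xi_{*}\bigr) \sim \psi_*(\xi_{1,0}),
\]
which I would establish by extending the de~Rham pairing \eqref{E:deRham pairing} to $\MM(*)^\dagger$ using uniformizability (Theorem~\ref{thm: CM prop}(2)) and showing that $\omega_{\eM(*),\xi_{1,0}}$ corresponds, under the product decomposition $O_{\bK_{\nfk,\ell}} \cong \prod_{i} O_{\bK_{\nfk,\ell},(i)}$, to projection onto the zeroth component followed by evaluation at the point $\xi_{1,0} = (\theta,\lambda_\nfk)\in \bU_{\nfk}$; with this interpretation of the differential, pairing with $\tilde{\psi}_*$ extracts precisely the leading coordinate $\psi_*(\xi_{1,0})$.

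Finally, by Proposition~\ref{prop: psi-factorial eq},
\[
\frac{\psi_{(x,-y)}(\xi_{1,0})}{\psi_{-y}(\xi_{1,0})} = \Pi(\{x\},-\langle -y\rangle_{\ari})^{q^\ell-1},
\]
so extracting the $(q^\ell-1)$-th root and using $\Pi(x,y-1)=x\cdot \Gamma(x,y)$ together with $\{x\}\in k\subset \bar{k}$ gives
\[
\Pscr^{\cyc}_{\nu_1}\bigl(\St(x,y)\bigr) \sim \Gamma(\{x\},1-\langle -y\rangle_{\ari}) = \tilde{\Gamma}(x,y) = \hat{\Gamma}(x,y), \quad x \neq 0 \in k/A.
\]
The degenerate case $x=0$ reduces (via $\St(0,y)=-\St^{\ari}(y)$) to the analogous computation using $\psi_{-y}(\xi_{1,0}) \sim \Pi_{\ari}(-\langle -y\rangle_{\ari})^{q^\ell-1} = \Gamma_{\ari}(1-\langle-y\rangle_{\ari})^{q^\ell-1}$.

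\textbf{Main obstacle.} The step demanding the most care is the identification $\Pcal_{\eK_{\nfk,\ell}}(\xi_{1,0},\Xi_*)\sim \psi_*(\xi_{1,0})$. It requires a precise description of the CM-type differential $\omega_{\eM(*),\xi_{1,0}}$ on the soliton dual $t$-motive—specifically, matching the $\nu_{\xi_{1,0}}$-isotypic component of $\eM(*)/(t-\theta)\eM(*)$ with evaluation at $\xi_{1,0}$ on the zeroth soliton coordinate—and then rigorously extending the de~Rham pairing to $\MM(*)^\dagger$ so that the period integral against $\tilde{\psi}_*$ equals $\psi_*(\xi_{1,0})$ up to an algebraic scalar.
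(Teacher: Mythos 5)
Your proposal is correct and takes essentially the same route as the paper's proof: Lemma~\ref{lem: ST} converts the Stickelberger functions into $\varphi_{\eK_{\nfk,\ell},\Phi}$, Theorem~\ref{thm: PD} together with Theorem~\ref{thm: PS} reduces everything to the period symbols $\Pcal_{\eK_{\nfk,\ell}}(\xi_{1,0},\cdot)$ of the soliton dual $t$-motives, and Proposition~\ref{prop: psi-factorial eq} finishes, your reduction of the arithmetic and geometric cases to the two-variable one via Remark~\ref{rem: St-relations}~(4) being only an organizational difference. The identification $\Pcal_{\eK_{\nfk,\ell}}(\xi_{1,0},\Xi_*)\sim\psi_*(\xi_{1,0})$ that you single out as the main obstacle is precisely what the paper's proof asserts as well, and your plan for it (the de Rham pairing already extended to $\MM^\dagger$ in Section~\ref{sec: defn PS}, paired against the cycle $\tilde{\psi}_*$ of Lemma~\ref{lem: psi*}, with $\omega_{\eM(*),\xi_{1,0}}$ realized as evaluation at $\xi_{1,0}$ on the zeroth coordinate) is the intended justification.
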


\begin{proof}
Let $\nfk \in \eA_+$ and $\ell \in \NN$.
By Lemma~\ref{lem: ST}, we have that for $x \in \frac{1}{\nfk(\theta)}A/A$ and $y \in \frac{1}{q^\ell-1}\ZZ/\ZZ$,
\begin{align*}
\Pscr_{\nu_1}^{\cyc}\circ \St^{\ari}(y)^{q^\ell-1}
& = \Pcal_{\eK_{\nfk,\ell}}(\xi_{1,0},\Xi_{\ari,-y}) = \psi_{\ari,-y}(\xi_{1,0}) \cdot \bar{k}^\times \quad \in \CC_\infty^\times/\bar{k}^\times, \\
\Pscr_{\nu_1}^{\cyc}\circ \St^{\geo}(x)^{q^\ell-1}
& = \Pcal_{\eK_{\nfk,\ell}}(\xi_{1,0},\Phi_x) = \left(\frac{\psi_{\geo,x}(\xi_{1,0})}{\psi_{\ari,1/(q-1)}(\xi_{1,0})}\right) \cdot \bar{k}^\times \quad \in \CC_\infty^\times/\bar{k}^\times, \\
\Pscr_{\nu_1}^{\cyc}\circ \St(x,y)^{q^\ell-1}
& = \Pcal_{\eK_{\nfk,\ell}}(\xi_{1,0},\Phi_{(x,y)}) = \left(\frac{\psi_{x,-y}(\xi_{1,0})}{\psi_{\ari,-y}(\xi_{1,0})}\right) \cdot \bar{k}^\times \quad \in \CC_\infty^\times/\bar{k}^\times.
\end{align*}
Comparing with the definition of $\tilde{\Gamma}_{\ari}$, $\tilde{\Gamma}_{\geo}$, and $\tilde{\Gamma}$,
the result thereby follows from Proposition~\ref{prop: psi-factorial eq}.
\end{proof}

\begin{remark}\label{rem: Gamma-dist}
For each $\varphi \in \Sscr(\eG^{\cyc})$, we let $\wp_{\nu_1}^{\cyc}(\varphi) \in \CC_\infty^\times$ be an arbitrary representative of $\Pscr_{\nu_1}^{\cyc}(\varphi)$.
Then Theorem~\ref{thm: Gamma dist} says that for $\boy \in \Acal_\QQ^{\ari}$, $\boxx \in \Acal_\QQ^{\geo}$, and $\ba \in \Acal_\QQ^{\cyc}$,
\[
\tilde{\Gamma}_{\ari}(\boy) \sim \wp_{\nu_1}^{\cyc}\big(\St^{\ari}(\boy)\big),\quad 
\tilde{\Gamma}_{\geo}(\boxx) \sim \wp_{\nu_1}^{\cyc}\big(\St^{\geo}(\boxx)\big), 
\quad\text{and}\quad 
\tilde{\Gamma}(\ba)  \sim \wp_{\nu_1}^{\cyc}\big(\St(\ba)\big).
\]
\end{remark}

\begin{theorem}\label{thm: Lang-Rohrlich-conj}
Let $\nfk \in \eA_+$ and $\ell \in \NN$.
We have:
\begin{align*}
& \trdeg_{\bar{k}} \bar{k}\Big(\Gamma_{\geo}(x),\Gamma_{\ari}(y),\ \Gamma(x,y)\ \Big|\ x \in \frac{1}{\nfk(\theta)}A \setminus (-A_+ \cup \{0\}),\ y \in \frac{1}{q^\ell-1}\ZZ\Big) \\
=& 
1+ (\ell-\frac{1}{(q-1)^{\epsilon_\nfk}}) \cdot \#(A/\nfk)^\times,
\end{align*}
where $\epsilon_\nfk = 1$ if $\deg \nfk >0$ and $0$ otherwise.
\end{theorem}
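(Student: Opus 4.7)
The plan is to derive Theorem~\ref{thm: Lang-Rohrlich-conj} from three ingredients already in hand: the transcendence-degree computation for period symbols (Theorem~\ref{thm: trdeg}), the universality of the two-variable Stickelberger distribution (Corollary~\ref{cor: Ucyc-iso}), and the bridge $\Pscr_{\nu_1}^{\cyc}\circ\St=\hat{\Gamma}$ provided by Theorem~\ref{thm: Gamma dist}. The main task is to translate between the ``period side'' of \cite{BCPW22} and the ``gamma side'' appearing in the statement, and to check that the passage from coset arguments to the open domain $\frac{1}{\nfk(\theta)}A\setminus(-A_+\cup\{0\})$ and $\frac{1}{q^\ell-1}\ZZ$ preserves transcendence degree.

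First I would apply Theorem~\ref{thm: trdeg} with $\eK=\eK_{\nfk,\ell}$ and $\eK^+=\eK_{\nfk,\ell}^+$. One has $[\eK_{\nfk,\ell}:\ek]=\ell\cdot\#(\eA/\nfk)^\times$, and, since $\eK_{\nfk,\ell}^+$ is the fixed field of the subgroup $\FF_q^\times\times(\ZZ/\ell\ZZ)$ of $\eG_{\nfk,\ell}$, also $[\eK_{\nfk,\ell}^+:\ek]=\#(\eA/\nfk)^\times/(q-1)^{\epsilon_\nfk}$. The formula $1+(1-1/[\eK:\eK^+])\cdot [\eK:\ek]=1+[\eK:\ek]-[\eK^+:\ek]$ from Theorem~\ref{thm: trdeg} then simplifies to
\[
\trdeg_{\ok}\ok\bigl(\Pscr_{\nu_1}^{\cyc}(\Sscr(\eG_{\nfk,\ell}))\bigr)=1+\Bigl(\ell-\tfrac{1}{(q-1)^{\epsilon_\nfk}}\Bigr)\cdot \#(\eA/\nfk)^\times.
\]
Next, Corollary~\ref{cor: Ucyc-iso} says that every $\varphi\in\Sscr(\eG_{\nfk,\ell})$ is a $\QQ$-linear combination of $\St(x,y)$ with $x\in\frac{1}{\nfk(\theta)}A/A$ and $y\in\frac{1}{q^\ell-1}\ZZ/\ZZ$. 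Since $\Pscr_{\nu_1}^{\cyc}$ is $\QQ$-linear into $\CC_\infty^\times/\ok^\times$, clearing denominators shows that any representative $\wp_{\nu_1}^{\cyc}(\varphi)$ satisfies an equation of the form $\wp_{\nu_1}^{\cyc}(\varphi)^N=\alpha\cdot\prod_i\wp_{\nu_1}^{\cyc}\bigl(\St(x_i,y_i)\bigr)^{n_i}$ with $\alpha\in\ok^\times$ and $N,n_i\in\ZZ$; hence $\wp_{\nu_1}^{\cyc}(\varphi)$ is algebraic over $\ok\bigl(\wp_{\nu_1}^{\cyc}(\St(x,y))\bigr)$. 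Theorem~\ref{thm: Gamma dist} identifies $\wp_{\nu_1}^{\cyc}(\St(x,y))$ with $\tilde{\Gamma}(x,y)$ modulo $\ok^\times$, giving
\[
\trdeg_{\ok}\ok\bigl(\tilde{\Gamma}(x,y)\mid x\in\tfrac{1}{\nfk(\theta)}A/A,\ y\in\tfrac{1}{q^\ell-1}\ZZ/\ZZ\bigr)=1+\Bigl(\ell-\tfrac{1}{(q-1)^{\epsilon_\nfk}}\Bigr)\cdot \#(\eA/\nfk)^\times.
\]

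The final step is to pass from the coset-based $\tilde{\Gamma}(x,y)$ to the three families $\Gamma_{\geo}(x)$, $\Gamma_{\ari}(y)$, $\Gamma(x,y)$ with $x\in\frac{1}{\nfk(\theta)}A\setminus(-A_+\cup\{0\})$ and $y\in\frac{1}{q^\ell-1}\ZZ$. Here I invoke the shift identities $\Gamma(x+a,y+N)\sim\Gamma(x,y)$, $\Gamma_{\ari}(y+N)\sim\Gamma_{\ari}(y)$, $\Gamma_{\geo}(x+a)\sim\Gamma_{\geo}(x)$, together with the normalizations $\Gamma_{\geo}(a)\sim\Gamma_{\ari}(N)\sim\Gamma(x,N)\sim\Gamma_{\geo}(a,y)\sim 1$ for $a\in A\setminus(-A_+\cup\{0\})$ and $N\in\NN$ from Proposition~\ref{prop: AR-Gamma} and Remark~\ref{rem: AR-geoGamma}. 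These, together with the definitions $\tilde{\Gamma}_{\geo}(x)=\tilde{\Gamma}(x,\tfrac{1}{1-q})$ and $\tilde{\Gamma}_{\ari}(y)=\tilde{\Gamma}(0,y)^{-1}$, show that the field appearing in the theorem and the field $\ok\bigl(\tilde{\Gamma}(x,y)\bigr)$ above differ only by an algebraic extension, so they share the same transcendence degree, completing the proof. The main conceptual obstacle is not in this final translation but in step~(i): it is the Shimura-type transcendence theorem of \cite{BCPW22} underlying Theorem~\ref{thm: trdeg} that provides the lower bound on the transcendence degree, while the universality of $\St$ in Corollary~\ref{cor: Ucyc-iso} together with $\Pscr_{\nu_1}^{\cyc}\circ\St=\hat{\Gamma}$ is what makes the gamma-value computation amenable to this result.
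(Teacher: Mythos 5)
Your proposal is correct and follows essentially the same route as the paper's own proof: reduce the field in the statement to the coset-indexed values $\tilde{\Gamma}(x,y)$ via the monomial relations of Proposition~\ref{prop: AR-Gamma} (and Remark~\ref{rem: AR-geoGamma}), use the spanning of $\Sscr(\eG_{\nfk,\ell})$ by the $\St(x,y)$ (Proposition~\ref{prop: St-span}~(3), which is what your appeal to Corollary~\ref{cor: Ucyc-iso} amounts to) together with Theorem~\ref{thm: Gamma dist} to identify the transcendence degree with that of $\bar{k}\big(\Pscr_{\nu_1}^{\cyc}(\Sscr(\eG_{\nfk,\ell}))\big)$, and conclude by Theorem~\ref{thm: trdeg} applied to $\eK_{\nfk,\ell}$. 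The only difference is cosmetic: you carry out the degree computation $[\eK_{\nfk,\ell}:\ek]$, $[\eK_{\nfk,\ell}^+:\ek]$ explicitly, which the paper leaves implicit.
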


\begin{proof}
Notice that for $y \in \ZZ_{(p)}$ and $0\neq x \in k/A$,
\[
\tilde{\Gamma}(0,y) = \Gamma_{\ari}(1-\langle -y\rangle_{\ari})^{-1} 
\quad \text{ and } \quad 
\tilde{\Gamma}(x,\frac{1}{1-q})
=\Gamma_{\geo}(\{x\}) \cdot \Gamma_{\ari}(1-\frac{1}{q-1})^{-1}.
\]
Thus by Proposition~\ref{prop: AR-Gamma}~(1)--(3) we get
\begin{align*}
& \ \bar{k}\Big(\Gamma_{\geo}(x),\Gamma_{\ari}(y), \Gamma(x,y)\ \Big|\ x \in \frac{1}{\nfk(\theta)}A \setminus (-A_+ \cup \{0\}),\ y \in \frac{1}{q^\ell-1}\ZZ\Big) \\
 =&\ 
\bar{k}\Big(\tilde{\Gamma}(x,y)\ \Big|\ x \in \frac{1}{\nfk(\theta)}A/A,\ y \in \frac{1}{q^\ell-1}\ZZ/\ZZ\Big).
\end{align*}
From Proposition~\ref{prop: St-span}~(3), Theorem~\ref{thm: Gamma dist}, and Remark~\ref{rem: Gamma-dist}, we know that the field 
$\bar{k}\big(\wp_{\nu_1}^{\cyc}(\varphi)\mid \varphi \in \Sscr(\eG_{\nfk,\ell})\big)$
is algebraic over $\bar{k}\Big(\tilde{\Gamma}(x,y)\ \Big|\ x \in \frac{1}{\nfk(\theta)}A/A,\ y \in \frac{1}{q^\ell-1}\ZZ/\ZZ\Big)$.
Hence
\begin{align*}
& \trdeg_{\bar{k}}\bar{k}\Big(\tilde{\Gamma}(x,y)\ \Big|\ x \in \frac{1}{\nfk(\theta)}A/A,\ y \in \frac{1}{q^\ell-1}\ZZ/\ZZ\Big) \\
= & \trdeg_{\bar{k}} \bar{k}\big(\wp_{\nu_1}^{\cyc}(\varphi)\mid \varphi \in \Sscr(\eG_{\nfk,\ell})\big) \\
=&1+ (\ell-\frac{1}{(q-1)^{\epsilon_\nfk}}) \cdot \#(A/\nfk)^\times \quad \text{(by Theorem~\ref{thm: trdeg})}.
\end{align*}
\end{proof}

Let $\nfk \in \eA_+$ and $\ell \in \NN$.
By \eqref{eqn: Ucyc-dim} we have 
\[
1+(\ell-\frac{1}{(q-1)^{\epsilon_\nfk}}) \cdot \#(A/\nfk)^\times
=
\dim_\QQ(\Ucal^{\cyc}_{\nfk,\ell})
\] 
where $\Ucal^{\cyc}_{\nfk,\ell}$ is defined in Section~\ref{sec: Uni-dis} III.
As $\nfk$ and $\ell$ are chosen arbitrarily, we arrive at:

\begin{corollary}\label{cor: AR-Gamma}
All algebraic relations among gamma values $\Gamma_{\geo}(x)$, $\Gamma_{\ari}(y)$, and 
$\Gamma(x,y)$ for $x \in \ek\setminus (-A_+\cup \{0\})$ and $y \in \ZZ_{(p)}$ are explained by the monomial relations listed in {\rm Proposition~\ref{prop: AR-Gamma}}.
\end{corollary}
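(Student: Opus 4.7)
The plan is to upgrade the dimension equality $\dim_\QQ \Ucal^{\cyc}_{\nfk,\ell} = 1 + (\ell - \tfrac{1}{(q-1)^{\epsilon_\nfk}}) \cdot \#(\eA/\nfk)^\times$ just noted, together with Theorem~\ref{thm: Lang-Rohrlich-conj}, into the injectivity of a map induced by the gamma distribution on the universal distribution. Once this injectivity is secured, every multiplicative relation modulo $\ok^\times$ among the gamma values will come from the diamond bracket relations cutting out $\Ucal^{\cyc}_{\nfk,\ell}$, which via Theorem~\ref{thm: Gamma dist} are precisely the monomial identities of Proposition~\ref{prop: AR-Gamma}.

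I will first match each monomial identity in Proposition~\ref{prop: AR-Gamma} with a generator of $\Rcal^{\cyc}_{\nfk,\ell} \subset \Acal^{\cyc}_{\nfk,\ell,\QQ}$ from \eqref{eqn: mul-re} and \eqref{eqn: ref-re}, using the equality $\hat{\Gamma} = \Pscr^{\cyc}_{\nu_1} \circ \St$. Items (2) and (3) realize the descent of $\tilde{\Gamma}$ to $(k/A) \times (\ZZ_{(p)}/\ZZ)$; the arithmetic decomposition (4) and multiplication formula (6) give the relations in \eqref{eqn: mul-re}; and the reflection identities (1) and (5) give those in \eqref{eqn: ref-re} after absorbing the $\tilde{\pi}$-factors via $\Pscr^{\cyc}_{\nu_1}(\mathbf{1}_\eG) = \tilde{\pi} \cdot \ok^\times$ (Remark~\ref{rem: Carlitz period}) and the normalizations $\tilde{\Gamma}_{\geo}(x) = \tilde{\Gamma}(x,\tfrac{1}{1-q})$, $\tilde{\Gamma}_{\ari}(y) = \tilde{\Gamma}(0,y)^{-1}$. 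Hence $\tilde{\Gamma}$ factors through a well-defined $\QQ$-linear map $\bar{\Gamma} : \Ucal^{\cyc}_{\nfk,\ell} \to (\CC_\infty^\times/\ok^\times) \otimes_\ZZ \QQ$.

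The main step will be to show $\bar{\Gamma}$ is injective. Set $N \assign \dim_\QQ \Ucal^{\cyc}_{\nfk,\ell}$ and suppose, for contradiction, that $0 \neq v \in \ker \bar{\Gamma}$. Extend $v = v_1$ to a $\QQ$-basis $v_1, \ldots, v_N$ of $\Ucal^{\cyc}_{\nfk,\ell}$ and pick representatives $\gamma_i \in \CC_\infty^\times$ of $\bar{\Gamma}(v_i)$. Because $\bar{\Gamma}(v_1)$ vanishes in the $\QQ$-module, some positive integer power of $\gamma_1$ lies in $\ok^\times$, so $\gamma_1 \in \ok$. Since every class $[x,y] \in \Ucal^{\cyc}_{\nfk,\ell}$ is a $\QQ$-linear combination of $v_1, \ldots, v_N$, some positive power of $\tilde{\Gamma}(x,y)$ equals an $\ok^\times$-multiple of $\gamma_1^{m_1}\cdots\gamma_N^{m_N}$, so $\tilde{\Gamma}(x,y)$ is algebraic over $\ok(\gamma_2, \ldots, \gamma_N)$. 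The field in Theorem~\ref{thm: Lang-Rohrlich-conj} therefore has transcendence degree at most $N-1$ over $\ok$, contradicting the computation there.

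Once $\bar{\Gamma}$ is injective, every multiplicative relation among the $\tilde{\Gamma}(x,y)$ modulo $\ok^\times$ corresponds to the zero class in $\Ucal^{\cyc}_{\nfk,\ell}$, hence to a consequence of the generators of $\Rcal^{\cyc}_{\nfk,\ell}$, and by the first paragraph to a consequence of Proposition~\ref{prop: AR-Gamma}. Taking the direct limit over $\nfk \in \eA_+$ and $\ell \in \NN$ yields the corollary. The main obstacle is the bookkeeping in the first paragraph: carefully tracking how the $\tilde{\pi}$-factors and Legendre-type corrections in (1), (4), (5) of Proposition~\ref{prop: AR-Gamma} absorb into the constant terms built into $\St^{\ari}, \St^{\geo}, \St$ in Definition~\ref{defn: ST}, so that the correspondence covers $\Rcal^{\cyc}_{\nfk,\ell}$ with no omissions.
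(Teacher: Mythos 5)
Your proposal is correct and takes essentially the same route as the paper: the corollary is deduced by comparing the transcendence degree in Theorem~\ref{thm: Lang-Rohrlich-conj} with $\dim_\QQ \Ucal^{\cyc}_{\nfk,\ell}$ from \eqref{eqn: Ucyc-dim}, together with the compatibility of the monomial relations of Proposition~\ref{prop: AR-Gamma} with the diamond bracket relations generating $\Rcal^{\cyc}_{\nfk,\ell}$. Your factorization of $\hat{\Gamma}$ through $\Ucal^{\cyc}_{\nfk,\ell}$ and the injectivity-by-contradiction step are just an explicit packaging of the dimension count that the paper leaves implicit.
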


\begin{remark}\label{rem: comparison}
${}$
\begin{itemize}
    \item[(1)] Taking $\ell = 1$ in the above equalities, we re-prove the Lang-Rohrlich conjecture for geometric gamma values in \cite[Corollary~1.2.2]{ABP}:
    \[
    \trdeg_{\bar{k}} \bar{k}\Big(\tilde{\pi},\Gamma_{\geo}(x)\ \Big|\ x \in \frac{1}{\nfk(\theta)}A \setminus (-A_+ \cup \{0\})\Big)
    = 1+(1-\frac{1}{(q-1)^{\epsilon_\nfk}}) \cdot (\eA/\nfk)^\times,
    \]
    where $\epsilon_\nfk = 1$ if $\deg \nfk >0$ and $0$ otherwise.
    \item[(2)] Taking $\nfk =1$ in the above equalities, we re-prove \cite[Corollary~3.3.3]{CPTY10}:
    \[
    \trdeg_{\bar{k}} \bar{k}\Big(\Gamma_{\ari}(y)\ \Big|\ y \in \frac{1}{q^\ell-1}\ZZ\Big) = \ell.
    \]
    \item[(3)] 
    Since
    \[
    \tilde{\Gamma}_{\geo}(x) \sim \tilde{\Gamma}(x,\frac{1}{1-q})
    = \prod_{i=0}^{\ell-1}\tilde{\Gamma}(x,\frac{q^i}{1-q^\ell}),
    \]
    We get that
    \begin{align*}
    &\ \trdeg_{\bar{k}} \bar{k}\Big(\Gamma_{\geo}(x),\Gamma_{\ari}(y)\ \Big|\ x \in \frac{1}{\nfk(\theta)}A \setminus (-A_+ \cup \{0\}),\ \ y \in \frac{1}{q^\ell-1}\ZZ\Big) \\
    =&\
    \ell + (1-\frac{1}{(q-1)^{\epsilon_\nfk}})\cdot (\eA/\nfk)^\times,
    \end{align*}
    and so all algebraic relations among arithmetic and geometric gamma values come from their relations with the Carlitz period $\tilde{\pi}$.
\end{itemize}
\end{remark}

\section{The Chowla--Selberg phenomenon}

 Theorem~\ref{thm: Gamma dist} enables us to connect the ``CM periods'', when the CM field is contained in a cyclotomic function field, with special gamma values.
In this section, we shall first recall the relationship between the periods of dual $t$-motives and the periods of ``abelian $t$-modules'', and then apply Theorem~\ref{thm: Gamma dist} to derive an analogue of the Chowla--Selberg formula.

\subsection{\texorpdfstring{Quasi-periods of CM abelian $t$-modules}{Quasi-periods of CM abelian t-modules}}

An \emph{abelian $t$-module of dimension} $d>0$ over $\bar{k}$ is a pair $(E,\rho)\rassign E_\rho$ with the following properties.
\begin{itemize}
\item $E$ is a $d$-dimensional additive group ${\GG_{a}^{d}}_{/\bar{k}}$ defined over $\ok$.
\item $\rho:\eA\rightarrow \End_{\FF_{q}} ({\GG_{a}^{d}}_{/\ok})$ is an $\FF_{q}$-algebra homomorphism so that $ \partial\rho_{t}-\theta I_{d}$ acts nilpotently on $\Lie (\GG_{a}^{d})$, where $\partial \rho_{t}$ denotes the  differential of the homomorphism $\rho_{t}:{\GG_{a}^{d}}_{/\ok}\rightarrow {\GG_{a}^{d}}_{/\ok}$.
\item $\Mscr(\rho) \assign \Hom_{\FF_q}(E,\GG_{a})$ is finitely generated over $\ok[t]$ with respect to the $\eA$-module structure via $\rho$.
\end{itemize}
It is known that $\Mscr(\rho)$ must be free of finite rank over $\ok[t]$, and $\rank_{\ok[t]}\Mscr(\rho)$ is called the \emph{rank of $E_\rho$}.
Set 
\begin{equation}\label{eqn: M(rho)}
\eM(\rho)\assign \Hom_{\bar{k}[t]}(\tau\Mscr(\rho),\bar{k}[t]dt)
\end{equation}
which is constructed by Hartl-Juschka \cite{HJ20},
where $\tau$ is identified with the  $q$-power Frobenius endomorphism on $\GG_a$.
It is known that $\eM(\rho)$ is a dual $t$-motive (and $E_\rho$ is always ``$A$-finite'', see \cite[Theorem 2.3.2]{BCPW22}).
We call $\eM(\rho)$ the \emph{Hartl-Juschka dual $t$-motive associated with $E_\rho$}.

Suppose that $E_\rho$ is uniformizable,
in which case so is $\eM(\rho)$, see \cite[Theorem~2.4.3]{BCPW22}.
Let $\Lambda_\rho$ be the period lattice of $E_\rho$ and let $H_{\mathrm{dR}}(E_\rho,\bar{k})$ be the de Rham module of $E_\rho$ (see \cite[\S 3.1]{BP02} and \cite[Proposition 4.1.3]{NP21}).
Let $[\cdot,\cdot]:H_{\mathrm{dR}}(E_\rho,\bar{k}) \times \Lambda_\rho \rightarrow \CC_\infty$ be the de Rham pairing introduced in \cite[(4.3.3) and (4.3.4)]{NP21}.
We call $[\delta,\lambda]$ for $\delta \in H_{\mathrm{dR}}(E_\rho,\bar{k})$
and $\lambda \in \Lambda_\rho$ a \emph{quasi-period of $E_\rho$}.
Recall that we have the following natural comparison between the associated de Rham pairings
(cf.\ \cite[Proposition~8.3.4]{BCPW22}):
\begin{equation}\label{eqn: dR-P comp}
\SelectTips{cm}{}
\xymatrixrowsep{0.4cm}
\xymatrixcolsep{0.05cm}
\xymatrix{
[\ ,\ ]: & H_{\mathrm{dR}}(E_\rho,\bar{k})\ar@{<->}[dd]^{\rotatebox{90}{\scalebox{1}[1]{$\sim$}}} & \times & \Lambda_\rho\ar@{<->}[dd]^{\rotatebox{90}{\scalebox{1}[1]{$\sim$}}}  \ar[rrr] & & & \CC_\infty \ar@{<->}[dd]^{\rotatebox{90}{\scalebox{1}[1]{$\sim$}}} & z \ar@{<->}[dd] \\
&&&& \\
\int : & H_{\mathrm{dR}}(\eM(\rho),\bar{k}) & \times & H_{\mathrm{Betti}}(\eM(\rho)) \ar[rrr] & & & \CC_\infty & -z\\
}
\end{equation}

We say that $E_\rho$ is a \emph{CM abelian $t$-module with generalized CM type $(\eK,\Xi)$} if $\eM(\rho)$ is a CM dual $t$-motive with generalized CM type $(\eK,\Xi)$.
In this case, the above comparison of de Rham pairings implies that the space of quasi-periods of $E_\rho$ coincides with (cf.~\cite[Thm.~8.4.1]{BCPW22})
\[
\sum_{\xi \in J_\eK}\bar{k}\cdot p_\eK(\xi,\Xi).
\]
In particular, suppose $\eK \subset \eK_{\nfk,\ell}$ for $\nfk \in \eA_+$ and $\ell \in \NN$.
Let $\varphi_{\eK,\Xi} \in \Sscr(\eG^{\cyc})$ be the Stickelberger function associated to the CM type $(\eK,\Xi)$ given in \eqref{eqn: phi-K-Phi}.
Then Lemma~\ref{lem: Stcyc-surj} guarantees that there exists $\ba \in \Acal^{\cyc}_{\nfk,\ell,\QQ}$ such that
\[
\varphi_{\eK,\Xi} = \St(\ba) \quad \in \Sscr(\eG^{\cyc}).
\]
In particular, consider $\xi_0 \in J_\eK$ whose corresponding embedding $\nu_{\xi_0}$ is $\nu_1\big|_{\eK}$ (where $\nu_1:\eK^{\cyc}\hookrightarrow \CC$ is chosen in \eqref{eqn: nu_1}).
As $\eK$ is Galois over $\ek$, one has
$J_\eK = \{\xi_0^{\varrho}\mid \varrho \in \eG^{\cyc}/\eH_\eK\}$. 
By Theorem~\ref{thm: PS}~(2) the space of quasi-periods of $E_\rho$ becomes
\begin{equation}\label{eqn: CM-qp}
\sum_{\varrho \in \eG_\eK} \ok \cdot p_\eK(\xi_0^{\varrho},\Xi) = 
\sum_{\varrho \in \eG_\eK} \ok \cdot p_\eK(\xi_0,\Xi^{\varrho^{-1}}) = \sum_{\varrho \in \eG_\eK} \ok \cdot \wp_{\nu_1}^{\cyc}(\varphi_{\eK,\Xi^{\varrho^{-1}}}).
\end{equation}
Since $\varphi_{\eK,\Xi^{\varrho^{-1}}} = \varrho \cdot \varphi_{\eK,\Xi} = \St(\varrho \star \ba)$ for every $\varrho \in \eG^{\cyc}$, by Theorem~\ref{thm: Gamma dist} (and Remark~\ref{rem: Gamma-dist})
we obtain 

\begin{theorem}\label{thm: qp-CM}
Let $E_\rho$ be a CM abelian $t$-module with generalized CM type $(\eK,\Xi)$ over $\bar{k}$, where $\eK$ is contained in $\eK_{\nfk,\ell}$ for $\nfk \in \eA_+$ and $\ell \in \NN$.
Let $\eH_\eK = \gal(\eG^{\cyc}/\eK)$.
There exists $\ba \in \Acal^{\cyc}_{\nfk,\ell,\QQ}$ such that the space of quasi-periods of $E_\rho$ is spanned over $\bar{k}$ by
\[
\tilde{\Gamma}(\varrho \star \ba) \ (\sim \wp_{\nu_1}^{\cyc}(\varrho \cdot \varphi_{\eK,\Xi})), \quad  \varrho \eH_\eK \in \eG^{\cyc}/\eH_\eK.
\]
\end{theorem}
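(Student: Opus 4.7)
The plan is to reduce the statement to the period symbols $p_\eK(\xi, \Xi)$ of the dual $t$-motive $\eM(\rho)$, then translate these into values of the period distribution at Stickelberger functions, and finally invoke Theorem~\ref{thm: Gamma dist} to match these with gamma values. First, I would invoke the comparison \eqref{eqn: dR-P comp} of de~Rham pairings: it identifies the space of quasi-periods of $E_\rho$ with the $\bar{k}$-span of periods $\int_\gamma \omega$ for $(\omega, \gamma) \in H_{\mathrm{dR}}(\eM(\rho), \bar{k}) \times H_{\mathrm{Betti}}(\eM(\rho))$. Since $\eM(\rho)$ is a CM dual $t$-motive with generalized CM type $(\eK, \Xi)$, the non-degeneracy of the de~Rham pairing combined with the fact that $H_{\mathrm{dR}}(\eM(\rho), \bar{k})$ is spanned by the differentials $\omega_{\eM(\rho), \xi}$ for $\xi \in J_\eK$ (cf.~\eqref{eqn: omegaM-xi}) and that $H_{\mathrm{Betti}}(\eM(\rho))$ is of rank one over $O_\eK$, yields the identification of the quasi-period space with $\sum_{\xi \in J_\eK} \bar{k} \cdot p_\eK(\xi, \Xi)$.

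Next, I would use that $\eK$ is Galois over $\ek$ (being contained in the abelian extension $\eK_{\nfk,\ell}$). Let $\xi_0 \in J_\eK$ be the point corresponding to $\nu_1|_\eK$ under the bijection of Remark~\ref{rem: JK-Emb}, so that $J_\eK = \{\xi_0^\varrho \mid \varrho \in \eG_\eK\}$ where $\eG_\eK \cong \eG^{\cyc}/\eH_\eK$. Applying Theorem~\ref{thm: PS}~(2), which permutes the arguments of the period symbol via the Galois action, we get
\[
\sum_{\xi \in J_\eK} \bar{k} \cdot p_\eK(\xi, \Xi) \;=\; \sum_{\varrho \in \eG_\eK} \bar{k} \cdot p_\eK(\xi_0^\varrho, \Xi) \;=\; \sum_{\varrho \in \eG_\eK} \bar{k} \cdot p_\eK\bigl(\xi_0, \Xi^{\varrho^{-1}}\bigr).
\]
By the defining property \eqref{eqn: Distribution} of the period distribution $\Pscr_{\nu_1}$, each summand $p_\eK(\xi_0, \Xi^{\varrho^{-1}})$ is a representative of $\Pscr_{\nu_1}^{\cyc}(\varphi_{\eK, \Xi^{\varrho^{-1}}})$, i.e.\ $\wp_{\nu_1}^{\cyc}(\varphi_{\eK, \Xi^{\varrho^{-1}}})$. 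A direct unwinding of \eqref{eqn: phi-K-Phi} together with the bijection \eqref{eqn: JK-coset} shows $\varphi_{\eK, \Xi^{\varrho^{-1}}} = \varrho \cdot \varphi_{\eK, \Xi}$ (the left $\eG$-action on $\Sscr(\eG)$ from Remark~\ref{rem: S(G)}~(1)).

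Finally, because $\varphi_{\eK, \Xi}$ lies in $\Sscr(\eG_{\nfk,\ell})$, Lemma~\ref{lem: Stcyc-surj} (the surjectivity of the Stickelberger distribution $\St: \Acal^{\cyc}_{\nfk,\ell,\QQ} \to \Sscr(\eG_{\nfk,\ell})$) furnishes some $\ba \in \Acal^{\cyc}_{\nfk,\ell,\QQ}$ with $\varphi_{\eK, \Xi} = \St(\ba)$. Since $\St$ is $\eG^{\cyc}$-equivariant (using the $\eG$-action on $\Acal^{\cyc}_\QQ$ from \eqref{eqn: G-action} extended linearly), we obtain $\varrho \cdot \varphi_{\eK, \Xi} = \St(\varrho \star \ba)$ for every $\varrho \in \eG^{\cyc}$. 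Applying Theorem~\ref{thm: Gamma dist} and Remark~\ref{rem: Gamma-dist} then yields
\[
\wp_{\nu_1}^{\cyc}\bigl(\varrho \cdot \varphi_{\eK,\Xi}\bigr) \;=\; \wp_{\nu_1}^{\cyc}\bigl(\St(\varrho \star \ba)\bigr) \;\sim\; \tilde{\Gamma}(\varrho \star \ba),
\]
which completes the argument. The only genuinely non-trivial ingredients are the previously established Theorem~\ref{thm: PS}~(2), the universality Lemma~\ref{lem: Stcyc-surj}, and the identification Theorem~\ref{thm: Gamma dist}; the step that requires the most care is verifying the identity $\varphi_{\eK, \Xi^{\varrho^{-1}}} = \varrho \cdot \varphi_{\eK, \Xi}$, since it depends on compatibility between the Galois twist on generalized CM types and the right-translation action on Stickelberger functions via the bijection \eqref{eqn: JK-coset}, and this is the chief bookkeeping obstacle in the proof.
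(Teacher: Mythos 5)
Your proposal is correct and follows essentially the same route as the paper: identify the quasi-period space with $\sum_{\xi\in J_\eK}\bar{k}\cdot p_\eK(\xi,\Xi)$ via the de~Rham comparison, use that $\eK/\ek$ is Galois together with Theorem~\ref{thm: PS}~(2) to rewrite it as $\sum_{\varrho}\bar{k}\cdot\wp_{\nu_1}^{\cyc}(\varphi_{\eK,\Xi^{\varrho^{-1}}})$, produce $\ba$ via Lemma~\ref{lem: Stcyc-surj}, and conclude with the identity $\varphi_{\eK,\Xi^{\varrho^{-1}}}=\varrho\cdot\varphi_{\eK,\Xi}=\St(\varrho\star\ba)$ and Theorem~\ref{thm: Gamma dist}. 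The bookkeeping step you flag is exactly what the paper implicitly relies on, so there is nothing to add.
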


\subsection{\texorpdfstring{Period vectors of CM Hilbert--Blumenthal $t$-modules}{Period vectors of CM Hilbert--Blumenthal t-modules}}\label{sec: pv-CM}

Let $\eK$ be a CM field with maximal totally real subfield denoted by $\eK^+$, and let
$\Xi$ be a CM type of $\eK$.
Write $\Xi = \xi_1+\cdots+ \xi_d$, where $d = [\eK^+:\ek]$.
Let $E_\rho$ be a CM abelian $t$-module with CM type $(\eK,\Xi)$ over $\bar{k}$.
By \cite[Section~8.5]{BCPW22}, $E_\rho$ is actually a Hilbert-Blumenthal $O_{\eK^+}$-module with CM by $O_{\eK}$.
Put $\eM = \eM(\rho)$.
Let $\omega_{\eM,\xi_1},...,\omega_{\eM,\xi_d}$ be the differentials of $\eM$ associated with $\xi_1,...,\xi_d$, respectively.
From the natural $\bar{k}$-linear isomorphism $\Lie(E_\rho) \cong \eM/\sigma \eM$ (see \cite[(8.5.1)]{BCPW22}), 
we get a $\bar{k}$-linear isomorphism $\Lie(E_\rho) \cong \bar{k}^d$ with respect to $\omega_{\eM,\xi_1},...,\omega_{\eM,\xi_d}$.
Moreover, the image of every period vector $\lambda \in \Lambda_\rho \subset \Lie(E_\rho)(\CC_\infty)$ in $\CC_\infty^d$ under this isomorphism is
\[
(\int_{\gamma_\lambda} \omega_{\eM,\xi_1},...,\int_{\gamma_\lambda}\omega_{\eM,\xi_1}) \quad \in \CC_\infty^d,
\]
where $\gamma_\lambda \in H_{\rm Betti}(\eM)$ is the cycle corresponding to $\lambda$ via the natural isomorphism in the comparison diagram~\eqref{eqn: dR-P comp}.

Suppose $\eK$ is contained in $\eK_{\nfk,\ell}$ for $\nfk \in \eA_+$ and $\ell \in \NN$.
By Lemma~\ref{lem: Stcyc-surj}, there exists $\ba \in \Acal^{\cyc}_{\nfk,\ell,\QQ}$ so that 
$$ 
\varphi_{\eK,\Xi} = \St(\ba) \quad \in \Sscr(\eG^{\rm cyc}).
$$
Let $\xi_0 \in J_\eK$ with $\nu_{\xi_0} = \nu_1\big|_{\eK}$.
For $1\leq i \leq d$, take $\varrho_i \in \eG^{\cyc}$ so that $\nu_1 \circ \varrho_i \big|_\eK = \nu_{\xi_i}$.
Write $\Lambda_\rho = \Ifk_{\rho}\cdot \lambda_0$ where $\Ifk_{\rho}$ is an ideal of $O_\eK$.
By Theorem~\ref{thm: Gamma dist} and \ref{thm: qp-CM} we have that
\begin{equation}\label{eqn: HB-period}
\int_{\gamma_{\lambda_0}} \omega_{\eM,\xi_i}
\sim p_\eK(\xi_i, \Xi) 
\sim p_{\eK}(\xi_0,\Xi^{\varrho_i^{-1}}) 
\sim \wp^{\cyc}_{\nu_1}\big(\varphi_{\eK,\Xi^{\varrho_i^{-1}}}\big)
\sim \wp^{\cyc}_{\nu_1}\big(\varrho_i \cdot \varphi_{\eK,\Xi}\big) 
\sim \tilde{\Gamma}(\varrho_i \star \ba).
\end{equation}
Therefore:

\begin{theorem}\label{thm: HB-period}
Let 
$\eK$ be a CM field whose maximal totally real subfield is denoted by $\eK^+$.
Suppose $\eK \subset \eK_{\nfk,\ell}$ for $\nfk \in \eA_+$ and $\ell \in \NN$.
Put $d = [\eK^+:\ek]$.
Let $\Xi = \xi_1+\cdots+\xi_d$ be a CM type of $\eK$.
For $1\leq i \leq d$, take $\varrho_i \in \eG^{\cyc}$ so that $\nu_{1}\circ \varrho_i\big|_\eK = \nu_{\xi_i}$.
Let $E_\rho$ be a CM abelian $t$-module with CM type $(\eK,\Xi)$ over $\bar{k}$.
There exist $\mathbf{a} \in \Acal^{\cyc}_{\nfk,\ell,\QQ}$, an ideal $\Ifk_\rho$ of $O_\eK$, and a suitable $\bar{k}$-linear isomorphism $\Lie(E_\rho) \cong \bar{k}^d$ so that
image of the period lattice $\Lambda_\rho \subset \Lie(E_\rho)(\CC_\infty)$ in $\CC_\infty^d$ is
$$
\left\{ 
\Big(\nu_{\xi_1}(a)\tilde{\Gamma}(\varrho_1\star \ba),...,\nu_{\xi_d}(a)\tilde{\Gamma}(\varrho_d \star \ba)\Big)\ \bigg|\ a \in \Ifk_\rho
\right\}.
$$
\end{theorem}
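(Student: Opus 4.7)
The plan is to identify $\Lie(E_\rho)$ with a diagonalized $O_\eK$-module via the $\xi_i$-eigenspaces and then read off the period vector coordinates from the computation $\int_{\gamma_{\lambda_0}}\omega_{\eM,\xi_i}\sim \tilde\Gamma(\varrho_i\star\ba)$ already outlined in \eqref{eqn: HB-period}. First I would set $\eM\assign \eM(\rho)$ and invoke the comparison diagram \eqref{eqn: dR-P comp} together with the natural isomorphism $\Lie(E_\rho)\cong \eM/\sigma\eM$ recalled in Section~\ref{sec: pv-CM}. For each $\xi_i \in J_\eK$, the differential $\omega_{\eM,\xi_i}\in H_{\mathrm{dR}}(\eM,\bar{k})$ from \eqref{eqn: omegaM-xi} descends to a $\bar{k}$-linear functional on $\eM/\sigma\eM$ whose $O_\eK$-eigenvalue is $\nu_{\xi_i}$. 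Since $\Xi$ is a CM type, the $d$ embeddings $\nu_{\xi_1},\dots,\nu_{\xi_d}$ are pairwise distinct on $\eK^+$, so $\{\omega_{\eM,\xi_i}\}_{i=1}^d$ is $\bar{k}$-linearly independent, giving a $\bar{k}$-linear isomorphism $\Lie(E_\rho)\cong \bar{k}^d$ in which the action of $\alpha\in O_\eK$ becomes diagonal with entries $\nu_{\xi_1}(\alpha),\dots,\nu_{\xi_d}(\alpha)$.

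Next I would exploit the CM structure to describe $\Lambda_\rho$ as an $O_\eK$-module. By Theorem~\ref{thm: CM prop} and the construction in Definition~\ref{defn: CM dual-t-motive}, $\eM$ is projective of rank one over $O_\bK$, and hence $H_{\mathrm{Betti}}(\eM)$ is projective of rank one over $O_\eK$. Transporting this through the comparison \eqref{eqn: dR-P comp} makes $\Lambda_\rho$ a rank-one projective $O_\eK$-module, so there exists a nonzero $\lambda_0\in\Lambda_\rho$ and an ideal $\Ifk_\rho$ of $O_\eK$ with $\Lambda_\rho=\Ifk_\rho\cdot \lambda_0$. Combining with the diagonalization from the previous step, the image of $a\cdot\lambda_0$ in $\CC_\infty^d$ is
\[
\bigl(\nu_{\xi_1}(a)\textstyle\int_{\gamma_{\lambda_0}}\omega_{\eM,\xi_1},\ \dots,\ \nu_{\xi_d}(a)\int_{\gamma_{\lambda_0}}\omega_{\eM,\xi_d}\bigr),\qquad a\in\Ifk_\rho,
\]
where $\gamma_{\lambda_0}\in H_{\mathrm{Betti}}(\eM)$ is the Betti cycle corresponding to $\lambda_0$ under \eqref{eqn: dR-P comp}.

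The third step is the identification $\int_{\gamma_{\lambda_0}}\omega_{\eM,\xi_i}\sim \tilde\Gamma(\varrho_i\star\ba)$, which is exactly \eqref{eqn: HB-period}. I would assemble this chain as follows: Definition~\ref{defn: period quantity} gives $\int_{\gamma_{\lambda_0}}\omega_{\eM,\xi_i}\sim p_\eK(\xi_i,\Xi)$; Theorem~\ref{thm: PS}(2) rewrites this as $p_\eK(\xi_0,\Xi^{\varrho_i^{-1}})$, where $\xi_0\in J_\eK$ corresponds to $\nu_1\big|_\eK$; Theorem~\ref{thm: PD} translates this further into $\wp_{\nu_1}^{\cyc}(\varphi_{\eK,\Xi^{\varrho_i^{-1}}})$; the identity $\varphi_{\eK,\Xi^{\varrho_i^{-1}}}=\varrho_i\cdot \varphi_{\eK,\Xi}$ (immediate from \eqref{eqn: phi-K-Phi}) together with Lemma~\ref{lem: Stcyc-surj} furnishes $\ba\in \Acal^{\cyc}_{\nfk,\ell,\QQ}$ with $\varphi_{\eK,\Xi}=\St(\ba)$; and finally Theorem~\ref{thm: Gamma dist} (together with Remark~\ref{rem: Gamma-dist}) yields $\wp_{\nu_1}^{\cyc}(\St(\varrho_i\star\ba))\sim\tilde\Gamma(\varrho_i\star\ba)$.

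The last — and essentially bookkeeping — step is to absorb the $\bar{k}^\times$ ambiguities that appear in each of the symbols $\omega_{\eM,\xi_i}$, $\Pcal_\eK(\cdot,\cdot)$, $\wp_{\nu_1}^{\cyc}(\cdot)$, and $\tilde\Gamma(\cdot)$. I would rescale the $i$-th coordinate of the tentative isomorphism $\Lie(E_\rho)\cong \bar{k}^d$ by a unique scalar in $\bar{k}^\times$ so that $\int_{\gamma_{\lambda_0}}\omega_{\eM,\xi_i}$ becomes exactly $\tilde\Gamma(\varrho_i\star\ba)$; because the rescaling is coordinate-wise, the diagonal $O_\eK$-eigenvalue structure with entries $\nu_{\xi_i}(\alpha)$ is preserved, so the final expression for the period lattice takes the asserted form. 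The main obstacle I foresee is precisely this coherent bookkeeping: one must check that the same choices of $\lambda_0$, of representatives $\wp^{\cyc}_{\nu_1}(\cdot)$, and of the isomorphism can be made simultaneously — but since all ambiguities are coordinate-wise $\bar{k}^\times$-scalings and the rank-one $O_\eK$-structure on $\Lambda_\rho$ makes $\Ifk_\rho$ independent of the generator $\lambda_0$, there is no obstruction to this simultaneous normalization.
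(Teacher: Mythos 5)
Your proposal is correct and follows essentially the same route as the paper: diagonalizing $\Lie(E_\rho)\cong\eM/\sigma\eM$ via the eigendifferentials $\omega_{\eM,\xi_i}$, writing $\Lambda_\rho=\Ifk_\rho\cdot\lambda_0$ from the rank-one projective $O_\eK$-structure, and invoking the chain $\int_{\gamma_{\lambda_0}}\omega_{\eM,\xi_i}\sim p_\eK(\xi_i,\Xi)\sim\wp^{\cyc}_{\nu_1}(\varrho_i\cdot\varphi_{\eK,\Xi})\sim\tilde{\Gamma}(\varrho_i\star\ba)$ of \eqref{eqn: HB-period}, with the final coordinate-wise $\bar{k}^\times$-rescaling absorbed into the ``suitable'' isomorphism. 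The only detail left tacit (as in the paper) is the $\eG^{\cyc}$-equivariance $\varrho_i\cdot\St(\ba)=\St(\varrho_i\star\ba)$, which is immediate from Definition~\ref{defn: ST}.
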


Now, let $\eK$ be an \emph{imaginary field} over $\ek$ (i.e.\ the infinite place $\infty$ of $\ek$ does not split in $\eK$).
Let $E_\rho$ be a Drinfeld $\eA$-module of rank $[\eK:\ek]$ over $\ok$ which has CM by $O_\eK$ (the integral closure of $\eA$ in $\eK$).
Extending $\rho$ to an $\FF_q$-algebra homomorphism (still denoted by)
$\rho: O_\eK \rightarrow \End_{\FF_q}(E_\rho)$, we have that the differential $\partial \rho : O_\eK \rightarrow \End_{\ok}(\Lie(E_\rho))$ induces an embedding $\nu_\rho: \eK \hookrightarrow \ok\subset \CC_\infty$.
Let $\xi_\rho \in J_\eK$ be the point corresponding to the embedding $\nu_\rho$.
We may regard $E_\rho$ as a CM abelian $t$-module with CM type $(\eK,\xi_\rho)$.

Suppose that $\eK\subset \eK^{\cyc}$ (which is equivalent to saying that $\eK/\ek$ is an abelian extension and the ramification index of $\infty$ in $\eK$ is divided by $q-1$ from class field theory).
By Theorem~\ref{thm: HB-period}, 
We are able to establish the following result which in particular verifies Thakur's recipe/conjecture on the Chowla--Selberg phenomenon in
\cite{Thakur91}:

\begin{theorem}\label{thm: CSP}
Let $E_\rho$ be a Drinfeld $\eA$-module over $\ok$ with CM by $O_\eK$, where $O_\eK$ is the integral closure of $\eA$ in an imaginary function field $\eK$ with $\eK\subset \eK_{\nfk,\ell}$ for $\nfk \in \eA_+$ and $\ell \in \NN$.
Let $\eH_\eK = \gal(\eK^{\cyc}/\eK)$.
Take any $\ba \in \Acal^{\cyc}_{\nfk,\ell,\QQ}$ so that $\mathbf{1}_{\eH_\eK} = \St(\ba) \in \Sscr(\eG^{\cyc})$.
Then
\[
\lambda \sim \wp^{\cyc}_{\nu_1}(\mathbf{1}_{\eH_\eK}) \sim \tilde{\Gamma}(\ba), \quad \text{ for every nonzero period } \lambda \in \Lambda_\rho \subset \CC_\infty.
\]
Moreover, 
\[\tilde{\Gamma}(\varrho\star \ba) \ (\sim \wp^{\cyc}_{\nu_1}(\varrho \cdot \mathbf{1}_{\eH_\eK})), \quad \varrho \eH_\eK \in \eG^{\cyc}/\eH_\eK
\]
are algebraically independent over $\bar{k}$ and
form a $\bar{k}$-basis of the space of quasi-periods of the CM Drinfeld module $E_\rho$.
\end{theorem}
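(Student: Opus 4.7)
The plan is to reduce Theorem~\ref{thm: CSP} to Theorem~\ref{thm: Gamma dist}, Theorem~\ref{thm: PS}, and Theorem~\ref{thm: trdeg}, exploiting that an imaginary $\eK$ over $\ek$ has maximal totally real subfield $\eK^+ = \ek$, so that the CM Drinfeld module $E_\rho$ is exactly a $1$-dimensional CM abelian $t$-module with CM type $\Xi = \xi_\rho \in J_\eK$, where $\xi_\rho$ corresponds to the embedding $\nu_\rho:\eK\hookrightarrow \CC_\infty$ induced by $\partial\rho$. First I would observe that if $\xi_0 \in J_\eK$ corresponds to $\nu_1\big|_\eK$, then \eqref{eqn: JK-coset} gives $\varrho_{\xi_0}\eH_\eK = \eH_\eK$, so \eqref{eqn: phi-K-Phi} yields $\varphi_{\eK,\xi_0} = \mathbf{1}_{\eH_\eK}$. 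Choosing $\sigma \in \eG_\eK$ with $\xi_\rho = \xi_0^\sigma$, Theorem~\ref{thm: PS}~(2) applied with $\xi = \xi_0$, $\varrho = \sigma$, and $\Phi^0 = \xi_0^\sigma$ produces
\[
p_\eK(\xi_\rho,\xi_\rho) = p_\eK(\xi_0^\sigma,\xi_0^\sigma) = p_\eK\bigl(\xi_0,(\xi_0^\sigma)^{\sigma^{-1}}\bigr) = p_\eK(\xi_0,\xi_0),
\]
so this period is independent (up to $\bar{k}^\times$) of the chosen CM Drinfeld module. Since $E_\rho$ is one-dimensional, the identification $\Lie(E_\rho)\cong \bar{k}$ attached to $\omega_{\eM(\rho),\xi_\rho}$ in Section~\ref{sec: pv-CM} together with the comparison~\eqref{eqn: dR-P comp} yields $\lambda \sim p_\eK(\xi_\rho,\xi_\rho)$ for every nonzero period, and chaining with Definition~\ref{defn: period quantity}, Theorem~\ref{thm: PD}, and Theorem~\ref{thm: Gamma dist} gives
\[
\lambda \sim p_\eK(\xi_0,\xi_0) = \Pscr_{\nu_1}^{\cyc}(\mathbf{1}_{\eH_\eK}) = \Pscr_{\nu_1}^{\cyc}(\St(\ba)) = \hat{\Gamma}(\ba),
\]
which is the first assertion.

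For the ``Moreover'' part, since $\eK\subset\eK_{\nfk,\ell}$ is abelian over $\ek$, $\eH_\eK$ is normal in $\eG^{\cyc}$, and direct inspection of Remark~\ref{rem: S(G)}~(1) and Definition~\ref{defn: ST} shows both $\varrho \cdot \mathbf{1}_{\eH_\eK} = \mathbf{1}_{\varrho^{-1}\eH_\eK}$ and the $\eG^{\cyc}$-equivariance $\varrho \cdot \St(\ba) = \St(\varrho \star \ba)$; hence Theorem~\ref{thm: Gamma dist} produces $\wp_{\nu_1}^{\cyc}(\varrho \cdot \mathbf{1}_{\eH_\eK}) \sim \tilde{\Gamma}(\varrho \star \ba)$. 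By \cite[Thm.~8.4.1]{BCPW22}, the quasi-period space of $E_\rho$ equals $\sum_{\xi \in J_\eK}\bar{k}\cdot p_\eK(\xi,\xi_\rho)$; parameterizing $\xi = \xi_0^{\varrho'}$, applying Theorem~\ref{thm: PS}~(2), and reindexing by $\tilde{\varrho} = \sigma(\varrho')^{-1}$ rewrites this span as $\sum_{\varrho \in \eG_\eK}\bar{k}\cdot \tilde{\Gamma}(\varrho \star \ba)$. When $\eK^+ = \ek$, condition~(2) of Definition~\ref{defn: S(G)} is automatic, so $\Sscr(\eG_\eK)$ equals the full $[\eK:\ek]$-dimensional space of $\QQ$-valued functions on $\eG_\eK$, with basis $\{\varrho \cdot \mathbf{1}_{\eH_\eK}\}_{\varrho \in \eG_\eK}$; Theorem~\ref{thm: trdeg} then gives
\[
\trdeg_{\bar{k}}\bar{k}\bigl(\Pscr_{\nu_1}^{\cyc}(\Sscr(\eG_\eK))\bigr) = 1 + \bigl(1-\tfrac{1}{[\eK:\eK^+]}\bigr)\cdot[\eK:\ek] = [\eK:\ek],
\]
matching the number of generators, so the $\tilde{\Gamma}(\varrho\star\ba)$'s are algebraically independent over $\bar{k}$ and form a $\bar{k}$-basis of the quasi-period space.

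The main subtlety will be keeping three actions aligned throughout: the left-translation action of $\eG^{\cyc}$ on $\Sscr(\eG^{\cyc})$, the $\star$-action on $\Acal^{\cyc}_{\nfk,\ell,\QQ}$, and the twist on $J_\eK$ appearing in Theorem~\ref{thm: PS}~(2). The inverse-shift in the identity $\Pcal_\eK(\xi^\varrho,\Phi^0) = \Pcal_\eK(\xi,(\Phi^0)^{\varrho^{-1}})$ is precisely what permits identifying $\lambda$ with $\tilde{\Gamma}(\ba)$ itself, rather than with some Galois translate, for every CM Drinfeld module by $O_\eK$.
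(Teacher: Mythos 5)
Your proposal is correct and takes essentially the same route as the paper: the paper's proof simply invokes Theorems~\ref{thm: qp-CM} and \ref{thm: HB-period} (whose proofs consist exactly of your steps via \eqref{eqn: dR-P comp}, Theorem~\ref{thm: PS}~(2), the identity $\varrho\cdot\varphi_{\eK,\Xi}=\St(\varrho\star\ba)$, and Theorem~\ref{thm: Gamma dist}) and then concludes algebraic independence from Theorem~\ref{thm: trdeg}, just as you do using $\eK^+=\ek$ and $\dim_\QQ\Sscr(\eG_\eK)=[\eK:\ek]$. The only difference is that you unpack those intermediate results inline rather than citing them.
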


\begin{proof}
As $\eK$ is imaginary,
we have that $\eK^+=\ek$ and the characteristic function $\mathbf{1}_{\eH_\eK}$ of $\eH_\eK$ is contained in $\Sscr(\eG_{\nfk,\ell})$. Hence
Lemma~\ref{lem: Stcyc-surj} assures us that there exist $\ba \in \Acal^{\cyc}_{\nfk,\ell,\QQ}$ so that 
\[
\mathbf{1}_{\eH_\eK} = \St(\ba).
\]
Let $\xi_0 \in J_\eK$ such that $\nu_{\xi_0} = \nu_1\big|_{\eK}$.
Then Theorem~\ref{thm: HB-period} implies that
\[
\lambda \sim  p_\eK(\xi_0,\xi_0)\sim \tilde{\Gamma}(\ba), \quad \text{ for every nonzero period } \lambda \in \Lambda_\rho \subset \CC_\infty.
\]
Moreover, Theorem~\ref{thm: qp-CM} shows that the space of quasi-periods of $E_\rho$ is spanned over $\ok$ by
\[
\tilde{\Gamma}(\varrho\star \ba), \quad \varrho \eH_\eK \in \eG^{\cyc}/\eH_\eK,
\]
and 
\begin{align*}
\trdeg_{\ok}\ok(\tilde{\Gamma}(\varrho\star \ba)\mid \varrho \eH_\eK \in \eG^{\cyc}/\eH_\eK)
&= \trdeg_{\ok}\ok\Big(\wp_{\nu_1}^{\cyc}\big(\Sscr(\eG_{\eK})\big)\Big) \\
&= [\eK:\ek] \quad \text{ (by Proposition~\ref{thm: trdeg}).}
\end{align*}
\end{proof}

Next, we shall provide explicit constructions for the $\ba$'s in Theorem~\ref{thm: qp-CM}, Theorem~\ref{thm: HB-period}, and Theorem~\ref{thm: CSP},
and present a function field analogue of the Chowla--Selberg formula.

\subsection{The Chowla--Selberg formula}\label{sec: CSF}

Let $\nfk \in \eA_+$ and $\ell \in \NN$.
Identifying $\eG_{\nfk,\ell}$ with $(\eA/\nfk)^\times \times (\ZZ/\ell \ZZ)$ via the Artin map as in \eqref{eqn: cyc-Artin},
every $\bochi \in \widehat{\eG}_{\nfk,\ell}$ can also be regarded as a character on $(\eA/\nfk)^\times \times (\ZZ/\ell \ZZ)$.
We put $\cfk_{\bochi} = \cfk_{\bochi_f}$, where $\bochi_f = \bochi\big|_{(\eA/\nfk)^\times}$.
Recall that for $c \in \ZZ/\ell \ZZ$ and $\alpha \in (\eA/\cfk_{\bochi})^\times$,
by Lemma~\ref{lem: evaluator}
we have
\begin{align}\label{eqn: generating-chi}
\left(\St(\frac{\alpha(\theta)}{c_{\bochi}(\theta)},\frac{q^c}{1-q^\ell})\ \bigg|\ \bochi\right)_{\cfk_{\bochi},\ell}
&= \sum_{(a,i) \in (\eA/\cfk_{\bochi})^\times \times (\ZZ/\ell\ZZ)}\St(\frac{\alpha(\theta)}{c_{\bochi}(\theta)},\frac{q^c}{1-q^\ell})(a,i) \cdot \overline{\bochi(a,i)} \nonumber \\
& = \bochi(\alpha,c+\deg \cfk_{\bochi}) \cdot L_\eA(0,\overline{\bochi}).
\end{align}
Therefore:

\begin{lemma}\label{lem: ST-chi}
Let $\nfk \in \eA_+$ and $\ell \in \NN$.
For each character $\bochi \in \widehat{\eG}_{\nfk,\ell}$, the following equality holds:
\[
\sum_{(a,i) \in (\eA/\cfk_{\bochi})^\times \times (\ZZ/\ell \ZZ)}
\overline{\bochi(a,i+\deg \cfk_{\bochi})} \cdot \St(\frac{a(\theta)}{\cfk_{\bochi}(\theta)},\frac{q^i}{1-q^\ell})
= L_\eA(0,\overline{\bochi}) \cdot \bochi.
\]
\end{lemma}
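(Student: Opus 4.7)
The plan is to show that the left-hand side, call it $\Phi$, is an eigenfunction of $\eG_{\cfk_\bochi,\ell}$ with character $\bochi$, and then pin down the proportionality constant via the formula \eqref{eqn: generating-chi} established just before the lemma. First I would observe that each summand $\St\bigl(a(\theta)/\cfk_\bochi(\theta),\,q^i/(1-q^\ell)\bigr)$ factors through the quotient $\eG^{\cyc} \twoheadrightarrow \eG_{\cfk_\bochi,\ell}$, and so does $\bochi$ by the very definition of its conductor. Both sides of the asserted identity therefore live in $\Fcal_\CC(\eG_{\cfk_\bochi,\ell})$, and it suffices to verify the identity there. The degenerate case $\bochi_f$ trivial corresponds to $\cfk_\bochi = 1$, in which case $(\eA/\cfk_\bochi)^\times$ is the trivial group, $a(\theta)/\cfk_\bochi(\theta)$ represents the zero class in $k/A$, and the argument below applies without change.

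For the eigenfunction step, fix $\varrho_0 = \varrho_{\alpha_0,c_0} \in \eG_{\cfk_\bochi,\ell}$. The $\eG$-equivariance $\varrho_0 \cdot \St(x,y) = \St(\varrho_0 \star x, \varrho_0 \star y)$, which is immediate from the definition of $\St$ together with the $\star$-action in \eqref{eqn: G-action}, converts each summand of $\varrho_0 \cdot \Phi$ into $\St\bigl((\alpha_0 a)(\theta)/\cfk_\bochi(\theta),\, q^{c_0+i}/(1-q^\ell)\bigr)$. Applying the bijective change of variables $(a,i) \mapsto (\alpha_0^{-1}a,\,i-c_0)$ on the summation index, the coefficient $\overline{\bochi(a,i+\deg\cfk_\bochi)}$ acquires the extra factor $\overline{\bochi(\alpha_0^{-1},-c_0)} = \bochi(\alpha_0,c_0) = \bochi(\varrho_0)$, producing $\varrho_0 \cdot \Phi = \bochi(\varrho_0) \cdot \Phi$. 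Since the $\bochi$-isotypic component of $\Fcal_\CC(\eG_{\cfk_\bochi,\ell})$ is one-dimensional, we conclude $\Phi = c\cdot \bochi$ for some scalar $c \in \CC$.

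To identify $c$ I would evaluate the inner product $(\Phi|\bochi)_{\cfk_\bochi,\ell}$ in two ways. On one hand, by orthogonality of characters, $(\Phi|\bochi)_{\cfk_\bochi,\ell} = c \cdot \#\eG_{\cfk_\bochi,\ell}$. On the other, $\CC$-linearity of the pairing combined with \eqref{eqn: generating-chi} yields
\[
(\Phi|\bochi)_{\cfk_\bochi,\ell} = \sum_{(a,i)} \overline{\bochi(a,i+\deg\cfk_\bochi)} \cdot \bochi(a,i+\deg\cfk_\bochi)\cdot L_\eA(0,\overline{\bochi}) = \#\eG_{\cfk_\bochi,\ell}\cdot L_\eA(0,\overline{\bochi}).
\]
Comparing gives $c = L_\eA(0,\overline{\bochi})$, which is exactly the claim. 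The only point requiring care is the uniform bookkeeping of the $\cfk_\bochi = 1$ case and the change-of-variables in the eigenfunction calculation; there is no serious obstacle beyond these elementary character-theoretic manipulations, as the main content of the lemma has already been absorbed into \eqref{eqn: generating-chi}.
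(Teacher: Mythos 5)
Your proposal is correct, and it reaches the same identity by a slightly different mechanism than the paper. The paper's own proof is a one-step pointwise computation: evaluating the left-hand side at an arbitrary $\varrho_{\alpha,c}\in\eG_{\nfk,\ell}$, it uses the fact that $\St\bigl(\tfrac{a(\theta)}{\cfk_{\bochi}(\theta)},\tfrac{q^i}{1-q^\ell}\bigr)(\alpha,c)=\langle\tfrac{a(\theta)\alpha(\theta)}{\cfk_{\bochi}(\theta)},\tfrac{q^{i+c}}{q^\ell-1}\rangle-\langle\tfrac{q^{i+c}}{q^\ell-1}\rangle_{\ari}$ is symmetric in the roles of $(a,i)$ and $(\alpha,c)$, so that after factoring out $\overline{\bochi(1,\deg\cfk_{\bochi})}$ the sum over $(a,i)$ is literally the pairing $\bigl(\St(\tfrac{\alpha(\theta)}{\cfk_{\bochi}(\theta)},\tfrac{q^{c}}{1-q^\ell})\,\big|\,\bochi\bigr)_{\cfk_{\bochi},\ell}$, and \eqref{eqn: generating-chi} then gives the value $L_\eA(0,\overline{\bochi})\cdot\bochi(\alpha,c)$ directly. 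You replace this symmetry observation by the $\eG$-equivariance $\varrho_0\cdot\St(x,y)=\St(\varrho_0\star x,\varrho_0\star y)$ together with Fourier analysis on the finite abelian group $\eG_{\cfk_{\bochi},\ell}$: the eigenfunction step (your change of variables is correct, including the factor $\overline{\bochi(\alpha_0^{-1},-c_0)}=\bochi(\varrho_0)$) pins the left side down to $\CC\cdot\bochi$, and the normalization by pairing against $\bochi$ again reduces to \eqref{eqn: generating-chi}. Both arguments draw all of their arithmetic content from Lemma~\ref{lem: evaluator}; the paper's symmetry trick is shorter, while your version needs the extra bookkeeping you flag (passage to $\eG_{\cfk_{\bochi},\ell}$, the $\cfk_{\bochi}=1$ case, and, implicitly, the harmless degenerate case $L_\eA(0,\overline{\bochi})=0$, where the eigenfunction is simply zero) but makes the underlying equivariance structure explicit. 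No gaps.
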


\begin{proof}
For every $(\alpha,c) \in (\eA/\nfk)^\times \times (\ZZ/\ell \ZZ)$, we get
\begin{align*}
& \sum_{(a,i) \in (\eA/\cfk_{\bochi})^\times \times (\ZZ/\ell \ZZ)}
\overline{\bochi(a,i+\deg \cfk_{\bochi})} \cdot \St(\frac{a(\theta)}{\cfk_{\bochi}(\theta)},\frac{q^i}{1-q^\ell})(\alpha,c)\\
= & \sum_{(a,i) \in (\eA/\cfk_{\bochi})^\times \times (\ZZ/\ell \ZZ)} \overline{\bochi(a,i)}  \cdot \left(\langle \frac{a (\theta)\alpha(\theta)}{\cfk_{\bochi}(\theta)},\frac{q^{i+c}}{q^\ell-1}\rangle - 
\langle \frac{q^{i+c}}{q^\ell-1}\rangle_{\ari} \right) \cdot \overline{\bochi(1,\deg \cfk_{\bochi})} \\
= & 
\left(\St(\frac{\alpha(\theta)}{\cfk_{\bochi}(\theta)},\frac{q^{c}}{1-q^\ell})\ \bigg|\ \bochi\right)_{\cfk_{\bochi},\ell} \cdot \overline{\bochi(1,\deg \cfk_{\bochi})}
= L_\eA(0,\overline{\bochi}) \cdot \bochi(\alpha,c) & \text{(by \eqref{eqn: generating-chi})}
\end{align*}
as desired.
\end{proof}

Given a CM field $\eK$ over $\ek$ which is contained in $\eK_{\nfk,\ell}$, let $\eG_\eK$ be the Galois group $\gal(\eK/\ek)$.
For a generalized CM type $\Xi$ of $\eK$,
from the bijection~\eqref{eqn: JK-coset}
we may write
\begin{equation}\label{eqn: Xi-varrho}
\Xi = \sum_{\varrho \in \eG_\eK}m_{\varrho} \cdot \xi_\varrho,
\end{equation}
where for each $\varrho \in \eG_\eK$, $\xi_\varrho \in J_\eK$ is the point corresponding to the embedding $\nu_1\circ \varrho$.
Let $\eK^+$ be the maximal totally real subfield of $\eK$ and put $\eG_{\eK^+} = \gal(\eK^+/\ek)$.
By Remark~\ref{rem: S(G)-chi}, we may express  
\[
\varphi_{\eK,\Xi} =\frac{1}{\#(\eG_{\eK})}\cdot \left(\sum_{\varrho \in \eG_\eK}m_\varrho\right) +  \frac{1}{\#(\eG_{\eK})}\cdot \sum_{\bochi \in \widehat{\eG}_{\eK}\setminus \widehat{\eG}_{\eK^+}}\left(\sum_{\varrho \in \eG_\eK}m_\varrho \overline{\bochi(\varrho)}\right) \cdot \bochi.
\]
Notice that $\sum\limits_{\varrho \in \eG_\eK} m_\varrho = [\eK^+:\ek] \cdot \wt(\Xi)$, and for $\bochi \in \widehat{\eG}_\eK \setminus \widehat{\eG}_{\eK^+}$ we have (by Remark~\ref{rem: L-vanish} and Lemma~\ref{lem: ST-chi}):
\[
\bochi = \sum_{\substack{a \in (\eA/\cfk_{\bochi})^\times \\ i \in \ZZ/\ell \ZZ}}\frac{\overline{\bochi(a,i+\deg \cfk_{\bochi})}}{L_\eA(0,\overline{\bochi})} \cdot \St(\frac{a(\theta)}{\cfk_{\bochi}(\theta)},\frac{q^i}{1-q^\ell}).
\]
Therefore
\begin{align*}
\varphi_{\eK,\Xi} &= 
\frac{\wt(\Xi)}{[\eK:\eK^+]}
+ \frac{1}{[\eK:\ek]}
\sum_{\bochi \in \widehat{\eG}_{\eK}\setminus \widehat{\eG}_{\eK^+}}\!\!
\left(\sum_{\varrho \in \eG_\eK}m_\varrho \overline{\bochi}(\varrho)\right)\!\! \cdot \!\!
\sum_{\substack{a \in (\eA/\cfk_{\bochi})^\times \\ i \in \ZZ/\ell \ZZ}}\frac{\overline{\bochi}(a,i+\deg \cfk_{\bochi})}{L_\eA(0,\overline{\bochi})}\cdot \St(\frac{a(\theta)}{\cfk_{\bochi}(\theta)},\frac{q^i}{1-q^\ell}) \\
&=
\frac{\wt(\Xi)}{[\eK:\eK^+]}
+ \frac{1}{[\eK:\ek]}  \sum_{\cfk\mid \nfk}\sum_{\substack{a \in (\eA/\cfk)^\times \\ i \in \ZZ/\ell \ZZ}} \left(\sum_{\varrho \in \eG_\eK} m_\rho \!\!\sum_{\subfrac{\bochi \in \widehat{\eG}_\eK\setminus \widehat{\eG}_{\eK^+}}{\cfk_{\bochi} = \cfk}}\!\!
\frac{\bochi(\varrho)\bochi(a,i+\deg \cfk)}{L_\eA(0,\bochi)} \right) \cdot \St(\frac{a(\theta)}{\cfk(\theta)},\frac{q^i}{1-q^\ell}).
\end{align*}
For each $\varrho \in \eG_\eK$, put
\begin{equation}\label{eqn: defn-nc}
n_\cfk(\varrho,a,i) \assign \sum_{\subfrac{\bochi \in \widehat{\eG}_\eK\setminus \widehat{\eG}_{\eK^+}}{\cfk_{\bochi} = \cfk}}
\frac{\bochi(\varrho)\bochi(a,i+\deg \cfk)}{L_\eA(0,\bochi)}, \quad \text{ which lies in $\QQ$.}
\end{equation}
Then 
\[
\varphi_{\eK,\Xi} = 
\frac{\wt(\Xi)}{[\eK:\eK^+]}
+ \frac{1}{[\eK:\ek]}
\sum_{\cfk \mid \nfk}\sum_{\substack{a \in (\eA/\cfk)^{\times} \\ i\in \ZZ/\ell\ZZ}}
\left(\sum_{\varrho \in \eG_\eK}m_\varrho \cdot  n_\cfk(\varrho,a,i)\right)
\cdot \St(\frac{a(\theta)}{\cfk(\theta)},\frac{q^i}{1-q^\ell}).
\]
In general, we obtain that:

\begin{proposition}\label{prop: phi-ST}
Let $\nfk \in \eA_+$ and $\ell \in \NN$.
Given a CM field $\eK$ contained in $\eK_{\nfk,\ell}$ and a generalized CM type $\Xi$ of $\eK$, write $\Xi = \sum_{\varrho \in \eG_\eK}m_\varrho \xi_\varrho$ as in \eqref{eqn: Xi-varrho}.
For each $\varrho_0 \in \eG_\eK$, we have the following expression for the Stickelberger function $\varphi_{\eK,\Xi^{\varrho_0}}$:
\[
\varphi_{\eK,\Xi^{\varrho_0}} = 
\frac{\wt(\Xi)}{[\eK:\eK^+]}
+ \frac{1}{[\eK:\ek]}
\sum_{\cfk \mid \nfk}\sum_{\substack{a \in (\eA/\cfk)^\times \\ i\in \ZZ/\ell\ZZ}}
\left(\sum_{\varrho \in \eG_\eK}m_\varrho \cdot  n_\cfk(\varrho\varrho_0,a,i)\right)
\cdot \St(\frac{a(\theta)}{\cfk(\theta)},\frac{q^i}{1-q^\ell}).
\]
\end{proposition}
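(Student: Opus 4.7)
The plan is to deduce the statement from the $\varrho_0 = \mathrm{id}_\eK$ case that is worked out in the discussion preceding the proposition. The bridge is the identity
\[
\varphi_{\eK,\Xi^{\varrho_0}} \;=\; \varrho_0^{-1}\cdot \varphi_{\eK,\Xi},
\]
where $\eG_\eK$ acts on $\Sscr(\eG_{\nfk,\ell})$ by right translation (Remark~\ref{rem: S(G)}~(1)). To verify it, I would apply Theorem~\ref{thm: PS}~(2): because $\xi_\varrho^{\varrho_0}$ corresponds to the embedding $\nu_1\circ\varrho\circ\varrho_0 = \nu_{\xi_{\varrho\varrho_0}}$, we have $\xi_\varrho^{\varrho_0} = \xi_{\varrho\varrho_0}$, whence $\Xi^{\varrho_0} = \sum_{\varrho'}m_{\varrho'\varrho_0^{-1}}\,\xi_{\varrho'}$ and so $\varphi_{\eK,\Xi^{\varrho_0}}(\varrho') = m_{\varrho'\varrho_0^{-1}} = \varphi_{\eK,\Xi}(\varrho'\varrho_0^{-1}) = (\varrho_0^{-1}\cdot \varphi_{\eK,\Xi})(\varrho')$.

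Having reduced to this translation, I would apply $\varrho_0^{-1}\cdot$ to the character decomposition of $\varphi_{\eK,\Xi}$ displayed just above the proposition. The constant term $\wt(\Xi)/[\eK:\eK^+]$ is $\eG_\eK$-invariant and survives unchanged, while for each non-trivial $\bochi \in \widehat{\eG}_\eK$ the relation $(\varrho_0^{-1}\cdot\bochi)(\varrho) = \bochi(\varrho\varrho_0^{-1}) = \overline{\bochi(\varrho_0)}\cdot\bochi(\varrho)$ shows that the coefficient of $\bochi$ is simply multiplied by $\overline{\bochi(\varrho_0)}$. Invoking Lemma~\ref{lem: ST-chi} to expand each such $\bochi$ as a $\CC$-linear combination of the basic Stickelberger functions $\St\bigl(\tfrac{a(\theta)}{\cfk_\bochi(\theta)},\tfrac{q^i}{1-q^\ell}\bigr)$ and grouping characters by their conductor, the coefficient of $\St\bigl(\tfrac{a(\theta)}{\cfk(\theta)},\tfrac{q^i}{1-q^\ell}\bigr)$ becomes
\[
\frac{1}{[\eK:\ek]}\sum_{\varrho\in\eG_\eK} m_\varrho \sum_{\substack{\bochi \in \widehat{\eG}_\eK \setminus \widehat{\eG}_{\eK^+} \\ \cfk_\bochi = \cfk}} \frac{\overline{\bochi(\varrho\varrho_0)\bochi(a,i+\deg\cfk)}}{L_\eA(0,\overline{\bochi})},
\]
which, after relabeling $\bochi\leftrightarrow\overline{\bochi}$ in the inner sum, equals $\frac{1}{[\eK:\ek]}\sum_\varrho m_\varrho\, n_\cfk(\varrho\varrho_0,a,i)$ by \eqref{eqn: defn-nc}, as required.

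Since the argument is essentially a Galois twist of the already-established $\varrho_0 = \mathrm{id}_\eK$ computation, I do not anticipate any substantive obstacle; the only point deserving care is the compatibility of the left/right action conventions on $\Sscr(\eG)$ and on characters, which is precisely what the ordering $\varrho\varrho_0$ (rather than $\varrho_0\varrho$) in the argument of $n_\cfk$ is tracking.
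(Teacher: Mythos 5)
Your proposal is correct and takes essentially the same route as the paper: the paper also starts from $\xi_\varrho^{\varrho_0}=\xi_{\varrho\varrho_0}$, so that $\Xi^{\varrho_0}=\sum_{\varrho\in\eG_\eK}m_{\varrho\varrho_0^{-1}}\xi_\varrho$, and then simply applies the formula for $\varphi_{\eK,\Xi}$ derived just before the proposition to these shifted multiplicities and re-indexes the sum, yielding $n_\cfk(\varrho\varrho_0,a,i)$. Your variant—writing $\varphi_{\eK,\Xi^{\varrho_0}}=\varrho_0^{-1}\cdot\varphi_{\eK,\Xi}$, twisting each character coefficient by $\overline{\bochi(\varrho_0)}$, and re-expanding via Lemma~\ref{lem: ST-chi}—is an equivalent reorganization of the same computation, with the conjugation relabeling $\bochi\leftrightarrow\overline{\bochi}$ handled correctly.
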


\begin{proof}
Suppose $\varrho_0 \in \eG_\eK$ is given.
As
\[
\Xi^{\varrho_0} = \sum_{\varrho \in \eG_\eK}m_\varrho \xi_{\varrho}^{\varrho_0}
=\sum_{\varrho \in \eG_\eK}m_\varrho \xi_{\varrho\varrho_0}
= 
\sum_{\varrho \in \eG_\eK}m_{\varrho\varrho_0^{-1}}\xi_\varrho,
\]
we obtain that 
\begin{align*}
\varphi_{\eK,\Xi^{\varrho_0}} 
&=\frac{\wt(\Xi)}{[\eK:\eK^+]}
+ \frac{1}{[\eK:\ek]}
\sum_{\cfk \mid \nfk}\sum_{\substack{a \in (\eA/\cfk)^\times \\ i\in \ZZ/\ell\ZZ}}
\left(\sum_{\varrho \in \eG_\eK}m_{\varrho\varrho_0^{-1}} \cdot  n_\cfk(\varrho,a,i)\right)
\cdot \St(\frac{a(\theta)}{\cfk(\theta)},\frac{q^i}{1-q^\ell})\\
&= 
\frac{\wt(\Xi)}{[\eK:\eK^+]}
+ \frac{1}{[\eK:\ek]}
\sum_{\cfk \mid \nfk}\sum_{\substack{a \in (\eA/\cfk)^\times \\ i\in \ZZ/\ell\ZZ}}
\left(\sum_{\varrho \in \eG_\eK}m_\varrho \cdot  n_\cfk(\varrho\varrho_0,a,i)\right)
\cdot \St(\frac{a(\theta)}{\cfk(\theta)},\frac{q^i}{1-q^\ell}).
\end{align*}
\end{proof}

\begin{remark}\label{rem: class number}
Let $\eK$ be a CM field and $\eK^+$ its maximal totally real subfield.
Suppose that $\eK$ is contained in $\eK_{\nfk,\ell}$ for some $\nfk \in \eA_+$ and $\ell \in \NN$.
It is known that (cf.\ \cite[Theorem~14.4]{Rosen})
\[
\prod_{\bochi \in \widehat{\eG}_\eK \setminus \widehat{\eG}_{\eK^+}}L_{\eA}(0,\bochi) = \frac{\#\Pic(O_\eK)\cdot \Rcal(O_\eK)}{w_\eK \cdot \#\Pic(O_{\eK^+})\cdot \Rcal(O_{\eK^+})},
\]
where:
\begin{itemize}
    \item $\Pic(O_\eK)$ (resp.\ $\Pic(O_{\eK^+})$) is the ideal class group of $O_\eK$ (resp.\ $O_{\eK^+}$);
    \item $w_\eK \assign \#(\FF_\eK^\times)/\#(\FF_q^\times)$, and $\FF_\eK$ is the constant field of $\eK$;
    \item $\Rcal(O_\eK)$ (resp.\ $\Rcal(O_{\eK^+})$) is the regulator of $O_\eK$ (resp.\ $O_{\eK^+}$), see \cite[p.~245]{Rosen}.
\end{itemize}
Let $h(\eK/\eK^+)$ be the following ``relative class number'':
\[
h(\eK/\eK^+)\assign 
\frac{\#\Pic(O_\eK)\cdot \Rcal(O_\eK)}{ \#\Pic(O_{\eK^+})\cdot \Rcal(O_{\eK^+})} \quad \in \NN.
\]
Then for $\cfk \in \eA_+$ with $\cfk \mid \nfk$, $a \in (\eA/\cfk)^\times$, $i \in \ZZ/\ell\ZZ$, and $\varrho \in \eG_\eK$, the expression of $\nfk_{\cfk}(\varrho,a,i)$ in \eqref{eqn: defn-nc} tells us in particular that
\[
h(\eK/\eK^+)\cdot \nfk_{\cfk}(\varrho,a,i) \ \in \ZZ.
\]
\end{remark}

Proposition~\ref{prop: phi-ST} leads us to the following Chowla--Selberg formula for quasi-periods of CM abelian $t$-modules:

\begin{theorem}\label{thm: ex-qp-CM}\label{thm: ex-HB-period}
Let $E_\rho$ be a CM abelian $t$-module with generalized CM type $(\eK,\Xi)$ over $\bar{k}$, where $\eK$ is contained in $\eK_{\nfk,\ell}$ for some $\nfk \in \eA_+$ and $\ell \in \NN$.
Let $\eG_\eK = \gal(\eK/\ek)$ and write
\[
\Xi = \sum_{\varrho \in \eG_\eK}m_\varrho \xi_\varrho \ \in I_\eK^0 \quad \text{ as in \eqref{eqn: Xi-varrho}}.
\]

(1) The space of quasi-periods of $E_\rho$ is spanned over $\bar{k}$ by
\[
\tilde{\pi}^{\frac{\wt(\Xi)}{[\eK:\eK^+]}}
\cdot  \prod_{\varrho \in \eG_\eK} \left(\prod_{\cfk\mid \nfk}\prod_{\substack{a \in (\eA/\cfk)^\times \\ i \in \ZZ/\ell \ZZ}} \tilde{\Gamma}(\frac{a(\theta)}{\cfk(\theta)},\frac{q^i}{1-q^\ell})^{n_{\cfk}(\varrho\varrho_0,a,i)}\right)^{\frac{m_{\varrho}}{[\eK:\ek]}}, \quad \text{ for } \varrho_0 \in \eG_\eK,
\]
where $n_\cfk(\varrho,a,i)$ is defined in \eqref{eqn: defn-nc}.

(2) Suppose $\Xi$ is a CM type of $\eK$.
Let $\eK^+$ be the maximal totally real subfield of $\eK$, and put $d = [\eK^+:\ek]$.
Write $\Xi = \xi_{\varrho_1}+\cdots + \xi_{\varrho_d}$, and let $\nu_{\varrho_j}\assign (\nu_1\big|_\eK) \circ \varrho_j$ for $1\leq j \leq d$.
There exist an ideal $\Ifk_\rho$ of $O_\eK$ and a suitable $\bar{k}$-linear isomorphism  $\Lie(E_\rho) \cong \bar{k}^d$ so that the
image of every period $\lambda \in \Lambda_\rho \subset \Lie(E_\rho)(\CC_\infty)$ in $\CC_\infty^d$ has the form $(\nu_{\varrho_1}(\alpha)\lambda_1,...,\nu_{\varrho_d}(\alpha)\lambda_d) \in \CC_\infty^d$,
where $\alpha \in \Ifk_\rho$ and for $1\leq j \leq d$,
\[
\lambda_j \assign \tilde{\pi}^{\frac{1}{[\eK:\eK^+]}}
\cdot  \prod_{j'=1}^d \prod_{\cfk\mid \nfk}\prod_{\substack{a \in (\eA/\cfk)^\times \\ i \in \ZZ/\ell\ZZ}} \tilde{\Gamma}(\frac{a(\theta)}{\cfk(\theta)},\frac{q^i}{1-q^\ell})^{\frac{n_{\cfk}(\varrho_{j'}\varrho_{j}^{-1},a,i)}{[\eK:\ek]}}.
\]

(3) Suppose further that $E_\rho$ is a Drinfeld 
$\eA$-module over $\ok$ with CM by $O_\eK$, where $O_\eK$ is the integral closure of $\eA$ in an imaginary field $\eK$.
Then
\[
\lambda^{[\eK:\ek]} \sim  
\tilde{\pi}
\cdot \prod_{\cfk\mid \nfk}\prod_{\substack{a \in (\eA/\cfk)^\times \\ i\in \ZZ/\ell\ZZ}} \tilde{\Gamma}(\frac{a(\theta)}{\cfk(\theta)},\frac{q^i}{1-q^\ell})^{n_{\cfk}(a,i)}
\quad \text{ for every nonzero period } \lambda \in \Lambda_\rho \subset \CC_\infty,
\]
where 
\[n_{\cfk}(a,i) \assign n_{\cfk}(\text{\rm id}_{\eK},a,i)
= \sum_{\subfrac{\bochi \in \widehat{\eG}_\eK\setminus \widehat{\eG}_{\eK^+}}{\cfk_{\bochi} = \cfk}}
\frac{\bochi(a,i+\deg \cfk)}{L_\eA(0,\bochi)}.
\]
Moreover,
\[
\varpi_\eK^{\varrho}\assign \tilde{\pi}^{\frac{1}{[\eK:\ek]}}
\cdot \prod_{\cfk\mid \nfk}\prod_{\substack{a \in (\eA/\cfk)^\times \\ i\in \ZZ/\ell\ZZ}} \tilde{\Gamma}(\frac{a(\theta)}{\cfk(\theta)},\frac{q^i}{1-q^\ell})^{\frac{n_{\cfk}(\varrho,a,i)}{[\eK:\ek]}},
\quad \text{ for } \varrho \in \eG_\eK,
\]
are algebraically independent over $\bar{k}$ and
form a $\bar{k}$-basis of the space of quasi-periods of the CM Drinfeld module $E_\rho$.
\end{theorem}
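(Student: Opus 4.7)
The plan is to read Theorem~\ref{thm: ex-qp-CM} as a direct consequence of three ingredients established earlier: the decomposition of $\varphi_{\eK,\Xi^{\varrho_0}}$ in Proposition~\ref{prop: phi-ST}, the identification $\wp_{\nu_1}^{\cyc}\circ\St\sim \tilde{\Gamma}$ (together with $\wp_{\nu_1}^{\cyc}(\mathbf{1}_{\eG^{\cyc}})\sim \tilde{\pi}$) furnished by Theorem~\ref{thm: Gamma dist}, and the period interpretations recorded in Theorem~\ref{thm: qp-CM}, \eqref{eqn: HB-period}, and Theorem~\ref{thm: CSP}. The whole proof then becomes an exercise in applying the $\QQ$-linearity of the period distribution $\wp_{\nu_1}^{\cyc}$ modulo $\bar{k}^\times$, which is well-defined because $\CC_\infty^\times/\bar{k}^\times$ is a divisible abelian group.

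For part~(1), I first invoke Theorem~\ref{thm: qp-CM} together with \eqref{eqn: CM-qp} to rewrite the space of quasi-periods of $E_\rho$ as $\sum_{\varrho_0 \in \eG_\eK}\bar{k}\cdot \wp_{\nu_1}^{\cyc}(\varphi_{\eK,\Xi^{\varrho_0}})$. Proposition~\ref{prop: phi-ST} then expresses $\varphi_{\eK,\Xi^{\varrho_0}}$ as $\tfrac{\wt(\Xi)}{[\eK:\eK^+]}\cdot \mathbf{1}_{\eG^{\cyc}}$ plus a $\QQ$-linear combination of basic Stickelberger functions with coefficients $\tfrac{1}{[\eK:\ek]}\sum_{\varrho}m_\varrho\,n_\cfk(\varrho\varrho_0,a,i)$. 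Applying $\wp_{\nu_1}^{\cyc}$ termwise and invoking Theorem~\ref{thm: Gamma dist} converts the right-hand side into the product of $\tilde{\pi}^{\wt(\Xi)/[\eK:\eK^+]}$ and powers of the $\tilde{\Gamma}$-values, after which swapping the order of summation yields the displayed outer product over $\varrho\in \eG_\eK$ weighted by $m_\varrho/[\eK:\ek]$.

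For part~(2), I specialize to a CM type $\Xi=\xi_{\varrho_1}+\cdots+\xi_{\varrho_d}$, in which case $m_{\varrho_j}=1$ and $\wt(\Xi)=1$. The formula \eqref{eqn: HB-period} identifies $\int_{\gamma_{\lambda_0}}\omega_{\eM,\xi_{\varrho_j}}\sim \wp_{\nu_1}^{\cyc}(\varphi_{\eK,\Xi^{\varrho_j^{-1}}})$, so substituting the result of part~(1) with $\varrho_0=\varrho_j^{-1}$ collapses the coefficient sum to $\sum_{j'=1}^{d}n_\cfk(\varrho_{j'}\varrho_j^{-1},a,i)$, giving the stated formula for $\lambda_j$. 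Writing $\Lambda_\rho=\Ifk_\rho\cdot \lambda_0$ as in the paragraph preceding \eqref{eqn: HB-period} and recalling that $\alpha\in O_\eK$ acts on $\Lie(E_\rho)\cong \bar{k}^d$ coordinate-wise through the embeddings $\nu_{\varrho_j}$ then provides the full description of period vectors. For part~(3), the imaginary hypothesis forces $\eK^+=\ek$ and $d=1$; Theorem~\ref{thm: CSP} with the CM type $\Xi=\xi_{\id_\eK}$, combined with part~(1) at $\varrho_0=\id_\eK$, yields the first formula upon raising to the $[\eK:\ek]$-th power, and the expression for $\varpi_\eK^\varrho$ comes from part~(1) at $\varrho_0=\varrho$. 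Algebraic independence and the $\bar{k}$-basis property of $\{\varpi_\eK^\varrho\}_{\varrho\in \eG_\eK}$ are inherited from the corresponding statement already established in Theorem~\ref{thm: CSP}.

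The primary obstacle is not analytic but notational: one must keep the convention $(\varrho_0\cdot \varphi)(\varrho)=\varphi(\varrho\varrho_0)$ consistent with the twist $\Xi\leftrightarrow \varphi_{\eK,\Xi}$ and with Theorem~\ref{thm: PS}~(2), so that the composed arguments $\varrho\varrho_0$ and $\varrho_{j'}\varrho_j^{-1}$ inside $n_\cfk(\cdot,a,i)$ appear in the correct slots after each substitution. All genuine transcendence content, and the core identification of the period distribution with gamma values, has already been absorbed into Theorem~\ref{thm: CSP}, Theorem~\ref{thm: Gamma dist}, and Proposition~\ref{prop: phi-ST}; hence beyond this careful bookkeeping no new difficulties arise in the final assembly.
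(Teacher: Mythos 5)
Your proposal is correct and takes essentially the same route as the paper: part (1) is exactly the paper's argument of feeding the decomposition from Proposition~\ref{prop: phi-ST} into Theorem~\ref{thm: qp-CM} and then converting to gamma values via Theorem~\ref{thm: Gamma dist} (Remark~\ref{rem: Gamma-dist}), while parts (2) and (3) are deduced, as in the paper, directly from \eqref{eqn: HB-period}/Theorem~\ref{thm: HB-period} and Theorem~\ref{thm: CSP} with the same specializations $\varrho_0=\varrho_j^{-1}$ and $\varrho_0=\mathrm{id}_\eK$. The only differences are notational bookkeeping (e.g.\ your $\mathbf{1}_{\eG^{\cyc}}$ versus the paper's $\mathbf{1}_{\eG_\eK}$), which do not affect correctness.
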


\begin{proof}
For $\varrho_0 \in \eG_\eK$, take
\[
\ba_{\eK,\Xi^{\varrho_0}} \assign 
\sum_{\cfk \mid \nfk}\sum_{\substack{a \in (\eA/\cfk)^{\times} \\ i\in \ZZ/\ell\ZZ}}
\left(\sum_{\varrho \in \eG_\eK}m_\varrho \cdot  n_\cfk(\varrho\varrho_0,a,i)\right)
\cdot [\frac{a(\theta)}{\cfk(\theta)},\frac{q^i}{1-q^\ell}] \quad \in \Acal_{\nfk,\ell,\QQ}^{\cyc}.
\]
Proposition~\ref{prop: phi-ST} implies that
\begin{equation}\label{eqn: phi-ST}
\varphi_{\eK,\Xi^{\varrho_0}} = \frac{\wt(\Xi)}{[\eK:\eK^+]}\cdot \mathbf{1}_{\eG_\eK} + \frac{1}{[\eK:\ek]}\St(\ba_{\eK,\Xi^{\varrho_0}}).
\end{equation}
Theorem~\ref{thm: qp-CM} implies that the space of quasi-period of $E_\rho$ is spanned over $\ok$ by
\begin{align*}
\wp_{\nu_1}^{\cyc}(\varphi_{\eK,\Xi^{\varrho_0}}) & \sim \wp_{\nu_1}^{\cyc}\left(\frac{\wt(\Xi)}{[\eK:\eK^+]}\cdot \mathbf{1}_{\eG_\eK} + \frac{1}{[\eK:\ek]}\St(\ba_{\eK,\Xi^{\varrho_0}})\right) \\
& \sim \tilde{\pi}^{\frac{\wt(\Xi)}{[\eK:\eK^+]}}
\cdot  \prod_{\varrho \in \eG_\eK} \left(\prod_{\cfk\mid \nfk}\prod_{\substack{a \in (\eA/\cfk)^\times \\ i \in \ZZ/\ell \ZZ}} \tilde{\Gamma}(\frac{a(\theta)}{\cfk(\theta)},\frac{q^i}{1-q^\ell})^{n_{\cfk}(\varrho\varrho_0,a,i)}\right)^{\frac{m_{\varrho}}{[\eK:\ek]}}, \quad \text{ for } \varrho_0 \in \eG_\eK.
\end{align*}
This completes the proof of (1).

Moreover, from the expression \eqref{eqn: phi-ST},
(2) and (3) follow directly from Theorem~\ref{thm: HB-period} and Theorem~\ref{thm: CSP}, respectively.
\end{proof}

\begin{example}\label{ex: CSF-1}
(\emph{Chowla--Selberg formula for constant field extensions}.)
Given $\ell \in \NN$, put $\eK = \eK_{1,\ell} = \FF_{q^\ell}(t)$.
Then
\[
\text{$\eG_\eK =\gal(\eK/\ek)\cong \gal(\FF_{q^\ell}/\FF_q) \cong \ZZ/\ell \ZZ$ \quad as in \eqref{eqn: ari-Artin},}
\]
and $\cfk_{\bochi} = 1$ for every $\bochi \in \widehat{\eG}_\eK$.
Put $\zeta_\ell \assign \exp(2\pi\sqrt{-1}/\ell) \in \CC^\times$.
Recall that we write $\bochi(1,1) = q^{-s_{\bochi}} = \zeta_\ell^{d_{\bochi}}$ for each $\bochi \in \widehat{\eG}_\eK$.
For $i \in \ZZ/\ell \ZZ$, we calculate
\[
n_1(1,i) = \sum_{1\neq \bochi \in \widehat{\eG}_\eK}\frac{\bochi(1,i)}{L_{\eA}(0,\bochi)}
= \sum_{d=1}^{\ell-1} \zeta_\ell^{id}(1-q\zeta_\ell^d)
= q-1+ \begin{cases}
\ell, &\text{ if $i = 0$,}\\
-\ell q, &\text{ if $i=\ell-1$,}\\
0, &\text{ otherwise.}
\end{cases}
\]
Let $E_\rho$ be a Drinfeld $\eA$-module of rank $\ell$ over $\ok$ with CM by $\FF_{q^\ell}[t]$.
Theorem~\ref{thm: ex-qp-CM}~(3) implies that
for every nonzero period $\lambda \in \Lambda_\rho$, we have that
\begin{align*}
\lambda & \sim \tilde{\pi}^{\frac{1}{\ell}} \cdot \left(\prod_{i=0}^{\ell-1}\tilde{\Gamma}(0,\frac{q^i}{1-q^\ell})^{\frac{q-1}{\ell}}\right) \cdot \left(\tilde{\Gamma}(0,\frac{1}{1-q^\ell})\Big/\tilde{\Gamma}(0,\frac{q^{\ell-1}}{1-q^\ell})^q\right) \\
& \sim \Gamma_{\ari}(1-\frac{q^{\ell-1}}{q^\ell-1})^q\Big/\Gamma_{\ari}(1-\frac{1}{q^\ell-1}),
\end{align*}
where the last equivalence comes from the monomial relations among gamma values in Proposition~\ref{prop: AR-Gamma}~(2) and (5).
In other words, the first statement of Theorem~\ref{thm: ex-qp-CM}~(3) implies Thakur's analogue of Chowla--Selberg formula in \cite[Theorem~1.6]{Thakur91}.
Moreover, 
the second statement of Theorem~\ref{thm: ex-qp-CM}~(3) shows the result in \cite[Theorem~3.3.2]{CPTY10} that
\[
\Gamma_{\ari}\Big(1-\frac{q^{c}}{q^\ell-1}\Big)^q\Big/\Gamma_{\ari}\Big(1-\frac{q^{c+1}}{q^\ell-1}\Big), \quad  c = 0,...,\ell-1,
\]
are algebraically independent over $\bar{k}$ and
form a $\bar{k}$-basis of the space of quasi-periods of $E_\rho$.
\end{example}

\begin{example}\label{ex: CSF-geo}
(\emph{Chowla--Selberg formula for the $t$-th Carlitz cyclotomic function field.})
Let $\eK = \eK_{t,1} = \ek(\sqrt[q-1]{-t})$, which is a geometric extension over $\ek$.
Then 
\[
\eG_\eK \cong \left(\frac{\eA}{(t)}\right)^{\times} \cong \FF_q^\times,
\quad 
\cfk_{\bochi} = t,
\text{ and } \quad 
L_\eA(0,\bochi) = 1,
\quad \forall \  1 \neq \bochi \in \widehat{\eG}_\eK.
\]
Thus for $\epsilon \in \FF_q^\times$, we get
\[
n_t(\epsilon,0) = \sum_{1\neq \bochi \in \widehat{\eG}_\eK}\bochi(\epsilon,0) 
= -1+
\begin{cases}
q-1, & \text{ if $\epsilon = 1$,}\\
0, & \text{ otherwise.}
\end{cases}
\]
Let $E_\rho$ be a Drinfeld $\eA$-module of rank $q-1$ over $\ok$ with CM by $\FF_q[\sqrt[q-1]{-t}]$.
Theorem~\ref{thm: ex-HB-period}~(3) implies that for every nonzero period $\lambda \in \Lambda_\rho$, we have that 
\[
\lambda \sim \tilde{\pi}^{\frac{1}{q-1}} \cdot 
\left(\prod_{\epsilon \in \FF_q^\times}\Gamma(\frac{\epsilon}{\theta},1-\frac{1}{q-1})^{\frac{-1}{q-1}}\right)
\cdot \Gamma(\frac{1}{\theta},1-\frac{1}{q-1})
\sim \Gamma_{\geo}(\frac{1}{\theta}),
\]
where the last equivalence comes from Proposition~\ref{prop: AR-Gamma}~(5).
This matches Thakur's formula in \cite[Theorem~4.11.2]{Thakur04}.
Moreover, the second statement of Theorem~\ref{thm: ex-HB-period}~(3) shows that
\[
\Gamma_{\geo}(\frac{\epsilon}{\theta}),\quad \epsilon \in \FF_q^\times,
\]
form a $\ok$-basis of the space of quasi-periods of $E_\rho$ (coinciding with \cite[Theorem~6.4.7~(2)]{Thakur04} when $q=3$), and that they are algebraically independent over $\ok$.
\end{example}

\begin{example}\label{ex: CSF-2}
(\emph{Chowla--Selberg formula for imaginary quadratic extensions.})
Here we apply Theorem~\ref{thm: ex-qp-CM}~(3) to present an analogue of the Chowla--Selberg formula for CM Drinfeld $\eA$-modules of rank $2$ over $\bar{k}$.
Note that by the class field theory, an imaginary quadratic extension $\eK$ over $\ek$ is contained in $\eK^{\cyc}$ if and only if the ramification index of the infinite place $\infty$ of $\ek$ in $\eK$ divides $q-1$.
In this case, let $\dfk \in \eA_+$ be the generator of the discriminant ideal of $O_\eK/\eA$ (where $O_\eK$ is the integral closure of $\eA$ in $\eK$).
Then $\eK$ is actually contained in $\eK_{\dfk,2}$, and $\eG_{\eK}$ is a group of order $2$.

Let $\bochi_\eK:\eG_\eK \rightarrow \{\pm 1\}$ be the unique non-trivial character in $\widehat{\eG}_\eK$.
Then $\cfk_{\bochi_\eK} = \dfk$, and for $a \in (\eA/\dfk)^\times$ and $i \in \ZZ/2\ZZ$, we get
\[
n_{\dfk}(\varrho,a,i) = \frac{\bochi_\eK(a,i+\deg \dfk)}{L_{\eA}(0,\bochi)}\cdot 
\begin{cases}
1, & \text{ if $\varrho =\mathrm{id}_{\eK}$,}\\
-1,& \text{ otherwise.}
\end{cases}
\]
Since $L_\eA(0,\bochi) = \#\Pic(O_\eK)/w_\eK$ (see Remark~\ref{rem: class number}), the first statement of Theorem~\ref{thm: ex-qp-CM}~(3) implies that for every nonzero period $\lambda$ of a Drinfeld $\eA$-module of rank $2$ over $\ok$ with CM by $O_\eK$, we have
\[
\lambda^{2\#\Pic(O_\eK)} \sim \tilde{\pi}^{\#\Pic(O_\eK)} \cdot \prod_{\substack{a \in (\eA/\dfk)^\times \\ i \in \ZZ/2\ZZ}}\Gamma(\frac{a(\theta)}{\dfk(\theta)},1-\frac{q^i}{q^2-1})^{w_\eK \bochi(a,i+\deg \dfk)}.
\]
Moreover, put
\begin{equation}\label{eqn: pi-K-pm}
\varpi_{\eK}^{\pm} \assign 
\sqrt{\tilde{\pi}} \cdot \prod_{(a,i) \in (\eA/\dfk)^\times\times \ZZ/2\ZZ} \Gamma\Big(\frac{a(\theta)}{\dfk(\theta)},1-\frac{q^i}{q^2-1}\Big)^{\pm w_\eK \frac{\bochi_\eK(a,i+\deg \dfk)}{2\#\Pic(O_\eK)}}.
\end{equation}
The second statement of Theorem~\ref{thm: ex-qp-CM}~(3) says that the space of quasi-periods of $E_\rho$ is spanned over $\ok$ by $\{\omega_\eK^+,\omega_\eK^-\}$, and $\omega_\eK^+,\omega_\eK^-$ are algebraically independent over $\ok$.
\end{example}

\begin{remark}
Here we use the notation in Example~\ref{ex: CSF-2}.
Assume further that $\bochi_\eK(1,i) = 1$ for any $i \in \ZZ/2\ZZ$.
Put $\chi_\eK \assign \bochi_\eK\big|_{(\eA/\dfk)^\times}$.
Then $\eK \subset \eK_{\dfk,1}$, which is a Carlitz cyclotomic function field.
In particular, the infinite place $\infty$ of $\ek$ is totally ramified in $\eK$.
These ensure that $q$ is odd, $\dfk$ is square-free with $\deg \dfk$ odd, $\eK = \ek(\sqrt{-\dfk})$, and $w_\eK = 1$.
In this case, we may write
\[
\varpi_{\eK}^{\pm}
=
\sqrt{\tilde{\pi}} \cdot
\prod_{a \in (\eA/\dfk)^\times}
\Gamma_{\geo}\left(\frac{a(\theta)}{\dfk(\theta)}\right)^{\pm \frac{\chi_\eK(a)}{2\#\Pic(O_\eK)}},
\]
and get a precise analogue of the classical Chowla--Selberg formula described by Gross in \cite[equation~(2) and (3)]{Gr78}.
\end{remark}

\subsection{The normalization of gamma values and a Lerch-type formula}

Given a $p$-adic integer $y = \sum_{i=0}^\infty y_i q^i \in \ZZ_p$, set (see \cite[3.6]{Thakur91})
\[
\partial(y) \assign \sum_{i=0}^\infty i y_i q^i \quad \in p\ZZ_p.
\]
In particular, let $\ell \in \NN$.
For each integer $c$ with $0\leq c<\ell$, one checks that
\[
\partial(\frac{q^c}{1-q^\ell}) = \frac{cq^c}{1-q^\ell} + \frac{\ell q^{\ell+c}}{(1-q^\ell)^2} \quad \in \QQ.
\]
Moreover, given a character $\bochi: \eG_{1,\ell} \cong \ZZ/\ell \ZZ \rightarrow \CC^\times$, we have the following ``$L'$-evaluator'' property:
writing $\bochi(1) =q^{-s_{\bochi}}$ where $s_{\bochi} = -\frac{2\pi \sqrt{-1}}{\ln q} \cdot \frac{d_{\bochi}}{\ell}$ for a unique integer $d_{\bochi}$ with $0\leq d_{\bochi}<\ell$,
\begin{align*}
 &\ \ln q \cdot \sum_{c=0}^{\ell-1}\bochi(c) \cdot \partial(\frac{q^c}{1-q^\ell}) \\
= & \ \ln q \cdot 
\left[ 
\frac{1}{1-q^\ell} \sum_{c=0}^{\ell-1}cq^{c(1-s_{\bochi})} + \frac{\ell q^\ell}{(1-q^\ell)^2} \sum_{c=0}^{\ell-1}q^{c(1-s_{\bochi})}
\right] \\
= & \
\ln q \cdot 
\left[ 
\frac{1}{1-q^\ell} \cdot\left( \frac{(1-q^{\ell(1-s_{\bochi})}) \cdot q^{1-s_{\bochi}}}{(1-q^{1-s_{\bochi}})^2} - \frac{\ell q^{\ell(1-s_{\bochi})}}{1-q^{1-s_{\bochi}}}\right)
+ \frac{\ell q^\ell}{(1-q^\ell)^2} \cdot \frac{1-q^{\ell(1-s_{\bochi})}}{1-q^{1-s_{\bochi}}}
\right] \\
= &\ \ln q \cdot \frac{q^{1-s_{\bochi}}}{(1-q^{1-s_{\bochi}})^2} \ = \ -L_\eA'(0,\bochi).
\end{align*}
For each $y \in \ZZ_{(p)}$, we follow \cite[3.6]{Thakur91} to normalize $\Gamma_{\ari}(y)$ by setting
\begin{equation}
    \Gamma^*_{\ari}(y) \assign \theta^{\partial(y-1)}\cdot \Gamma_{\ari}(y) = \theta^{\partial(y-1)} \cdot \Pi_{\ari}(y-1)
\end{equation}
to obtain
\begin{lemma}
For $\ell \in \NN$ and $\bochi \in \widehat{\eG}_{1,\ell}$ we have that
\[
L_\eA'(0,\bochi) = -\sum_{c=0}^{\ell-1}\bochi(c)\cdot \ln\left|\Gamma_{\ari}^*(1-\frac{q^c}{q^\ell-1})\right|_\infty.
\]
\end{lemma}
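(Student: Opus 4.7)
The plan is to reduce the claimed identity to the evaluator formula for $L_{\eA}'(0,\bochi)$ already computed in the excerpt, namely
\[
L_\eA'(0,\bochi) \;=\; \ln q \cdot \sum_{c=0}^{\ell-1}\bochi(c) \cdot \partial\bigl(\tfrac{q^c}{1-q^\ell}\bigr).
\]
Since $-q^c/(q^\ell-1) = q^c/(1-q^\ell)$ lies in $\ZZ_{(p)}$, I would first unpack the normalization: for $y = 1-q^c/(q^\ell-1)$,
\[
\Gamma_{\ari}^*(y) \;=\; \theta^{-\partial(y-1)}\cdot \Pi_{\ari}(y-1) \;=\; \theta^{-\partial(q^c/(1-q^\ell))}\cdot \Pi_{\ari}\bigl(\tfrac{q^c}{1-q^\ell}\bigr).
\]
Taking $|\cdot|_\infty$ with $|\theta|_\infty = q$ gives
\[
\ln\bigl|\Gamma_{\ari}^*(1-\tfrac{q^c}{q^\ell-1})\bigr|_\infty \;=\; -\partial\bigl(\tfrac{q^c}{1-q^\ell}\bigr)\cdot \ln q \;+\; \ln\bigl|\Pi_{\ari}\bigl(\tfrac{q^c}{1-q^\ell}\bigr)\bigr|_\infty.
\]

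The key step is then to observe that $|\Pi_{\ari}(z)|_\infty = 1$ for every $z \in \ZZ_p$. Indeed, expanding $\Pi_{\ari}(z)$ according to the $q$-adic digits of $z$, every factor has the form $1 - \theta^{q^j}/\theta^{q^i}$ with $j < i$; since $|\theta^{q^j-q^i}|_\infty = q^{q^j-q^i} < 1$, the ultrametric inequality gives $|1-\theta^{q^j-q^i}|_\infty = 1$. Every finite partial product therefore has absolute value $1$, and the non-archimedean limit preserves this. Hence the second term vanishes, and
\[
-\sum_{c=0}^{\ell-1}\bochi(c)\cdot \ln\bigl|\Gamma_{\ari}^*(1-\tfrac{q^c}{q^\ell-1})\bigr|_\infty
\;=\; \ln q \cdot \sum_{c=0}^{\ell-1}\bochi(c) \cdot \partial\bigl(\tfrac{q^c}{1-q^\ell}\bigr) \;=\; L_\eA'(0,\bochi).
\]

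There is essentially no obstacle: the entire proof is the unwinding above, with the only substantive point being the evaluation $|\Pi_{\ari}(z)|_\infty = 1$, which is immediate from the ultrametric nature of $|\cdot|_\infty$ and the inequality $q^j - q^i < 0$ for $j < i$. The identity $L_\eA'(0,\bochi) = \ln q \cdot \sum_c \bochi(c)\partial(q^c/(1-q^\ell))$ has already been established in the preceding display, so no further manipulation of $L$-functions or derivatives is needed.
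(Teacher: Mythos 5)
Your proof is correct and is essentially the paper's own (implicit) argument: the lemma is exactly the unwinding of the normalization $\Gamma^*_{\ari}(y)=\theta^{-\partial(y-1)}\Pi_{\ari}(y-1)$ against the ``$L'$-evaluator'' identity $\ln q\cdot\sum_{c}\bochi(c)\,\partial\bigl(\tfrac{q^c}{1-q^\ell}\bigr)=L_\eA'(0,\bochi)$ displayed immediately before the lemma. The only detail the paper leaves unstated is your observation that $\bigl|\Pi_{\ari}(z)\bigr|_\infty=1$, which indeed follows from the ultrametric inequality applied to each factor $1-\theta^{q^j-q^i}$ with $j<i$.
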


\begin{remark}
One consequence of the above lemma is that
\begin{align}
\ln \left|\tilde{\pi} \cdot \prod_{i=0}^{\ell-1}\Gamma_{\ari}^*(1-\frac{q^i}{q^\ell-1})^{-n_1(1,i)}\right|_\infty
&= \frac{q \ln q}{q-1} - \ln q \cdot \sum_{i=0}^{\ell-1} n_1(1,i)\cdot \partial(\frac{q^i}{1-q^\ell})  \nonumber \\
&= \frac{\zeta_\eA'(0)}{\zeta_\eA(0)} + \sum_{1\neq \bochi \in \widehat{\eG}_{1,\ell}}\frac{L_\eA'(0,\bochi)}{L_\eA(0,\bochi)} \nonumber \\
&= \frac{\zeta_{O_{1,\ell}}'(0)}{\zeta_{O_{1,\ell}}(0)}. \nonumber 
\end{align}
\end{remark}

In order to extend the above result to arbitrary cyclotomic function fields, we define
\[
\partial(x,y) \assign \langle x,-y\rangle, \quad \forall x \in k,\ y \in \ZZ_{(p)}.
\]
Then for each character $\bochi \in \widehat{\eG}_{\nfk,\ell}$ with $\cfk_{\bochi} \neq 1$, from the expression of $L_A^{\nfk}(s,\bochi)$ in \eqref{eqn: L-relation} the following ``$L'$-evaluator'' property holds:
\[
-\!\!\!\! \sum_{\substack{a \in \eA,\ \deg a<\deg \nfk \\ \text{gcd}(a,\nfk) = 1} }\  \sum_{i=0}^{\ell-1} \bochi(a,i+\deg \nfk)\cdot \partial(\frac{a(\theta)}{\nfk(\theta)},\frac{q^i}{1-q^\ell})\cdot \ln\left|\frac{a(\theta)}{\nfk(\theta)}\right|_\infty = (L_{\eA}^{\nfk})'(0,\bochi) + L_{\eA}^{\nfk}(0,\bochi)\cdot \ln|\nfk(\theta)|_\infty.
\]
We normalize the two-variable gamma values $\Gamma(x,y)$ as follows:
\begin{equation}\label{eqn: gamma-normalization}
    \Gamma^*(x,y) \assign \frac{x^{-\partial(x,y-1)}\cdot \Pi_{\geo}(x,y-1)}{\Gamma_{\ari}^*(x,y)} = x^{1-\partial(x,y-1)} \cdot \theta^{-\partial(y-1)} \cdot  \Gamma(x,y).
\end{equation}
In particular, $\Gamma^*(0,y) = \Gamma^*_{\ari}(y)^{-1}$ for all $y \in \ZZ_{(p)}$.
Then we have the following Lerch-type formula:

\begin{theorem}\label{thm: Lerch-type formula}
Let $\nfk \in \eA_+$ and $\ell \in \NN$.
Given a character $\bochi \in \eG_{\nfk,\ell}$, the following equality holds:
\[
L_\eA'(0,\bochi) = -L_\eA(0,\bochi)\ln|\cfk_{\bochi}|_\infty + \sum_{\substack{a \in \eA,\ \deg a<\deg \cfk_{\bochi} \\ \text{\rm gcd}(a,\cfk_{\bochi}) = 1}} \ \sum_{i=0}^{\ell-1}\bochi(a,i+\deg \cfk_{\bochi})\cdot \ln\left|\Gamma^*(\frac{a(\theta)}{\cfk_{\bochi}(\theta)},1-\frac{q^i}{q^\ell-1})\right|_\infty.
\]
\end{theorem}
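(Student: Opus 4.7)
The plan is to prove the formula by splitting on whether $\bochi_f = \bochi\big|_{(\eA/\nfk)^\times}$ is trivial, reducing the non-trivial case to the $L'$-evaluator identity established just above the definition of $\Gamma^*$, and computing $\ln|\Gamma^*(a(\theta)/\cfk_{\bochi}(\theta), 1-q^i/(q^\ell-1))|_\infty$ in closed form by means of the ultrametric inequality.

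When $\bochi_f$ is trivial, $\cfk_{\bochi} = 1$, and the outer sum of the claimed formula degenerates to $a = 0$. Since $\Gamma^*(0,y) = \Gamma^*_{\ari}(y)^{-1}$, the right-hand side collapses to $-\sum_{i=0}^{\ell-1}\bochi(1,i)\ln|\Gamma^*_{\ari}(1-q^i/(q^\ell-1))|_\infty$, which equals $L_\eA'(0,\bochi)$ by the Lemma immediately preceding the theorem (applied to the character on $\eG_{1,\ell}$ through which $\bochi$ factors).

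When $\bochi_f$ is non-trivial, set $\cfk := \cfk_{\bochi}$. The Euler product correction in \eqref{eqn: L-relation} is empty, so $L_\eA(s,\bochi) = L_\eA^{\cfk}(s,\bochi)$. The $L'$-evaluator identity applied with $\nfk = \cfk$ then reads
\[
(L_\eA)'(0,\bochi) + L_\eA(0,\bochi)\ln|\cfk(\theta)|_\infty \;=\; -\sum_{a,i}\bochi(a,i+\deg\cfk)\,\partial\!\Bigl(\tfrac{a(\theta)}{\cfk(\theta)},\tfrac{q^i}{1-q^\ell}\Bigr)\ln\bigl|\tfrac{a(\theta)}{\cfk(\theta)}\bigr|_\infty,
\]
summed over $a\in\eA$ with $\deg a<\deg\cfk$ and $\gcd(a,\cfk)=1$ and over $0\le i<\ell$. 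So it suffices to match the right-hand side of this identity with the gamma-value sum in the statement of the theorem. The key calculation is the following: for $x = a(\theta)/\cfk(\theta)$ with $\deg a<\deg\cfk$, one has $|x|_\infty<1\le|b|_\infty$ for every $b\in\eA_+$, so the ultrametric inequality gives $|1+x/b|_\infty = 1$, and therefore $|\Pi_{\geo}(x,y-1)|_\infty = 1$ for every $y$. Similarly $|\theta^{q^j}/\theta^{q^i}|_\infty < 1$ for $j<i$ yields $|\Pi_{\ari}(y-1)|_\infty = 1$. Hence $|\Gamma(x,y)|_\infty = 1/|x|_\infty$, and unpacking the normalization $\Gamma^*(x,y) = x^{1-\partial(x,y-1)}\theta^{\partial(y-1)}\Gamma(x,y)$ together with $\partial(x,y) = \langle x,-y\rangle$ gives
\[
\ln\bigl|\Gamma^*(x,1-\tfrac{q^i}{q^\ell-1})\bigr|_\infty \;=\; -\langle x,\tfrac{q^i}{q^\ell-1}\rangle\ln|x|_\infty \;+\; \partial(-\tfrac{q^i}{q^\ell-1})\ln q.
\]

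Summing this identity with the weights $\bochi(a,i+\deg\cfk)$, the first piece reproduces exactly (the negative of) the right-hand side of the $L'$-evaluator. The second piece factors as $\ln q \cdot \bochi(1,\deg\cfk)\cdot\bigl(\sum_{a}\bochi_f(a)\bigr)\cdot\bigl(\sum_i\bochi(1,i)\partial(-q^i/(q^\ell-1))\bigr)$; since $\bochi_f$ is a primitive character of conductor $\cfk\neq 1$, character orthogonality on $(\eA/\cfk)^\times$ forces $\sum_a\bochi_f(a) = 0$, killing this contribution. Rearranging then yields the claimed Lerch-type formula.

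The main obstacle is a bookkeeping one rather than a conceptual one: establishing $|\Pi_{\geo}(x,y-1)|_\infty = |\Pi_{\ari}(y-1)|_\infty = 1$ uniformly in $y$ via the ultrametric estimate, and then tracking the normalization factors carefully enough to display $\ln|\Gamma^*|_\infty$ as a clean $\QQ$-linear combination of $\ln|x|_\infty$ and $\ln q$. Once that calculation is in place, character orthogonality against $\bochi_f$ finishes the reduction without further work.
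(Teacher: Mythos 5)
Your argument is correct and is essentially the proof the paper intends (the paper leaves it implicit): the trivial-$\bochi_f$ case reduces to the preceding arithmetic lemma via $\Gamma^*(0,y)=\Gamma^*_{\ari}(y)^{-1}$, and the non-trivial case combines the two-variable $L'$-evaluator identity at conductor level (where $L_\eA^{\cfk_{\bochi}}=L_\eA$) with the ultrametric computation $\ln|\Gamma^*(x,1-\frac{q^i}{q^\ell-1})|_\infty=-\langle x,\frac{q^i}{q^\ell-1}\rangle\ln|x|_\infty+\partial(\frac{q^i}{1-q^\ell})\ln q$ and orthogonality of $\bochi_f$ to kill the $\ln q$ term. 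Only note that your parenthetical ``(the negative of)'' is a wording slip: the first piece equals the side $-\sum_{a,i}\bochi(a,i+\deg \cfk_{\bochi})\,\partial(\frac{a(\theta)}{\cfk_{\bochi}(\theta)},\frac{q^i}{1-q^\ell})\ln|\frac{a(\theta)}{\cfk_{\bochi}(\theta)}|_\infty$ of the evaluator identity exactly as written, i.e.\ it equals $L_\eA'(0,\bochi)+L_\eA(0,\bochi)\ln|\cfk_{\bochi}|_\infty$, which is indeed what your final rearrangement uses.
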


Consequently, Theorem~\ref{thm: Lerch-type formula} leads us to:

\begin{corollary}\label{cor: log-der}
Let $\nfk \in \eA_+$ and $\ell \in \NN$.
Given an imaginary field $\eK \subset \eK_{\nfk,\ell}$, we have that
\[
\ln\left|\tilde{\pi}\cdot \prod_{\cfk \mid \nfk} \prod_{\substack{a \in \eA,\ \deg a < \deg \cfk \\ \text{\rm gcd}(a,\cfk) = 1}} \ 
\prod_{i=0}^{\ell-1}
\Gamma^*(\frac{a(\theta)}{\cfk(\theta)},1-\frac{q^i}{q^\ell-1})^{n_{\cfk}(a,i)}\right|_\infty 
= \frac{\zeta_{O_\eK}'(0)}{\zeta_{O_\eK}(0)} + \ln|\dfk(O_\eK/\eA)|_\infty,
\]
where $\dfk(O_\eK/\eA) \in \eA_+$ is the monic generator of the discriminant ideal of $O_\eK$ over $\eA$.
\end{corollary}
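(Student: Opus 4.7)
The plan is to combine Theorem~\ref{thm: Lerch-type formula} with the Artin decomposition of $\zeta_{O_\eK}$. Since $\eK\subset \eK_{\nfk,\ell}$ is abelian over $\ek$ and imaginary, its maximal totally real subfield $\eK^+$ must be $\ek$ itself (otherwise $\infty$ would split in $\eK^+\subset \eK$, contradicting the imaginary hypothesis). This lets me write
\[
\zeta_{O_\eK}(s) \;=\; \zeta_{\eA}(s)\cdot \prod_{1\neq \bochi \in \widehat{\eG}_\eK} L_\eA(s,\bochi),
\]
and Corollary~\ref{cor: vanishing-cond} guarantees $L_\eA(0,\bochi)\neq 0$ for every non-trivial $\bochi$, so the logarithmic derivative at $s=0$ splits cleanly as
\[
\frac{\zeta_{O_\eK}'(0)}{\zeta_{O_\eK}(0)} \;=\; \frac{\zeta_\eA'(0)}{\zeta_\eA(0)} \;+\; \sum_{1\neq \bochi\in \widehat{\eG}_\eK}\frac{L_\eA'(0,\bochi)}{L_\eA(0,\bochi)}.
\]

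First I will handle the term $\zeta_\eA'(0)/\zeta_\eA(0)$ by a direct computation from $\zeta_\eA(s)=(1-q^{1-s})^{-1}$, which yields $q\ln q/(q-1)=\ln|\tilde\pi|_\infty$. Next, I will apply Theorem~\ref{thm: Lerch-type formula} to each non-trivial $\bochi$ and sum the resulting identities over $\bochi$, grouping the character contributions by their conductor $\cfk_\bochi=\cfk$. In the imaginary case $\widehat{\eG}_{\eK^+}$ is trivial, so the inner character sum of $\bochi(a,i+\deg\cfk)/L_\eA(0,\bochi)$ over non-trivial $\bochi$ with $\cfk_\bochi=\cfk$ matches precisely $n_\cfk(a,i)=n_\cfk(\mathrm{id}_\eK,a,i)$ from \eqref{eqn: defn-nc}.

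Finally I will collect the conductor contributions $-\sum_{1\neq \bochi}\ln|\cfk_\bochi|_\infty$ via the conductor--discriminant formula, which in our abelian setting over $\ek$ reads $\dfk(O_\eK/\eA)=\prod_{\bochi\in \widehat{\eG}_\eK}\cfk_\bochi$ (the trivial conductor being $1$), whence $\sum_{1\neq \bochi}\ln|\cfk_\bochi|_\infty=\ln|\dfk(O_\eK/\eA)|_\infty$. Assembling the three pieces and rearranging yields the claimed identity. The main technical input is the conductor--discriminant formula for abelian extensions of $\ek$; the remainder is bookkeeping on characters sums and one logarithmic derivative calculation.
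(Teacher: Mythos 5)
Your proposal is correct and is essentially the argument the paper gives: the paper likewise reduces to $\ln|\tilde\pi|_\infty=\zeta_\eA'(0)/\zeta_\eA(0)$, regroups the gamma-value sum by characters using $n_\cfk(a,i)=\sum_{1\neq\bochi,\,\cfk_\bochi=\cfk}\bochi(a,i+\deg\cfk)/L_\eA(0,\bochi)$ (valid since $\eK^+=\ek$), applies Theorem~\ref{thm: Lerch-type formula} to each non-trivial $\bochi$, and concludes via $\zeta_{O_\eK}(s)=\prod_{\bochi\in\widehat{\eG}_\eK}L_\eA(s,\bochi)$ together with the conductor--discriminant formula. The only cosmetic difference is that you expand the right-hand side while the paper expands the left-hand side; the ingredients and bookkeeping are identical.
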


\begin{proof}
From the formula of $\tilde{\pi}$ in Remark~\ref{rem: Carlitz period} one gets
\[
\ln|\tilde{\pi}|_\infty=\frac{q\ln q}{q-1} = \frac{\zeta_{\eA}'(0)}{\zeta_{\eA}(0)}.
\]
As $\eK$ is imaginary, one has that $\eK^+=\ek$.
Thus for $\cfk \in \eA_+$ with $\cfk\mid \nfk$, $a \in \eA$ with $\deg a < \deg c$ and $\text{gcd}(a,c) = 1$, and $0\leq i < \ell$,
\[
n_\cfk(a,i) = \sum_{\substack{1\neq \bochi \in \widehat{\eG}_\eK\\ \cfk_{\bochi}=\cfk}} \frac{\bochi(a,i+\deg \cfk)}{L_{\eA}(0,\bochi)}.
\]
Therefore we obtain that
\begin{align*}
& \ln\left|\tilde{\pi}\cdot \prod_{\cfk \mid \nfk} \prod_{\substack{a \in \eA,\ \deg a < \deg \cfk \\ \text{gcd}(a,\cfk) = 1}} \ 
\prod_{i=0}^{\ell-1}
\Gamma^*(\frac{a(\theta)}{\cfk(\theta)},1-\frac{q^i}{q^\ell-1})^{n_{\cfk}(a,i)}\right|_\infty \\
=\ & 
\frac{\zeta_{\eA}'(0)}{\zeta_{\eA}(0)}+ \sum_{\cfk \mid \nfk} \sum_{\substack{a \in \eA,\ \deg a<\deg \cfk \\ \text{gcd}(a,\cfk)=1}} \sum_{i=0}^{\ell-1}n_\cfk(a,i) \cdot \ln \left|\Gamma^*(\frac{a(\theta)}{\cfk(\theta)},1-\frac{q^i}{q^\ell-1})\right|_\infty \\
=\ &
\frac{\zeta_{\eA}'(0)}{\zeta_{\eA}(0)} + \sum_{1\neq \bochi \in \widehat{\eG}_\eK}
\sum_{\substack{a \in \eA,\ \deg a<\deg \cfk_{\bochi} \\ \text{gcd}(a,\cfk_{\bochi}) = 1}} \ \sum_{i=0}^{\ell-1}\frac{\bochi(a,i+\deg \cfk_{\bochi})}{L_\eA(0,\bochi)}\cdot \ln\left|\Gamma^*(\frac{a(\theta)}{\cfk_{\bochi}(\theta)},1-\frac{q^i}{q^\ell-1})\right|_\infty \\
=\ &
\frac{\zeta_{\eA}'(0)}{\zeta_{\eA}(0)} + \sum_{1\neq \bochi \in \widehat{\eG}_\eK} \left(\frac{L_\eA'(0,\bochi)}{L_\eA(0,\bochi)} + \ln|\cfk_{\bochi}|_\infty\right) \hspace{2cm}  \text{(by Theorem~\ref{thm: Lerch-type formula})} \\
=\ &
\frac{\zeta_{O_\eK}'(0)}{\zeta_{O_\eK}(0)} + \ln|\dfk(O_\eK/\eA)|_\infty,
\end{align*}
where the last equality comes from the fact that $\zeta_{O_\eK}(s) = \prod_{\bochi \in \widehat{\eG}_\eK}L_\eA(s,\bochi)$ and the conductor-discriminant formula.
\end{proof}

\begin{remark}
Let $\eK$ be an imaginary field contained in $\eK_{\nfk,\ell}$ for $\nfk \in \eA_+$ and $\ell \in \NN$.
Take $\nu\assign \nu_1\big|_{\eK} :\eK \hookrightarrow \ok\subset \CC_\infty$, and let $K\assign \nu(\eK)$ (resp.\ $O_K\assign \nu(O_\eK)$).
Put $r = [\eK:\ek]$.
Every fractional ideal of $O_K$ becomes a discrete $A$-lattice of rank $r$ in $\CC_\infty$.
The Kronecker limit formula established in \cite{Wei} implies in particular that (see \cite[(5.3)]{Wei})
\[
\frac{\zeta_{O_\eK}'(0)}{\zeta_{O_\eK}(0)} = \frac{1}{\#\Pic(O_K)}\sum_{[\Afk] \in \Pic(O_K)} \ln\Big(N(\Afk)|\Delta(\Afk)|_\infty^{\frac{r}{q^r-1}}\Big),
\]
where
\begin{itemize}
    \item $N(\Afk) \assign \#(O_K/\Afk)$ for each nonzero ideal $\Afk$ of $O_K$; and
    \item $\Delta(\cdot)$ is the \emph{Drinfeld discriminant function} (on $A$-lattices of rank $r$ in $\CC_\infty$, see \cite[p.~880]{Wei}).
\end{itemize}
Therefore Corollary~\ref{cor: log-der} leads us to the following ``analytic'' Chowla--Selberg formula:
\begin{align}\label{eqn: A-CSF}
    & \frac{1}{\#\Pic(O_K)}\sum_{[\Afk] \in \Pic(O_K)} \ln\Big(N(\Afk)|\Delta(\Afk)|_\infty^{\frac{r}{q^r-1}}\Big) \nonumber \\
    &= \ln \left(\frac{|\tilde{\pi}|_\infty}{|\dfk(O_\eK/\eA)|_\infty}\right) + 
    \sum_{\cfk \mid \nfk} \sum_{\substack{a \in \eA,\ \deg a < \deg \cfk \\ \text{gcd}(a,\cfk) = 1}} \ 
    \sum_{i=0}^{\ell-1}
n_{\cfk}(a,i)\ln \Big|\Gamma^*(\frac{a(\theta)}{\cfk(\theta)},1-\frac{q^i}{q^\ell-1})\Big|_\infty.
\end{align}
\end{remark}

\begin{remark}
Let $E_\rho$ be a Drinfeld $\eA$-module over $\ok$ of rank $r$ which has CM by $O_\eK$, where $\eK$ is an imaginary field of degree $r$ over $\ek$. 
Suppose that $\eK$ is separable over $\ek$ and tamely ramified at $\infty$.  
Then the Colmez-type formula derived in \cite[Theorem 1.6]{Wei} (together with \cite[Remark~1.7~(3)]{Wei}) expresses the ``stable Taguchi height of $E_\rho$'' as follows:
\[
h_{\mathrm{Tag}}^{\mathrm{st}}(E_\rho) = -\frac{1}{2r}\cdot \ln |\dfk(O_\eK/\eA)|_\infty - \frac{1}{r}\cdot \frac{\zeta_{O_\eK}'(0)}{\zeta_{O_\eK}(0)}.
\]
When $\eK \subset \eK_{\nfk,\ell}$ for $\nfk \in \eA_+$ and $\ell \in \NN$,
\eqref{eqn: A-CSF} implies that
\begin{align}\label{eqn: height}
h_{\mathrm{Tag}}^{\mathrm{st}}(E_\rho) &= \frac{1}{2r}\cdot \ln |\dfk(O_\eK/\eA)|_\infty-\frac{1}{r}\ln|\tilde{\pi}|_\infty \nonumber \\
&-\sum_{\cfk \mid \nfk} \sum_{\substack{a \in \eA,\ \deg a < \deg \cfk \\ \text{gcd}(a,\cfk) = 1}} \ 
\sum_{i=0}^{\ell-1}\frac{n_{\cfk}(a,i)}{r}\cdot \ln \Big|\Gamma^*(\frac{a(\theta)}{\cfk(\theta)},1-\frac{q^i}{q^\ell-1})\Big|_\infty.
\end{align}
Suppose further that $O_\eK$ is a principal ideal domain and $E_\rho$ is ``sgn-normalized'' (see \cite[Chapter~7]{Goss98}).
Then the period lattice of $E_\rho$ is of the form:
\[
\Lambda_\rho = O_K \cdot \varpi_\rho,
\]
where $\varpi_\rho\in \CC_\infty$ is unique up to $O_K^\times$-multiples.
We can show that
\[
h_{\mathrm{Tag}}^{\mathrm{st}}(E_\rho)
= -\frac{1}{2r}\cdot \ln |\dfk(O_\eK/\eA)|_\infty
- \ln |\varpi_\rho|_\infty.
\]
Combining this with \eqref{eqn: height}, we arrive at the following ``absolute'' Chowla--Selberg formula:
\begin{equation}\label{eqn: A-CSF-2}
|\varpi_\rho|_\infty^r =  \frac{|\tilde{\pi}|_\infty}{|\dfk(O_\eK/\eA)|_\infty} \cdot \prod_{\cfk \mid \nfk}\prod_{\substack{a \in \eA,\ \deg a < \deg \cfk \\ \text{gcd}(a,\cfk) = 1}} \ 
\prod_{i=0}^{\ell-1}\left|\Gamma^*(\frac{a(\theta)}{\cfk(\theta)},1-\frac{q^i}{q^\ell-1})\right|_\infty^{n_{\cfk}(a,i)}.
\end{equation}
\end{remark}

\section{The Deligne--Gross period conjecture}

In this section, we shall apply the work of Pink and Hartl--Juschka (in \cite{Pink}, \cite{HP04}, \cite{Jus10}, and \cite{HJ20}) on the Hodge conjecture over function fields, to derive an analogue of the Deligne--Gross period conjecture.

\subsection{Hodge--Pink structures}

\begin{definition}
Let $H$ be a finite dimensional vector space over $\ek$. An \emph{exhaustive and separated increasing $\QQ$-filtration} $W_\bullet H$ is a collection of $\ek$-subspaces $W_\mu H \subset H$ for $\mu \in \QQ$ with $W_{\mu'} H \subset W_{\mu} H$ whenever $\mu'<\mu$, such that the associated graded space 
\[
\Gr^W H \assign \bigoplus_{\mu \in \QQ} \Gr^W_\mu H, 
\quad \text{ where } \quad \Gr^W_\mu H \assign W_\mu H/ (\cup_{\mu'<\mu} W_{\mu'}H),
\]
has the same dimension over $\ek$ as $H$.
\end{definition}

\begin{definition}
(See \cite[Definition~2.2.3]{HJ20}.) A \emph{(mixed) $\ek$-pre-Hodge--Pink structure} is a triple $\eH = (H,W_\bullet H,\qfk)$, where 
\begin{itemize}
    \item $H$ is a finite dimensional vector space over $\ek$;
    \item $W_\bullet H$ is an exhaustive and separated increasing $\QQ$-filtration; 
    \item $\qfk \subset \CC_\infty(\!(t-\theta)\!)\otimes_\ek H$ is a $\CC_\infty[\![t-\theta]\!]$-lattice of full rank.
\end{itemize}
Here the tensor product is with respect to the following inclusion
\[
\ek \subset \CC_\infty(t) = \CC_\infty(t-\theta) \subset \CC_\infty(\!(t-\theta)\!).
\]
In particular, $\eH$ is called \emph{pure (of weight $\mu$)} if $H \cong \Gr^W_{\mu} H$ for some $\mu \in \QQ$.

Given two (mixed) $\ek$-pre-Hodge--Pink structures $\eH = (H,W_\bullet H, \qfk)$ and $\eH' = (H', W_\bullet H', \qfk')$, a \emph{morphism} $f:\eH \rightarrow \eH'$ is a $\ek$-linear homomorphism $f: H \rightarrow H'$ such that $f(W_\mu H) \subset W_\mu H'$ for every $\mu \in \QQ$ and the induced homomorphism $\mathrm{id}\otimes f : \CC_\infty(\!(t-\theta)\!)\otimes_\ek H \rightarrow \CC_\infty(\!(t-\theta)\!)\otimes_\ek H'$ satisfies $(\mathrm{id}\otimes f)(\qfk) \subseteq \qfk'$.
\end{definition}

\begin{remark}\label{rem: completion embedding}
In fact, $\ek$ is contained in  $\CC_\infty[\![t-\theta]\!]$, as every nonzero $a \in \eA$ is contained in $\CC_\infty[\![t-\theta]\!]^\times$.
In particular,
\[
\frac{1}{t} =
\big(\theta + (t-\theta)\big)^{-1} = \sum_{n=0}^\infty (-1)^n\cdot (\frac{1}{\theta})^{n+1} \cdot (t-\theta)^n.
\]
It is known that (see \cite[p.~39 and Lemma~2.1.3]{HJ20}) the inclusion $\ek\subset \CC_\infty[\![t-\theta]\!]$ can be extended to a $\ek$-algebra embedding $\ek_\infty \hookrightarrow \CC_\infty[\![t-\theta]\!]$.
Thus it is natural to extend the above definition to \emph{(mixed) $\ek_\infty$-pre-Hodge--Pink structures} and the morphisms between them without difficulty.
Every $\ek$-pre-Hodge--Pink structure $\eH = (H,W_\bullet H,\qfk)$ can be extended to a $\ek_\infty$-pre-Hodge--Pink structure
$\eH_\infty = (H_\infty, W_\bullet H_\infty,\qfk)$ where $H_\infty \assign \ek_\infty \otimes_\ek H$ and $W_\mu H_\infty \assign \ek_\infty \otimes_\ek W_\mu H$ for every $\mu \in \QQ$.
\end{remark}

\begin{definition}
A (mixed) $\ek$-Hodge--Pink structure is a (mixed) $\ek$-pre-Hodge--Pink structure $\eH = (H,W_\bullet H,\qfk)$
satisfying the \emph{local semistable condition (at $\infty$)}: for every $\ek_\infty$-subspace $H_\infty'$ of $H_\infty$, put $W_\mu H_\infty' \assign H_\infty' \cap W_\mu H_\infty$ for every $\mu \in \QQ$, $\qfk' \assign \qfk \cap \CC_\infty(\!(t-\theta)\!)\otimes_{\ek_\infty} H'_\infty$, and $\pfk' \assign \CC_\infty[\![t-\theta]\!]\otimes_{\ek_\infty} H_\infty'$.
Then
\[
\dim_{\CC_\infty}\left(\frac{\qfk'}{\qfk'\cap \pfk'}\right) - \dim_{\CC_\infty}\left(\frac{\pfk'}{\qfk'\cap \pfk'}\right)\leq \sum_{\mu \in \QQ} \mu \cdot \dim_{\ek_\infty} \Gr^W_\mu H'_\infty,
\]
and equality holds when $H'_\infty = \ek_\infty \otimes_\ek W_\mu H$ for every $\mu \in \QQ$.
\end{definition}

\begin{remark}
The local semistable condition can be also defined on $\ek_\infty$-pre-Hodge--Pink structures, and we may say as well that a (mixed) $\ek_\infty$-pre-Hodge--Pink structure is a (mixed) $\ek_\infty$-Hodge--Pink structure if it satisfies the local semistable condition.
In particular, if $\eH$ is a $\ek$-Hodge--Pink structure, then $\eH_\infty$ is a $\ek_\infty$-Hodge--Pink structure.
\end{remark}

Given two (mixed) $\ek$-Hodge--Pink structures $\eH = (H,W_\bullet H, \qfk)$ and $\eH' = (H', W_\bullet H', \qfk')$,
let $f: \eH\rightarrow \eH'$ be a morphism.
The local semistable condition on $\eH$ and $\eH'$ ensures that $f$ must be \emph{strict} (see \cite[Remark~2.2.11]{HJ20}), i.e.
\[
f(W_\mu H) = f(H) \cap W_\mu H', \quad \forall \mu \in \QQ,
\quad \text{ and } \quad 
(\mathrm{id}\otimes f)(\qfk) = \CC_\infty(\!(t-\theta)\!)\otimes_\ek f(H) \cap \qfk'.
\]
We denote by $\Hom_{\mathrm{HP}}(\eH,\eH')$ the space of the morphisms from $\eH$ to $\eH'$, and $\End_{\mathrm{HP}}(\eH)$ the endomorphism ring of $\eH$.

\begin{remark}
Let $\eH = (H,W_\bullet H,\qfk)$ be a (mixed) $\ek$-Hodge--Pink structure.
A \emph{$\ek$-Hodge--Pink sub-structure of $\eH$} is a triple $\eH' =(H',W_\bullet H', \qfk')$, where
$H'$ is a $\ek$-vector suspace of $H$,
$W_\mu H' = H' \cap W_\mu H$ for every $\mu \in \QQ$, and
$\qfk' = \qfk \cap \CC_\infty(\!(t-\theta)\!) \otimes_\ek H'$.
In particular, if $\eH$ is pure of weight $\mu$, then so is $\eH'$.
\end{remark}

\begin{example}\label{ex: H(M)}
One natural example of a pure $\ek$-Hodge--Pink structure comes from pure uniformizable dual $t$-motives.
Let $\eM$ be a pure uniformizable dual $t$-motive of weight $\wt(\eM)$ over $\CC_\infty$. Recall in Section~\ref{sec: defn PS} that 
$H^\eM = \Hom_{\eA}(H_{\mathrm{Betti}}(\eM),\ek)$.
Set
\[
\qfk^\eM \assign \Hom_{\CC_\infty[t]}\big(\sigma \eM,\CC_\infty[\![t-\theta]\!]\big)
\quad \text{ and }\quad 
W_\mu H^\eM \assign
\begin{cases} 
0, & \text{ if $\mu<-\wt(\eM)$,}\\
H^\eM, &\text{ if $\nu\geq -\wt(\eM)$.}
\end{cases}
\]
By \cite[Theorem~2.4.32 and 2.3.34~(a)]{HJ20},
the triple $\eH(\eM)\assign (H^\eM,W_\bullet H^\eM,\qfk^\eM)$ is a pure Hodge--Pink structure of weight $\mu_\eM$, where
\[
\mu_\eM =-w(\eM)= \frac{\rank_{\CC_\infty[\sigma]}\eM}{\rank_{\CC_\infty[t]}\eM}.
\]
Moreover, given two pure uniformizable dual $t$-motives $\eM$ and $\eM'$ over $\CC_\infty$, every morphism $f:\eM\rightarrow \eM'$ (over $\CC_\infty$) induces a morphism
\[
f_{\mathrm{HP}}: \eH(\eM')\rightarrow \eH(\eM),
\]
and $(f\mapsto f_{\mathrm{HP}})$ gives a $\ek$-linear (resp.\ $\ek$-algebra) isomorphism (resp.\ when $\eM = \eM'$) (see \cite[Theorem~2.4.32 and 2.3.34~(b)]{HJ20})
\[
\ek \otimes_\eA \Hom_{\CC_\infty[t,\sigma]}(\eM,\eM') \cong \Hom_{\mathrm{HP}}(\eH(\eM'),\eH(\eM)).
\]
\end{example}

The following fact comes from the function field analogue of the Hodge conjecture for dual $t$-motives (see \cite[Theorem~2.4.32 and 2.3.34~(c)]{HJ20}):

\begin{theorem}\label{thm: HP-conj}
Let $\eM$ be a pure uniformizable dual $t$-motive over $\CC_\infty$.
Given a $\ek$-Hodge--Pink sub-structure $\eH'$ of $\eH(\eM)$, there exists a pure uniformizable dual $t$-motive $\eM'$ over $\CC_\infty$ together with an essentially surjective morphism $f: \eM \rightarrow \eM'$ such that the corresponding morphism $f_{\mathrm{HP}}: \eH(\eM')\hookrightarrow \eH(\eM)$ gives an isomorphism $\eH(\eM')\cong \eH'$.
\end{theorem}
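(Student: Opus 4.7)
The approach is to appeal directly to the function field analogue of the Hodge conjecture for pure uniformizable dual $t$-motives established by Pink and Hartl--Juschka, and to translate its categorical output into the form stated here. First I would recall from Example~\ref{ex: H(M)} that $\eM \mapsto \eH(\eM)$ defines a contravariant functor (via $f \mapsto f_{\mathrm{HP}}$) from the category of pure uniformizable dual $t$-motives over $\CC_\infty$ up to isogeny to the category of $\ek$-Hodge--Pink structures, and that this functor is fully faithful through the $\ek$-linear isomorphism
\[
\ek \otimes_\eA \Hom_{\CC_\infty[t,\sigma]}(\eM,\eM') \cong \Hom_{\mathrm{HP}}\bigl(\eH(\eM'),\eH(\eM)\bigr).
\]

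Next I would invoke the essentially surjective half of the Hodge--Pink correspondence, namely that every $\ek$-Hodge--Pink structure arises, up to isomorphism, as $\eH(\eM')$ for some pure uniformizable dual $t$-motive $\eM'$ over $\CC_\infty$. Applied to the given sub-structure $\eH' \hookrightarrow \eH(\eM)$, this produces such an $\eM'$ together with an isomorphism $\eH(\eM') \cong \eH'$. Composing with the inclusion yields a morphism $\iota: \eH(\eM') \hookrightarrow \eH(\eM)$ of Hodge--Pink structures, and by full faithfulness this lifts, possibly after replacing $\eM'$ within its isogeny class, to a morphism $f: \eM \to \eM'$ of dual $t$-motives with $f_{\mathrm{HP}} = \iota$.

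Finally I would verify that $f$ is essentially surjective in the sense of Remark~\ref{rem: ess-surj}. The morphism $\iota$ is injective on underlying $\ek$-spaces and strict for both the weight filtration and the Hodge--Pink lattice; via the comparison~\eqref{eqn: comp-period-iso} together with \eqref{eqn: pf-pb}, this translates into the push-forward $f_*: H_{\mathrm{Betti}}(\eM) \to H_{\mathrm{Betti}}(\eM')$ having image of full $\eA$-rank inside $H_{\mathrm{Betti}}(\eM')$. Consequently $\rank_{\CC_\infty[t]} f(\eM) = \rank_{\CC_\infty[t]} \eM'$, and since $\eM'$ is a finitely generated torsion-free $\CC_\infty[t]$-module the quotient $\eM'/f(\eM)$ is a finitely generated torsion module, hence finite-dimensional over $\CC_\infty$. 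By Remark~\ref{rem: isogeny} the inclusion $f(\eM) \hookrightarrow \eM'$ is then an isogeny, so $f$ is essentially surjective as required.

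The main obstacle is the essentially surjective half of the Hodge--Pink correspondence, i.e.\ producing a pure uniformizable dual $t$-motive from an abstract $\ek$-Hodge--Pink structure satisfying the local semistability condition at $\infty$; this is the deep content of the Pink--Hartl--Juschka theorem and I would use it as a black box via \cite[Thm.~2.4.32 and 2.3.34~(c)]{HJ20}. The remaining content of the proof is bookkeeping: lifting the inclusion $\eH' \hookrightarrow \eH(\eM)$ through the fully faithful Hodge realization functor, and matching ranks on the Betti side to deduce essential surjectivity of the resulting morphism of dual $t$-motives from the injectivity and strictness of $\iota$.
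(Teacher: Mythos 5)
There is a genuine gap, and it lies in the black box you invoke. The paper gives no argument for this theorem at all: the statement \emph{is} the function field analogue of the Hodge conjecture and is quoted essentially verbatim from \cite[Theorem~2.4.32 and 2.3.34~(c)]{HJ20}, whose content is precisely that a Hodge--Pink sub-structure $\eH'$ of $\eH(\eM)$ is realized by a dual $t$-motive quotient, i.e.\ it already supplies both $\eM'$ and the essentially surjective morphism $f$ (the paper's subsequent remark only explains why $\eM'$ is effective and finitely generated over $\CC_\infty[\sigma]$, hence a dual $t$-motive in this paper's sense). Your proposal replaces this by a different and much stronger claim, namely that \emph{every} $\ek$-Hodge--Pink structure is isomorphic to $\eH(\eM')$ for some pure uniformizable dual $t$-motive $\eM'$ over $\CC_\infty$. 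That is not what the cited theorems assert, it is not available in the literature, and it is false as stated: for any dual $t$-motive one has $\qfk^{\eM}\supseteq \pfk^{\eM}$ and weight $\mu_{\eM}=\rank_{\CC_\infty[\sigma]}\eM/\rank_{\CC_\infty[t]}\eM>0$, so every $\eH(\eM)$ is effective of strictly positive weight; consequently the unit Hodge--Pink structure $(\ek, W_\bullet, \qfk=\pfk)$ of weight $0$, or any locally semistable structure with a negative Hodge--Pink weight, lies outside the essential image. The realizability of the \emph{particular} $\eH'$ at hand (which is effective of the correct weight, as the paper checks later in Proposition~\ref{prop: CM-HP}) is exactly the deep point, and your argument never supplies it except through the mis-stated black box; if you instead cite \cite[Thm.~2.3.34~(c)]{HJ20} correctly, there is nothing left to prove.

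The surrounding bookkeeping is mostly sound but also not needed, and one step deserves care if you keep it. Granting some $\eM'$ with $\eH(\eM')\cong\eH'$, full faithfulness lifts the composite $\eH(\eM')\cong\eH'\hookrightarrow\eH(\eM)$ only to an element of $\ek\otimes_\eA\Hom_{\CC_\infty[t,\sigma]}(\eM,\eM')$, so you must clear denominators (harmless, since multiplication by $a\in\eA\setminus\{0\}$ is invertible on $H'$). For essential surjectivity, injectivity of $f_{\mathrm{HP}}$ gives surjectivity of $\ek\otimes f_*$, hence that $f_*(H_{\mathrm{Betti}}(\eM))$ has full $\eA$-rank; but to conclude that $\eM'/f(\eM)$ is finite-dimensional you should compare ranks over $\CC_\infty[t]$ via flatness of $\TT$ (so that $\rank_{\CC_\infty[t]}f(\eM)=\rank_{\TT}f(\MM)=\rank_{\TT}\MM'$), rather than argue through $\MM'/f(\MM)$ alone: torsion of $\eM'/f(\eM)$ supported at points $t=c$ with $|c|_\infty>1$ becomes invisible after tensoring with $\TT$, so finiteness of $\MM'/f(\MM)$ by itself does not bound $\eM'/f(\eM)$.
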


\begin{remark}
It is worth pointing out that
the ``effectiveness'' of $\eM'$ (i.e.\ $\eM'$ is indeed a $\CC_\infty[\sigma]$-module) comes from the ``duality'' in \cite[Proposition~2.4.3]{HJ20}, and the essential surjectivity of the morphism $f: \eM\rightarrow \eM'$ implies that $\eM'$ is finitely generated over $\CC_\infty[\sigma]$.
Thus $\eM'$ is indeed a dual $t$-motive in our setting.
\end{remark}

\subsection{\texorpdfstring{CM Hodge--Pink structures and CM dual $t$-motives}{CM Hodge--Pink structures and CM dual t-motives}}

Let $\eH = (H,W_\bullet H, \qfk)$ be a pure $\ek$-Hodge--Pink structure of weight $w(\eH)$.
Put $\pfk \assign \CC_\infty[\![t-\theta]\!]\otimes_{\ek} H$, and identify
\[
\frac{\pfk}{(t-\theta) \pfk} \cong \CC_\infty \underset{v_\theta,\ek}{\otimes} H \rassign H_{\CC_\infty}.
\]
The \emph{Hodge--Pink filtration} of $H_{\CC_\infty}$ is $F^\bullet H_{\CC_\infty} \assign (F^i H_{\CC_\infty})_{i\in\ZZ}$, where $F^i H_{\CC_\infty}$ is the subspace of $H_{\CC_\infty}$ corresponding to 
\[
\frac{\pfk\cap (t-\theta)^i \qfk}{(t-\theta)\pfk\cap (t-\theta)^i \qfk} \quad \cong \quad \frac{\pfk \cap (t-\theta)^i \qfk+(t-\theta) \pfk}{(t-\theta)\pfk} \quad \subset \quad \frac{\pfk}{(t-\theta)\pfk}.
\]
Let $r = \dim_{\ek} H$.
Take an integer $n$ sufficiently large so that $(t-\theta)^n\pfk \subset \qfk$.
There exist $r$ integers $w_1,...,w_r$ satisfying that
\[
\frac{\qfk}{(t-\theta)^n \pfk} \cong \bigoplus_{i=1}^r \frac{\CC_\infty[\![t-\theta]\!]}{(t-\theta)^{n+w_i}},
\]
which is independent of the choice of $n$.
We call $w_1,...,w_r$ the \emph{Hodge--Pink weights of $\eH$}.
In particular, the local semistability of $\eH$ implies that 
\[
r \cdot w(\eH) = w_1+\cdots + w_r.
\]

Suppose further that $\eH$ has \emph{full-CM} by a CM field $\eK$, i.e.\ $\dim_\ek H = [\eK:\ek]$ and there exists a $\ek$-algebra homomorphism
\begin{equation}\label{eqn: HCM}
\eK \hookrightarrow \End_{\mathrm{HP}}(\eH).
\end{equation}
Then $\pfk$ becomes a free module of rank one over $\CC_\infty[\![t-\theta]\!]\otimes_{\ek} \eK$.
Recall that $O_\eK$ denotes the integral closure of $\eA$ in $\eK$, and here we put $O_{\bK} = \CC_\infty \otimes_{\FF_q}O_\eK$.
Observe that
\[
\CC_\infty[\![t-\theta]\!] \otimes_{\ek} \eK \cong \CC_\infty[\![t-\theta]\!] \otimes_{\eA} O_\eK \cong \prod_{\xi \in J_\eK} \hat{O}_{\bK,\xi}
\quad \text{and}\quad
\CC_\infty(\!(t-\theta)\!) \otimes_{\ek} \eK \cong
\prod_{\xi \in J_\eK} \hat{\bK}_{\xi},
\]
where for each $\xi \in J_\eK$, $\hat{O}_{\bK,\xi}$ is the completion of $O_{\bK}$ with respect to the maximal ideal $\Pfk_\xi$ and $\hat{\bK}_{\xi}$ is the field of fraction of $\hat{O}_{\bK,\xi}$.
As $\qfk$ is also a $\CC_\infty[\![t-\theta]\!] \otimes_{\ek} \eK$-module and there exists a sufficiently large integer $n$ so that 
\[(t-\theta)^n\cdot \pfk \subset \qfk \subset (t-\theta)^{-n} \cdot \pfk \quad \subset \CC_\infty(\!(t-\theta)\!) \otimes_{\ek} H,
\]
we obtain that $\qfk$ is actually a free $\CC_\infty[\![t-\theta]\!] \otimes_{\ek} \eK$-module of rank one, and there exist unique integers $w_\xi$ for $\xi \in J_\eK$ satisfying that
\[
\frac{\qfk}{(t-\theta)^n \pfk} \cong \prod_{\xi \in J_\eK}\frac{\hat{O}_{\bK,\xi}}{\Pfk_\xi^{n+w_\xi}\hat{O}_{\bK,\xi}} \cong \prod_{\xi \in J_\eK}\frac{\CC_\infty[\![t-\theta]\!]}{(t-\theta)^{n+w_\xi}},
\]
where the last isomorphism is due to that 
$(t-\theta)O_\bK = \prod_{\xi \in J_\eK}\Pfk_\xi$ (i.e.\ $(t-\theta)$ splits completely in $O_\bK$).
Thus the numbers $w_\xi$ for $\xi \in J_\eK$ coincide with the Hodge--Pink weights of $\eH$.

\begin{definition}\label{defn: HP-type}
Let $\eH$ be a $\ek$-Hodge--Pink structure with full-CM by a CM field $\eK$ over $\ek$.
The \emph{Hodge--Pink type of $\eH$} is 
\[
\Phi_{\eH} \assign \sum_{\xi \in J_\eK} w_\xi \xi \quad \in I_\eK.
\]
\end{definition}

In fact, we have the following:

\begin{lemma}\label{lem: HP-type in IK0}
Let $\eH$ be a pure $\ek$-Hodge--Pink structure of weight $w(\eH)$ which has full-CM by a CM field $\eK$ over $\ek$.
Then $\Phi_\eH$ lies in $I_\eK^0$ with $\wt(\Phi_\eH) = [\eK:\eK^+]\cdot w(\eH)$,
where $\eK^+$ is the maximal totally real subfield of $\eK$.
\end{lemma}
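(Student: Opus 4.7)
The plan is to establish both assertions simultaneously by applying the local semistability condition at $\infty$ to the $\ek_\infty$-subspaces of $H_\infty$ arising from the CM action at places of $\eK$ above $\infty$, and then translating the resulting equalities into sums of Hodge--Pink weights along fibers of $\pi_{\bX/\bX^+}$. First I would decompose
\[
H_\infty = \ek_\infty \otimes_\ek H = \bigoplus_{j} H_{\infty,j}
\]
via the ring isomorphism $\eK \otimes_\ek \ek_\infty \cong \prod_j \eK_{\infty_j}$, where $\{\infty_j\}$ are the places of $\eK$ above $\infty$ and $H_{\infty,j} := H \otimes_\eK \eK_{\infty_j}$. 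Because $\eK^+$ is totally real, $\eK^+_{\infty^+_j} = \ek_\infty$ for every place $\infty^+_j$ of $\eK^+$ above $\infty$; combined with the non-split CM condition (each $\infty^+_j$ has a unique extension $\infty_j$ to $\eK$), this forces $\dim_{\ek_\infty} H_{\infty,j} = [\eK_{\infty_j}:\ek_\infty] = [\eK:\eK^+]$, with $[\eK^+:\ek]$ many indices $j$.

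Next I would apply the local semistability inequality to each subspace $H_{\infty,j}$, obtaining
\[
\Delta_j := \dim_{\CC_\infty}\bigl(\qfk_j/\qfk_j \cap \pfk_j\bigr) - \dim_{\CC_\infty}\bigl(\pfk_j/\qfk_j \cap \pfk_j\bigr) \leq w(\eH)\cdot [\eK:\eK^+],
\]
with $\pfk_j := \CC_\infty[\![t-\theta]\!]\otimes_{\ek_\infty} H_{\infty,j}$ and $\qfk_j$ the induced intersection. Purity of $\eH$ means $H_\infty = \ek_\infty \otimes_\ek W_{w(\eH)}H$, so the equality case of semistability yields the global identity $\sum_{\xi \in J_\eK} w_\xi = w(\eH)\cdot [\eK:\ek]$. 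Since $\pfk = \bigoplus_j \pfk_j$ and $\qfk = \bigoplus_j \qfk_j$ (the latter because $\qfk$ is an $\eK \otimes_\ek \ek_\infty$-submodule of the ambient space), one has $\sum_j \Delta_j = \sum_\xi w_\xi = w(\eH)[\eK:\ek] = \sum_j w(\eH)[\eK:\eK^+]$, forcing each inequality to be an equality: $\Delta_j = w(\eH)[\eK:\eK^+]$ for every $j$.

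To convert each $\Delta_j$ into a sum over an appropriate fiber of $\pi_{\bX/\bX^+}$, I would refine the $\infty$-decomposition using $\CC_\infty[\![t-\theta]\!]\otimes_\ek \eK \cong \prod_\xi \CC_\infty[\![t-\theta]\!]$. Concretely, $\CC_\infty[\![t-\theta]\!]\otimes_{\ek_\infty} \eK_{\infty_j}$ splits as $\prod_{\xi \in \pi_\infty^{-1}(\infty_j)} \CC_\infty[\![t-\theta]\!]$ via Hensel's lemma, once I verify that the reduction modulo $(t-\theta)$ of the minimal polynomial of a primitive element of $\eK_{\infty_j}/\ek_\infty$ remains separable --- this follows from the transcendence of $\theta$ over $\FF_q$ together with separability of $\eK_{\infty_j}/\ek_\infty$. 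Here $\pi_\infty : J_\eK \to \{\infty_j\}$ is the map determined by the factorization: each embedding $\eK \hookrightarrow \CC_\infty(\!(t-\theta)\!)$ corresponding to a $\xi \in J_\eK$ extends uniquely through one $\eK_{\infty_j}$. Under this refinement $\pfk_j = \bigoplus_{\xi \in \pi_\infty^{-1}(\infty_j)} \pfk_\xi$, giving $\Delta_j = \sum_{\xi \in \pi_\infty^{-1}(\infty_j)} w_\xi$.

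The final step identifies $\pi_\infty$ with $\pi_{\bX/\bX^+}$ via a natural bijection $\tilde\pi : J_{\eK^+} \xrightarrow{\sim} \{\infty^+_j\}$: the $j$-th projection $\eK^+ \hookrightarrow \eK^+_{\infty^+_j} = \ek_\infty$ followed by the residue $\ek_\infty \to \CC_\infty$ (sending $t \mapsto \theta$) yields an embedding $\eK^+ \hookrightarrow \CC_\infty$, i.e., an element of $J_{\eK^+}$. Combined with the CM non-split identification $\{\infty^+_j\} \leftrightarrow \{\infty_j\}$, chasing the inclusion diagram $\eK^+ \hookrightarrow \eK$ and $\ek_\infty \hookrightarrow \CC_\infty(\!(t-\theta)\!)$ shows $\pi_\infty = \tilde\pi^{-1}\circ \pi_{\bX/\bX^+}$, so fibers of $\pi_\infty$ correspond bijectively to fibers of $\pi_{\bX/\bX^+}$. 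Consequently $\sum_{\xi \mapsto \xi^+} w_\xi = w(\eH)[\eK:\eK^+]$ for every $\xi^+ \in J_{\eK^+}$, which simultaneously proves $\Phi_\eH \in I_\eK^0$ and $\wt(\Phi_\eH) = [\eK:\eK^+]\cdot w(\eH)$. The principal technical obstacle will be this last bookkeeping between the $\infty$- and $\theta$-decompositions, particularly the verification that $\pi_\infty$ and $\pi_{\bX/\bX^+}$ induce the same partition of $J_\eK$.
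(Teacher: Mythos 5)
Your proposal is correct and follows essentially the same route as the paper's proof: decompose $H_\infty$ via $\eK\otimes_\ek\ek_\infty$ (your indexing by the places of $\eK$ above $\infty$ agrees with the paper's indexing by $J_{\eK^+}$ because of the non-split CM condition), apply the local semistability inequality to each factor, identify each factor's degree with $\sum_{\pi_{\bX/\bX^+}(\xi)=\xi^+} w_\xi$ by comparing the $\infty$-adic and $(t-\theta)$-adic decompositions of $\CC_\infty[\![t-\theta]\!]\otimes_\ek\eK$, and force every inequality to be an equality by summing against the purity identity $\sum_{\xi\in J_\eK} w_\xi = w(\eH)\cdot[\eK:\ek]$. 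Apart from the order of the steps and your explicit appeal to Hensel's lemma where the paper writes out a chain of isomorphisms, this is the paper's argument.
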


\begin{proof}
Write $\eH = (H,W_\bullet H,\qfk)$, and denote by $w(\eH)$ be the weight of $\eH$.
Recall that we let $\eH_\infty\assign (H_\infty, W_\bullet H_\infty, \qfk)$ where $H_\infty = \ek_\infty \otimes_\ek H$ and $W_\mu H_\infty = \ek_\infty \otimes_\ek W_\mu H$, which is a $\ek_\infty$-Hodge--Pink structure.
Then we have an embedding
\[
\ek_\infty  \otimes_\ek \eK \hookrightarrow \End_{\mathrm{HP}}(\eH_\infty),
\]
which makes $H_\infty$, $W_\mu H_\infty$ for every $\mu \in \QQ$, and $\qfk$ become $\ek_\infty \otimes_\ek \eK$-modules.
On the other hand, as $\eK$ is a CM field over $\ek$, we have that
\[
\ek_\infty \otimes_{\ek}\eK \cong \prod_{\xi^+ \in J_{\eK^+}} \eK_{\infty,\xi^+},
\]
where for each $\xi^+ \in J_{\eK^+}$, $\eK_{\infty,\xi^+} \assign \ek_\infty \underset{\tilde{\nu}_{\xi^+},\eK^+}{\otimes} \eK$ is the compositum of $\eK$ and $\ek_\infty$ over $\eK^+$ with respect to the embedding
\[
\tilde{\nu}_{\xi^+}: \eK^+\hookrightarrow \ek_\infty \quad \text{ so that } \quad 
v_\theta \circ \tilde{\nu}_{\xi^+} = \nu_{\xi^+} \in 
\Emb(\eK^+,\CC_\infty).
\]
In particular, one has that 
$[\eK_{\infty,\xi^+}:\ek_\infty] = [\eK:\eK^+]$ for every $\xi^+ \in J_{\eK^+}$.

For each $\xi^+ \in J_{\eK^+}$, we take $e_{\xi^+}$ to be the  idempotent of $\ek_\infty \otimes_\ek \eK$ corresponding to $\eK_{\infty,\xi^+}$.
Then $e_{\xi^+} \eH_\infty \assign (e_{\xi^+} H_\infty, W_\bullet (e_{\xi^+} H_\infty), e_{\xi^+} \qfk)$ is a pure $\ek_\infty$-Hodge--Pink sub-structure of $\eH_\infty$, and 
\[
\eH_\infty = \bigoplus_{\xi \in J_{\eK^+}} e_{\xi^+}\eH_{\infty}.
\]
Moreover, comparing with the decomposition
\begin{align*}
\prod_{\xi \in J_\eK}\hat{O}_{\bK,\xi}
& \cong \CC_\infty[\![t-\theta]\!] \otimes_{\ek} \eK 
 \cong
\CC_\infty[\![t-\theta]\!] \otimes_{\ek_\infty} (\ek_\infty \otimes_{\ek} \eK) \\
& \cong 
\prod_{\xi^+ \in J_{\eK^+}} \CC_\infty[\![t-\theta]\!] \otimes_{\ek_\infty} \eK_{\infty,\xi^+} 
\cong 
\prod_{\xi^+ \in J_{\eK^+}} \CC_\infty[\![t-\theta]\!] \underset{\tilde{\nu}_{\xi^+},\eK^+}{\otimes} \eK \\
& \cong 
\prod_{\xi^+ \in J_{\eK^+}} \CC_\infty[\![t-\theta]\!] \underset{\tilde{\nu}_{\xi^+},O_{\eK^+}}{\otimes} O_{\eK}
\cong 
\prod_{\xi^+ \in J_{\eK^+}}
\left(\prod_{\substack{\xi \in J_{\eK} \\ \pi_{\bX/\bX^+}(\xi) = \xi^+}} \hat{O}_{\bK,\xi}\right),
\end{align*}
we get
\[
e_{\xi^+} = \sum_{\substack{\xi \in J_{\eK} \\ \pi_{\bX/\bX^+}(\xi) = \xi^+}} \hat{e}_{\xi},
\quad \text{ where $\hat{e}_{\xi}$ is the
idempotent of $\prod_{\xi \in J_\eK}\hat{O}_{\bK,\xi}$ corresponding to $\hat{O}_{\bK,\xi}$.}
\]
Consequently, taking sufficiently large integer $n$ so that $(t-\theta)^n \pfk \subset \qfk$, we obtain that
\[
\frac{e_{\xi^+}\qfk}{(t-\theta)^n e_{\xi^+}\pfk}\cong
\prod_{\substack{\xi \in J_\eK \\ \pi_{\bX/\bX^+}(\xi) = \xi^+}} \frac{ \hat{O}_{\eK,\xi}}{\Pfk_{\xi}^{n+w_\xi} \hat{O}_{\eK,\xi}}.
\]
The local semistable condition of $\eH$ (at $\infty$) ensures that
\begin{align}\label{eqn: wt-equality}
\sum_{\substack{\xi \in J_\eK \\ \pi_{\bX/\bX^+}(\xi) = \xi^+}} w_\xi 
& =  \dim_{\CC_\infty}\left(\frac{e_{\xi^+}\qfk}{e_{\xi^+}\qfk \cap e_{\xi^+}\pfk}\right) - \dim_{\CC_\infty}\left(\frac{e_{\xi^+}\pfk}{e_{\xi^+}\qfk \cap e_{\xi^+}\pfk}\right) \nonumber \\
& \leq \dim_{\ek_\infty} (e_{\xi^+} H_\infty) \cdot w(\eH)  = [\eK:\eK^+] \cdot w(\eH) .
\end{align}
Since
\[
\sum_{\xi^{+} \in J_{\eK}^{+}} \left(\sum_{\substack{\xi \in J_\eK \\ \pi_{\bX/\bX^+}(\xi) = \xi^+}} w_\xi\right)= \sum_{\xi \in J_\eK} w_\xi = \dim_{\ek} (H) \cdot w(\eH)  = \sum_{\xi^{+} \in J_{\eK}^{+}} \bigg([\eK:\eK^+] \cdot w(\eH) \bigg),
\]
the inequality~\eqref{eqn: wt-equality} is actually an equality, whence $\Phi_{\eH} \in I_\eK^0$ with $\wt(\Phi_{\eH}) = [\eK:\eK^+] \cdot w(\eH) $ as desired.
\end{proof}

\begin{remark}\label{rem: effective}
Let $\eH=(H,W_\bullet H,\qfk)$ be a pure $\ek$-Hodge--Pink structure of weight $w(\eH)$ which has full-CM by a CM field $\eK$ over $\ek$. 
Suppose $\eH$ is \emph{effective}, i.e.\ 
\[
\pfk = \CC_\infty[\![t-\theta]\!] \otimes_{\ek} H \subset \qfk.
\]
Then $w_\xi \geq 0$ for every $\xi \in J_\eK$, for which $\Phi_\eH$ becomes a generalized CM type of $\eK$ with
\[
\wt(\Phi_\eH) = [\eK:\eK^+]\cdot w(\eH).
\]
\end{remark}

\begin{proposition}\label{prop: CM-HP}
Let $\eM$ be a pure uniformizable dual $t$-motive of weight $w(\eM)$ over $\CC_\infty$.
Given a $\ek$-Hodge--Pink sub-structure $\eH'$ of $\eH(\eM)$, suppose that $\eH'$ has full-CM by a CM field $\eK$ over $\ek$.
Then $\eH'$ is effective and we may take $\eM'$ in Theorem~\ref{thm: HP-conj} to be a CM dual $t$-motive with generalized CM type $(\eK,\Phi_{\eH'})$ over $\CC_\infty$, where $\wt(\Phi_{\eH'}) = -[\eK:\eK^+]\cdot w(\eM)$.
\end{proposition}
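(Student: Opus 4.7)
My plan is to first establish effectiveness of $\eH'$, then construct a CM dual $t$-motive whose Hodge--Pink structure recovers $\eH'$, and finally glue everything via Theorem~\ref{thm: HP-conj} and the full-faithfulness up to isogeny in Example~\ref{ex: H(M)}.

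\textbf{Step 1: effectiveness and generalized CM type.} I first show that $\eH(\eM)$ is effective, i.e.\ $\pfk^\eM\subset\qfk^\eM$. Since $\pfk^\eM$ may be identified with $\Hom_{\CC_\infty[t]}(\eM,\CC_\infty[\![t-\theta]\!])$ while $\qfk^\eM=\Hom_{\CC_\infty[t]}(\sigma\eM,\CC_\infty[\![t-\theta]\!])$, the inclusion $\sigma\eM\subset\eM$ dualizes by restriction to $\pfk^\eM\subset\qfk^\eM$. This effectiveness is inherited by any Hodge--Pink sub-structure: writing $\eH'=(H',W_\bullet H',\qfk')$ with $\pfk'=\CC_\infty[\![t-\theta]\!]\otimes_\ek H'\subset\pfk^\eM\subset\qfk^\eM$ and $\pfk'\subset\CC_\infty(\!(t-\theta)\!)\otimes_\ek H'$, we deduce $\pfk'\subset\qfk^\eM\cap(\CC_\infty(\!(t-\theta)\!)\otimes_\ek H')=\qfk'$. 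Since $\eH(\eM)$ is pure of weight $\mu_\eM=-w(\eM)$, so is $\eH'$, and Lemma~\ref{lem: HP-type in IK0} together with Remark~\ref{rem: effective} identify $\Phi_{\eH'}$ as a generalized CM type of $\eK$ with $\wt(\Phi_{\eH'})=-[\eK:\eK^+]\cdot w(\eM)$.

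\textbf{Step 2: candidate CM dual $t$-motive with matching Hodge--Pink structure.} By Theorem~\ref{thm: CM prop}(1), I choose a CM dual $t$-motive $\eM''$ over $\CC_\infty$ with generalized CM type $(\eK,\Phi_{\eH'})$; by Theorem~\ref{thm: CM prop}(2) it is automatically pure and uniformizable. Writing $\Phi_{\eH'}=\sum_{\xi\in J_\eK}m_\xi\xi$, the defining relation $\sigma\eM''=\Ifk_{\Phi_{\eH'}}\cdot\eM''=\prod_\xi\Pfk_\xi^{m_\xi}\cdot\eM''$ combined with $\eM''$ being projective of rank one over $O_{\bK}$ shows, upon localizing at each $\Pfk_\xi$, that the Hodge--Pink type of $\eH(\eM'')$ is exactly $\Phi_{\eH'}$. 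Since both $\eH(\eM'')$ and $\eH'$ are rank-one free modules over $\CC_\infty[\![t-\theta]\!]\otimes_\ek\eK$ with identical lattice exponents at each $\Pfk_\xi$, they are $\eK$-equivariantly isomorphic as $\ek$-Hodge--Pink structures.

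\textbf{Step 3: gluing via the Hodge conjecture.} Theorem~\ref{thm: HP-conj} supplies a pure uniformizable dual $t$-motive $\tilde{\eM}$ together with an essentially surjective morphism $f_0:\eM\to\tilde{\eM}$ satisfying $(f_0)_{\mathrm{HP}}:\eH(\tilde{\eM})\iso\eH'$. Composing with the isomorphism from Step 2 yields a Hodge--Pink isomorphism $\eH(\eM'')\cong\eH(\tilde{\eM})$. Via the isomorphism $\ek\otimes_\eA\Hom_{\CC_\infty[t,\sigma]}(\tilde{\eM},\eM'')\cong\Hom_{\mathrm{HP}}(\eH(\eM''),\eH(\tilde{\eM}))$ of Example~\ref{ex: H(M)}, clearing the denominator from $\eA$ produces a morphism $h:\tilde{\eM}\to\eM''$ whose $h_{\mathrm{HP}}$ is a nonzero scalar multiple of the chosen isomorphism, and hence itself an isomorphism. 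This forces $h$ to be an isogeny (matching ranks and finite cokernel via Remark~\ref{rem: isogeny}). Setting $f\assign h\circ f_0:\eM\to\eM''$ gives the required essentially surjective morphism onto the CM dual $t$-motive $\eM''$, whose image under $f_{\mathrm{HP}}$ recovers $\eH'\subset\eH(\eM)$.

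The main obstacle is the Hodge--Pink computation for $\eM''$ in Step 2: one must carefully compare the $O_{\bK}$-module structure on $\eM''$ (which gives a decomposition of $\pfk^{\eM''}$ into factors indexed by $J_\eK$) with the $\CC_\infty[\![t-\theta]\!]$-lattice structure on $\qfk^{\eM''}$, verifying that the relation $\sigma\eM''=\Ifk_{\Phi_{\eH'}}\cdot\eM''$ translates precisely into lattice exponents $m_\xi$ at each $\Pfk_\xi$. Once this local computation is in place, the remaining assertions follow formally from purity, Theorem~\ref{thm: HP-conj}, and the equivalence (up to isogeny) between dual $t$-motives and their Hodge--Pink structures recorded in Example~\ref{ex: H(M)}.
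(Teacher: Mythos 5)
Your argument is correct, but it reaches the conclusion by a genuinely different route than the paper. The paper works directly with the motive $\eM'$ produced by Theorem~\ref{thm: HP-conj}: it transfers the full-CM of $\eH'$ to $\ek\otimes_{\eA}\End_{\CC_\infty[t,\sigma]}(\eM')$ via the isomorphism in Example~\ref{ex: H(M)}, enlarges the resulting $\eA$-order $\Ocal=\eK\cap\End_{\CC_\infty[t,\sigma]}(\eM')$ to $O_\eK$ by replacing $\eM'$ with $O_\eK\otimes_{\Ocal}\eM'$, and then reads off $\sigma\eM'=\Ifk_{\Phi_{\eH'}}\eM'$ from the identification $\qfk^{\eM'}/\pfk^{\eM'}\cong\qfk'/\pfk'$ supplied by $f_{\mathrm{HP}}$, so that this very $\eM'$ satisfies Definition~\ref{defn: CM dual-t-motive}. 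You instead take an abstract CM dual $t$-motive $\eM''$ of type $(\eK,\Phi_{\eH'})$ from Theorem~\ref{thm: CM prop}, verify that its Hodge--Pink type is $\Phi_{\eH'}$ (the same local computation the paper performs, run in the opposite direction), use that a full-CM Hodge--Pink structure on a one-dimensional $\eK$-space is determined by its exponents to get $\eH(\eM'')\cong\eH'$, and then transport the essential surjection across this isomorphism via the full faithfulness of Example~\ref{ex: H(M)}. Your route costs the extra uniqueness statement and the isogeny-transport step but yields a reusable comparison of CM Hodge--Pink structures and avoids the order-enlargement; the paper's route is shorter precisely because it never compares two a priori different motives.

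Two points you should spell out. First, the uniqueness claim in Step 2 is valid but deserves its one-line reason: any $\eK$-linear isomorphism $H^{\eM''}\to H'$ carries $\pfk^{\eM''}$ onto $\pfk'$ and hence $\qfk^{\eM''}$ onto a free rank-one lattice over $\CC_\infty[\![t-\theta]\!]\otimes_\ek\eK$ with the same exponents as $\qfk'$; inside a one-dimensional $\eK$-space two such lattices have generators differing by a unit, so they coincide, and the weight filtrations agree automatically since the weight equals $\wt(\Phi_{\eH'})/[\eK:\eK^+]$ by Lemma~\ref{lem: HP-type in IK0}. Second, ``$h_{\mathrm{HP}}$ an isomorphism forces $h$ to be an isogeny'' is not what Remark~\ref{rem: isogeny} says; justify it by applying Example~\ref{ex: H(M)} to the inverse isomorphism, which gives $g:\eM''\to\tilde{\eM}$ and a nonzero $b\in\eA$ with $g\circ h=b\cdot\mathrm{id}_{\tilde{\eM}}$ and $h\circ g=b\cdot\mathrm{id}_{\eM''}$; then $h$ is injective and $b\eM''\subset h(\tilde{\eM})$, so its cokernel is finite dimensional over $\CC_\infty$, and composing with the essentially surjective $f_0$ stays essentially surjective.
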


\begin{proof}
Recall that $\eH(\eM) = (H^{\eM},W_\bullet H^{\eM},\qfk^\eM)$ is a pure $\ek$-Hodge--Pink structure of weight $-w(\eM)$, and
\begin{align*}
\qfk^\eM = \Hom_{\CC_\infty[t]}(\sigma \eM, \CC_\infty[\![t-\theta]\!]) &\supset \Hom_{\CC_\infty[t]}(\eM,\CC_\infty[\![t-\theta]\!])
= \Hom_{\TT^\dagger}(\MM^\dagger,\CC_\infty[\![t-\theta]\!])\\
&
= \Hom_{\eA}(H_{\mathrm{Betti}}(\eM), \CC_\infty[\![t-\theta]\!]) 
= \CC_\infty[\![t-\theta]\!]\otimes_{\ek} H^{\eM} \rassign \pfk^{\eM}.
\end{align*}
Here $\TT^\dagger$ and $\MM^\dagger$ are introduced in Section~\ref{sec: uniformizibility}.
Then $\eH' = (H',W_\bullet H', \qfk')$, where 
$H'$ is a $\ek$-subspace of $H^\eM$, $W_\mu H' = W_\mu H^{\eM}\cap  H'$, and 
\[\qfk' = \qfk \cap \CC_\infty(\!(t-\theta)\!) \otimes_\ek H' \supset \pfk^\eM \cap \CC_\infty(\!(t-\theta)\!) \otimes_\ek H' = \CC_\infty[\![t-\theta]\!] \otimes_\ek H' = \pfk'.
\]
Thus $\eH'$ is pure of weight $-w(\eM)$ and effective.
By Theorem~\ref{thm: HP-conj}, there exists a pure uniformizable dual $t$-motive $\eM'$ over $\CC_\infty$ together with an essentially surjective morphism $f: \eM \rightarrow \eM'$ over $\CC_\infty$ such that $f_{\mathrm{HP}}: \eH(\eM') \cong \eH'$.
As $\eH'$ has full-CM by $\eK$, one has that $\rank_{\CC_\infty[t]}(\eM') = \dim_{\ek}(\eH') = [\eK:\ek]$, and
\[
\eK \hookrightarrow \End_{\mathrm{HP}}(\eH') \cong 
\ek \otimes_{\eA}\End_{\CC_\infty[t,\sigma]}(\eM').
\]
In particular, the intersection of $\eK$ and $\End_{\CC_\infty[t,\sigma]}(\eM')$ via the above embedding is an $\eA$-order $\Ocal$ in $\eK$.
Let $O_\eK$ be the integral closure of $\eA$ in $\eK$.
Replacing $\eM'$ by $O_\eK \otimes_{\Ocal} \eM'$ if necessary, we may assume that $\Ocal = O_\eK$.

From the isomorphism $f_{\mathrm{HP}}: \eH(\eM') \cong \eH'$, we have that
\[
\CC_\infty[\![t-\theta]\!]\otimes_{\CC_\infty[t]} \left(\frac{\eM'}{\sigma \eM'}\right) \cong \frac{\qfk^{\eM'}}{\pfk^{\eM'}} \cong \frac{\qfk'}{\pfk'} \cong \prod_{\xi \in J_\eK}\frac{\hat{O}_{\eK,\xi}}{\Pfk_\xi^{w_\xi'}\hat{O}_{\eK,\xi}},
\]
where $\Phi_{\eH'} = \sum_{\xi \in J_\eK} w_{\xi}' \xi$ is the Hodge--Pink type of $\eH'$.
Let $\Ifk_{\Phi_{\eH'}}\assign \prod_{\xi \in J_\eK} \Pfk^{w_\xi'} \subset O_{\bK}$ to get $\sigma \eM' = \Ifk_{\Phi_{\eH'}} \eM'$.
By Lemma~\ref{lem: HP-type in IK0} and Remark~\ref{rem: effective}, we know that $\Phi_{\eH'}$ is a generalized CM type of $\eK$.
Hence by Definition~\ref{defn: CM dual-t-motive}, $\eM'$ is a CM dual $t$-motive with CM type $(\eK,\Phi_{\eH'})$ over $\CC_\infty$,
and
\[
\wt(\Phi_{\eH'}) = [\eK:\eK^+]\cdot w(\eH') = -[\eK:\eK^+]\cdot w(\eM).
\]
\end{proof}

\begin{remark}\label{rem: field-of-defn}
Keep the notations in Proposition~\ref{prop: CM-HP}.
By Theorem~\ref{thm: CM prop}~(3) we may choose $\eM'$ to be defined over $\ok$.
Suppose further that $\eM$ is also defined over $\ok$, then by Lemma~\ref{lem: morphism-defining-field} the essentially surjective morphism $\eM \rightarrow \eM'$ is over $\ok$ as well.
\end{remark}

\begin{definition}\label{defn: CM-HP}
Let $\eM$ be a pure uniformizable dual $t$-motive over $\ok$, and $\eH'$ a $\ek$-Hodge--Pink substructure of $\eH(\eM)$ with full-CM by a CM field $\eK$ over $\ek$.
A dual $t$-motive $\eM'$ over $\ok$ given in Proposition~\ref{prop: CM-HP} is called a \emph{CM dual $t$-motive associated to $\eH'$ over $\ok$}.
By Theorem~\ref{thm: CM prop}~(3), such $\eM'$ is unique up to isogeny.
\end{definition}

\subsection{The Deligne--Gross conjecture}

Let $\eM$ be a pure uniformizable dual $t$-motive defined over $\ok$.
Given a $\ek$-Hodge--Pink substructure $\eH' = (H', W_\bullet H', \qfk')$ of $\eH(\eM)$, via the period isomorphism $H^{\eM}_{\CC_\infty} \cong H_{\mathrm{dR}}(\eM,\CC_\infty)$ (see \eqref{eqn: period-iso}), one may identify $H'_{\CC_\infty} \assign \CC_\infty \otimes_{\ek} H' \subset H^{\eM}_{\CC_\infty}$ as a subspace of $H_{\mathrm{dR}}(\eM,\CC_\infty)$.
Suppose $\eH'$ has full-CM by a CM field $\eK \subset \ek^{\sep}$, i.e.\ $\dim_\ek H'= [\eK:\ek]$ and $\eK$ embeds into the endomorphism algebra $\End_{\mathrm{HP}}(\eH')$ over $\ek$.
Note that $H'_{\CC_\infty}$ becomes a $\CC_\infty\underset{v_\theta,\ek}{\otimes} \eK$-module, and
\[
\CC_\infty\underset{v_\theta,\ek}{\otimes} \eK \cong \prod_{\nu \in \Emb(\eK,\CC_\infty)} \CC_\infty.
\]
For each embedding $\nu \in \Emb(\eK,\CC_\infty)$,
there exists a nonzero $\omega_{\nu}^{\eH'} \in H_{\mathrm{dR}}(\eM,\CC_\infty) \cap H'_{\CC_\infty}$, which is unique up to $\CC_\infty^\times$-multiples, satisfying that
\[
\alpha (\omega_{\nu}^{\eH'}) = \nu(\alpha) \cdot \omega_{\nu}^{\eH'}, \quad \forall \alpha \in \eK,
\]
The main result in this section is to derive an explicit description of the periods
\[
\int_{\gamma} \omega_{\nu}^{\eH'}, \quad \forall 0\neq \gamma \in H_{\mathrm{Betti}}(\eM),
\]
when $\omega_{\nu}^{\eH'}$ is algebraic (i.e.\  $\omega_{\nu}^{\eH'} \in H_{\mathrm{dR}}(\eM,\ok)\cap H'_{\CC_\infty}$), in terms of two-variable gamma values when the CM field $\eK$ is contained in a cyclotomic function field.\\

Suppose $\eK \subset \eK_{\nfk,\ell}$ where $\nfk \in \eA_+$ and $\ell \in \NN$.
Let $\eM'$ be a CM dual $t$-motive associated to $\eH'$ over $\ok$.
Then $\eM'$ has generalized CM type $(\eK, \Phi_{\eH'})$, where $\Phi_{\eH'} \in I_{\eK}^0$ is the Hodge--Pink type of $\eH'$.
By Proposition~\ref{prop: St-span}~(3), there exists a function $\varepsilon^{\eH'}:(\frac{1}{\nfk(\theta)}A/A) \times (\frac{1}{q^{\ell}-1} \ZZ/\ZZ) \rightarrow \QQ$ such that
\[
\varphi_{\eK,\Phi_{\eH'}} = \sum_{\substack{x \in \frac{1}{\nfk(\theta)}A/A \\ y \in \frac{1}{q^{\ell}-1} \ZZ/\ZZ}}\varepsilon^{\eH'}(x,y)\cdot \St(x,y).
\]
By Theorem~\ref{thm: Gamma dist}, we prove the following analogue of Deligne--Gross period conjecture:

\begin{theorem}\label{thm: DG-conj}
Let $\eM$ be a pure uniformizable dual $t$-motive defined over $\ok$.
Given a $\ek$-Hodge--Pink sub-structure $\eH' = (H',W_\bullet H', \qfk')$ of $\eH(\eM)$, suppose $\eH'$ has full-CM by a CM field $\eK$, where $\eK \subset \eK_{\nfk,\ell}$ for $\nfk \in \eA_+$ and $\ell \in \NN$.
For each $\varrho \in \eG_{\nfk,\ell}$, put $\nu_{\varrho}\assign \nu_1 \circ \varrho\big|_\eK : \eK \hookrightarrow \CC_\infty$.
Identifying $H'_{\CC_\infty} \assign \CC_\infty \otimes_{\ek} H' \subset H^{\eM}_{\CC_\infty}$ with a subspace of $H_{\mathrm{dR}}(\eM,\CC_\infty)$ via the period isomorphism in \eqref{eqn: period-iso}, let  $\omega_{\varrho}^{\eH'} \in H_{\mathrm{dR}}(\eM,\CC_\infty) \cap \eH'_{\CC_\infty}$ be a nonzero differential satisfying
\[
\alpha (\omega_{\varrho}^{\eH'}) = \nu_{\varrho}(\alpha) \cdot \omega_{\varrho}^{\eH'}, \quad \forall \alpha \in \eK.
\]
Take 
$\varepsilon^{\eH'}:(\frac{1}{\nfk(\theta)}A/A) \times (\frac{1}{q^{\ell}-1} \ZZ/\ZZ) \rightarrow \QQ$
such that
\[
\varphi_{\eK,\Phi_{\eH'}}
= \sum_{\substack{x \in \frac{1}{\nfk(\theta)}A/A \\ y \in \frac{1}{q^{\ell}-1} \ZZ/\ZZ}}\varepsilon^{\eH'}(x,y)\cdot \St(x,y) \quad \in \Sscr(\eG_{\nfk,\ell}),
\]
where $\Phi_{\eH'}$ is the Hodge--Pink type of $\eH'$, see Definition~\ref{defn: HP-type}. 
Suppose that $\omega_\varrho^{\eH'}$ is algebraic, i.e.\ $\omega_\varrho^{\eH'} \in H_{\mathrm{dR}}(\eM,\ok)$.
For $\gamma \in \eH_{\mathrm{Betti}}(\eM)$ with $\int_{\gamma} \omega_{\varrho}^{\eH'} \neq 0$, 
we then have that
\[
\int_{\gamma} \omega_{\varrho}^{\eH'}
\sim \prod_{\substack{x \in \frac{1}{\nfk(\theta)}A/A \\ y \in \frac{1}{q^{\ell}-1}\ZZ/\ZZ}}\tilde{\Gamma}(x, y)^{\varepsilon^{\eH'}(\varrho^{-1}\star x,\varrho^{-1}\star y))}.
\]
\end{theorem}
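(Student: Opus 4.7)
The plan is to reduce the theorem to Theorem~\ref{thm: Gamma dist} by replacing the abstract Hodge--Pink sub-structure $\eH'$ with an honest CM dual $t$-motive. First, by Proposition~\ref{prop: CM-HP} combined with Theorem~\ref{thm: CM prop}~(3) and Remark~\ref{rem: field-of-defn}, I can select a CM dual $t$-motive $\eM'$ over $\ok$ with generalized CM type $(\eK,\Phi_{\eH'})$ together with an essentially surjective morphism $f:\eM\rightarrow \eM'$ defined over $\ok$, such that the induced morphism of Hodge--Pink structures $f_{\mathrm{HP}}$ realizes the isomorphism $\eH(\eM')\cong \eH'$. Via Remark~\ref{rem: ess-surj} and the commuting square~\eqref{eqn: comp-period-iso}, the pull-back $f^{*}: H_{\mathrm{dR}}(\eM',\CC_\infty) \hookrightarrow H_{\mathrm{dR}}(\eM,\CC_\infty)$ is injective and its image matches $H'_{\CC_\infty}$ under the period isomorphism.

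Next, I transfer the eigen-differential $\omega_\varrho^{\eH'}$ to $\eM'$. Let $\xi_\varrho \in J_\eK$ be the point with $\nu_{\xi_\varrho} = \nu_\varrho$, and take $\omega_{\eM',\xi_\varrho} \in H_{\mathrm{dR}}(\eM',\ok)$ the differential associated with $\xi_\varrho$ via \eqref{eqn: omegaM-xi}. Because $f^*(\omega_{\eM',\xi_\varrho})$ lies in $H'_{\CC_\infty}$ and is a $\eK$-eigenvector with eigenvalue character $\nu_\varrho$, it agrees with $\omega_\varrho^{\eH'}$ up to a $\CC_\infty^\times$-multiple; since both $f^*$ and $\omega_{\eM',\xi_\varrho}$ are defined over $\ok$ and $\omega_\varrho^{\eH'}$ is algebraic by hypothesis, this scalar lies in $\ok^\times$. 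After absorbing it, I may assume $\omega_\varrho^{\eH'} = f^*(\omega_{\eM',\xi_\varrho})$.

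Now applying the pull-back/push-forward compatibility \eqref{eqn: pf-pb}, for any $\gamma\in H_{\mathrm{Betti}}(\eM)$ with $\int_\gamma \omega_\varrho^{\eH'}\neq 0$ we have $\int_\gamma \omega_\varrho^{\eH'} = \int_{f_*\gamma} \omega_{\eM',\xi_\varrho}$ with $f_*\gamma\neq 0$. By Definition~\ref{defn: period quantity} this integral represents the period symbol $\Pcal_\eK(\xi_\varrho,\Phi_{\eH'})$, which by Theorem~\ref{thm: PS}~(2) equals $\Pcal_\eK(\xi_0,\Phi_{\eH'}^{\varrho^{-1}})$ where $\nu_{\xi_0} = \nu_1|_\eK$; hence $\int_\gamma \omega_\varrho^{\eH'} \sim \wp_{\nu_1}^{\cyc}(\varphi_{\eK,\Phi_{\eH'}^{\varrho^{-1}}}) = \wp_{\nu_1}^{\cyc}(\varrho \cdot \varphi_{\eK,\Phi_{\eH'}})$. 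Finally, using the elementary equivariance $\varrho\cdot \St(x,y) = \St(\varrho\star x, \varrho\star y)$ (immediate from the definitions in Section~4.2) and reindexing, the decomposition $\varphi_{\eK,\Phi_{\eH'}} = \sum_{x,y}\varepsilon^{\eH'}(x,y)\St(x,y)$ yields
\[
\varrho\cdot \varphi_{\eK,\Phi_{\eH'}} = \sum_{x,y}\varepsilon^{\eH'}(\varrho^{-1}\star x, \varrho^{-1}\star y)\cdot \St(x,y),
\]
and Theorem~\ref{thm: Gamma dist} with Remark~\ref{rem: Gamma-dist} produces the claimed product of gamma values.

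The main obstacle is Step~2, namely ensuring that the eigen-differential transported through $f^*$ is compatible with the algebraicity hypothesis on $\omega_\varrho^{\eH'}$; this rests on knowing that $\eM'$ and $f$ descend to $\ok$ and that the differentials $\omega_{\eM',\xi_\varrho}$ from \eqref{eqn: omegaM-xi} exist already over $\ok$, so that the comparison scalar is forced into $\ok^\times$ rather than merely $\CC_\infty^\times$. Everything else is a matter of carefully threading together the period symbol formalism of Theorem~\ref{thm: PS} with the Stickelberger-distribution interpretation of gamma values.
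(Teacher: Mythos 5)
Your proposal is correct and follows essentially the same route as the paper's own proof: replace $\eH'$ by a CM dual $t$-motive $\eM'$ over $\ok$ via Proposition~\ref{prop: CM-HP} and Remark~\ref{rem: field-of-defn}, identify $\omega_\varrho^{\eH'}$ with $f^*\omega_{\eM',\xi_\varrho}$ up to $\ok^\times$, pass through \eqref{eqn: pf-pb}, Theorem~\ref{thm: PS}~(2), the equivariance $\varrho\cdot\St(x,y)=\St(\varrho\star x,\varrho\star y)$, and conclude by Theorem~\ref{thm: Gamma dist} with Remark~\ref{rem: Gamma-dist}. The only difference is that you spell out slightly more explicitly why the comparison scalar lands in $\ok^\times$, which the paper leaves implicit; this is a harmless elaboration, not a change of method.
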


\begin{proof}
Let $\eM'$ be a CM dual $t$-motive associated to $\eH'$ over $\ok$, and $f: \eM \rightarrow \eM'$ the essentially surjective homomorphism over $\ok$ such that $f_{\mathrm{HP}}: \eH(\eM') \cong \eH' \subset \eH(\eM)$ as in Proposition~\ref{prop: CM-HP}.
Let $\xi_{\varrho} \in J_\eK$ be the point so that $\nu_{\xi_{\varrho}} = \nu_\varrho$.
Let $\omega_{\eM',\xi_{\varrho}} \in H_{\mathrm{dR}}(\eM',\ok)$ be the differential associated with $\xi_\varrho$.
Up to a $\ok^\times$-multiple, we then have $\omega_{\varrho}^{\eH'} = f^*\omega_{\eM',\xi_{\varrho}}$.
Note that $\eM'$ has generalized CM type $(\eK,\Phi_{\eH'})$.
Hence for each $\gamma \in H_{\mathrm{Betti}}(\eM)$ such that $\int_{\gamma} \omega_{\varrho}^{\eH'}$ is nonzero, we obtain that 
\begin{align*}
    \int_{\gamma} \omega_{\varrho}^{\eH'} &\sim \int_{\gamma} f^* \omega_{\eM',\xi_{\varrho}} \sim \int_{f_* \gamma } \omega_{\eM',\xi_{\varrho}}\\
    &\sim p_{\eK}(\xi_{\varrho}, \Phi_{\eH'}) \sim p_{\eK}(\xi_1, \Phi_{\eH'}^{\varrho^{-1}}) \sim \wp^{\cyc}_{\nu_1}\big(\varrho \cdot  \varphi_{\eK,\Phi_{\eH'}}\big) 
    \sim \prod_{\substack{x \in \frac{1}{\nfk(\theta)}A/A \\ y \in \frac{1}{q^{\ell}-1}\ZZ/\ZZ}}\wp_{\nu_1}^{\cyc}\big(\St(\varrho\star x,\varrho\star y)\big)^{\varepsilon^{\eH'}(x,y)} \\
    & \sim \prod_{\substack{x \in \frac{1}{\nfk(\theta)}A/A \\ y \in \frac{1}{q^{\ell}-1}\ZZ/\ZZ}}\wp_{\nu_1}^{\cyc}\big(\St(x,y)\big)^{\varepsilon^{\eH'}(\varrho^{-1}\star x,\varrho^{-1} \star y)}
    \sim 
    \prod_{\substack{x \in \frac{1}{\nfk(\theta)}A/A \\ y \in \frac{1}{q^{\ell}-1}\ZZ/\ZZ}}\tilde{\Gamma}(x,y)^{\varepsilon^{\eH'}(\varrho^{-1}\star x,\varrho^{-1} \star y)},
\end{align*}
where the last equivalence is from Theorem~\ref{thm: Gamma dist} (and Remark~\ref{rem: Gamma-dist}).
\end{proof}

\begin{remark}\label{rem: alg-omega}
As $\eM'$ is defined over $\ok$ and $\eH'$ is isomorphic to $\eH(\eM')$ via the essentially surjective morphism $f:\eM\rightarrow \eM'$ over $\ok$, we are always able to choose the differential $\omega_{\varrho}^{\eH'}$ to be algebraic for every $\varrho \in \eG_{\nfk,\ell}$ by taking $\omega_{\varrho}^{\eH'} = f^* \omega_{\eM',\xi_\varrho}$ as in the proof above.
\end{remark}

\begin{remark}\label{rem: ex-epsilon}
Keeping the notation in Theorem~\ref{thm: DG-conj},
write the Hodge--Pink type of $\eH'$ as $\Phi_{\eH'} = \sum_{\varrho \in \eG_\eK}m_{\varrho} \cdot \xi_{\varrho} \in I_\ek^0$.
Although the choice of the function $\varepsilon^{\eH'}$ might not be unique, we can follow Theorem~\ref{thm: ex-HB-period} and take $\varepsilon^{\eH'}$ to be the following explicit one: for $x \in \frac{1}{\nfk(\theta)}A/A$ and $y \in \frac{1}{q^\ell-1}\ZZ/\ZZ$, write $x = \frac{a(\theta)}{\cfk(\theta)} \bmod A$ where $a \in \eA$ and $\cfk \in \eA_+$ with $\cfk \mid \nfk$ and $\text{gcd}(a,\cfk) = 1$, put
\[
\varepsilon^{\eH'}(x,y) \assign \begin{cases}
\displaystyle \frac{(1-q)\wt(\Phi_{\eH'})}{[\eK:\eK^+]} + \frac{m_\varrho n_{\cfk}(\varrho,a,i)}{[\eK:\ek]}, & \text{ if $\cfk = 1$ and $y = \displaystyle \frac{q^i}{1-q^\ell} \bmod \ZZ$ for $0\leq i <\ell$,}\\
\displaystyle\frac{m_\varrho n_{\cfk}(\varrho,a,i)}{[\eK:\ek]}, & \text{ if $\cfk \neq 1$ and $y = \displaystyle \frac{q^i}{1-q^\ell} \bmod \ZZ$ for $0\leq i <\ell$,}\\
0, & \text{ otherwise.}
\end{cases}
\]
Here $n_\cfk(\varrho,a,i)$ is defined in \eqref{eqn: defn-nc}.
As in \eqref{eqn: phi-ST}, we get
\begin{align*}
& \sum_{\substack{x \in \frac{1}{\nfk(\theta)}A/A \\ y \in \frac{1}{q^\ell-1}\ZZ/\ZZ}}\varepsilon^{\eH'}(x,y)\St(x,y) \\
&= \frac{\wt(\Phi_{\eH'})}{[\eK:\eK^+]} \cdot \mathbf{1}_{\eG_\eK}+\sum_{\cfk \mid \nfk}\sum_{\substack{a \in (\eA/\cfk)^\times \\ i \in \ZZ/\ell \ZZ}}\left(\sum_{\varrho \in \eG_\eK} m_\varrho \cdot n_\cfk(\varrho,a,i)\right)\cdot \St(\frac{a(\theta)}{\cfk(\theta)},\frac{q^i}{1-q^\ell}) \\
&= \varphi_{\eK,\Phi_{\eH'}}.
\end{align*}
Hence given $\varrho_0 \in \eG_\eK$,  we obtain that
\[
\int_{\gamma}\omega_{\varrho_0}^{\eH'} \sim 
\tilde{\pi}^{\frac{\wt(\Phi_{\eH'})}{[\eK:\eK^+]}}
\cdot  \prod_{\varrho \in \eG_\eK} \left(\prod_{\cfk\mid \nfk}\prod_{\substack{a \in (\eA/\cfk)^\times \\ i \in \ZZ/\ell \ZZ}} \tilde{\Gamma}(\frac{a(\theta)}{\cfk(\theta)},\frac{q^i}{1-q^\ell})^{n_{\cfk}(\varrho\varrho_0^{-1},a,i)}\right)^{\frac{m_{\varrho}}{[\eK:\ek]}}
\]
whenever $\gamma \in H_{\mathrm{Betti}}(\eM)$ such that $\int_{\gamma}\omega_{\varrho_0}^{\eH'}$ is nonzero.
\end{remark}

\begin{example}\label{ex: Gross-formula}
Let $E_\rho$ be an abelian $t$-module over $\ok$ which is pure and uniformizable (i.e.\ the Hartl--Juschka dual $t$-motive $\eM(\rho)$ is pure and uniformizable).
Put $\eH(E_\rho) \assign \eH(\eM(\rho))$, and for each $N \in \NN$ we define the \emph{$N$-th Hodge--Pink structure of $E_\rho$} by
\[
\eH^N(E_\rho) \assign \bigwedge^N_{\ek} \eH(E_\rho) = \eH\Big(\bigwedge^N_{\ok[t]} \eM(\rho)\Big).
\]

Let $d$ be the dimension of $E_\rho$ and $r$ be the rank of $E_\rho$.
Suppose further that $E_\rho$ has ``complex multiplication'' by $O_\eK$, where $\eK$ is a CM field over $\ek$ with $[\eK:\ek] = n$ and $O_\eK$ is the integral closure of $\eA$ in $\ek$.
This means that there exists an $\eA$-algebra homomorphism from $O_\eK \hookrightarrow \End_{\text{ab.~$t$-mod.}}(E_\rho)$,
which corresponds to a $\ek$-algebra embedding $\eK \hookrightarrow \End_{\mathrm{HP}}(\eH(E_\rho))$.
In particular, one has that $n\mid r$.
Put $d_0\assign r/n$.
We may identify
\[
\eH^{d_0}_{\eK}(\rho)\assign \bigwedge^{d_0}_{\eK} \eH(E_\rho) = \eH\Big(\bigwedge^{d_0}_{O_\bK}\eM(\rho)\Big)
\hookrightarrow
\eH\Big(\bigwedge^{d_0}_{\ok[t]} \eM(\rho)\Big) = \eH^{d_0}(E_\rho).
\]
In particular, the Hodge--Pink structure $\eH^{d_0}_{\eK}(\rho)$ is pure of weight $d_0 \cdot \frac{d}{r} = \frac{d}{n}$ and has full-CM by $\eK$.
This is equivalent to saying that $\bigwedge^{d_0}_{O_\bK}\eM(\rho)$ is a CM dual $t$-motive with generalized CM type $(\eK,\Phi_{\eH^{d_0}_{\eK}(\rho)})$, where $\Phi_{\eH^{d_0}_{\eK}(\rho)}$ is the Hodge--Pink type of $\eH^{d_0}_{\eK}(\rho)$ with
\[
\wt(\Phi_{\eH^{d_0}_{\eK}(\rho)}) = \frac{d}{n} \cdot [\eK:\eK^+] = \frac{d}{[\eK^+:\ek]}.
\]

Now, suppose $\eK \subset \eK_{\nfk,\ell}$ for $\nfk \in \eA_+$ and $\ell \in \NN$.
Write
\[
\Phi_{\eH^{d_0}_{\eK}(\rho)} = \sum_{\varrho \in \eG_{\eK}}m_\varrho \xi_\varrho \quad \in I_\eK^0. 
\]
Then by Remark~\ref{rem: ex-epsilon} we have that for $\varrho_0 \in \eG_\eK$,
\begin{align*}
\int_{\gamma_1\wedge\cdots \wedge\gamma_{d_0}}\omega_{\varrho_0}^{\eH^{d_0}_{\eK}(\rho)} & \sim 
\tilde{\pi}^{\frac{d}{[\eK:\ek]}}
\cdot  \prod_{\varrho \in \eG_\eK} \left(\prod_{\cfk\mid \nfk}\prod_{\substack{a \in (\eA/\cfk)^\times \\ i \in \ZZ/\ell \ZZ}} \tilde{\Gamma}(\frac{a(\theta)}{\cfk(\theta)},\frac{q^i}{1-q^\ell})^{n_{\cfk}(\varrho\varrho_0^{-1},a,i)}\right)^{\frac{m_{\varrho}}{[\eK:\ek]}}
\end{align*}
for 
$\gamma_1\wedge \cdots \wedge \gamma_{d_0} \in H_{\mathrm{Betti}}\big(\bigwedge^{d_0}_{\ok[t]}\eM(\rho)\big)$ so that the period integral on the left hand side is nonzero.
In particular, when $\eK$ is imaginary and quadratic, we may write $\eG_\eK = \{\varrho_1 = \mathrm{id}_\eK,\varrho_2\}$, and
\[
\Phi_{\eH^{d_0}_{\eK}(\rho)} = m_1 \xi_{\varrho_1} + m_2 \xi_{\varrho_2} \quad \text{ with } \quad 
m_1+m_2 = d \quad (\text{as $\eK^+ =\ek$}).
\]
Then we get the following analogue of the Gross formula in \cite[(4),(5) in p.~194]{Gr78}:
\[
\int_{\gamma_1\wedge\cdots \wedge\gamma_{d_0}}\omega_{\varrho_1}^{\eH^{d_0}_{\eK}(\rho)} \sim (\varpi_\eK^+)^{m_1}\cdot (\varpi_{\eK}^-)^{m_2} \sim (\varpi_\eK^+)^{m_1} \cdot \left(\frac{\tilde{\pi}}{\varpi_\eK^+}\right)^{m_2},
\]
and 
\[
\int_{\gamma_1\wedge\cdots \wedge\gamma_{d_0}}\omega_{\varrho_2}^{\eH^{d_0}_{\eK}(\rho)}  \sim (\varpi_\eK^+)^{m_2}\cdot (\varpi_{\eK}^-)^{m_1} \sim (\varpi_\eK^+)^{m_2} \cdot \left(\frac{\tilde{\pi}}{\varpi_\eK^+}\right)^{m_1},
\]
where $\varpi_\eK^{\pm}$ are given in \eqref{eqn: pi-K-pm}.
\end{example}

\bibliographystyle{plain}

\begin{thebibliography}{BCPW22}

\bibitem[All96]{All96}
J.-P.~Allouche, \textit{Transcendence of the Carlitz--Goss gamma at rational arguments}, J.~Number Theory 60 (1996) 318--328.

\bibitem[And82]{Anderson82}
G.~W.~Anderson,
\textit{Logarithmic derivatives of Dirichlet $L$-functions and the periods of abelian varieties}, Compositio math.\ tome 45, n$^{\rm o}\,$3 (1982) 315--332.

\bibitem[And86] {Anderson86} G.~W.~Anderson, \textit{$t$-motives},
Duke Math. J.  \textbf{53} (1986), no.~2, 457--502.

\bibitem[ABP04]{ABP} G. W. Anderson, W. D. Brownawell, and M. A. Papanikolas,
  \textit{Determination of the algebraic relations among special
  $\Gamma$-values in positive characteristic}, Ann. of
  Math. (2) \textbf{160} (2004), no.~1, 237--313.

\bibitem[BM16]{BM16}
A.~Barquero-Sanchez and R.~ Masri, \textit{The Chowla--Selberg formula for abelian CM fields and Faltings height}, Compositio math.\ \textbf{152} (2016), 445--476.

\bibitem[BCPW22]{BCPW22}
W.~D.~Brownawell, C.-Y.~Chang, M.~A.~Papanikolas, and F.-T.~Wei, {\it Function field analogue of
 Shimura's conjecture on period symbols}, preprint (arXiv:2203.09131). 

\bibitem[BP02] {BP02} W. D. Brownawell and M. A. Papanikolas, \textit{Linear
independence of gamma values in positive characteristic}, J. Reine
Angew. Math. \textbf{549} (2002), 91--148.

\bibitem[BP20] {BPrapid} 
W. D. Brownawell and M. A. Papanikolas, \textit{A rapid introduction to Drinfeld modules, $t$-modules, and $t$-motives}, in: $t$-Motives: Hodge Structures, Transcendence and Other Motivic Aspects, Eur. Math. Soc., Z\"urich, 2020, pp.~3--30.

\bibitem[Car35]{Car35}
L.~Carlitz, \textit{On certain functions connected with polynomials in a Galois field}, Duke Math.~J.~ \textbf{1} (1935) 137--168.

\bibitem[C12a]{C12} C.-Y.\ Chang, \textit{Special values of Drinfeld modular forms and algebraic independence}, Math. Ann. \textbf{352} (2012), 189--204.

\bibitem[C12b]{C12-2} C.-Y.\ Chang, \textit{Transcendence of special values of quasi modular forms}, Forum Math.~ 24 (2012) 539--551.

\bibitem[CPTY10]{CPTY10}
C.-Y.~Chang, M.~A.~Papanikolas, D.~S.~Thakur, and J.~Yu, {\it Algebraic independence of arithmetic gamma values and Carlitz zeta values}, Adv.\ math.\ 223 (2010) 1137--1154.

\bibitem[CY07]{CY07}
C.-Y.\ Chang and J.\ Yu,
\textit{Determination of algebraic relations among special zeta
values in positive characteristic},
Adv.\ Math.\ \textbf{216} (2007) 321--345.

\bibitem[CG22]{CG22}
Y.-T.\ Chen and O.\ Gezmi\c{s}, 
\textit{On Drinfeld modular forms of higher rank and quasi-periodic functions}, Trans.~A.~M.~S.~vol.~375 no.~4 (2022) 2387--2416.

\bibitem[CS67]{CS67}
S.~Chowla and A.~Selberg, {\it On Epstein’s zeta-function}, J.~Reine Angew.~Math.~ 227(1967), 86--110.

\bibitem[Co93]{Co93}
P.~Colmez, {\it P\'eriodes des Vari\'et\'es Ab\'eliannes \`a multiplication complexe}, Annals of Mathematics \textbf{138} (1993) 625--683.


\bibitem[Fr17]{Fr17}
J.~Fresán, \textit{Periods of Hodge structures and special values of the gamma function}, Inventiones mathematicae 208 (2017) 247--282.

\bibitem[Ge88]{Ge88}
E.-U. Gekeler, \textit{On the coefficients of Drinfeld modular forms}, Invent.~ math.~93 (1988) 667--700.

\bibitem[Ge89]{Ge89}
E.-U.~Gekeler, \textit{Quasi-periodic functions and Drinfeld modular forms}, Compositio Math.~69 no.~3 (1989) 277--293.

\bibitem[GN21]{GN21} O.\ Gezmi\c{s} and C.\ Namoijam,
\textit{On the transcendence of special values of Goss $L$-functions attached to Drinfeld modules}, preprint arXiv:2110.02569.

\bibitem[Go80]{Go80}
D.\ Goss,
\textit{Modular forms for $\FF_r[T]$},
J.~ Reine~Angew.~Math.\ 317 (1980) 16--39.

\bibitem[Go88]{Goss88} D.\ Goss,
{\it The $\Gamma$-function in the arithmetic of function fields}, Duke Mathematical Journal, vol.\ 56 no.\ 1 (1988) 163--191.

\bibitem[Go98]{Goss98} D.\ Goss,
\textit{Basic structures of function field arithmetic}, Ergebnisse der Mathematik und ihrer Grenzgebeite.\ 3.\ Folge, Springer-Verlag Berlin Heidelberg 1998.

\bibitem[Gr78]{Gr78} B.\ H.\ Gross,
{\it On the periods of abelian integrals and a formula of Chowla and Selberg}, Inventiones mathematicae 45 (1978) 193--211.

\bibitem[Gr80]{Gr80} B.\ H.\ Gross,
{\it Arithmetic on Elliptic Curves with Complex Multiplication}, Lecture Notes in Mathematics 776, Springer--Verlag, Berlin, 1980.

\bibitem[Gr21]{Gr21} B.\ H.\ Gross,
{On the periods of abelian varieites}, ICCM Not.\ 8 no.\ 2 (2020) 10--18.

\bibitem[HJ20]{HJ20} U. Hartl and A.-K. Juschka, \textit{Pink's theory of Hodge structures and the Hodge conjecture over function fields} in: $t$-Motives: Hodge Structures, Transcendence and Other Motivic Aspects, Eur. Math. Soc., Z\"{u}rich, 2020, pp.~31--182.

\bibitem[HP04]{HP04} U.~Hartl and R.~Pink, \textit{Vector bundles with a Frobenius structure on the punctured unit disc},
Compositio Mathematica 140, n.~3 (2004), 689--716.

\bibitem[HS21]{HS21}
U.~Hartl and R.~K.~Singh, \textit{Product formulas for periods of CM abelian varieties and the function field analog},
Journal of Number Theory Prime 218 (2021) 62--151.

\bibitem[Hayes74]{Hayes74} D. R. Hayes, \textit{Explicit class field theory for rational function fields}, Trans. Amer.
Math. Soc. \textbf{189} (1974), 77--91.

\bibitem[Jus10]{Jus10}
A.-K.~Juschka, \textit{The Hodge Conjecture For Function Fields}, Diploma thesis, University of Muenster, 2010; available at http://www.math.uni-muenster.de/u/urs.hartl

\bibitem[Ku79]{Ku79}
D.\ Kubert, {\it The universal ordinary distribution}, Bull.\ Soc.\ Math.\ France 107 (1979) 179--202.

\bibitem[KL81]{K-L}
D.\ S.\ Kubert and S.\ Lang, \textit{Modular units}, Grundlehren der mathematischen Wissenschaften 244, Springer-Verlag New York Inc.\ 1981.

\bibitem[La90]{La90}
S.\ Lang, {\it Cyclotomic fields I$\&$II, Grad.\ Text.\ in Math.}\ 121 Springer-Verlag New York 1990.  


\bibitem[MR04]{MR04}
V.~Maillot and D.~ Roessler, \textit{On the periods of motives with complex multiplication and a conjecture of Gross--Deligne}, Annals of Mathematics, \textbf{160} (2004), 727--754.


\bibitem[MY97]{MY97}
M.~Mendès France and J.~Yao, \textit{Transcendence of the Carlitz--Goss gamma function}, J.~Number Theory 63 (1997) 396--402.


\bibitem[NP21]{NP21} C.\ Namoijam and M.\ A.\ Papanikolas, \textit{Hyperderivatives of periods and quasi-periods for Anderson $t$-modules}, Mem. Amer. Math. Soc. (to appear), arXiv:2103.05836, 2021.

\bibitem[P08] {Papanikolas} M. A. Papanikolas, \textit{Tannakian duality
for Anderson-Drinfeld motives and algebraic independence of
   Carlitz logarithms}, Invent. Math. \textbf{171} (2008), 123--174.
   
\bibitem[Pink97]{Pink}
R.\ Pink, \textit{Hodge structure over function fields}, preprint 1997, which is available at https://people.math.ethz.ch/$\sim$pink/ftp/HS.pdf.
   
\bibitem[Rosen02]{Rosen}
M.\ Rosen, \textit{Number theory in function fields}, Graduate texts in mathematics 210, Springer-Verlag New York, Inc.\ 2002.

\bibitem[Sh98]{Sh98} G.~Shimura, \textit{Abelian Varieties with Complex Multiplication and Modular Functions}, Princeton Mathematical Series, vol.~46, Princeton University Press, 1998. 
   
\bibitem[Si97] {Sinha97} S. K. Sinha, {\it Periods of $t$-motives and transcendence},
Duke Math. J.  \textbf{88} (1997), 465--535.

\bibitem[Si97-2]{Sinb} S.~K.~Sinha, {\it Deligne's reciprocity for function fields}, J.\ Number Theory {\bf 63} (1997) 65--88.

\bibitem[T91]{Thakur91} D. S. Thakur, \textit{Gamma functions for function fields and Drinfeld modules},
Ann. of Math (2) 134 (1991), no.1, 25--64.

\bibitem[T96]{Thakur96}
D.~S.~Thakur, \textit{Transcendence of gamma values for $\FF_q[T]$}, Ann.~of Math.\ (2) 144 (1996) 181--188.

\bibitem[T04]{Thakur04} D. S. Thakur, \textit{Function field arithmetic}, World Scientific Publishing Co.\ Pte.\ Ltd.\ 2004.

\bibitem[Wei20]{Wei}
F.-T.~Wei, \textit{On Kronecker terms over global function fields}, Invent.~math.~(2020) 220:847--907.

\bibitem[Weil76]{Weil76} 
A.~ Weil, \textit{Elliptic functions according to Eisenstein and Kronecker}, in \textit{Ergebnisse der Mathematik und ihrer Grenzgebiete, Band 88} (Springer, New York, 1976).

\bibitem[Yang10]{Yang10}
T.~Yang, \textit{The Chowla--Selberg formula and the Colmez
conjecture}, Canadian J.~Math.~62 (2010) 456--472.

\bibitem[Yo03]{Yo03} H.~Yoshida, \textit{Absolute CM-Periods}, Mathematical Surveys and Monographs, vol.~106, American Mathematical Society, Providence, RI, 2003.

\bibitem[Yu90]{Yu90}
J.~Yu, \textit{On periods and quasi-periods of Drinfeld modules}, Compositio Math.~74 no.~3 (1990) 235--245.

\bibitem[Yu91]{Yu91}
J.~Yu, \textit{Transcendence and special values in characteristic $p$}, Ann.~of Math.~(2) 134 no.~1 (1991) 1--23.

%\bibitem[YZ18]{YZ18}
%X.~Yuan and S.~Zhang, \textit{On the averaged Colmez conjecture}, Ann.~of Math.~vol.~187 iss.~2 (2018) 533--638.

\end{thebibliography}

\end{document}